\begin{document}

\setcounter{secnumdepth}{3}
\setcounter{tocdepth}{2}

\newtheorem{definition}{Definition}[section]
\newtheorem{lemma}[definition]{Lemma}
\newtheorem{sublemma}[definition]{Sublemma}
\newtheorem{corollary}[definition]{Corollary}
\newtheorem{proposition}[definition]{Proposition}
\newtheorem{theorem}[definition]{Theorem}

\newtheorem{remark}[definition]{Remark}
\newtheorem{example}[definition]{Example}

\newcommand{\cov}{\mathrm{covol}}
\def \tr{{\mathrm{tr}}}
\def \det{{\mathrm{det}\;}}
\def\co{\colon\tanhinspace}
\def\I{{\mathcal I}}
\def\N{{\mathbb N}}
\def\R{{\mathbb R}}
\def\Z{{\mathbb Z}}
\def\Sph{{\mathbb S}}
\def\Tor{{\mathbb T}}
\def\Disk{{\mathbb D}}
\def\Hess{\mathrm{Hess}}
\def\rad{\mathbf{v}}

\def\H{{\mathbb H}}
\def\RP{{\mathbb R}{\mathrm{P}}}
\def\dS{{\mathrm d}{\mathbb{S}}}
\def\Isom{\mathrm{Isom}}

\def\sh{\mathrm{sinh}\,}
\def\ch{\mathrm{cosh}\,}
\newcommand{\arccosh}{\mathop{\mathrm{arccosh}}\nolimits}
\newcommand{\oh}{\overline{h}}

\newcommand{\mf}{\mathfrak}
\newcommand{\mb}{\mathbb}
\newcommand{\ol}{\overline}
\newcommand{\la}{\langle}
\newcommand{\ra}{\rangle}
\newcommand{\hess}{\mathrm{Hess}\;}
\newcommand{\grad}{\mathrm{grad}}
\newcommand{\M}{\mathrm{MA}}
\newcommand{\II}{\textsc{I\hspace{-0.05 cm}I}}
\renewcommand{\d}{\mathrm{d}}
\newcommand{\A}{\mathrm{A}}
\renewcommand{\L}{\mathcal{L}}


\newcommand{\newnew}[1]{#1}
\newcommand{\oldold}[1]{}
\newcommand{\old}[1]{}
\newcommand{\new}[1]{#1}
\newcommand{\note}[1]{{\color{green}{\small #1}}}


\newcommand{\Area}{\mathrm{Area}}
\newcommand{\mink}{\R^{d,1}}     
\newcommand{\gauss}{\mathcal{G}}    
\newcommand{\fconv}{\mathbf{K}}       
\newcommand{\jplus}{\mathrm{J}^+}    
\newcommand{\ball}{\mathbb B}           
\newcommand{\hor}{P}                          
\newcommand{\fut}{\operatorname{I}^+}   
\newcommand{\proj}{\pi}                        
\newcommand{\plane}[2]{P_#1(#2)}      
\newcommand{\fund}{\mathcal P}          
\newcommand{\poly}{\mathscr P}          
\newcommand{\const}{\zeta}                  
\newcommand{\qd}{\hfill{\scriptsize $\square$}}   
\setlength{\abovedisplayshortskip}{1pt}
\setlength{\belowdisplayshortskip}{3pt}
\setlength{\abovedisplayskip}{3pt}
\setlength{\belowdisplayskip}{3pt}


\title{The equivariant Minkowski problem in Minkowski space}

\author{Francesco Bonsante and Fran\c{c}ois Fillastre}

\date{ \today}

\maketitle

Universit\`a degli Studi di Pavia, Via Ferrata, 1, 27100 Pavia, Italy

Universit\'e de Cergy-Pontoise, UMR CNRS 8088, F-95000 Cergy-Pontoise, France

francesco.bonsante@unipv.it, francois.fillastre@u-cergy.fr

\begin{abstract}

The classical Minkowski problem in Minkowski space asks, for a positive function
$\phi$ on $\H^d$, for a convex set  $\fconv$ 
in Minkowski space  with $C^2$ space-like boundary $S$,
such that $\phi(\eta)^{-1}$ is the Gauss--Kronecker curvature at the point with normal $\eta$.
Analogously to the Euclidean case, it is possible to formulate a weak version of this problem:
 given a Radon measure $\mu$ on $\H^d$  the generalized Minkowski problem in Minkowski space 
asks for a convex subset $\fconv$ such that the area measure of $\fconv$ is $\mu$.

In the present paper we look at an equivariant version of the problem: given 
a uniform lattice $\Gamma$ of isometries of $\H^d$, given a $\Gamma$ invariant Radon measure $\mu$, given 
a isometry group $\Gamma_{\tau}$ of Minkowski space, with $\Gamma$ as linear part, there exists a unique 
convex set with area measure $\mu$, invariant under the action of $\Gamma_{\tau}$.
 The proof uses a functional which is the covolume associated to every invariant convex set.

This result translates as a solution of the Minkowski problem in flat space times with compact hyperbolic Cauchy surface.
The uniqueness part, as well as regularity results,  follow from properties of the Monge--Amp\`ere equation. The existence part can be translated as 
an existence  result for Monge--Amp\`ere equation.

The regular version  was proved by
T.~Barbot, F.~B\'eguin and A.~Zeghib for $d=2$ and by V.~Oliker and U.~Simon for $\Gamma_{\tau}=\Gamma$. 
Our method is totally different. Moreover, we show that those cases are  very specific:
in general, 
there is no  smooth $\Gamma_\tau$-invariant hypersurface of constant Gauss-Kronecker curvature equal to $1$.

\end{abstract}

\textbf{Keywords} Minkowski problem, Lorentzian geometry, covolume, Monge--Amp\`ere equation.

\textbf{Subject Classification (2010)} 53C50: Lorentz manifolds, manifolds with indefinite metrics 52A20: Convex sets in $n$ dimensions (primary), and, 
53C42 Immersions (minimal, prescribed curvature, tight, etc.) 35J60  Nonlinear elliptic equations  (secondary)

\setcounter{tocdepth}{3}
\tableofcontents

\section{Introduction}

In this paper we study an adapted version of 
classical Minkowski problem in Minkowski space and more generally
in globally hyperbolic flat Lorentzian manifolds.

Let us recall that Minkowski space $\mathbb R^{d,1}$ is the  manifold $\mathbb R^{d+1}$ equipped with the flat
Lorentzian structure given by Minkowski product: 
$$\langle X,Y\rangle_-=\sum_{i=1}^d X_iY_i-X_{d+1}Y_{d+1}~.$$
\index{$\langle,\cdot,\cdot\rangle_-$ Minkowski product}

The pseudo-sphere is the subset of $\mathbb R^{d,1}$ of unit future oriented time-like vectors 
$$\H^d=\{X\in\R^{d+1}| \langle X,X\rangle_-=-1, X_{d+1}>0 \}~.$$\index{$\H^d$ Hyperbolic space}
It is well-known that it is a space-like (i.e., the restriction of $\langle\cdot,\cdot\rangle_-$ to $T\H^d$ is positive) 
convex hypersurface in $\R^{d+1}$ isometric to the hyperbolic space.

Let $S$ be the boundary of a strictly convex set $\fconv$ of $\mink$.  Suppose that $S$ is $C^2$, and space-like. 
This assumption forces $\fconv$ to be unbounded: in fact either
$\fconv$ coincides with the future of $S$ or with its past. We will always assume that $\fconv$ is the future of $S$.

The normal vectors of $S$ are time-like, so a natural Gauss map is defined
$\gauss:S\rightarrow \H^d$ sending $p$ to the unique future-oriented unit time-like vector orthogonal to $T_pS$.
The map $\gauss$ is injective but in general is not surjective.
We say that $\fconv$ is an \emph{F-convex set} if $\gauss$ is surjective. 

Let $\phi$ be a continuous positive function on $\H^d$.
The classical Minkowski problem in Minkowski space asks for an F-convex set  $\fconv$ 
in Minkowski space  with $C^2$ space-like boundary $S$,
such that $\frac{1}{\phi(\eta)}$ is the Gaussian curvature at $\gauss^{-1}(\eta)$.
It turns out that $\phi(x)$ corresponds to the density of the direct image of the intrinsic volume measure of $S$ 
through the map $\gauss$ with respect to the volume measure of $\H^d$.

Analogously to the Euclidean case, it is possible to formulate a weak version of this problem.
Indeed if $\fconv$ is any convex set whose support planes are \old{not time-like}
\new{space-like}, it is always possible to define a set-valued
function $\gauss$ from $\partial \fconv=S$ to $\H^d$. Namely $\gauss$ sends a point $p$ to the set of unit time-like vectors
orthogonal to support planes at $p$.
As in the previous case, the condition that support planes are not time-like implies that $\fconv$ coincides either with the future
or the past of $S$. We will always assume that $\fconv$ is the future of $S$.
We say that $\fconv$ is an F-convex set if moreover $\gauss$ is surjective in the sense that $\bigcup_{x\in S} \gauss(x)=\H^d$.

In \cite{fv} a Radon measure is defined on $\H^d$ 
as the first-order variation of the volume of an $\epsilon$-neighborhood of  $S$. 
If $S$ is $C^1$ and space-like, it 
coincides with the direct image of  the intrinsic volume measure on $S$ through the map $\gauss$. 
We denote by $\A(\fconv)$ this measure, called the \emph{area measure}.

So given a Radon measure $\mu$ on $\H^d$  the generalized Minkowski problem in Minkowski space 
asks for a convex subset $\fconv$ such that $\A(\fconv)=\mu$.

As $\H^d$ is not compact,  a boundary condition must be added  for the well-possessedness of this  problem.
Geometrically this boundary condition corresponds to the choice of  the domain of dependence of the hypersurface $S$, or
equivalently the light-like support planes of $\fconv$.

Results on this direction are for instance pointed out in \cite{li95}, where an  assumption on the regularity of the boundary
condition has been considered. 

In a different direction one could look at an equivariant version of the problem. 
Isometries of Minkowski space are affine transformations of $\R^{d+1}$ whose linear part 
preserves the Minkowski  product.  We will restrict to isometries in the connected component of the identity
of the isometry group, that is $\sigma\in\Isom(\mink)$ whose linear part $\gamma\in \operatorname{SO}^+(d,1)$ preserves the orientation
and leaves the future cone from $0$ invariant. There is a natural isometric action of $\operatorname{SO}^+(d,1)$ on $\H^d$ which identifies
$\operatorname{SO}^+(d,1)$ with the group of orientation preserving isometries of $\H^d$.

Let us fix a group $\Gamma_\tau$  of isometries of Minkowski space.
If  $\fconv$ is an F-convex set invariant under the action of $\Gamma_\tau$, the area measure $\A(\fconv)$ is a measure on $\H^d$ invariant by
the group $\Gamma<\operatorname{SO}^+(d,1)$ obtained by taking the linear parts of elements of $\Gamma_\tau$.

So if a $\Gamma$-invariant measure $\mu$ in $\H^d$ is fixed, the equivariant version of Minkowski problem asks for
a $\Gamma_\tau$-invariant F-convex set $\fconv$ such that $\A(\fconv)=\mu$.
The geometric interest of this equivariant version  relies on the fact that Minkowski problem can be
intrinsically formulated in any flat globally hyperbolic space-time (see Section \ref{sec:mgh} of the Introduction).
By using the theory developed by Mess in \cite{Mes07} and generalized in \cite{Bar05, Bon05}, 
this intrinsic Minkowski problem  on a flat Lorentzian manifold 
turns out to be equivalent to an equivariant Minkowski problem in Minkowski space in many geometrically interesting cases.

From \cite{Bon05} it is known that if $\Gamma$ is a uniform lattice in $\operatorname{SO}^+(d,1)$ (that means that $\Gamma$ is discrete and
$\H^d/\Gamma$ is compact), there is a maximal $\Gamma_\tau$-invariant F-convex set, say $\Omega_\tau$, which
contains all the $\Gamma_\tau$-invariant F-convex sets. Moreover, if $\fconv$ is any $\Gamma_\tau$-invariant F-convex set, then the domain
of dependence of its boundary coincides with $\Omega_\tau$.
So  requiring the invariance under $\Gamma_\tau$ of $\fconv$ automatically fixes the boundary conditions. 

On the other hand, the boundary conditions arising in this setting
 do not satisfy the regularity conditions required in \cite{li95}, so the existence result
of that paper is not helpful in this context.

In the present paper we solve the equivariant generalized Minkowski problem in any dimension, with the only assumption that the group
$\Gamma$ is a uniform lattice of $\operatorname{SO}^+(d,1)$. 

The study of the Minkowski problem in Minkowski space, and its relations with Monge--Amp\`ere equation is not new. 
But, as far as the authors know,  all the results concern the regular Minkowski problem 
(excepted a polyhedral Minkowski problem in the Fuchsian case $\Gamma_\tau=\Gamma$ in \cite{Fil12}).
Two previously known results are particular cases of Theorem~\ref{thm:main1}.

The first one is a theorem of V.~Oliker and U.~Simon \cite{OS83}, solving the Minkowski problem in the 
$C^{\infty}$ setting and in the 
Fuchsian case, in all dimensions.
Actually they consider the underlying PDE problem intrinsically on the compact hyperbolic manifold $\H^d/\Gamma$.

The second one is a theorem of T.~Barbot, F.~B\'eguin and A.~Zeghib \cite{BBZ}, 
solving the equivariant Minkowski problem in the $C^{\infty}$ setting in the $d=2$ case.
It is interesting to note that their proof is totally different from the one presented here, and use a geometric feature of the dimension $2$. 
They show that a sequence of Cauchy surfaces with constant curvature $<-\infty$ and bounded diameter cannot have small systole as a simple consequence of Margulis Lemma.  
By contrast, in higher dimensions hypersurfaces with constant Gauss--Kronecker curvature
does not have any bound on the sectional curvature.

We also discuss the equivariant regular Minkowski problem. Opposite to the $d=2$ case, we will show that in general 
there is no  smooth $\Gamma_\tau$-invariant hypersurface of constant Gauss--Kronecker curvature equal to $1$.
On the other hand, we  prove that there is a constant $c$ depending on the group $\Gamma_\tau$ such that if $\phi$ is
a  smooth $\Gamma$-invariant function with $\phi\geq c$, then the $\Gamma_\tau$-invariant F-convex set $\fconv$ such 
that $\A(\fconv)=\phi \d\H^d$ is  smooth  and strictly convex.

We  prove that the constant $c$ can be taken equal to $0$ if the dual stratification in the sense of \cite{Bon05}
does not contain strata of dimension $k<d/2$, that roughly means that the dimension of the initial singularity
of $\Omega_\tau$ is $\leq d/2$. 
If $d=2$, then this condition is always true so this gives a new proof of the result in \cite{BBZ}.
At least for $d=3$ this condition seems optimal in the sense that the construction of the example  in
 Section~\ref{sub: pogo} works every time the singularity of $\Gamma_\tau$ is  simplicial (in the sense of \cite{Bon05})   
 with at least a stratum of dimension bigger than $1$.
In fact that construction works in any dimension 
under the weaker assumption that the initial singularity of $\Omega_\tau$ contains a $(d-1)$-dimensional polyhedron.

Finally, let us mention related results in  \cite{GJS06}, and also a ``dual'' problem to the Minkowski problem, known as  
the Alexandrov problem. Here one prescribes a ``curvature measure'' rather than an area measure.
Curvature measures are defined using a radial function, and the Alexandrov problem
was solved in the Fuchsian case  in \cite{Ber14}.
It is not clear if this problem have an analog in the general non-Fuchsian case.

The trivial  $d=1$ case was considered in \cite{fv}, so in all the paper, $d\geq 2$.

\subsection{Main results}

Let us formulate in a more precise way the results of the paper.
Let $\Gamma$ be a uniform lattice in  $\operatorname{SO}^+(d,1)$.
An affine deformation of $\Gamma$ is a subgroup of $\operatorname{Isom}(\mathbb R^{d,1})$ obtained by adding translations
to elements of $\Gamma$:
\[
\Gamma_\tau=\{\gamma+\tau_\gamma|\gamma\in\Gamma\}~.
\]
Notice that $\Gamma_\tau$ is determined by the function
$\tau :\Gamma \rightarrow \R^{d+1}$ which assigns to each $\gamma\in\Gamma$ the translation part of the
corresponding element of $\Gamma_\tau$. The condition that $\Gamma_\tau$ is a subgroup forces $\tau$ to verify
a \emph{cocycle} relation: $$\tau_{\alpha\beta}=\tau_\alpha+\alpha\tau_\beta~.$$

A \emph{$\tau$-convex set} is a proper convex set (globally) invariant under the action of $\Gamma_{\tau}$. Here we also require the $\tau$-convex sets to be future:  
\old{they are intersections of half-spaces bounded by space-like planes, and we suppose that the half-spaces are the 
future sides of the hyperplanes} \new{they are disjoint from the past of their support planes}. When $\tau=0$, the term Fuchsian is sometimes used. 

\begin{theorem}\label{thm:main1}
 Let $\mu$ be a $\Gamma$-invariant Radon measure on $\H^d$. Then
 for any cocycle $\tau$ there exists a unique $\tau$-convex set with area measure $\mu$.
\end{theorem}

Moreover the correspondence between the $\tau$-convex sets and the measures is continuous for a natural Hausdorff topology on the space of $\tau$-convex sets, see  Section~\ref{sec:33}.

The proof of the existence part Theorem~\ref{thm:main1} is variational. It is similar to the proofs of the Minkowski problem for convex bodies, where the functional involves the volume of the convex bodies \cite{Ale37,car04}.
Here the group action allows to define a \emph{covolume} for any $\tau$-convex set. The set of $\tau$-convex sets is convex (for linear combinations given by homotheties and Minkowski addition of 
convex sets), and the main ingredient in the variational approach is the following.

\begin{theorem}\label{thm: cov con}
 The covolume is convex on the set of $\tau$-convex sets.
\end{theorem}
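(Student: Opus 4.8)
The plan is to pass from $\tau$-convex sets to their support functions, where the Minkowski combination becomes a genuine affine structure, and then to prove convexity by a second-variation computation. Recall that a future $\tau$-convex set $K$ is the intersection of the future sides $\la x,\eta\ra_-\le h(\eta)$ of its space-like support planes, so it is encoded by a support function $h$ on $\H^d$; Minkowski combination and homothety of $\tau$-convex sets correspond exactly to convex combination of the $h$'s. Fixing two $\tau$-convex sets $K_0,K_1$ with support functions $h_0,h_1$ and writing $h_t=(1-t)h_0+t\,h_1$, $K_t=(1-t)K_0+t\,K_1$, I must show $t\mapsto\cov(K_t)$ is convex. The first point is that the difference $\dot h:=h_1-h_0$ is $\Gamma$-\emph{invariant} (the cocycle contributions of $\tau$ cancel), so it descends to the closed hyperbolic manifold $M=\H^d/\Gamma$; the covolume itself is the volume of a fundamental domain of $\Gamma_\tau$ in $\Omega_\tau\setminus K_t$, and the \emph{closedness} of $M$ is what will let me integrate by parts with no boundary terms.

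First I would establish the first-variation formula. Since $K$ grows as $h$ increases while $\cov$ measures the complementary region in $\Omega_\tau$, one gets
\[
\frac{\d}{\d t}\cov(K_t)=-\int_M \dot h\,\d\A(K_t)~.
\]
Because $\dot h$ is fixed along the segment, convexity of $\cov(K_t)$ is equivalent to the monotonicity statement that $t\mapsto\int_M\dot h\,\d\A(K_t)$ is non-increasing. In the smooth strictly convex case the area measure has density $\d\A(K)=\det(b_h)\,\d\H^d$ with respect to the hyperbolic volume, where $b_h=\Hess h - h\,g$ and $g$ is the metric of $\H^d$; convexity of $K$ means $b_h$ is positive definite. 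Differentiating once more, the second variation of the covolume equals $-\int_M\dot h\,L_h(\dot h)$, where $L_h(\dot h)=\tr\!\big(\mathrm{cof}(b_h)(\Hess\dot h-\dot h\,g)\big)$ is the linearization of the Monge--Amp\`ere operator $h\mapsto\det(b_h)$.

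The heart of the argument is the sign of this quadratic form. The decisive structural input is that on the constant-curvature space $\H^d$ the tensor $b_h$ is Codazzi, so its cofactor (Newton) tensor $\mathrm{cof}(b_h)$ is divergence-free; integrating the Hessian term by parts on the closed manifold $M$ then gives
\[
\int_M \dot h\,L_h(\dot h)=-\int_M\Big(\la \mathrm{cof}(b_h)\nabla\dot h,\nabla\dot h\ra+\dot h^{\,2}\,\tr\big(\mathrm{cof}(b_h)\,g\big)\Big)~.
\]
Here $\mathrm{cof}(b_h)$ is positive definite (as $b_h$ is) and $\tr(\mathrm{cof}(b_h)\,g)>0$, so the integrand is non-negative and the whole expression is $\le 0$; hence the second variation of $\cov$ is $\ge 0$. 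I want to emphasize that the \emph{minus} sign in $b_h=\Hess h-h\,g$ — the hyperbolic, Lorentzian feature, opposite to the $+h\,g$ of the Euclidean/spherical Christoffel--Minkowski setting — is exactly what makes the zeroth-order term cooperate with the gradient term instead of competing with it; this is the mechanism turning the usual Brunn--Minkowski concavity into convexity of the covolume.

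Finally I would remove the smoothness hypothesis. Convexity along Minkowski segments is a closed condition, so it suffices to approximate an arbitrary $\tau$-convex set by smooth strictly convex $\tau$-convex ones and invoke continuity of $\cov$ along such segments. \textbf{The main obstacle} I anticipate is precisely this regularity step together with the justification of the integration by parts in the equivariant, non-compact geometry: one must check that smooth strictly convex $\Gamma_\tau$-invariant sets are dense, that the covolume is continuous under the relevant convergence despite the non-compactness of $\Omega_\tau\setminus K$ near the initial singularity, and that the Codazzi/divergence-free identity is valid in the distributional setting. An alternative, should the approximation prove delicate, is to recast the monotonicity of $\int_M\dot h\,\d\A(K_t)$ as a mixed-covolume inequality of reverse Alexandrov--Fenchel type, valid directly for area measures of convex sets; but I would attempt the second-variation route first, as it isolates the single sign that drives the whole statement.
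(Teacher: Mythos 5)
Your proposal is correct in substance, but it follows a genuinely different route from the paper. The paper's proof is purely geometric and needs no PDE machinery: it constructs a convex fundamental domain $P(x)$ for the action of $\Gamma_\tau$ on the future of an invariant surface (Proposition~\ref{prop:fund reg}), cuts it by a space-like hyperplane so that each $K_i\cap P\cap R^-$ becomes a convex cap over a common base, and then invokes a Fubini-type lemma (Lemma~\ref{lem:vol concave} and its Lorentzian version) saying that volume is concave — strictly, on caps — for convex bodies with the same orthogonal projection; this yields strict convexity of $\cov$ on Cauchy $\tau$-convex sets, and the general case follows from continuity of $\cov$ (Lemma~\ref{lem: cov cont}) without any smoothing. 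Your route instead runs through the second variation: the identity $\frac{\d}{\d t}\cov=-\int_{\H^d/\Gamma}\dot h\,\d\A(\oh_t)$, the linearized Monge--Amp\`ere operator, and the fact that $b_h=\nabla^2\oh-\oh g_\H$ is a Codazzi tensor on $\H^d$ whose cofactor is divergence-free, so that integration by parts on the closed quotient gives the good sign; this is exactly the Cheng--Yau/Oliker--Simon mechanism, and your observation that the $-\oh g_\H$ term (as opposed to $+hg$ in the Euclidean--spherical setting) makes the zeroth-order term cooperate with the gradient term is the correct structural explanation of why one gets convexity here rather than Brunn--Minkowski concavity. Each approach buys something: yours makes the mechanism transparent and ties the theorem to the Hessian-equation literature; the paper's avoids all regularity questions, gives strict convexity for non-smooth Cauchy sets directly, and is self-contained.

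Two caveats you should address to make your argument airtight. First, your integration by parts uses $\operatorname{div}(\mathrm{cof}(b_h))=0$, which requires three derivatives of $\oh$; so you need \emph{smooth} (or at least $C^3_+$) equivariant approximation, not merely $C^2_+$ — this is available (the paper's Appendix proves density of $C^2_+$ $\tau$-convex sets and notes the argument upgrades to $C^\infty$), but it is a nontrivial input, since mollifying an equivariant support function on $\H^d$ does not obviously return a support function. Second, your first-variation formula must be justified along \emph{non-nested} segments: the paper only establishes the foliation computation when $\oh_1\leq\oh_0$ (and otherwise infers derivative bounds \emph{from} convexity, which you cannot do). In the smooth case this is repairable by a signed-volume/Stokes argument — the pullback identity $\chi^*(\d V)=-\dot h\,\d t\wedge\d\A(\oh_t)$ holds regardless of nestedness, and the chain $\chi([a,b]\times\H^d/\Gamma)$ has boundary $\partial K_b/\Gamma_\tau-\partial K_a/\Gamma_\tau$ — but this step should be written out rather than assumed.
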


\subsubsection{Monge--Amp\`ere equation and regular Minkowski problem}

An important point  widely used  in the paper is that the Minkowski problem can be formulated in terms of a Monge--Amp\`ere equation 
on the unit ball $\ball$ in $\R^d$. Indeed the boundary of any F-convex set $\fconv$ is the graph of a $1$-Lipschitz function $\tilde u$  on $\R^d$.
The Legendre--Fenchel transform of $\tilde u$ is a function $h$ defined on $\ball$ (since $\tilde u$ is $1$-Lipschitz). 
This correspondence is in fact bijective, as any convex function $h$ on $\ball$ defines dually a $1$-Lipschitz convex function
over the ``horizontal'' plane  $\R^d$.

Recall that  for a convex function $h$ there is a well-defined Monge--Amp\`ere measure $\M(h)$  on $\ball$
such that $\M(h)(\omega)$ is the Lebesgue measure of the set of sub-differentials at points in $\omega$.
If $h$ is $C^2$, then $\M(h)=\det(\mathrm{Hess} h)\mathcal L$ where $\mathcal L$ is the Lebesgue measure on $\ball$.

There is a very direct relation between the area measure $\A(\fconv)$ and the Monge--Amp\`ere measure $\M(h)$ of the
corresponding function $h$ on $\ball$. Regarding $\R^d$ as an affine chart in $\RP^{d}$, $\ball$ is identified to the set of 
time-like directions. This gives an identification $\rad:\ball\to\H^d$ and we simply have  
$\rad^{-1}_*(\A(\fconv))=(1-\|x\|^2)^{1/2}\M(h)$.

The correspondence between  F-convex sets and convex functions on $\ball$
 allows to define an action of $\mathrm{Isom}(\mink)$ on the space of convex functions on $\ball$.
Namely, if $\sigma\in \mathrm{Isom}(\mink)$ and $h$ is a convex function on $\ball$ corresponding to the F-convex set $\fconv$,
then $\sigma\cdot h$ is the convex function on $\ball$ corresponding to the F-convex set $\sigma(\fconv)$. 

This action is made explicit in Lemma~\ref{lem: projective action}. 
It turns out that if $\sigma$ is a pure translation then $\sigma\cdot h$ differs by 
$h$ by an affine function,
whereas if $\sigma$ is linear, then $\sigma\cdot h$ is explicitly related (although not equal) to the pull-back of $h$   by the projective
transformation induced by $\sigma$ on $\ball$.

Using those correspondences, Theorem~\ref{thm:main1} can be also stated in terms of Monge--Amp\`ere equation:

\begin{theorem}\label{thm:main2}
Let $\ball$ be the open  unit ball of $\R^d$, and $\|\cdot \|$ be the usual norm. Let $\Gamma$ be a uniform
lattice in $\operatorname{SO}^+(d,1)$.

Let $\mu$ be a Radon measure on $\ball$ such that $\sqrt{1-\|x\|^2}\mu$ is invariant for the 
projective action of $\Gamma$ on $\ball$.

Then, for any  convex function $h_0$ on $\overline{\ball}$ such that,  for any $\gamma\in \Gamma$, on $\partial \ball$, the restriction of
$\gamma\cdot h_0 - h_0$ coincides with the restriction  of an affine map,  there exists a unique convex function $h$ on $\ball$ such that
\begin{itemize}[nolistsep]
 \item $\M(h)=\mu$, where $\M$ is the Monge--Amp\`ere measure,
 \item $h=h_0$ on $\partial \ball$.
\end{itemize}
\end{theorem}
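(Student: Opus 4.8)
The plan is to deduce Theorem~\ref{thm:main2} from Theorem~\ref{thm:main1} through the dictionary between convex functions on $B$ and F-convex sets in $\R^{d,1}$ recalled above, reserving a direct Monge--Amp\`ere argument for uniqueness. First I would translate the data. Given $\mu$, set $\tilde\mu:=\rad_*\big((1-\|x\|^2)^{1/2}\mu\big)$, a Radon measure on $\H^d$; since $\rad$ conjugates the projective action of $\Gamma$ on $B$ with its isometric action on $\H^d$, the hypothesis that $(1-\|x\|^2)^{1/2}\mu$ is projectively $\Gamma$-invariant says exactly that $\tilde\mu$ is $\Gamma$-invariant. Next I would read off the cocycle from the boundary datum. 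The restriction $h_0|_{\partial B}$ is the support datum of a light-like convex set, and the condition that $\gamma\cdot h_0-h_0$ be affine on $\partial B$ for every $\gamma\in\Gamma$ is, via Lemma~\ref{lem: projective action}, precisely the compatibility condition identifying $h_0|_{\partial B}$ with the boundary data of a unique affine deformation $\Gamma_\tau$. In other words, $h_0$ determines a cocycle $\tau$, whose associated maximal $\tau$-convex set is the domain of dependence $\Omega_\tau$ of \cite{Bon05}.

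With $\tilde\mu$ and $\tau$ in hand, I would apply Theorem~\ref{thm:main1} to obtain a unique $\tau$-convex set $K$ with $\A(K)=\tilde\mu$, and let $h$ be the convex function on $B$ corresponding to $K$. The measure correspondence $\rad^{-1}_*(\A(K))=(1-\|x\|^2)^{1/2}\M(h)$ then gives $\M(h)=\mu$ immediately. It remains to verify the boundary condition. Because $K$ is $\Gamma_\tau$-invariant and, by \cite{Bon05}, every $\tau$-convex set shares the same domain of dependence $\Omega_\tau$, the light-like support planes of $K$ coincide with those of $\Omega_\tau$. Translating back to $B$, the boundary trace $h|_{\partial B}$ is the support datum of $\Omega_\tau$ on $\partial B$, which is exactly $h_0|_{\partial B}$; hence $h=h_0$ on $\partial B$.

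For uniqueness I would argue directly with the Monge--Amp\`ere measure rather than through the geometry, as the excerpt indicates. If $h_1,h_2$ are convex on $B$ with $\M(h_1)=\M(h_2)=\mu$ and $h_1=h_2=h_0$ on $\partial B$, then the comparison principle for the Monge--Amp\`ere measure (Aleksandrov's maximum principle), applied in both directions, yields $h_1=h_2$. The delicate point is that $\mu$ has infinite mass near $\partial B$, so the boundary values are attained only in a limiting sense; I would therefore run the comparison on the exhausting subdomains $B_r=\{\|x\|<r\}$, controlling the boundary discrepancy uniformly, and let $r\to 1$.

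I expect the main obstacle to be the boundary matching of the second paragraph: making rigorous that the $\tau$-convex set produced by Theorem~\ref{thm:main1} has boundary trace on $\partial B$ equal to $h_0|_{\partial B}$, and not merely equal up to an affine ambiguity. This hinges on the explicit form of the action in Lemma~\ref{lem: projective action} and on the identification, coming from \cite{Mes07,Bon05}, of the assignment $\tau\mapsto(\text{boundary data modulo affine functions})$ as a bijection. Verifying that the class of $h_0|_{\partial B}$ corresponds to the domain of dependence of $K$ exactly, with the correct normalization of the translational ambiguity, is the crux of the reduction.
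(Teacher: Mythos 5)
Your proposal is correct and follows essentially the same route as the paper's own proof: push $\sqrt{1-\|x\|^2}\,\mu$ to $\H^d$ by the radial map, read the cocycle $\tau$ off the affine defects $\gamma\cdot h_0-h_0$, solve with Theorem~\ref{thm:main1}, convert area measure to Monge--Amp\`ere measure by Proposition~\ref{prop: area=ma}, match boundary values because the boundary trace of every $\tau$-convex set is the single function $g_\tau$ determined by $\tau$ (Proposition~\ref{prop:g tau}), and get uniqueness from the same comparison principle the paper invokes for Theorem~\ref{thm:main1}. The crux you flag is handled exactly as you anticipate: since $\tau$ is built from the \emph{specific} affine maps appearing in $\gamma\cdot h_0-h_0$, the datum $h_0|_{\partial B}$ is $\Gamma_\tau$-invariant and the difference between $h_0$ and any $\tau$-support function is $\Gamma$-invariant on the boundary, hence tends to $0$ at $\partial B$ (Remark~\ref{rk:ext inv}), so no affine ambiguity survives.
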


The uniqueness part of Theorem~\ref{thm:main1} follows from classical uniqueness result for the Monge--Amp\`ere equation. Conversely,
the existence part of Theorem~\ref{thm:main1} can be regarded
as a new result about Monge--Amp\`ere equation.

The main result about Monge--Amp\`ere equation, close to the ones considered here, is 
the existence of a convex function $h$ on a strictly convex 
open bounded set $\Omega$, with prescribed continuous values on the boundary, satisfying $\M (h) = \mu$, but for $\mu$ with finite total mass, \cite{RT77}, \cite[1.4.6]{Gu01}.
Also result in \cite{li95} is formulated in terms of existence of a solution of a Monge--Amp\`ere equation.

Under this correspondence, in the Fuchsian case the covolume can be written 
as the functional introduced by I.~Bakelman for the Monge--Amp\`ere equation, see Remark~\ref{rem: bakelman}.

Using again classical result of Monge--Amp\`ere theory, we can get a ``regular'' version of Theorem~\ref{thm:main1}. 
There are two main points that will allow to use those regularity results. The first one is a result 
contained in \cite{li95} 
which will imply that convex functions $h$ on $\ball$ corresponding to 
$\tau$-convex sets  have a continuous extension $g$ on the boundary of the ball. 
It turns out that the convex envelope of $g$ corresponds to the maximal $\tau$-convex set $\Omega_\tau$.
So $g$ only depends on $\tau$.

The second ingredient is a positive interior  upper bound of the difference between
the convex envelope of $g$ in $\ball$ and the support function $h$ of a $\tau$-convex set with positive
area measure.
The example in Section~\ref{sub: pogo} shows that some assumption must be added to get this bound.
In particular we find a bound  if the prescribed measure has sufficiently large total mass. 
This assumption can be weakened if the dual stratification of the initial singularity of $\Omega_\tau$ contains only
strata of dimension $\geq d/2$. In this case $\Omega_{\tau}$ is called \emph{simple}. This condition comes out as we notice here that the strata of the  dual stratification pointed out
in \cite{Bon05} correspond to the maximal regions of $\ball$ where the convex envelope of $g$ is affine. 
Assuming that those regions have dimension $\geq d/2$ a multi-dimensional version of the Alexander Heinz Theorem  
ensures that $h$ is strictly convex, providing the bound we need. 
This is stated in details in Theorem~\ref{thm: alex heinz} and  Corollary~\ref{cor: MA regularite}. See also Theorem~\ref{thm:reg equi}.

\subsubsection{Minkowski problem in flat space times with compact hyperbolic Cauchy surfaces}\label{sec:mgh}

A Cauchy surface in a Lorentzian manifold $S$ is an embedded hypersurface such that any inextensible  time-like path meets
$S$ exactly at one point. A space-time is said globally hyperbolic (GH) if it contains a Cauchy surface. 
Topologically a  GH space-time $M$ is simply $S\times\R$, where $S$ any Cauchy surface in $M$.

If $S$ is any space-like hypersurface in a flat space-time $N$, then it is a Cauchy  surface of some neighborhood
 in $N$.  An extension of $S$ is a GH  flat
space-time $M$ where  a neighborhood of $S$ in $N$ can be isometrically embedded
so that the image of $S$ is a Cauchy surface in $M$. 
By a very general result in general relativity \cite{GH69}, there exists a unique maximal extension $M$, 
where here maximal means that all the other extensions isometrically embeds into $M$.

A maximal globally hyperbolic compact flat space-time (we will use the acronym MGHCF) is a maximal flat space-time
containing a compact Cauchy surface.
In \cite{Mes07} Mess studied MGHCF space-times in dimension $2+1$, and a generalization of Mess theory was done
in any dimension in \cite{Bar05, Bon05}.

By \cite{Bar05} it is known that if $S$ is the Cauchy surface of some MGHCF space-time then up to a finite covering
$S$ can be equipped with a metric locally isometric to $\H^k\times\R^{d-k}$ for some $0\leq k\leq d$. 
We will be mainly interested in the case where $S$ is of hyperbolic case (that is $k=d$), as the Minkowski problem
is not meaningful in the other cases (see Remark~\ref{rk:nhyp}).

In this hyperbolic case it turns out that any MGHCF space-time (up to reversing the time orientation)
 is obtained as the quotient under an affine deformation $\Gamma_\tau$
of a uniform lattice $\Gamma$ in $\operatorname{SO}^+(d,1)$ of the interior of the maximal $\Gamma_\tau$-invariant 
F-convex set $\Omega_\tau$.

A  subset $\fconv$ in $M$ is convex if whenever the end-points of a geodesic segment in $M$ lies
in $\fconv$, then the whole segment is in $\fconv$.
Any $\tau$-convex set is contained in $\Omega_\tau$ and projects to a convex subset of $M$.
The boundary of $\fconv$ projects to a Cauchy surface in $M$, provided that $\fconv$ does not meet the boundary of $\Omega_\tau$.
Conversely if $\fconv$ is a convex subset of $M$, its pre-image in $\Omega_\tau$ is a $\tau$-convex set.

By the flatness of the manifold $M$, the tangent bundle $TM$ is a $\R^{d+1}$ flat bundle with holonomy
$\Gamma$. This implies that if $S$ is any $C^1$-space-like hypersurface, its normal field define a  Gauss map
\[
   \gauss_S:S\rightarrow\H^d/\Gamma
\]
which lifts to the usual Gauss map $\gauss:\tilde S\to \H^d$.
 If $S$ is a $C^1$-strictly convex  Cauchy surface whose future is convex, then   $\gauss_S$ is a homeomorphism.

We say that a Cauchy surface in $M$ is convex, if its future is convex. 
For any convex Cauchy surface it is possible to define a Gauss map as the set-valued function that 
sends  a point $p\in S$ to the set of unit time-like direction orthogonal to local support planes of $S$ at $p$.
It turns out that $\gauss_S$ is always surjective.

The area measure can be defined for every convex Cauchy surface in the same way as in the local model.
If $S$ is $C^1$, the corresponding area measure is a measure on  $\H^d/\Gamma$ that 
coincides with the intrinsic volume form via the Gauss map. So
the Minkowski problem in $M$ asks  for a convex
Cauchy surface in $M$ whose area measure is $\mu$, where  $\mu$ is  a fixed measure of $\H^d/\Gamma$,.

If $S$ is smooth  with positive Gauss--Kronecker curvature,
 then $\mu=f^{-1}\d V$, where $\d V$ is the volume measure on $\H^d/\Gamma$ and
$f(x)$ is the Gauss--Kronecker curvature of $S$ at $\gauss^{-1}(x)$.

Lifting the problem to the universal covering, it can be rephrased as a $\tau$-equivariant Minkowski problem,
with the additional requirement that the solution of the equivariant problem   must be contained in the interior of
$\Omega_\tau$. Even assuming that $\mu$ is a strictly positive measure (in the sense that it is bigger than $c \d V$, where 
$\d V$ is the intrinsic measure on $\H^d/\Gamma_\tau$), Theorem~\ref{thm:main1} does not ensure that the equivariant 
solution does not meet the boundary of $\Omega_\tau$.  To this aim  we need to require that $\mu$ has sufficiently big mass.

\begin{theorem}\label{thm GH1}
For any MGHCF space-time $M$, there is a constant $c_0$ such that if $\mu$ is a measure
on $\H^d/\Gamma$ such that $\mu>c_0\d V$, then there is a unique Cauchy surface $S$ in $M$ with
area measure equal to $\mu$.
If $\Omega_\tau$ is simple we may assume $c_0=0$.
\end{theorem}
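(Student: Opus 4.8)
The plan is to reduce Theorem~\ref{thm GH1} to the equivariant solution provided by Theorem~\ref{thm:main1}, so the only real work is to show that when the mass of $\mu$ is large enough the solution $K$ does not touch $\partial\Omega_\tau$, and therefore descends to a genuine Cauchy surface of $M$. First I would lift the prescribed measure $\mu$ on $\H^d/\Gamma$ to a $\Gamma$-invariant Radon measure $\widetilde\mu$ on $\H^d$, and apply Theorem~\ref{thm:main1} to obtain a unique $\tau$-convex set $K$ with $\A(K)=\widetilde\mu$. By the discussion in Section~\ref{sec:mgh}, $K\subset\Omega_\tau$ and $K$ projects to a convex subset of $M$; moreover its boundary $S=\partial K$ projects to a Cauchy surface precisely when $K$ does not meet $\partial\Omega_\tau$. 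Uniqueness is immediate from the uniqueness already established (equivalently, from the classical uniqueness for the Monge--Amp\`ere equation). So the whole theorem rests on the separation estimate $K\cap\partial\Omega_\tau=\emptyset$.

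To control this separation I would work on the ball $B$ via the support-function correspondence. Recall that $\tau$-convex sets correspond to convex functions $h$ on $B$, that the maximal set $\Omega_\tau$ corresponds to the convex envelope $g$ of the boundary data determined by $\tau$, and that under $\rad:B\to\H^d$ one has $\rad^{-1}_*(\A(K))=(1-\|x\|^2)^{1/2}\M(h)$. The key quantity is the interior gap between $h$ and $g$, since $K$ stays off $\partial\Omega_\tau$ exactly when $h$ lies strictly below $g$ on a fixed compact region where $g$ is affine — i.e.\ on the strata of the dual stratification of the initial singularity of $\Omega_\tau$. I would invoke the positive interior lower bound for $g-h$ described in the Introduction (the second ingredient, stated precisely in Corollary~\ref{cor: MA regularite}/Theorem~\ref{thm:reg equi}): one shows that on a region where the Monge--Amp\`ere measure has total mass at least $m$, the convex function $h$ must dip below the affine function $g$ by an amount that grows with $m$. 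Concretely, on a subregion $\omega$ where $g$ is affine, the mass $\M(h)(\omega)=\int_\omega(1-\|x\|^2)^{-1/2}\,d\widetilde\mu$ forces, by the comparison principle for the Monge--Amp\`ere operator, a definite lower bound on $\sup_\omega(g-h)$; choosing $\mu>c_0\,\d V$ with $c_0$ large makes this bound large enough to keep $h<g$ on all of $\omega$, hence $K$ interior to $\Omega_\tau$.

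The main obstacle is exactly this quantitative bound, and it is where the hypothesis on the mass of $\mu$ is genuinely needed: the example of Section~\ref{sub: pogo} shows that without a mass condition $h$ may agree with $g$ on a stratum, so that $K$ touches $\partial\Omega_\tau$ and the projection fails to be a complete Cauchy surface. I would derive the constant $c_0$ by covering the (finitely many, up to $\Gamma$) affine regions of $g$ coming from the initial singularity, applying the comparison estimate on each, and taking $c_0$ to be the worst of the resulting thresholds; compactness of $\H^d/\Gamma$ guarantees this is a finite maximum and hence a well-defined constant depending only on $M$ (equivalently on $\Gamma_\tau$). In the Fuchsian case $\tau=0$ the boundary data $g$ is itself affine and $\Omega_\tau$ has a single vertex as initial singularity, so there is no positive-dimensional affine stratum to clear and any positive measure already yields a solution strictly inside $\Omega_\tau$; this is why one may take $c_0=0$. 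Finally I would check that, with $K$ off the boundary, the projected surface $S/\Gamma$ is indeed a convex Cauchy surface of $M$ whose area measure equals the original $\mu$, completing the proof.
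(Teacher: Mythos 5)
Your reduction skeleton is the same as the paper's: lift $\mu$ to a $\Gamma$-invariant measure, solve the equivariant problem by Theorem~\ref{thm:main1}, get uniqueness from uniqueness of the $\tau$-convex solution, and reduce everything to showing that for $c_0$ large the solution $K$ avoids $\partial\Omega_\tau$. The gap is in your proof of this separation. You propose a local Monge--Amp\`ere argument: on a region $\omega$ where the envelope $g$ is affine, large mass of $\M(h)$ forces $\sup_\omega(g-h)$ to be large, ``hence $h<g$ on all of $\omega$''. First, that last inference is a non sequitur: a lower bound on the supremum of $g-h$ does not exclude $g-h=0$ elsewhere; what must be excluded is $h=g$ along an entire face $F$ of $g$ (if $h=g$ at one relative interior point of $F$ then, $g-h$ being nonnegative and concave on $F$, it vanishes on all of $F$). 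Second, and decisively: when $F$ has dimension $<d/2$ --- the non-simple case, which is exactly the case in which a positive threshold $c_0$ is needed at all --- no local Monge--Amp\`ere estimate can exclude $h=g$ on $F$, however large the density of $\M(h)$. The Pogorelov functions of Remark~\ref{rk:ex pogo} are affine on such an $F$ while satisfying $\M(f)\geq c_0\L$, and rescaling ($f\mapsto Af$, so $\M(Af)=A^d\M(f)$) produces, near $F$, convex functions with arbitrarily large Monge--Amp\`ere density that are still affine on $F$. An argument that only sees the density of $\M(h)$ near a low-dimensional face and the inequality $h\leq g$ therefore cannot succeed; in the opposite, simple case, Theorem~\ref{thm: alex heinz} rules out touching for \emph{any} positive density, so no largeness is needed there either. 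In short, the comparison principle is being asked to detect something it cannot detect.

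What makes largeness of $c_0$ work is global and equivariant, and this is what your proof never uses. The rescaled Pogorelov counterexample is blocked only because $Af$ is $A\tau$-equivariant rather than $\tau$-equivariant, so within the fixed class of $\tau$-convex sets one cannot scale; the paper exploits this through Proposition~\ref{prop: c tau}: if a $\tau$-convex set $K$ touches $\partial\Omega_\tau$, its boundary contains points of arbitrarily small cosmological time, hence by the compactness of the space of $\tau$-convex sets (Lemma~\ref{lem: conv lip}, Corollary~\ref{cor:cosm borne}) the cosmological time on $\partial K$ is bounded by a constant $T=T(\tau)$, so $\Omega_\tau+TK(\H)\subset K$; monotonicity of the total area (Lemma~\ref{lem:area monotonic}) then gives $\Area(K)\leq\Area(\Sigma_T)$, a constant depending only on $\tau$. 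Since $\mu>c_0\,\d V$ forces $\Area(K)\geq c_0\,\mathrm{Vol}(\H^d/\Gamma)$, taking $c_0>\Area(\Sigma_T)/\mathrm{Vol}(\H^d/\Gamma)$ yields the contradiction; this is the missing key step. Two smaller inaccuracies: the affine strata of $g$ need not be finite in number modulo $\Gamma$ (the stratification need not be locally finite), so ``take the worst of finitely many thresholds'' is not available; and in the Fuchsian case $g$ is affine on \emph{all} of $B$ --- one full-dimensional stratum, not ``no positive-dimensional stratum'' --- so touching at an interior point forces $h\equiv h_\tau$, hence zero area measure, which is why $c_0=0$ works there (the paper argues instead via the accumulation of $\Gamma$-orbits of points of $\partial \operatorname{I}^+(0)$ at the origin; both are correct).
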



\begin{remark}\label{rk:nhyp}\emph{
By \cite{Bar05} if $M$ is a MGHCF space-time whose Cauchy surfaces are
  not of hyperbolic-type, then for any  convex  Cauchy surface $S$ in $M$  the Gauss map
on the universal covering $\gauss:\tilde S\to\H^d$ is never a proper map, as its image has dimension $k<d$ and
the fibers are $n-k$ affine subspaces. Notice that in this case the Gauss--Kronecker curvature of $S$ is zero, 
so the Minkowski problem is not meaningful in this case.}
\end{remark}

 The results about the regular equivariant Minkowski problem allows to state the following theorem.

\begin{theorem}\label{thm GH2}
Let $M$ be a flat MGHCF spacetime, 
 let $f$ be a positive function on $\H^d/\Gamma$ of class $C^{k+1}$, $k\geq 2$.

 There exists $c(M)\geq 0$, such that 
 if $f<c(M)$, then there exists a unique smooth strictly convex Cauchy surface $S$ of class $C^{k+2}$ in $M$ 
 such that $f(x)$  is the Gauss--Kronecker curvature of $S$ at $\gauss_S^{-1}(x)$.

 If $\Omega_\tau$ is simple, then $c(M)=+\infty$.
\end{theorem}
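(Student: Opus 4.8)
The plan is to reduce the statement to an interior regularity question for the Monge--Amp\`ere equation on the ball. Given a positive $f\in C^{k+1}$ on $\H^d/\Gamma$, lift it to a $\Gamma$-invariant function on $\H^d$ and set $\mu=f^{-1}\d V$, a $\Gamma$-invariant Radon measure. By Theorem~\ref{thm:main1} there is a unique $\tau$-convex set $K$ with $\A(K)=\mu$; passing to the ball model via Theorem~\ref{thm:main2}, $K$ corresponds to a convex function $h$ on $B$ with $\M(h)=(1-\|x\|^2)^{-1/2}\rad^{-1}_*(\mu)$, a measure whose density in the interior is positive and of class $C^{k+1}$ (namely $f^{-1}$ times the smooth positive weight coming from $\rad^{-1}_*(\d V)$). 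By the continuity result of \cite{li95}, recalled before Corollary~\ref{cor: MA regularite}, $h$ extends continuously to a boundary datum $g$ on $\partial B$ depending only on $\tau$, and the convex envelope $\overline{g}$ of $g$ corresponds to the maximal set $\Omega_\tau$; in particular every $\tau$-convex set satisfies $h\le \overline{g}$, with equality on $\partial B$. Uniqueness of $S$ is inherited from Theorem~\ref{thm:main1}, and the equality of $f(x)$ with the Gauss--Kronecker curvature of $S$ at $G_S^{-1}(x)$ holds by construction through $\rad^{-1}_*\A(K)=(1-\|x\|^2)^{1/2}\M(h)$.

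Everything then reduces to proving that $h$ is \emph{strictly convex} in the interior of $B$. Indeed, once this is known, the density of $\M(h)$ being positive and of class $C^{k+1}$, Caffarelli's interior $C^{2,\alpha}$ estimate followed by Schauder bootstrapping promote $h$ to $C^{k+2,\alpha}_{\mathrm{loc}}$, hence $S$ to a surface of class $C^{k+2}$. Moreover strict convexity, as I explain next, forces $h<\overline{g}$ on the whole interior, so $K$ is contained in the interior of $\Omega_\tau$ and its boundary projects to a genuine Cauchy surface of $M$.

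The key observation is that the only way $h$ can fail to be strictly convex is by containing a flat reaching $\partial B$, and such a flat must lie along an affine face of $\overline{g}$. First, since the density of $\M(h)$ is bounded below by a positive constant on compact subsets of $B$, Caffarelli's localization principle shows that any maximal segment on which $h$ is affine has both endpoints on $\partial B$. If $[p,q]$ is such a segment and $\ell$ is the affine function agreeing with $h$ on it, then $\ell=h\le\overline{g}$ on $[p,q]$ while $\ell(p)=g(p)=\overline{g}(p)$ and $\ell(q)=\overline{g}(q)$; since $\overline{g}$ is convex it lies below the chord $\ell$ on $[p,q]$, whence $\overline{g}=\ell=h$ there. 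Thus $[p,q]$ lies in an affine face of $\overline{g}$, i.e.\ in a stratum of the initial singularity of $\Omega_\tau$ in the sense of \cite{Bon05}. Consequently $h<\overline{g}$ in the interior is equivalent to the absence of such flats.

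It remains to exclude flats, and this is where the dichotomy of the statement appears and where the main difficulty lies. Caffarelli's dimension estimate for Monge--Amp\`ere measures bounded below asserts that the flats of $h$ have dimension $<d/2$. In the simple case all affine faces of $\overline{g}$ have dimension $\ge d/2$, so no flat of $h$ can lie along them: strict convexity holds for every positive $\mu$ and $c(M)=+\infty$. In the general case some faces have dimension $<d/2$, and a Pogorelov-type solution may genuinely touch them -- this is the content of the example of Section~\ref{sub: pogo}, which shows that some assumption is unavoidable. To rule this out I would construct, on each of the finitely many $\Gamma_\tau$-orbits of low-dimensional faces, an upper barrier $w\ge h$ with $\M(w)\le\mu$ and $w<\overline{g}$ on the relative interior of the face; comparison then yields $h\le w<\overline{g}$ there. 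Because the admissible Monge--Amp\`ere mass of $w$ is controlled by that of $\mu$, the barrier can be forced to dip below $\overline{g}$ precisely when $\mu$ has large enough density, that is when $f$ is smaller than a constant $c(M)$ determined by the (finitely many, by cocompactness) faces. Producing this barrier, and matching its quantitative dip against the transverse opening of $\overline{g}$ along a low-dimensional face, is the technical heart of the argument.
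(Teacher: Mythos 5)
Your skeleton is the right one and, up to the point where flats must be excluded, it coincides with the paper's: existence and uniqueness come from Theorem~\ref{thm:main1}/Theorem~\ref{thm:main2}, flats of $h$ are shown to lie along affine faces of the convex envelope $h_\tau$ of the boundary datum, and strict convexity plus the positive $C^{k+1}$ density yields $C^{k+2}$ regularity exactly as in Corollary~\ref{cor: MA regularite}. The first genuine gap is in your simple case. The deduction ``flats of $h$ have dimension $<d/2$, faces of $h_\tau$ have dimension $\geq d/2$, so no flat of $h$ can lie along them'' is a non sequitur: nothing prevents a flat of small dimension (say a segment) from sitting \emph{inside} a face of large dimension, so the two estimates never meet. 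What is missing is the propagation step: if $h=h_\tau$ at one interior point of a face $F$, then, since $h=g=h_\tau$ on all of $F\cap\partial B$ (this is the boundary condition), $h\leq h_\tau$ on $F$, and $h_\tau$ is affine on $F$, convexity forces $h=h_\tau$ on the \emph{whole} of $F$. Only after this does the contact set have dimension $\geq d/2$, and only then does the multidimensional Alexandrov--Heinz theorem (Theorem~\ref{thm: alex heinz}) produce a contradiction. This propagation is precisely the first paragraph of the proof of Corollary~\ref{cor: MA regularite}; without it your dimension count proves nothing.

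The second gap is the general case, which is where the constant $c(M)$ actually comes from, and here your proposal is both incomplete and built on a false premise. The barrier construction is not carried out (you yourself call it ``the technical heart''), and the hypothesis that there are only finitely many $\Gamma_\tau$-orbits of low-dimensional faces is unjustified: the dual stratification of \cite{Bon05} need not be locally finite (the non-simplicial case, e.g.\ the $d=3$ example of \cite{BMS13}), so a face-by-face barrier argument cannot even be set up as stated. The paper's mechanism is entirely different and much softer: $f<c(M)$ means the density $f^{-1}$ of the prescribed area measure is large, hence the total area $\Area(K)$ of the solution is large; Proposition~\ref{prop: c tau} --- proved by combining the monotonicity of total area under inclusion (Lemma~\ref{lem:area monotonic}) with the compactness statement for cosmological-time levels (Corollary~\ref{cor:cosm borne}) --- then shows that any $\tau$-convex set of sufficiently large total area cannot touch $\partial\Omega_\tau$, i.e.\ $h<h_\tau$ everywhere at once, with no analysis of individual faces. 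After that, condition~2 of Corollary~\ref{cor: MA regularite} gives strict convexity and regularity, and Theorem~\ref{thm GH2} is just the quotient version of Theorem~\ref{thm:reg equi}. You should either adopt this global area-monotonicity argument or supply the barrier construction in full, including the non-simplicial case; as written, neither branch of your dichotomy is proved.
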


\subsection{Outline of the paper}
In Section \ref{sec:2} we state some basic facts we need on F-convex sets in Minkowski space.
In particular, we describe the connection between Minkowski problem in Minkowski space and the Monge--Amp\`ere
equation on the disc. As already mentioned this is achieved by considering the Legendre transform $h$ of the function
$\tilde u:\R^d\to\R$ whose graph is identified to the boundary of the F-convex set $\fconv$. More intrinsically we study the support function
of $\fconv$, as defined in \cite{fv}. This is  a $1$-homogenous function on $\fut(0)$. 
The area measure of $\fconv$ can be directly computed by the restriction of the support function on $\H^d$, 
whereas its restriction on the ball coincides with $h$.

Although the material of this section is already known, we  give a detailed exposition of the theory as
we widely use this correspondence. 
Most of the results are only stated and some references are given for the proof. 
One of the results we  prove in details is the multidimensional version of the Alexandrov--Heinz Theorem, Theorem~\ref{thm: alex heinz}.
Although the proof follows closely the proof of the standard Alexandrov--Heinz Theorem given in \cite{TW08}, we did not 
find a reference so details are given for the convenience of the reader.

In Section~\ref{sec3} we fix an affine deformation $\Gamma_\tau$ of a uniform lattice $\Gamma$ and 
study $\tau$-convex sets, i.e. F-convex sets invariant by $\Gamma_\tau$.

After stating some  simple properties  on the action of the isometry group of Minkowski space on F-convex sets and 
dually on the space of support functions, we point out a structure of affine cone on the space of $\tau$-convex sets.
By using a result of \cite{li95} we show that the support function of 
a $\tau$-convex set extends to the boundary of
$\fut(0)$ and the value of the support function on the boundary $g_\tau$ only depends on the cocycle. 
Observing that the convex envelope of $g_\tau$ is the support function  of the maximal $\tau$-convex set 
$\Omega_\tau$, we will recover the strata of $\H^d$ studied in \cite{Bon05} as 
maximal regions of $\H^d$ where $g_\tau$ is an affine function.

We put a distance on the space of $\tau$-convex sets that basically corresponds to the $L^\infty$-distance on the
corresponding support functions. A compactness property of this distance is proved, Lemma~\ref{lem: conv lip}, 
that can  be regarded as the analog of Blaschke Selection Theorem for convex bodies. 
We then study area measures of  $\tau$-convex sets. They are $\Gamma$-invariant so can be regarded as measures on $\H^d/\Gamma$,
in particular for any $\tau$-convex set $\fconv$, its total area is the total mass of the induced measure on $\H^d/\Gamma$.
We prove a monotonicity result for the total area Lemma~\ref{lem:area monotonic}, 
which implies that a $\tau$-convex set with sufficiently big area
does not meet the boundary of the maximal $\tau$-convex set.

After this preliminary part we introduce the covolume and study its main properties.
Covolume of $\fconv$ is defined as the volume of the complement of $\fconv/\Gamma_\tau$ in $\Omega_\tau/\Gamma_\tau$. 
 In particular we prove that it is a convex functional on the set of $\tau$-convex set. 
 The proof of this result relies  in a concavity result for the volume of particular convex bodies Lemma~\ref{lem:vol concave}
and its Lorentzian version Corollary~\ref{cor:vol concave lor}. 
A technical point in the proof 
is that we need to use 
 a convex fundamental domain for the
action of $\Gamma_\tau$ on a $\tau$-convex set $\fconv$. Actually we are able to construct this fundamental domain
only if $\fconv$ lies in the interior of $\Omega_\tau$, but this is in fact sufficient for our aims.
Using this convex fundamental domain  
we prove that the covolume is strictly convex on the  convex subspace of $\tau$-convex sets
lying in the interior of $\Omega_\tau$. Then by  approximation  the convexity is proved in general.

Let us explain how convexity of the covolume is related to the Minkowski problem.
In the Fuchsian case ($\tau=0$),       
similarly to the Euclidean case for convex bodies \cite{car04},  
the derivative of the covolume along a family of support functions  $t\oh_1+(1-t)\oh_0$
 at $\oh_0$ is just $-\int (\oh_1-\oh_0)\d\A(\oh_0)$. Here $\oh$ denotes the \emph{hyperbolic support function}
 obtained by restricting on $\H^d$ the $1$-homogenous extension of the support function $h$ on $\ball$ over
 $\fut(0)$.

So if $\mu$ is a $\Gamma$-invariant measure and $\oh_0$ is a Fuchsian solution of the Minkowski problem $\A(\oh_0)=\mu$, 
$\oh_0$ turns to be a critical point of the functional $L_\mu(\oh)=\cov(\oh)-\int \oh\d\mu$, and by convexity of $L_\mu$ it must 
be the minimum of $\oh_0$. So to find out the solution one looks at minima  of the functional $L_\mu$. 

In the non Fuchsian case some difficulties arise. 
First  in the Fuchsian case 
one has a simple representation formula for the covolume
 \[
     \cov(\oh)=-\frac{1}{d+1}\int \oh \d\A(\oh)
 \]
 where $\oh$ is the support function \cite{Fil12}. 
 \old{In the affine case, 
the representation formula cannot be stated as in the Fuchsian case, as $\oh$ is not $\Gamma$-invariant.
One could wonder if $\cov(\oh)=-\frac{1}{d+1}\int (\oh-\oh_\tau)\d\A(\oh)$, but 
this is false as it an be seen taking $\oh-\oh_\tau$ constant for some simple examples of $\Omega_{\tau}$ .}
\new{In the affine case there is no similar explicit and simple formula.}

Actually this is not a major problem, as the main point is  a formula for the first order variation of the covolume, and
not for the covolume itself. In fact by using the convexity of the covolume
its derivative can be computed along a family of support functions $t\oh_1+(1-t)\oh_0$ with $\oh_1\leq \oh_0$ and this
turns out to be sufficient to show that the minimum point of $L_\mu$ is a solution of the Minkowski problem.

Another difference with respect to the Fuchsian case is  that the strategy to find the solution
is slightly different. In fact first it was proved that  the minimum $\oh$ of the covolume on some level set 
$\{\oh|\int \oh d\mu=1\}$ satisfies the equation $\A(\oh)=c\mu$ where $c$ is a some positive.
Then by scaling $\oh$ by $c^{-1/d}$ one finds the solution. 
A similar  strategy could be also followed in the case we consider in the present paper, but, due to the fact that the set of
$\tau$-convex sets is not stable under homotheties, the argument would become more intricate and the strategy we propose
seems in this case more direct and simple.

Indeed, using the convexity of the covolume one sees that the difference 
 between the covolume of  two $\tau$-convex sets $\fconv_0\subset \fconv_1$ can be estimated in terms of the integrals of the difference of the corresponding support
 functions with respect to respective area measures, see formula \eqref{eq: enc diff vol}. 
 Then refining the expected arguments in the 
 Fuchsian case one sees that $\Gamma_\tau$-equivariant solutions of the equations $\A(\oh_0)=\mu$ corresponds to minima of
 the functional $L_\mu(\oh)=\cov(\oh)-\int (\oh-\oh_\tau)\d\mu$. The existence of the minimum is then simply achieved
 using the compactness property of the support functions.
 
 Once the general Minkowski problem is proved, using the correspondence with Monge--Amp\`ere equation we see that if the measure
 $\mu$  is regular with respect to the volume form of $\H^d$ with smooth and strictly positive density, the corresponding solution
 is smooth provided that the $\tau$-convex set lies in the interior of $\Omega_\tau$. 
 A counterexample is then given to show that in general even if $\mu$ coincides with the Lebesgue measure up to a constant factor
 the corresponding domain can  meet the boundary of $\Omega_\tau$.
 
 Finally in the appendix we prove  that the space of $C^2_+$ $\tau$-convex sets is dense
 in the space of $\tau$-convex sets (where $C^2$ can be replaced by smooth). 
 Here  a $C^2_+$ $\tau$-convex set is a $C^2$ $\tau$-convex set such that the Gauss map is a $C^1$-diffeomorphism,
 or analogously with positive Gauss--Kronecker curvature is strictly positive at any point.
 This basically implies that any convex hypersurface
 in a maximal globally hyperbolic flat space-time can be approximated by smooth strictly convex hypersurfaces. Although the fact is not surprising,
 there was no proof at our knowledge, so we give some details.

\subsection{Acknowledgements}

The authors thanks the anonymous referee for his/her valuable comments which helped to improve the presentation of the paper.

The second author thanks Guillaume Carlier for useful conversations.

The first author was partially supported by FIRB 2010 project  RBFR10GHHH\_002 ``Geometria e topologia delle variet\`a
in bassa dimensione''.

The second author was partially supported by the ANR GR-Analysis-Geometry.
Most of this work was achieved during stays of the second author in the Department of Mathematics of the University of Pavia. He thanks the institution for its hospitality.

\subsection{Notations}

\begin{tabular}{| l | p{5cm} || l |p{5cm} |}
\hline
$\mink$ & Minkowski space & $\L$ & Lebesgue measure of $\R^d$ \\

$\R^d$ & Euclidean space identified with the \emph{horizontal} hyperplane in $\mink$  &  $\d\H^d$  & volume measure of $\H^d$ \\

$\ball$ & unit ball in $\R^d$ centred at $0$ & $\M(h)$ & Monge-Amp\`ere measure of $h$ \\ 

$\hat \ball$ & intersection of $\fut(0)$ with the affine plane $(0,\ldots,0,1)+\R^d$ & $\A(\fconv)=\A(\oh)$ & area measure on $\H^d$ \\

$\fut(0)$ & future cone of $0$ in $\mink$  & $\A(h)$ & area measure on $\ball$, $A(h)=(\rad^{-1})_*(A(\oh))$ \\

 $\H^d$ & hyperboloid model of hyperbolic space & $\phi$ & curvature function, density of $\A(\oh)$ for $\oh\in C^2$\\

$\jplus(\H)$ & convex region bounded by the hyperboloid in $\mink$ & $\operatorname{Area}(\fconv)$ & $\A(K)(\H^d/\Gamma)$\\

$ \langle\cdot,\cdot\rangle_- $& Minkowski product & $V_\epsilon(\fconv)$ & measure on $\H^d$ introduced in Definition~\ref{defVeps} \\ 

$\plane{H}{X}$ & hyperplane defined by the equation $\la z, X\ra=H(X)$ & $S_i(\fconv)$ & area measure of order $i$\\

 $\proj:\mink\to\R^d$ & orthogonal projection & $\overline{V}_\epsilon(\fconv)$ & measure on $\H^d/\Gamma$ induced by $V_\epsilon(\fconv)$\\

$\hat x$ & $(x,1)\in\hat\ball$ for $x\in\ball$ &  $\overline{S_i}(\fconv)$ & area measure of order $i$ on $\H^d/\Gamma$ induced by $S_i(\fconv)$ \\

$\lambda(x)$ & $\sqrt{1-||x||^2}$ for $x\in\ball$ & $\operatorname{covol}$ & covolume\\

$\rad:\ball\to\H^d$ & radial identification & $\tilde u:\R^d\to\R$ & function whose graph is $\partial\fconv$ \\ 

 $\grad^-$ & Lorentzian gradient in $\mink$ & $\tilde h:\R^d\to \R\cup\{+\infty\}$ & Legendre-Fenchel transform of $\tilde u$\\

$\grad^\H$ & hyperbolic gradient in $\H^d$ & $u$ & restriction of $\tilde u$ on $\proj(\partial_s\ball)=\nabla u(\ball)$ \\

$\grad$ & Euclidean gradient in $\R^d$&$g$ & continuous function on $\partial\ball$ \\ 

$\nabla$ & Levi-Civita connection on $\H^d$ &$h_g$ & convex envelope of $g$\\

$\nabla^2$ & covariant Hessian in $\H^d$&$\Omega_g$ & F-convex set with support function $h_g$ \\ 

$\hess$  &Euclidean Hessian in $\R^d$ & $F_g(x)$ & maximal convex set of $\ball$ containing $x$ on which $h_g$ is affine\\

$\fconv$ & F-convex set & $\Gamma$ & uniform lattice in $\operatorname{SO}_0(2,1)$ \\

$\partial_s\fconv$ & space-like boundary of $\fconv$ &  $\tau:\Gamma\to\R^{d+1}$ & cocycle\\

$\partial_{\mathrm{reg}}\fconv$ & regular part of the space-like boundary & $\Gamma_\tau$ & affine deformation of $\Gamma$ \\  

$\gauss:\partial_s\fconv\to\H^d$ & Gauss map & $\gamma$ & element of $\Gamma$ \\

$\chi:\H^d\to\partial_s\fconv$ & inverse of the Gauss map & $\gamma_\tau$ & affine transformation $p\to\gamma(p)+\tau_\gamma$ \\ 

  $C^2_+$ & F-convex set with $\partial_s \fconv$ $C^2$ and $\gauss$ a $C^1$ diffeomorphism  & $\bar\gamma$ & projective transformation on $\ball$ induced by $\gamma$\\

$H:\fut(0)\to\R$ & extended support function & $\Omega_\tau$ & maximal $\tau$-convex set\\ 

$\oh:\H^d\to\R$ & hyperbolic support function & $H_\tau, \oh_\tau, h_\tau$ & support functions of $\Omega_\tau$\\

$h:\ball\to\R$ & ball support function  & $g_\tau$ & boundary value of the support function of any $\tau$-convex set\\ 

$\partial h$ & subdifferential of $h$ & $T$ & cosmological time of $\fconv$ \\  

  $\fund(x)$ & Lorentzian Dirichlet polyhedron & $\Sigma_t$ & level set $T^{-1}(t)$\\

\hline
\end{tabular}





\section{Area measure of F-convex sets}\label{sec:2}

In this section, we recall some facts about particular convex sets in Minkowski space and the Monge--Amp\`ere equation.

\subsection{Background on convex sets}\label{sub:back}


\paragraph{Notations}
 For a set $S$ in $\mink$, 
 $\fut(S)$ is the set of the final points  of curves directed by a future time-like vector and starting from $S$. In particular
 $\fut(0)$ is the future cone of the origin.
 
We will consider $\R^d$ as a subset of $\mink$ through the isometric inclusion $x\mapsto (x,0)$.
We denote by $\hor$ the affine hyperplane  $(0,\ldots,0,1)+\R^d$: 
for $x\in\R^d$ we denote by $\hat x=(x,1)$ the corresponding point on $\hor$.
The intersection of $\hor$ with $\fut(0)$ is $(0,\ldots,0,1)+\ball$, where $\ball$
is the unit ball centered in $0$ in $\R^d$. We denote the translated ball by $\hat\ball=(0,\ldots,0,1)+\ball$.

Let us introduce the following notations
\begin{itemize}[nolistsep]
 \item for $X\in\mink$ we put $\|X\|_-=\sqrt{|\langle X,X\rangle|_-}$
 \item $\langle \cdot,\cdot\rangle $ is the usual scalar product on $\R^d$, \index{$\langle \cdot,\cdot\rangle $ the usual scalar product on $\R^d$}
 \item $\|\cdot \|$ is the associated norm,\index{$\|\cdot \|$ the usual norm on $\R^d$}
 \item $\lambda(x)=\sqrt{1-\|x\|^2}=\|\hat{x}\|_-$, $x\in \ball$,\index{$\lambda(x)=\sqrt{1-\|x\|^2}=\|\hat{x}\|_-$}
 \item $\pi:\mink\to\R^d$ is the orthogonal projection sending $(x,x_{d+1})$ to $x$
 \item $\mathcal{L} $ is the Lebesgue measure on $\R^d$. \index{Measure! $\mathcal{L} $  the Lebesgue measure} 
\end{itemize}

The radial map
$\rad$  from $\ball$ to $\H^d$ is  $\rad(x)=\frac{1}{\lambda}\hat{x}$, while \index{$\rad$ Radial map  $\rad(x)=\frac{1}{\lambda}\hat{x}$}
$\rad^{-1}$ maps 
$(x,x_{d+1})\in\H^d$ to $x/x_{d+1}=x/\sqrt{1+\|x\|^2}\in \ball$.
It can be useful to consider on $\ball$ the pull-back of the hyperbolic metric (we then get the Klein model of the hyperbolic space). If 
$\mathrm{d}\H^d$ is the volume element for the hyperbolic metric on $\ball$,
an explicit computation gives
\begin{equation}\label{eq:density vol ball}
 \d\H^d=\lambda^{-d-1}\L~.
\end{equation}

\paragraph{F-convex sets.}

Let $\fconv$ be a closed convex set which is defined as an intersection of
the future side of space-like hyperplanes of $\mink$. 
Note that it may have light-like support planes but no time-like support plane.
We will denote by $\partial_s\fconv $ \index{$\partial_s \fconv$}\label{partial_s K} the part of the boundary $\partial \fconv$ of $\fconv$ which meets space-like support planes.
The Gauss map of $\fconv$ is a set-valued map from $\partial_s \fconv$ to $\H^d$, which associates to a point of $\partial_s\fconv$ \index{Gauss map}\label{Gauss map}
all the unit future time-like support vectors of $\fconv$ at $x$. 
We say that $\fconv$ is an \emph{F-convex set}\index{Convex set! F-convex set}\label{F-convex} if its Gauss map is surjective. In other terms, 
any unit future time-like vector is a  support vector of $\fconv$.
An F-convex set $\fconv$ is called $C^2_+$\index{$C^2_+$} if its boundary is a $C^2$ hypersurface and its Gauss map is a $C^1$ diffeomorphism.
We  say that  $\fconv$ is $C^\infty_+$ if its boundary is smooth and its Gauss map is a diffeomorphism.

\paragraph{Support functions.}
\new{
Let $\fconv$ be an F-convex set.
The  (extended) support function $H$ of $\fconv$ is the map from $\fut(0)$ onto 
$\R$ given by 
$$H(X)=\operatorname{sup}\{\langle p,X\rangle_- | \forall p\in \fconv \}~. $$\index{Support function!  $H$}
\label{sup H}

By definition the support function is sublinear:  it is  homogeneous of degree $1$ and subadditive, 
or equivalently $1$-homogenous and convex.
Conversely any sublinear function on $\fut(0)$ is the support function of a unique F-convex set  \cite{fv}.

By homogeneity, $H$ is determined by its restriction on any hypersurface of $\fut(0)$  that meets all the radial lines,
 like for instance $\H^d$ or the translated ball $\hat\ball$.
 We will denote by $\bar h$ the restriction of $H$  on $\H^d$. It will be called the \emph{hyperbolic support function} of $\fconv$. 
 \index{Support function! $\bar h$ the hyperbolic support function}
Analogously we set $h:\ball\to\R$ by $h(x)=H(\hat x)$, and refer to $h$ as the \emph{ball support function} of $\fconv$.
By the homogeneity of $H$  we  get
$$h(x)=\lambda(x)\overline{h}(\rad(x))~.$$
 
Of course the support function $h$  is convex on $\ball$. 
The following lemma says that all convex functions on $\ball$ are obtained in
this way.  
 \begin{lemma}
A   convex function $h$ on $\ball$ extends  (in the sense above) to a unique
 sublinear  function on $\fut(0)$, namely
 $$H\left((x,z)\right)=zh(x/z)~,$$ $x\in \ball,z\in\R$.
 \end{lemma}
\begin{proof}
The function $H$ is $1$-homogeneous by definition. So we only need to prove that it is subadditive.
Take $X=(x,x_{d+1}), Y=(y,y_{d+1})\in \fut(0)$.
By definition, $$H(X+Y)=(x_{d+1}+y_{d+1})h\left(\frac{x+y}{x_{d+1}+y_{d+1}}\right).$$
Notice that the point $\frac{x+y}{x_{d+1}+y_{d+1}}$ can be expressed as the following convex combination
of $x/x_{d+1}$ and $y/y_{d+1}$:
$$     
\frac{x+y}{x_{d+1}+y_{d+1}}=\frac{x_{d+1}}{x_{d+1}+y_{d+1}}\frac{x}{x_{d+1}}\ +\ \frac{y_{d+1}}{x_{d+1}+y_{d+1}}\frac{y}{y_{d+1}}~.
$$
Since $h$ is convex we get that
\[
   H(X+Y)\leq(x_{d+1}+y_{d+1})
   \left(\frac{x_{d+1}}{x_{d+1}+y_{d+1}}h\left (\frac{x}{x_{d+1}}\right)\ +\ \frac{y_{d+1}}{x_{d+1}+y_{d+1}}
   h\left(\frac{y}{y_{d+1}}\right)\right)\]
$$   =H(X)+H(Y)~.$$

\end{proof}
\begin{corollary}
Any convex function on $\ball$ is the support function of a unique F-convex set of $\mink$.
\end{corollary}

\begin{example}\rm{ The support function of the convex 
 side of the upper-sheet of the hyperboloid with radius $t$ is $H(X)=-t\|X\|_-$, 
and its restriction to $\H^d$ is the constant function equal to $-t$. 
Its restriction to $\ball$ is $h(x)=H(\hat{x})=-t\lambda(x)$. Note that
 $\hess h=-t\hess \lambda$, so the convexity of $h$  implies that $\hess \lambda$ is semi-negative definite.
 
We will denote by  $ \jplus(\H)$ the convex 
 side of the upper-sheet of the hyperboloid with radius $1$. 
 }
\end{example}

\begin{example}\rm{The restriction to $\fut(0)$ of the linear functionals
of $\R^{d+1}$ are the support functions on $\fut(0)$ of the future cone of points.
Their restriction to $\ball$ are exactly the restriction to $\ball$ of the 
affine maps of $\R^d$.
}
\end{example}

The support function $H$ determines $\fconv$: indeed for any $X\in\fut(0)$ the affine
plane $\plane{H}{X}=\{p\in\mink|\langle p, X\rangle_-=H(X)\}$ is the support  plane for $\fconv$
orthogonal to $X$ so $$\fconv=\bigcap_{X\in \fut(0)}\overline{\fut(\plane{H}{X})}~.$$

\new{A less obvious remark is that $\plane{H}{X}$ meets $\partial_s\fconv$ at least in one point, see
\cite{fv}.}
Under some regularity assumption on $H$ it is possible to determine the intersection between
$\partial_s\fconv$ and $\plane{H}{X}$. 
Indeed, if $H$ is $C^1$, by convexity we have $H(Y)\geq H(X)+\langle \grad^-_X H, Y-X\rangle_-$, where
$$\grad^-(H)=(\partial_1H,\ldots,\partial_d H,-\partial_{d+1}H)$$ is the Minkowski gradient.
Moreover, since $H$ is $1$-homogeneous we have $$H(X)=\langle\grad^-_X H, X\rangle_-~,$$ so we deduce
that $\langle \grad^-_X H, Y\rangle_-\leq H(Y)$, that implies that $\grad^-_X H$ lies on $\fut(\plane{H}{Y})$.
Indeed, $\grad^-_X H$ is the only point in $\fconv$ with support plane orthogonal to $X$, that is
$\grad^-_X H=\gauss^{-1}(X)$.
In fact the following lemma holds:

\begin{lemma}[{\cite{fv}}]\label{lm:c1supp}
If the support function $H$ of a $F$-convex set $\fconv$ is $C^1$, then $\partial_s\fconv$ is
the range of the map $\chi:\fut(0)\to\R^{d+1}$ \index{$\chi$} sending $X $ to $\grad_X^-H$.
Moreover, $\partial_s\fconv$ is strictly convex, in the sense that each space-like support plane meets $\fconv$
at one point.

Conversely if $\partial_s\fconv$ is strictly convex, the  support function $H$ is $C^1$.
\end{lemma}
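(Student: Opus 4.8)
The plan is to reduce both implications to the classical convex-analytic duality between differentiability of a support function and single-valuedness of its exposed faces, transported to the Minkowski product. For each $X\in\operatorname{I}^+(0)$ I would introduce the contact set
\[
C(X)=P_X\cap K=\{p\in K:\langle p,X\rangle_-=H(X)\}.
\]
Since the space-like support planes of $K$ are exactly the planes $P_X$ with $X$ time-like, the strict convexity of $\partial_s K$ (each space-like support plane meets $K$ at one point) is precisely the assertion that $C(X)$ is a single point for every $X$. So both statements of the lemma follow once I identify differentiability of $H$ at $X$ with $C(X)$ being a singleton.

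The main step is the directional-derivative identity. Fix $X\in\operatorname{I}^+(0)$; since the cone is open, $X+tY\in\operatorname{I}^+(0)$ for small $t>0$ and every $Y\in\R^{d+1}$, and convexity makes the difference quotient monotone, so
\[
H'(X;Y):=\lim_{t\to0^+}\frac{H(X+tY)-H(X)}{t}=\sup_{p\in C(X)}\langle p,Y\rangle_-
\]
is finite. The inequality $\ge$ is immediate from $H(X+tY)\ge\langle p,X+tY\rangle_-$ for $p\in C(X)$ together with $1$-homogeneity; the reverse inequality is the only genuinely analytic point and follows from a compactness argument showing that maximizers of $H(X+tY)$ accumulate on $C(X)$ as $t\to0^+$. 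Finiteness of $H'(X;\cdot)$ forces the convex set $C(X)$ to be compact. Because $\langle\cdot,\cdot\rangle_-$ is non-degenerate, the supremum on the right is a linear function of $Y$ if and only if $C(X)$ is a single point; thus $H$ is differentiable at $X$ iff $C(X)$ is a singleton, in which case that point is $\grad^-_X H$, in agreement with the identity $\grad^-_X H=G^{-1}(X)$ recorded just before the lemma.

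With this equivalence the two assertions are short. If $H$ is $C^1$ then every $C(X)$ is a singleton, which is exactly the strict convexity of $\partial_s K$, and its unique point is $\chi(X)=\grad^-_X H$; since each $\chi(X)$ lies in $P_X\cap K\subset\partial_s K$ and, conversely, every $p\in\partial_s K$ lies on some space-like $P_X$ so that $p\in C(X)=\{\chi(X)\}$, we obtain $\partial_s K=\chi(\operatorname{I}^+(0))$. Conversely, if $\partial_s K$ is strictly convex then each $C(X)$ reduces to a point, so $H$ is differentiable at every $X$ of the open cone $\operatorname{I}^+(0)$; a finite convex function that is differentiable throughout an open set is automatically continuously differentiable there, whence $H$ is $C^1$.

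The main obstacle is therefore the directional-derivative identity together with the regularity upgrade from pointwise differentiability to $C^1$ for convex functions; both are classical, and I would either cite a standard reference in convex analysis or supply the short upper-semicontinuity argument for the (here single-valued) subdifferential map. A minor reduction worth noting is that, by $1$-homogeneity of $H$, the gradient $\grad^- H$ is $0$-homogeneous, so $\chi$ factors through $\H^d$ and differentiability of $H$ on $\operatorname{I}^+(0)$ is equivalent to differentiability of the hyperbolic support function $\bar h$ on $\H^d$; this lets one carry out all estimates on the restriction to $\H^d$ without loss.
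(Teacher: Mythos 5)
Your proposal is correct, but there is nothing in the paper to compare it with line by line: the paper does not prove this lemma, it imports it from \cite{fv}, and the only argument it records is the computation immediately preceding the statement, namely that convexity plus the Euler relation $H(X)=\langle\grad^-_X H,X\rangle_-$ give $\langle\grad^-_X H,Y\rangle_-\leq H(Y)$ for all $Y$, so that $\grad^-_X H$ is a contact point on $P_X$. That computation is exactly the easy half of your directional-derivative identity (it shows $\grad^-_X H\in C(X)$, i.e. the ``$\geq$'' direction and membership in $K$). What you add is the standard convex-analytic completion: the contact set $C(X)=P_X\cap K$ coincides with the Minkowski subdifferential of $H$ at $X$, differentiability of a convex function at a point is equivalent to that subdifferential being a singleton, and pointwise differentiability on the open convex cone $\operatorname{I}^+(0)$ upgrades automatically to $C^1$. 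This buys a self-contained proof where the paper only offers a citation, and it isolates exactly where the geometry enters. Two points should be made explicit, though both fit your framework: (a) nonemptiness of every $C(X)$, which you use tacitly in both directions of the equivalence with strict convexity, is precisely the surjectivity of the Gauss map, i.e. the F-convexity hypothesis --- without it a space-like plane $P_X$ could fail to touch $K$ and the strict-convexity hypothesis would be vacuous at that $X$ while $H$ could still fail to be differentiable there; (b) since $K$ is unbounded, the ``compactness argument'' for your reverse inequality requires knowing that the maximizers $p_t\in C(X+tY)$ stay bounded as $t\to 0^+$; this follows because contact points are subgradients (after transporting by the non-degenerate pairing) and the subdifferential of the finite convex function $H$ is locally bounded on the open cone, by local Lipschitz continuity. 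With those two remarks inserted, your argument is complete.
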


Assume that $H$ is $C^1$, or analogously that $\partial_s\fconv$ is strictly convex. Denote by $\chi:\fut(0)\to\mink$ the map
sending $X$ to $\grad^-_X H$.  
As $H$ is $1$-homogeneous, $\chi$ is constant along the radial 
rays. So we have   $\partial_s\fconv=\chi(\H^d)=\chi(\hat\ball)$.
The restriction of $\chi$ to $\H^d$ (respectively $\hat\ball$)  can be easily expressed in terms of the support functions $\oh$  (respectively $h$):

\begin{lemma}
 Assume that the support function $H$ is $C^1$. 
 Then, for $\eta\in\H^d$
  \begin{equation}
   \label{eq:chi on hyp} \chi(\eta)=\grad^\H_\eta\oh - \oh(\eta)\eta
  \end{equation}
 where $\grad^\H$ is the hyperbolic gradient  regarded  as a vector in $\R^{d+1}$ by the natural inclusion
  $T_\eta\H^d\subset \mink$.

  Analogously, for $x\in\ball$
  \begin{equation}
   \label{eq:chi on disc} \chi(\hat{x})=\grad_xh + (\langle x,\grad_xh\rangle -h(x))e_{d+1}
  \end{equation}
where $\grad$ is the Euclidean gradient of $\R^d$.
\end{lemma}
\begin{proof}
The tangential component of $\grad^-_\eta H$ is clearly the intrinsic gradient of $\oh$.
The normal component is obtained by imposing that $D_\eta H(\eta)=H(\eta)=\oh(\eta)$. 

Analogously the tangential component of  the gradient of $H$ to $\R^d$ is equal to  the gradient of $h$.
To compute the normal component, $$D_{\hat{x}}H(e_{d+1})=D_{\hat{x}}H(\hat{x})-D_{\hat{x}}H(\hat{x}-e_{d+1})=H(\hat{x})-
D_xh(x)$$ where $x\in \R^d$ is identified
with $(x,0)\in\R^{d+1}$. Formula~ \eqref{eq:chi on disc} follows.
\end{proof}

By Lemma~\ref{lm:c1supp} it turns out that 
the injectivity of the Gauss map of $\fconv$ is related to the regularity of $H$.
In particular the support function of a $C^2_+$  F-convex set must be
$C^1$. This condition is  not sufficient.
Indeed it implies that $\chi=\gauss^{-1}$ is a well-defined map,
but still $\gauss$ could be set valued: e.g., take $\fconv=\fut(0)$, its support function is $H=0$
and $\gauss$ maps $0$ to the whole $\H^d$.

The following  characterization  of $C^2_+$ F-convex sets
in terms of  the hyperbolic support functions is proved in \cite{fv}.

\begin{lemma}\label{lm:c2+}
A function  $\oh$ on $\H^d$ is the support function
of a $C^2_+$ F-convex set iff it is $C^2$ and satisfies

\begin{equation}\label{eq:twdff}
 \nabla^2\oh-\oh g_\H >0
 \end{equation}
 with $g_\H$ the hyperbolic metric and $\nabla^2$ the hyperbolic Hessian.
 \end{lemma}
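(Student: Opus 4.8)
The plan is to exploit the explicit parametrization of $\partial_sK$ by the map $\chi\colon\H^d\to\R^{d+1}$ of \eqref{eq:chi on hyp}, and to show that condition \eqref{eq:twdff} is exactly the statement that the differential of $\chi$ is well defined, injective and positive definite. Both implications follow once $d\chi$ is computed and its associated bilinear form is read off. Throughout I use that, by Lemma~\ref{lm:c1supp}, the support function of a $C^2_+$ set is $C^1$, so that $\chi=G^{-1}$ is a genuine map with $\partial_sK=\chi(\H^d)$.

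The heart of the argument is the computation of $d\chi$. I would first record the Gauss formula for the isometric embedding $\H^d\subset\R^{d,1}$: the position vector $\eta$ is the future unit normal, so $D_v\eta=v$ for the flat ambient connection $D$, and for any tangent field $W$ on $\H^d$, differentiating $\langle W,\eta\rangle_-=0$ gives $\langle D_vW,\eta\rangle_-=-\langle v,W\rangle_-$; since $\langle\eta,\eta\rangle_-=-1$ this means $D_vW=\nabla_vW+\langle v,W\rangle_-\,\eta$, where $\nabla$ is the hyperbolic connection. The sign of this normal term is opposite to the Euclidean one, and this is precisely what produces the minus sign in \eqref{eq:twdff}. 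Applying the formula to $W=\grad^\H\bar h$ and differentiating $\chi=\grad^\H\bar h-\bar h\,\eta$, the two contributions involving $d\bar h(v)\,\eta$ cancel and I obtain
\[
 d\chi(v)=\Hess\bar h\,(v)-\bar h\,v\ \in\ T_\eta\H^d ,
\]
whose associated bilinear form is exactly $\langle d\chi(v),w\rangle_-=\nabla^2\bar h(v,w)-\bar h\,g_\H(v,w)$.

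With this at hand both directions are short. For the forward implication, if $K$ is $C^2_+$ then $\chi=G^{-1}$ is $C^1$; rewriting \eqref{eq:chi on hyp} as $\grad^\H\bar h=\chi+\bar h\,\eta$ shows the hyperbolic gradient of $\bar h$ is $C^1$, hence $\bar h\in C^2$, which legitimates the computation above. Strict convexity of $\partial_sK$ together with the fact that $G$ is a diffeomorphism force $d\chi$ to be a positive definite isomorphism, that is \eqref{eq:twdff}. Conversely, if $\bar h\in C^2$ then $\chi\in C^1$, and \eqref{eq:twdff} makes $d\chi$ positive definite, so $\chi$ is a $C^1$ immersion; the inverse function theorem yields that locally $G=\chi^{-1}$ is $C^1$, so the normal field of the $C^1$ hypersurface $\chi(\H^d)$ is $C^1$ and the hypersurface is therefore $C^2$. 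Global injectivity of $\chi$ and surjectivity of $G$ come from the F-convex set structure via Lemma~\ref{lm:c1supp}, giving that $K$ is $C^2_+$.

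The main obstacle is getting the sign right in the Gauss formula and correctly identifying positive-definiteness of $d\chi$ with strict convexity of the space-like surface $\partial_sK$, where the orientation of the Gauss map and the time-like normalization of $\eta$ both enter. The regularity bootstrapping — upgrading $\bar h$ from $C^1$ to $C^2$ in the forward direction, and upgrading the $C^1$ surface $\chi(\H^d)$ to $C^2$ in the converse — is a secondary but genuine technical point, which I would handle using the identity $\grad^\H\bar h=\chi+\bar h\,\eta$ and the inverse function theorem.
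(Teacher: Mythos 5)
Your computation of $d\chi$ is correct, and it is exactly the Gauss--Weingarten computation that the paper records immediately after the lemma; note, however, that the paper does not prove this lemma itself (it quotes it from \cite{fv}), so that computation is the only in-paper material to compare with. Your forward direction is essentially sound: $C^2_+$ gives that $\chi=G^{-1}$ is $C^1$, the identity $\grad^\H\bar h=\chi+\bar h\,\eta$ upgrades $\bar h$ to $C^2$, and positivity of $\nabla^2\bar h-\bar h g_\H$ follows from convexity of the $1$-homogeneous extension $H$ (which gives semi-definiteness) together with invertibility of $d\chi$ (which gives nondegeneracy). One small imprecision: it is sublinearity of $H$, i.e.\ convexity of $K$, not ``strict convexity of $\partial_sK$'', that yields the semi-definiteness, but the ingredients are present.

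The converse, however, has two genuine gaps. First, what must be proved is that any $C^2$ function $\bar h$ on $\H^d$ satisfying \eqref{eq:twdff} \emph{is} the support function of a $C^2_+$ F-convex set; your argument invokes ``the F-convex set structure'', i.e.\ it presupposes the existence of the convex set $K$ whose support function is $\bar h$. You must first show that the $1$-homogeneous extension $H$ of $\bar h$ is sublinear: by homogeneity $\mathrm{Hess}_X H(X,\cdot)=0$, and on $T_\eta\H^d$ one has $\mathrm{Hess}_\eta H=\nabla^2\bar h-\bar h g_\H>0$, so $\mathrm{Hess}\,H\geq 0$ on $\operatorname{I}^+(0)$; hence $H$ is convex and $1$-homogeneous, therefore sublinear, and the correspondence recalled in Section~\ref{sub:back} then produces $K$. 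Second, and more seriously, global injectivity of $\chi$ (equivalently single-valuedness of the Gauss map, i.e.\ absence of corners on $\partial_sK$) does \emph{not} follow from Lemma~\ref{lm:c1supp}: that lemma gives strict convexity --- each space-like support plane meets $K$ at a single point --- which is the dual property and is perfectly compatible with a boundary point carrying several space-like support planes. You need \eqref{eq:twdff} once more: if $\eta_1\neq\eta_2$ both belonged to $G(p)$, then by convexity $H$ would be affine on the segment $[\eta_1,\eta_2]\subset\operatorname{I}^+(0)$, hence $\bar h-\ell_p$, where $\ell_p(\eta)=\langle p,\eta\rangle_-$ satisfies $\nabla^2\ell_p-\ell_p g_\H=0$, would vanish along the geodesic of $\H^d$ joining $\eta_1$ to $\eta_2$, forcing $\bigl(\nabla^2\bar h-\bar h g_\H\bigr)(\dot\gamma,\dot\gamma)=0$ there and contradicting \eqref{eq:twdff}. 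With these two points supplied, your inverse-function-theorem bootstrap from a $C^1$ to a $C^2$ hypersurface goes through.
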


Condition \eqref{eq:twdff} can be also expressed in term of the ball
support  function $h$, as the next lemma suggests.

 \begin{lemma}
 Let $H$ be the support function of an $F$-convex set $\fconv$.
  If $H$ is $C^2$, then
  \begin{equation}\label{eq:hessh}
 \mathrm{Hess}_x h (v,w)=\lambda(x) \left(\nabla^2 \oh-\oh g\right)(D_x\rad(v),D_x\rad(w)) 
    \end{equation}
 where $\mathrm{Hess}$ is the Euclidean Hessian on $\R^d$.
 
 In particular $\fconv$ is $C^2_+$ iff $h$ is $C^2$ with positive Hessian everywhere.
 \end{lemma}

\begin{proof} 
By $0$-homogeneity of $\chi$ and because $\lambda \rad$ is the identity
$$h(x)=\langle \chi(\rad(x)), (\lambda \rad)(x)\rangle_-~.$$ 
 We compute 
$$D_x h (v) = \langle D_x(\chi\circ \rad)(v),(\lambda \rad)(x)\rangle_-+\langle \chi(\rad(x)),D_x(\lambda \rad)(v)\rangle_-
=\langle \chi(\rad(x)),D_x(\lambda \rad)(v)\rangle_-$$ 
 because  $\rad(x)$ is orthogonal to the tangent plane at the point $\chi (\rad(x)) $.
 For the same reason, and also because $\mathrm{Hess}_x(\lambda \rad)=0$ (because $\lambda \rad$ is the identity) we compute that
$$\mathrm{Hess}_x h (v,w)=\lambda(x) \langle D_{\rad(x)}\chi(D_x\rad(v)), D_x\rad(w)\rangle_-$$ that together with 
\begin{equation*}\label{eq:rel H h}
 \mathrm{Hess}_{\eta} H(v,w)=\nabla^2 \oh(v,w)-\oh g_\H(v,w)=\langle D_{\eta}\chi(v),w\rangle_-
\end{equation*} gives \eqref{eq:hessh}.
  \end{proof}

We denote by $C^2_+(\H^d)$ the set of hyperbolic support functions  satisfying \eqref{eq:twdff}.

Since $\chi|_{T\H^d}$
 is the inverse of the Gauss map, it turns out that  if $\fconv$ is $C^2_+$ then $D\chi|_{T\H^d}$ is the inverse of
the shape operator.  
Now by Equation (\ref{eq:chi on hyp}) we have that if $v\in T_\eta\H^d$ then 
\[
      D\chi(v)=D(\grad^\H(\oh))(v)-D\oh(v)\eta-\oh v~.
 \]
 By Gauss--Weingarten formulas we deduce
\[
  D(\grad^\H(\oh))=\nabla_v(\grad^\H(\oh))+\II(\grad^\H(\oh), v)\eta
\]
where $\nabla$ is the Levi-Civita connection on $\H^d$ and $\II$ is the second fundamental form of the immersion
$\H^d\subset\mink$. Since the second fundamental form is the identity, putting together these formulas we have
\[
    D\chi|_{T\H^d}=\nabla(\grad^\H(\oh))-\oh \operatorname{Id}~.
\]
It follows that the \emph{radii of curvature} $r_i$ of $\fconv$ at some point $p$ are the  eigenvalues of $\nabla^2\oh-\oh g_{\H}$
 (computed with respect to the hyperbolic metric) at $\gauss(p)$. 
The \emph{curvature function} of $\fconv$ is \begin{equation}\label{eqref:det hess}
                                 \phi=\Pi_{i=1}^dr_i=\det(\nabla^2\oh-\oh g_{\H})~.
                                 \end{equation}
The \emph{mean radius of curvature} is defined as $\varphi=\frac{1}{d}\sum_{i=1}^dr_i=\frac{1}{d}\tr\left(\nabla^2\oh-\oh g_{\H}\right)$.

Both the curvature function and the mean radius of curvature are defined on $\H^d$. 
If $\gauss$ is the Gauss map of $\fconv$, $\frac{1}{\phi}\circ \gauss$
is the  Gauss--Kronecker curvature of $\partial_s\fconv$.

It is useful to express the curvature function and the mean radius of curvature in terms of the ball support function $h$.
\begin{lemma}
 Let $\fconv$ be an $F$-convex set, and $h$ be the corresponding support function on $\ball$.
 If $h$ is $C^2$ we have 
  \begin{eqnarray}
\ &&\label{eq: det hess plan}\det \mathrm{Hess}_x h = \lambda^{-d-2}(x) \phi(\rad(x)) \\
\ &&\label{eq: tr hess plan}\varphi(\rad(x)) = \frac{\lambda(x)}{d} (\tr\mathrm{Hess}_x h-\mathrm{Hess}_xh(x,x)) \nonumber
    \end{eqnarray}
with $\phi$ the curvature function  and $\varphi$ the mean radius of
curvature.
 \end{lemma}
 \begin{proof}
 We know $\det \nabla^2 \overline{h}-\overline{h}g_\H=\phi$  
 so, by \eqref{eq:hessh} $\det \hess h = \lambda^d \det(D\rad)^2 \phi$
and  we already know $\det(D\rad)$, see \eqref{eq:density vol ball}.

Let $\square$ be the wave operator on $\R^{d+1}$:  
$\square_{\hat{x}} H = \tr\mathrm{Hess}_x h - \mathrm{Hess}_{\hat{x}}H(e_{d+1},e_{d+1})$.
Writing $$e_{d+1}=e_{d+1}-\hat{x}+\hat{x},$$ as by homogeneity $\mathrm{Hess}_XH(X,\cdot)=0\;\forall X$, we have
$\mathrm{Hess}_{\hat{x}}H(e_{d+1},e_{d+1})=\mathrm{Hess}_x h(x,x)$.
\eqref{eq: tr hess plan} follows because
$\varphi(\rad(x))=\frac{1}{d}\square_{\hat{x}/\lambda} H$ and $\square_{\hat{x}/\lambda} H=\lambda \square_{\hat{x}} H$ by $(-1)$-homogeneity.
 \end{proof}
 }
\paragraph{Area measures.}

Let $\omega\subset\H^d$ be a Borel set. 
\newnew{\begin{definition}\label{defVeps}
We will denote by $V_{\epsilon}(\fconv)(\omega)$  the volume of the the set of points of $\fconv$ which are at Lorentzian distance at most $\epsilon$ from
 $\partial_s\fconv$ and such that the direction of their orthogonal projection onto $\partial_s \fconv$ is colinear to an element of $\omega$.
 \end{definition}}
The (Lorentzian) \emph{area measure} of $\fconv$ is a Radon measure on $\H^d$ defined by \index{Measure! $\A(\fconv)$ area measure of $\fconv$}
\begin{equation*}\label{eq: mink area}
\A(\fconv)(\omega)=\underset{\epsilon\downarrow 0}{\mathrm{lim}}
\frac{V_{\epsilon}(\fconv)(\omega)-V_{0}(\fconv)(\omega)}{\epsilon}=\underset{\epsilon\downarrow 0}{\mathrm{lim}}
\frac{V_{\epsilon}(\fconv)(\omega)}{\epsilon}~.
\end{equation*}
\begin{remark}{\rm
 In the convex body case, the area measure was introduced under the name area function by A.D.~Alexandrov. 
A difference with respect to the Euclidean case is that the Lorentzian projection on $\partial_s\fconv$ is well-defined
in the interior of $\fconv$ (see \cite{Bon05} for a discussion on this point). For this reason is convenient to  define $V_\epsilon(\fconv)(\omega)$ as
the volume of points on $\fconv$ at distance $\epsilon$ from the boundary instead of considering the exterior of $\fconv$.}

\end{remark}

If $\fconv$ has support function $\oh$ on $\H^d$, we will denote $\A(\fconv)$ by $\A(\oh)$.  If $\oh\in C^2_+(\H^d)$, then
 \begin{equation}\label{eq: area measure c2+}
  \A(\oh)=\det(\nabla^2 \oh-\oh g_\H)\d\H^d=\phi \d\H^d~,
 \end{equation}
where $\d\H^d$ is the volume form on $\H^d$ given by the hyperbolic metric and $\phi$ the curvature function.
Notice that in this case $\A(\oh)$ coincides with the direct image of the intrinsic volume form on 
$\partial_s\fconv$ through the Gauss map.

We need a result of weak convergence of the area measure. For completeness we will consider all kinds of area measures.
It is proved in \cite{fv} that there exist Radon measures $S_0(\fconv),\ldots,S_d(\fconv)$
on $\H^d$ such that, for any Borel set $\omega$ of $\H^d$ and any $\epsilon >0$,
\begin{equation}\label{eq: mu}V_{\epsilon}(\fconv)(\omega)=\frac{1}{d+1}\sum_{i=0}^d \epsilon^{d+1-i}\binom{d+1}{i} S_i(\fconv)(\omega)~.
\end{equation}
$S_i(\fconv)$ is called the \emph{area measure of order $i$} of $\fconv$. Note that $V_{\epsilon}(\fconv)$ is itself a Radon measure.
We have  that $S_0(\fconv)$ is given by the volume form of $\H^d$. 
The area measure $\A(\fconv)$ introduced above coincides with the area measure of order $d$, that is $S_d(\fconv)$.

\begin{lemma}\label{lem: weak conv are fconv}
 Let $(\oh_n)_n$ be support functions on $\H^d$ of F-convex sets converging uniformly on a compact set 
 $\omega$ to a support function $\oh$. 
 Then, for any continuous function $f:\H^d\rightarrow \R$ with compact support contained in the interior of $\omega$,
 for any $\epsilon >0$,
 $$\int_{\omega} f \d V_\epsilon(\fconv_n)\rightarrow \int_{\omega} f \d V_{\epsilon}(\fconv)  \qquad  \text{as $n\to\infty$ }~,$$
and, for $0\leq i \leq s$,
$$\int_{\omega} f \d S_i(\fconv_n)\rightarrow \int_{\omega} f \d S_i(\fconv) \qquad \text{as $n\to\infty$}~.$$
 \end{lemma}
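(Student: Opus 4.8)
The plan is to prove the convergence of the local parallel volumes $V_\epsilon$ first, and then to deduce the convergence of the intermediate area measures $S_i$ from it by a purely algebraic argument. Indeed, by the polynomial expansion \eqref{eq: mu}, for every $\epsilon>0$,
\[
\int_\omega f\,\d V_\epsilon(K_n)=\frac{1}{d+1}\sum_{i=0}^d\epsilon^{\,d+1-i}\binom{d+1}{i}\int_\omega f\,\d S_i(K_n)~,
\]
so that $\epsilon\mapsto\int_\omega f\,\d V_\epsilon(K_n)$ is a polynomial with $d+1$ coefficients (the exponents $\epsilon^{1},\dots,\epsilon^{d+1}$), namely the quantities $\int_\omega f\,\d S_i(K_n)$ up to the fixed factors $\binom{d+1}{i}/(d+1)$. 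Granting the first statement, these polynomials converge at every $\epsilon>0$, in particular at $d+1$ distinct positive abscissae; since evaluation at $d+1$ fixed distinct points yields an invertible (Vandermonde) linear system on the coefficients, the coefficients themselves converge, which is exactly $\int_\omega f\,\d S_i(K_n)\to\int_\omega f\,\d S_i(K)$ for each $0\le i\le d$. Thus everything reduces to the first convergence.

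To prove the latter I would realize $V_\epsilon(K)$ as a push-forward of Lebesgue measure. Write $T_\epsilon(K)$ for the set of points of $K$ lying at Lorentzian distance at most $\epsilon$ from $\partial_sK$, and let $N_K\colon T_\epsilon(K)\to\H^d$ send a point to the future unit normal of its orthogonal projection onto $\partial_sK$; this projection, and hence $N_K$, is well defined off a Lebesgue-negligible set (see the Remark following the definition of $\A(K)$). By the very definition of $V_\epsilon(K)$ as a parallel volume, for any bounded Borel $g$,
\[
\int_{\H^d} g\,\d V_\epsilon(K)=\int_{T_\epsilon(K)}(g\circ N_K)\,\d\mathrm{vol}~,
\]
so it suffices to show $\int_{T_\epsilon(K_n)}(f\circ N_{K_n})\,\d\mathrm{vol}\to\int_{T_\epsilon(K)}(f\circ N_{K})\,\d\mathrm{vol}$, which I would establish by dominated convergence in $\R^{d+1}$.

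For the domination, note that whenever $N_{K_n}(p)$ lies in $\mathrm{supp}(f)\Subset\operatorname{int}\omega$, the foot point $\chi_n(N_{K_n}(p))=\grad^-_{N_{K_n}(p)}H_n$ stays bounded, because gradients of the convex functions $H_n$ are locally uniformly bounded once $\bar h_n\to\bar h$ uniformly on $\omega$. Since then $p=\chi_n(N_{K_n}(p))+tN_{K_n}(p)$ with $t\in[0,\epsilon]$, all the relevant $p$ lie in a fixed compact $C\subset\R^{d+1}$ for $n$ large, so the integrands are dominated by $\|f\|_\infty\mathbf 1_{C}$. For the pointwise convergence, uniform convergence of the support functions gives local Hausdorff convergence $\partial_sK_n\to\partial_sK$; at a point $p$ whose nearest point on $\partial_sK$ is unique and carries a single support plane, the projection and its normal are stable under perturbation, so $N_{K_n}(p)\to N_K(p)$ and $\mathbf 1_{T_\epsilon(K_n)}(p)\to\mathbf 1_{T_\epsilon(K)}(p)$. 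The excluded points---where the Lorentzian projection is non-unique, where $\bar h$ fails to be differentiable, or which lie on $\partial T_\epsilon(K)$---form a Lebesgue-null set, so a.e.\ convergence holds and dominated convergence applies.

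The main obstacle is precisely this last, analytic step: establishing the a.e.\ convergences $N_{K_n}\to N_K$ and $\mathbf 1_{T_\epsilon(K_n)}\to\mathbf 1_{T_\epsilon(K)}$ with no smoothness of the $K_n$, while controlling the exceptional null sets. It rests on two classical facts about convex functions---that uniform convergence forces a.e.\ convergence of the gradients (at differentiability points of the limit, which are of full measure), and that the set of points with non-unique nearest-point projection onto a convex boundary is negligible---together with the hypothesis $\mathrm{supp}(f)\Subset\operatorname{int}\omega$, which prevents any mass from escaping through $\partial\omega$ in the limit.
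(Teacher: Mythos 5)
Your first paragraph (recovering the $\int f\,\d S_i$ convergences from the $\int f\,\d V_\epsilon$ convergences by evaluating the polynomial in $\epsilon$ at $d+1$ distinct points) is exactly the paper's first step, and your domination argument is sound: the set where the $K_n$-integrand is nonzero is trapped in a fixed compact set because subdifferentials of the $H_n$ over the cone on $\mathrm{supp}(f)$ are uniformly bounded once $\bar h_n\to\bar h$ uniformly on $\omega$. The gap is in the pointwise a.e.\ convergence, and it is not a technicality. The hypothesis controls $K_n$ \emph{only} through its support planes with normals in $\omega$; it does \emph{not} give ``local Hausdorff convergence $\partial_s K_n\to\partial_s K$'' --- in directions outside the cone over $\omega$ the sets $K_n$ may be unrelated to $K$ (both larger and smaller), and the Lorentzian projection of a fixed point $p$ onto $\partial_s K_n$ is a \emph{global} maximization over all of $\partial K_n$, i.e.\ it depends precisely on the uncontrolled part. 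Concretely, for a point $p\in T_\epsilon(K)$ with $N_K(p)\in\mathrm{supp}(f)$ you must rule out that, for infinitely many $n$: (i) $p\notin K_n$ (a support plane of $K_n$ with normal outside $\omega$ separates $p$ from $K_n$); (ii) $p\in K_n$ but its foot point lies on a portion of $\partial_s K_n$ with normal outside $\mathrm{supp}(f)$, so the integrand drops to $0$; (iii) $d(p,\partial_s K_n)>\epsilon$. The two ``classical facts'' you invoke do not address any of these: a.e.\ convergence of gradients and negligibility of non-unique nearest points are statements about the \emph{limit} set $K$ (whose Lorentzian projection is in fact unique at every interior point), not about the behavior of the $K_n$ away from $\omega$.

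These obstructions can actually be overcome, but by an argument of a different nature: one uses that each face $G_n^{-1}(\xi)$, $\xi$ in the interior of $\omega$, is a local object (determined by $H_n$ near $\xi$), nonempty by F-convexity, and upper semicontinuous under locally uniform convergence; and, for the crucial lower containment forcing $p\in K_n$ eventually, that every face $G^{-1}(\xi_0)$ of $K$ with $\xi_0$ interior to $\omega$ is the closed convex hull of limits of the singleton faces over nearby differentiability points of $\bar h$ (see \cite[Theorem~25.6]{Roc97}); this traps the projection directions of $p$ inside $\omega$ and only then does your stability claim follow. That is essentially a from-scratch proof of the localization which the paper exploits directly: the paper notes that $V_\epsilon(K)(b)$, for $b\subset\omega$, depends only on the part of $\partial K$ with support normals in $b$, replaces $K_n$ and $K$ by convex sets invariant under a uniform lattice which coincide with them (up to translation) over $\omega$, and then quotes the weak convergence already proved in \cite{fv} for lattice-invariant F-convex sets, where cocompactness eliminates all escape-to-infinity issues. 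As written, your ``stability under perturbation'' step is asserted rather than proved, and proving it is the entire content of the lemma.
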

\begin{proof}
The convergence for the measures $S_i(\fconv)$ follows from the convergence for $V_{\epsilon}(\fconv)$ and the linearity of the integral, using a polynomial interpolation
from  \eqref{eq: mu}. We have to prove the convergence  result for $V_{\epsilon}(\fconv)$.
This follows directly from the following facts, which are explicitly proved in \cite{fv}:
\begin{itemize}[nolistsep]
 \item the result is true if the F-convex sets are invariant under the action of a   same uniform lattice $\Gamma$,
 provided that $\omega$ is contained in a fundamental domain for the action of $\Gamma$;
 \item there exist a uniform lattice $\Gamma$ and $\Gamma$-invariant convex sets $\tilde{\fconv}_n$ and $\tilde{\fconv}$
 such that the set of points on $\fconv_n$ (resp. $\fconv$) with support normals in $\omega$
is, up to a translation, the set of points on $\tilde{\fconv}_n$ (resp. $\tilde{\fconv}$) with support normals in $\omega$.  As for any subset $b$, $V_{\epsilon}(\fconv)(b)$ depends only on the subset of $\partial \fconv$ of points
with support normals in $b$, and not on the whole $\fconv$, it follows that $V_{\epsilon}(\tilde{\fconv}_n)$
(resp. $V_{\epsilon}(\tilde{\fconv})$) coincide with $V_{\epsilon}(\fconv_n)$ (resp. $V_{\epsilon}(\fconv)$) on $\omega$;
 \item as $(\oh_n)$ uniformly converges to $\oh$ on $\omega$, the same is true for the support functions of $\tilde{\fconv}_n$ and $\tilde{\fconv}$.
\end{itemize}
\end{proof}

\old{

\subsection{Support functions restricted to the ball}

At a point $\eta$ of $\H^d\subset \mink$, consider the radial projection
from $\H^d$ to the affine plane $P=\eta+T_\eta\H^2$.
Let us denote by $\rad$  the inverse mapping $\rad:P\cap \fut(0)\to\H^d$.
We fix the point $\eta=(0,\ldots,0,1)$ and identify the affine horizontal plane $P$
passing through $\eta$ with $\R^d$ through the map $x\mapsto(x,1)$
Let us introduce the following notations
\begin{itemize}[nolistsep]
 \item $\ball$ is the open unit ball centred at the origin of $\R^d$, \index{$B$ the open unit ball centred at the origin of $\R^d$}
 \item $\hat{x}=(x,1)$ for any $x\in\R^d$, \index{$\hat{x}=(x,1)$}
 \item $\langle \cdot,\cdot\rangle $ the usual scalar product on $\R^d$, \index{$\langle \cdot,\cdot\rangle $ the usual scalar product on $\R^d$}
 \item $\|\cdot \|$ the associated norm,\index{$\|\cdot \|$ the usual norm on $\R^d$}
 \item $\lambda(x)=\sqrt{1-\|x\|^2}=\|\hat{x}\|_-$, $x\in \ball$,\index{$\lambda(x)=\sqrt{1-\|x\|^2}=\|\hat{x}\|_-$}
  \item $\mathcal{L} $ is the Lebesgue measure on $\R^d$. \index{Measure! $\mathcal{L} $  the Lebesgue measure}
\end{itemize}

Notice that $\ball$ corresponds exactly to the intersection $P\cap \fut(0)$. The radial map
$\rad$  from $\ball$ to $\H^d$ is  $\rad(x)=\frac{1}{\lambda}\hat{x}$, while \index{$\rad$ Radial map  $\rad(x)=\frac{1}{\lambda}\hat{x}$}
$\rad^{-1}$ maps 
$(x,x_{d+1})\in\H^d$ to $x/x_{d+1}=x/\sqrt{1+\|x\|^2}\in \ball$.
It can be useful to consider on $\ball$ the pull-back of the hyperbolic metric (we then get the Klein model of the hyperbolic space). If 
$\mathrm{d}\H^d$ is the volume element for the hyperbolic metric on $\ball$,
an explicit computation gives
\begin{equation}\label{eq:density vol ball}
 \d\H^d=\lambda^{-d-1}\L~.
\end{equation}

Now let $H$ be a convex $1$-homogeneous function on $\fut(0)$, and
consider its restriction to $\ball\times\{1\}$. By an abuse of terminology, we will call
the restriction of $H$ to $\ball$ the following map $h$ defined on $\ball$: \index{Support function! $h$ support function on $B$}\label{h}
 \begin{equation*}\label{eq: def oh}h(x)=H(\hat{x})~. \end{equation*}
 If $\overline{h}$ is the restriction of $H$ to $\H^d$, we then get
$$h(x)=\lambda(x)\overline{h}(\rad(x))~.$$
 
Of course a function $h$ obtained in this way is convex on $\ball$. 
The following lemma says that all convex functions on $\ball$ are obtained in
this way.  
 \begin{lemma}
A   convex function $h$ on $\ball$ is the restriction (in the sense above) of the
following sublinear 1-homogeneous function on $\fut(0)$:
 $$H\left((x,z)\right)=zh(x/z)~,$$ $x\in \ball,z\in\R$.
 \end{lemma}
\begin{proof}
The function $H$ is $1$-homogeneous by definition. So we only need to prove that it is sub-additive.
Take $X=(x,x_{d+1}), Y=(y,y_{d+1})\in \fut(0)$.
By definition, $$H(X+Y)=(x_{d+1}+y_{d+1})h\left(\frac{x+y}{x_{d+1}+y_{d+1}}\right).$$
Notice that the point $\frac{x+y}{x_{d+1}+y_{d+1}}$ can be expressed as the following convex combination
of $x/x_{d+1}$ and $y/y_{d+1}$:
$$     
\frac{x+y}{x_{d+1}+y_{d+1}}=\frac{x_{d+1}}{x_{d+1}+y_{d+1}}\frac{x}{x_{d+1}}\ +\ \frac{y_{d+1}}{x_{d+1}+y_{d+1}}\frac{y}{y_{d+1}}~.
$$
Since $h$ is convex we get that
\[
   H(X+Y)\leq(x_{d+1}+y_{d+1})
   \left(\frac{x_{d+1}}{x_{d+1}+y_{d+1}}h\left (\frac{x}{x_{d+1}}\right)\ +\ \frac{y_{d+1}}{x_{d+1}+y_{d+1}}
   h\left(\frac{y}{y_{d+1}}\right)\right)\]
$$   =H(X)+H(Y)~.$$

\end{proof}
\begin{corollary}
Any convex function on $\ball$ is the support function of a unique F-convex set of $\mink$.
\end{corollary}

By Lemma \ref{lm:c1supp}, if $H$  is $C^1$, then 
$\partial_s \fconv$ is the range of the function  $\chi:\fut(0)\to\R^{d+1}$ sending $X$ to $\grad^-_{X}H$.
We express both $\chi$ and curvature function in terms of the ball support function $h$.
 
 \begin{lemma}
 Let $H$ be a convex $1$-homogeneous function on $\fut(0)$. Let us denote by $h$ its restriction to $\ball$, and by $\oh$ its restriction to
 $\H^d$.
  If $H$ is $C^1$, then
  \begin{equation}
   \label{eq:chi on disc} \chi(\hat{x})=\grad_xh + (\langle x,\grad_xh\rangle -h(x))e_{d+1}
  \end{equation}
  If $H$ is $C^2$, then
  \begin{eqnarray}
\ &&\label{eq: hess plan}\mathrm{Hess}_x h (v,w)=\lambda(x) \left(\nabla^2 \oh-\oh g\right)(D_x\rad(v),D_x\rad(w)) \\
\ &&\label{eq: det hess plan}\det \mathrm{Hess}_x h = \lambda^{-d-2}(x) \phi(\rad(x)) \\
\ &&\label{eq: tr hess plan}\varphi(\rad(x)) = \frac{\lambda(x)}{d} (\tr\mathrm{Hess}_x h-\mathrm{Hess}_xh(x,x)) \nonumber
    \end{eqnarray}
with $\phi$ the curvature function  and $\varphi$ the mean radius of
curvature (the mean value of the radii of curvature).
 \end{lemma}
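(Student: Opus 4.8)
The plan is to establish the four identities in turn, passing from the explicit description $H(x,z)=zh(x/z)$ of the homogeneous extension to the intrinsic data $\oh$ on $\H^d$. First I would prove \eqref{eq:chi on disc} by direct differentiation: from $H(x,z)=zh(x/z)$ one gets $\partial_iH(\hat x)=\partial_ih(x)$ for $i\le d$ and $\partial_{d+1}H(\hat x)=h(x)-\langle x,\grad_xh\rangle$. Since $\chi(\hat x)=\grad^-_{\hat x}H$ and the Minkowski gradient flips the sign of the last component by its definition $\grad^-(H)=(\partial_1H,\ldots,\partial_dH,-\partial_{d+1}H)$, this gives exactly \eqref{eq:chi on disc}.

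For the Hessian formula \eqref{eq: hess plan} I would set $\psi(x)=\chi(\hat x)$ and use that $\chi$ is $0$-homogeneous, so $\psi(x)=\chi(\rad(x))$ because $\hat x$ and $\rad(x)=\lambda^{-1}\hat x$ lie on the same ray. Differentiating this in a direction $v\in\R^d$ and invoking the relation $D\chi|_{T\H^d}=\nabla^2\oh-\oh\operatorname{Id}$ established above gives $D_x\psi(v)=(\nabla^2\oh-\oh\operatorname{Id})(D_x\rad(v))$, a vector in $T_\eta\H^d$ with $\eta=\rad(x)$. Differentiating instead the explicit expression \eqref{eq:chi on disc} yields $D_x\psi(v)=(\mathrm{Hess}_xh\cdot v,\ \langle \mathrm{Hess}_xh\cdot v,x\rangle)$. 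Since the hyperbolic metric $g$ is the restriction of $\langle\cdot,\cdot\rangle_-$ to $T\H^d$, pairing the first expression with $D_x\rad(w)$ in the Minkowski product reproduces $(\nabla^2\oh-\oh g)(D_x\rad(v),D_x\rad(w))$; pairing the second expression with $D_x\rad(w)$, a short calculation in which the vertical contributions cancel thanks to $\langle\hat x,\hat x\rangle_-=-\lambda^2$ shows the result is $\lambda^{-1}\mathrm{Hess}_xh(v,w)$, which is \eqref{eq: hess plan}.

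The last two identities I would deduce from \eqref{eq: hess plan}, read as the equality of bilinear forms $\mathrm{Hess}_xh=\lambda\,\rad^*(\nabla^2\oh-\oh g)$ on $\R^d$. Taking determinants with respect to the Euclidean structure produces the Jacobian factor $(\det D_x\rad)^2=\det(\rad^*g)=\lambda^{-2d-2}$, the last equality because the volume density of the Klein metric $\rad^*g$ is $\sqrt{\det(\rad^*g)}=\lambda^{-d-1}$ by \eqref{eq:density vol ball}; together with $\phi=\det(\nabla^2\oh-\oh g)$ this gives $\det\mathrm{Hess}_xh=\lambda^d\cdot\lambda^{-2d-2}\cdot\phi=\lambda^{-d-2}\phi$, i.e. \eqref{eq: det hess plan}. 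For the trace I would first compute the inverse Klein metric, which by Sherman--Morrison together with the identity $\lambda^2+\|x\|^2=1$ simplifies to $(\rad^*g)^{-1}=\lambda^2(\operatorname{Id}-xx^{\mathrm T})$. Since the trace of a bilinear form with respect to $\rad^*g$ equals the trace of its pushforward with respect to $g$, one finds $\tr_g(\nabla^2\oh-\oh g)=\lambda^{-1}\tr_{\rad^*g}(\mathrm{Hess}_xh)=\lambda^{-1}\cdot\lambda^2\bigl(\tr\mathrm{Hess}_xh-\mathrm{Hess}_xh(x,x)\bigr)$; dividing by $d$ and recalling that $\varphi$ is the mean of the radii of curvature yields \eqref{eq: tr hess plan}.

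The main obstacle is the pairing computation behind \eqref{eq: hess plan}: one must check that the vertical (last-coordinate) contributions of $D_x\psi(v)$ and of $D_x\rad(w)$ cancel precisely against the cross terms, leaving only $\lambda^{-1}\mathrm{Hess}_xh(v,w)$, while keeping straight the three bilinear structures in play, namely the Euclidean form on $\R^d$, the Minkowski product $\langle\cdot,\cdot\rangle_-$, and the hyperbolic metric $g$. Once this is settled, the determinant and trace formulas are bookkeeping, the only non-routine inputs being the Klein-metric volume normalization \eqref{eq:density vol ball} and the simplification $\lambda^2+\|x\|^2=1$.
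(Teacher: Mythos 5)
Your proof is correct, and it is a hybrid: the first three identities follow the paper's line, while your derivation of the trace formula is genuinely different. For \eqref{eq:chi on disc} and \eqref{eq: det hess plan} you do essentially what the paper does (direct differentiation of the homogeneous extension, then determinants combined with the Jacobian $\lambda^{-d-1}$ of $\rad$ from \eqref{eq:density vol ball}). For \eqref{eq: hess plan} the paper avoids ever computing $D_x\rad$ explicitly: it differentiates the scalar identity $h(x)=\langle \chi(\rad(x)),\hat{x}\rangle_-$ twice, using that $\rad(x)$ is Minkowski-orthogonal to the image of $D(\chi\circ\rad)$ and that $\lambda\rad$ is affine, so its Hessian vanishes; your version differentiates \eqref{eq:chi on disc} once and checks the pairing by hand, which does work --- although the cross terms cancel because $D_x\psi(v)$ lies in $T_{\rad(x)}\H^d$, i.e.\ is Minkowski-orthogonal to $\hat{x}$, rather than ``thanks to $\langle\hat{x},\hat{x}\rangle_-=-\lambda^2$''; that identity is what you actually need later, when computing the Klein metric $\rad^*g=\lambda^{-2}\operatorname{Id}+\lambda^{-4}xx^{\mathrm{T}}$. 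For the trace formula the paper works upstairs in $\R^{d+1}$: it introduces the wave operator $\square$, uses the homogeneity identity $\mathrm{Hess}_XH(X,\cdot)=0$ to replace $\mathrm{Hess}_{\hat{x}}H(e_{d+1},e_{d+1})$ by $\mathrm{Hess}_xh(x,x)$, and concludes by $(-1)$-homogeneity of the second derivatives; you instead stay on the ball, invert the Klein metric via Sherman--Morrison to get $(\rad^*g)^{-1}=\lambda^2(\operatorname{Id}-xx^{\mathrm{T}})$, and take metric traces of \eqref{eq: hess plan}. The paper's route is shorter and, notably, independent of \eqref{eq: hess plan}; yours is more self-contained on the ball and yields the explicit inverse Klein metric as a by-product, at the cost of making the trace formula logically depend on \eqref{eq: hess plan}.
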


\begin{proof} 
It is clear that the projection of the gradient of $H$ to $\R^d$ is equal to  the gradient of $h$.
To compute the normal component, $$D_{\hat{x}}H(e_{d+1})=D_{\hat{x}}H(\hat{x})-D_{\hat{x}}H(\hat{x}-e_{d+1})=H(\hat{x})-
D_xh(x)$$ where $x\in \R^d$ is identified
with $(x,0)\in\R^{d+1}$. Formula~ \eqref{eq:chi on disc} follows.

By $0$-homogeneity of $\chi$ and because $\lambda \rad$ is the identity
$$h(x)=\langle \chi(\rad(x)), (\lambda \rad)(x)\rangle_-~.$$ 
 We compute 
$$D_x h (v) = \langle D_x(\chi\circ \rad)(v),(\lambda \rad)(x)\rangle_-+\langle \chi(\rad(x)),D_x(\lambda \rad)(v)\rangle_-
=\langle \chi(\rad(x)),D_x(\lambda \rad)(v)\rangle_-$$ 
 because  $\rad(x)$ is orthogonal to the tangent plane at the point $\chi (\rad(x)) $.
 For the same reason, and also because $\mathrm{Hess}_x(\lambda \rad)=0$ (because $\lambda \rad$ is the identity) we compute that
$$\mathrm{Hess}_x h (v,w)=\lambda(x) \langle D_{\rad(x)}\chi(D_x\rad(v)), D_x\rad(w)\rangle_-$$ that together with 
\begin{equation*}\label{eq:rel H h}
 \mathrm{Hess}_{\eta} H(v,w)=\nabla^2 \oh(v,w)-\oh g_\H(v,w)=\langle D_{\eta}\chi(v),w\rangle_-
\end{equation*} gives \eqref{eq: hess plan}.

We know $\det \nabla^2 \overline{h}-\overline{h}g_\H=\phi$  
 so $\det \hess h = \lambda^d \det(D\rad)^2 \phi$
and  we already know $\det(D\rad)$, see \eqref{eq:density vol ball}.

Let $\square$ be the wave operator on $\R^{d+1}$:  
$\square_{\hat{x}} H = \tr\mathrm{Hess}_x h - \mathrm{Hess}_{\hat{x}}H(e_{d+1},e_{d+1})$.
Writing $$e_{d+1}=e_{d+1}-\hat{x}+\hat{x},$$ as by homogeneity $\mathrm{Hess}_XH(X,\cdot)=0\;\forall X$, we have
$\mathrm{Hess}_{\hat{x}}H(e_{d+1},e_{d+1})=\mathrm{Hess}_x h(x,x)$.
\eqref{eq: tr hess plan} follows because
$\varphi(\rad(x))=\frac{1}{d}\square_{\hat{x}/\lambda} H$ and $\square_{\hat{x}/\lambda} H=\lambda \square_{\hat{x}} H$ by $(-1)$-homogeneity.
  \end{proof}

 \begin{example}\rm{ The support function of the convex 
 side of the upper-sheet of the hyperboloid with radius $t$ is $H(X)=-t\|X\|_-$, 
and its restriction to $\H^d$ is the constant function equal to $-t$. 
Its restriction to $\ball$ is $h(x)=H(\hat{x})=-t\lambda(x)$. Note that
 $\hess h=-t\hess \lambda$, so the convexity of $h$  implies that $\hess \lambda$ is semi-negative definite.
 
We will denote by  $ \jplus(\H)$ the convex 
 side of the upper-sheet of the hyperboloid with radius $1$. 
 }
\end{example}

\begin{example}\rm{The restriction to $\fut(0)$ of the linear functionals
of $\R^{d+1}$ are the support functions on $\fut(0)$ of the future cone of points.
Their restriction to $\ball$ are exactly the restriction to $\ball$ of the 
affine maps of $\R^d$.
}
\end{example}

From \eqref{eq: hess plan}, support functions of $C^2_+$ F-convex sets are exactly 
the $C^2$  maps on $\ball$ with positive definite Hessian.

}
 \subsection{F-convex sets as graphs}
 Let $\fconv$ be an F-convex set. 
 Since $\fconv$ has no time-like support plane, each time-like line 
 must meet $\fconv$. Moreover since $\fconv$ is future complete ($\fconv=\fut(\fconv)$)
the intersection must be a future-complete half-line.
So  each time-like line meets $\partial \fconv$ exactly at one point.
This shows that $\partial \fconv$ is a graph
of a function $\tilde u$ defined on the horizontal plane $\R^d\subset\R^{d+1}$.
Convexity of $\fconv$ implies that $\tilde u$ is a convex function.
 
Let $h$ be the support function of $\fconv$ on $\ball$. In this section we investigate the relation between 
$h$ and $\tilde u$. To this aim it is convenient to consider the extension of $h$ to $\R^d$ defined as 
$$\tilde{h}:\R^{d}\rightarrow \overline{\R}:=\R\cup \{+\infty\}~,\qquad \tilde{h}(x)=\sup\left\{\left\langle k,\hat{x}\right\rangle_-\vert k \in \fconv\right\}~.$$
It is a proper convex lower semi-continuous function (as a sup of continuous functions). It takes values 
$+\infty$ out of $\overline{\ball}$ and is equal to $h$ on $\ball$. Its value on 
$\partial \ball$ may be finite or infinite. In other terms, $\tilde{h}$ is the lower semi-continuous hull of $h$. If $\tilde{h}$ has a finite value at $\ell\in \partial \ball$, then
$\fconv$ has a light-like support plane ``at infinity'' directed by $\ell$. This means that any parallel displacement
of the hyperplane in the future direction will meet the interior of $\fconv$. The hyperplane may or may not meet $\fconv$.

By properties of lower semi-continuous convex functions, for any $\ell\in\partial \ball$ and any $x\in \ball$, we have 
\begin{equation}\tilde{h}(\ell)=\underset{t\downarrow 0}{\mathrm{lim}}\: h(\ell+t(x-\ell))~. \end{equation}
Let us write
$$\tilde{h}(x)=\sup_{(p,p_{d+1})\in \fconv}\left\langle \binom x 1,\binom{p}{p_{d+1}} \right\rangle_- $$
$$=\sup_{(p,p_{d+1})\in \fconv}\left\{ \langle x,p\rangle - p_{d+1}\right\} =\sup_{p\in \R^d}\left\{ \langle x,p\rangle - \tilde{u}(p)\right\} $$
where $\tilde{u}$ is the graph of $\partial \fconv$. From this simple relation we immediately get

\begin{definition}
The function  $\tilde{h}$  is the \emph{dual} of $\tilde u$. The correspondence $\tilde u\mapsto \tilde h$
is the \emph{Legendre--Fenchel transform}.
\end{definition}

Conversely by the involution property of the Legendre--Fenchel transform we have that $\tilde{u}$ is the dual of $\tilde h$, that is
\[
     \tilde{u}(p)=\sup_{x\in\R^d}\left(\langle p,x\rangle-\tilde{h}(x)\right)
\]
for all $p\in\R^d$.

Since the dual of $\tilde u$ (which is $\tilde{h}$) takes finite values only on $\ball$, it turns out that
$\tilde u$ is $1$-Lipschitz. Indeed a general fact is that proper future convex sets are exactly the graphs of $1$-Lipschitz
convex functions
\cite[Lemma~3.11]{Bon05}. 
 As $\tilde{u}$ is convex and $1$-Lipschitz, by Rademacher theorem, 
 $\grad \tilde{u}$ exists and is continuous almost everywhere, and
 its norm is less than $1$.

\begin{remark}
Given a convex $1$-Lipschitz function $\tilde u:\R^d\to\R$, its graph bound a convex set $\fconv$ that coincides with its future.
For $x\in\ball$ the point $\rad(x)$ lies in the image of the Gauss map of $\fconv$ iff $\tilde h$ is finite at $x$.
So $\fconv$ is an F-convex set iff the function $\tilde h$, dual of $\tilde u$, takes finite values on $\ball$.
\end{remark}

\old{ 
Suppose now that $h$ is $C^1$. 
Putting  $y=\grad_xh\in\R^d$, \eqref{eq:chi on disc}  precisely says that 
$\partial_s\fconv$   is the graph over $\grad h(\ball)\subset \R^d$  of the function 
$u(y)=\langle x,y\rangle -h(x)$.
So $(u,\grad h(\ball))$ is the \emph{Legendre transform} of $(h,\ball)$. 
Actually $u$ is nothing but the
 the restriction of $\tilde u$ on $\grad h(\ball)$ \cite[26.4]{Roc97}. 
Note that  $\grad h(\ball)$ --- the definition domain of $u$ --- is not necessarily the whole $\R^d$. 
It can be for example reduced to a point (e.g. taking $h=0$).
}

\new{
    For a convex function $f$ defined on some convex domain $U$ of $\R^d$, the \emph{normal mapping}, or \emph{subdifferential} of $f$ at $x\in U$, 
    denoted by $\partial f(x)$ \index{ $\partial h(x)$ the normal mapping}, is a subset of $ \R^d$ 
 defined as follow: it is the horizontal projection onto $\R^d$ of the set of inward (Lorentzian) normals with
 last coordinate equal to one of the support hyperplanes 
 of the graph of $f$ at the point $(x,f(x))$. In other terms,
 $p\in \partial f(x)$ if and only if, for all $y\in U$, $\langle \binom p 1, \binom{y-x}{f(y)-f(x)}\rangle_- \leq 0$, i.e.
$f(y)\geq f(x)+\langle p, y-x\rangle$. It turns out that $\partial f(x)$ is a convex set in $\R^d$, \cite{Gu01}.
If  $\omega$ is a Borel set  of $U$,  we  set $\partial f (\omega):=\cup_{x\in\omega} \partial f(x)$.

Let $h$ be a convex function on $\ball$ and $\tilde h$ be its extension to $\R^d$.
Note that for $x\in \ball$, $\partial \tilde{h}(x)=\partial h(x)$.

If $\tilde{u}$ is the conjugate of $\tilde{h}$, we have \cite[11.3]{RW98}
\begin{equation}\label{eq:inv partial}\partial \tilde{u} = \left( \partial \tilde{h} \right)^{-1} \end{equation}
that has to be understood as: 
 $p\in \partial \tilde{h}(x)$ if and only if $x\in \partial\tilde{u}(p)$

From \eqref{eq:inv partial} we simply deduce the following relation between $\partial h$ and the Gauss map
$\gauss$ of the domain supported by $h$.

\begin{lemma}
Let $\gauss$ be the Gauss map of the F-convex set supported by $h$.
Let $\proj$ be the orthogonal projection from $\R^{d+1}$ onto  the horizontal plane. Then for any Borel set $\omega$ of $\H^d$
 \begin{equation}\label{eq: proj subdif}
 \partial h(\rad^{-1}(\omega))=\proj(\gauss^{-1}(\omega))~.
\end{equation}
\end{lemma}
\begin{proof}
By \eqref{eq:inv partial}  $p\in \partial h(x)$ if and only if
  $p\in \partial \tilde{h}(x)$ if and only if $x\in \partial\tilde{u}(p)$ if and only if 
 $\hat{x}$ is an inward unit normal to $\fconv$ at $(p,\tilde{u}(p))$ if and only if $(p,\tilde{u}(p))\in \gauss^{-1}(\rad(x))$, 
if and only if $p\in \proj(\gauss^{-1}(\rad(x)))$. 
\end{proof}

As a corollary the projection to $\R^d$ of $\partial_s\fconv$ is $\partial h(\ball)=\bigcup_{x\in\ball}\partial h(x)$.
We will denote by $u$ the restriction of $\tilde u$ on $\partial h(\ball)$, so that $\partial_s\fconv$ is the graph of $u$.

If $h$ is $C^1$, then $\partial h$ is single-valued and coincides with the gradient map, so it is a  continuous mapping from $\ball$ to
$\partial\fconv$. If $h$ is also strictly convex, then the gardient map is injective so $\partial_s\fconv$ is open in $\partial\fconv$.
However there are examples of smooth strictly convex functions $h$, so that $\partial_s\fconv\neq\partial\fconv$. 
  }
A criterion to have  $\grad h(\ball)=\R^d$ is that $\partial_s \fconv$ is $C^1$ and the induced Riemannian metric is complete \cite[Lemma~3.1]{Bon05}.  
Space-like hypersurfaces which are graph over $\R^d$ are usually called \emph{entire}.
Entire hypersurfaces are not necessarily graphs of F-convex sets. A condition for a $C^2$ entire space-like hypersurface to bound an F-convex set can 
be found in  \cite{li95}.

If $h$ is $C^1$, strictly convex, and $\grad h(\ball)=\R^d$, then $h$ is of \emph{Legendre type}, and the 
gradient of $u$ is the inverse of the gradient of $h$ at the corresponding points \cite[26.5]{Roc97}. 
In particular, if  moreover the Hessian of $h$ is positive definite, then, looking the Hessian as the Jacobian of the gradient, 
the Hessian matrix of $h$ is the inverse of the Hessian matrix of $u$ at the corresponding points.
So  from \eqref{eq: det hess plan} we recover the well-known formula for the Gauss--Kronecker
curvature of $\partial_s\fconv$:
\newnew{$$\phi^{-1}=\left(1-\|\grad u\|^2\right)^{-1-d/2}\det \hess u~. $$}
\oldold{$$\phi^{-1}=\left(\sqrt{1-\|\grad u\|^2}\right)^{-1-d/2}\det \hess u~. $$}

\begin{lemma}\label{lem: conv unif sur compt}
 Let $H$ be a support function of an F-convex $\fconv$ set on $\fut(0)$. 
 Then there exists a sequence of $C^{\infty}_+$ support functions converging to $H$, uniformly on 
 any compact set.
\end{lemma}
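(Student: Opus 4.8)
The plan is to transfer the problem to the ball picture and there approximate by mollification together with a strictly convex correction. By the Corollary following the sublinearity lemma and by the characterization stated after \eqref{eq: hess plan}, support functions on $\operatorname{I}^+(0)$ correspond bijectively to convex functions on $B$, and the $C^\infty_+$ ones correspond exactly to $C^\infty$ functions on $B$ with positive definite Hessian. Convergence is compatible with the two pictures: if $h,h_n$ denote the restrictions to $B$ of $H,H_n$ and $h_n\to h$ uniformly on compact subsets of $B$, then $H_n\to H$ uniformly on compact subsets of $\operatorname{I}^+(0)$. Indeed a compact $C\subset\operatorname{I}^+(0)$ projects under $X\mapsto x/x_{d+1}$ to a compact subset $K'\subset B$ with $x_{d+1}$ bounded by some $M$ on $C$, and $H_n(X)-H(X)=x_{d+1}\big(h_n(x/x_{d+1})-h(x/x_{d+1})\big)$, so $\sup_C|H_n-H|\le M\sup_{K'}|h_n-h|\to 0$. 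Thus it suffices to approximate the convex function $h=H|_B$ uniformly on compacts by smooth functions with positive definite Hessian.

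The difficulty is purely at $\partial B$: a naive mollification of $h$ is only defined on compact subsets of $B$, whereas each approximant must be convex, hence defined, on \emph{all} of $B$ in order to be a genuine support function. I resolve this by first contracting the domain. Fix a standard nonnegative mollifier $\rho_\delta$ and sequences $r_n\uparrow 1$, $\delta_n\downarrow 0$, $\epsilon_n\downarrow 0$ with $\delta_n<1/r_n-1$. The function $x\mapsto h(r_n x)$ is convex on the ball of radius $1/r_n$, hence on a neighborhood of $\overline{B}$, so its convolution with $\rho_{\delta_n}$ is well defined and smooth on a neighborhood of $\overline{B}$. Set
\[
 h_n(x)=\big(h(r_n\,\cdot\,)*\rho_{\delta_n}\big)(x)+\epsilon_n\|x\|^2~.
\]
Each $h_n$ is smooth on $B$; it is convex as a sum of convex functions, since the convolution of a convex function with a nonnegative mollifier is convex, and its Hessian is positive definite because the last term contributes $2\epsilon_n\operatorname{Id}$ while the convolution term has positive semidefinite Hessian. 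Hence $h_n$ is the ball restriction of a $C^\infty_+$ support function $H_n$.

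It remains to check that $h_n\to h$ uniformly on any compact $K'\subset B$. On a fixed compact neighborhood of $K'$ in $B$ the convex function $h$ is uniformly continuous and Lipschitz with some constant $L$; therefore $h(r_n\,\cdot\,)\to h$ uniformly on $K'$, the mollification error $|h(r_n\,\cdot\,)*\rho_{\delta_n}-h(r_n\,\cdot\,)|$ is bounded by $L\delta_n\to 0$, and $\epsilon_n\|x\|^2\le\epsilon_n\to 0$. Combining these three estimates gives uniform convergence on $K'$, and lifting back to $\operatorname{I}^+(0)$ as above produces a sequence of $C^\infty_+$ support functions converging to $H$ uniformly on compacts, as claimed. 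The only genuine obstacle is the boundary behaviour, which is handled by the contraction $h(r_n\,\cdot\,)$ that creates a collar inside $B$ on which mollification is legitimate.
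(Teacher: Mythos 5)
Your proof is correct, and it takes a genuinely different route from the paper's. The paper works on the Legendre--dual side: it mollifies the graph function $\tilde u$ of $\partial K$ on $\R^d$ (getting $u_n=u\ast\phi_n$ with $|u_n-u|<1/n$ globally, since $\tilde u$ is $1$-Lipschitz) and then restores strict convexity with the barycentric perturbation $v_n=(1-\tfrac1n)u_n+\tfrac1n\sqrt{1+\|x\|^2}$. The cost of working on the graph side is that mollification and perturbation there do not obviously produce F-convex sets, nor do they obviously control support functions, so the paper needs its Facts 1--4 (the graphs of $u_n$ and of $v_n$ bound F-convex sets, $u_n-f$ is bounded above, and the support functions converge pointwise, hence uniformly on compacts by convexity of the extensions). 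You work instead directly on the support-function side, where F-convexity is free --- every convex function on $B$ is the ball restriction of the support function of an F-convex set, by the Corollary you cite --- and where the quantity to be controlled is exactly the object being mollified. The only obstruction on your side is the lack of room for mollification near $\partial B$, and your dilation trick $h(r_n\cdot)$, convex on the ball of radius $1/r_n>1$ so that convolution at scale $\delta_n<1/r_n-1$ is legitimate on a neighborhood of $\overline B$, handles it cleanly; the term $\epsilon_n\|x\|^2$, which is bounded on $B$ (unlike on $\R^d$, which is what forces the paper's more delicate hyperboloid perturbation), supplies a uniformly positive definite Hessian without spoiling local uniform convergence. The net effect is a shorter, more elementary argument that replaces the paper's four Facts with standard properties of convolution, at the price of giving up the paper's explicit geometric picture of entire space-like graphs with the intermediate stage $u_n$ globally within $1/n$ of $u$. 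One small point: you invoke the correspondence between $C^\infty_+$ sets and smooth ball support functions with positive definite Hessian, while the paper states this characterization at the $C^2$ level (after \eqref{eq: hess plan}); the smooth version does follow, since for $h$ smooth with $\Hess h>0$ the inverse Gauss map \eqref{eq:chi on disc} is a smooth parameterization of $\partial_s K$ with invertible differential, and this is the same implicit upgrade the paper itself makes for its $v_n$.
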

\begin{proof}
 Let $u:\R^d\to\R$ be the convex function whose graph is $\partial \fconv$.
 Let $\phi_n$ be smooth functions on $\R^d$, with compact support in a ball of radius $1/n$, 
 and with $\int_{\R^d }\phi_n \d x=1$. By the convolution properties,
 $$u_n(x)=(u\ast \phi_n)(x)=\int_{\R^d}u(x-y)\phi_n(y)\d y~, $$
are smooth  functions converging uniformly to $u$. 
More precisely since $u$ is $1$-Lipschitz, it is straightforward to check that
$|u(x)-u_n(x)|<1/n$.

\emph{Fact 1:  The graph of $u_n$ bounds an F-convex set.}
 Notice that $u_n$ is convex.
 Indeed, by convexity of $u$
 \begin{align*}
  u_n((1-t)x_1+tx_2)=\int_{\R^d}u((1-t)(x_1-y)+t(x_2-y))\phi_n(y)\d y\leq\\ \leq \int_{\R^d}(1-t)u(x_1-y)\phi_n(y)\d y +\int_{\R^d}t u(x_2-y)\phi_n(y)\d y=(1-t)u_n(x_1)+tu_n(x_2)~.
 \end{align*}
 
 Let $y\in \ball$.  
 As the graph of $u$ is an F-convex set, there exists $\alpha\in \R$ such that
for any $x\in \R^d$, 
$\langle \binom{x}{u(x)},\binom{y}{1}\rangle_-=\langle x,y\rangle - u(x)\leq \alpha$.
Since $|u(x)-u_n(x)|<1/n$ we deduce
$\langle \binom{x}{u_n(x)},\binom{y}{1}\rangle_-\leq \alpha+1/n$,
i.e. there is a support plane orthogonal to $(y,1)$.
\qd 

The functiont $u_n$ is smooth but its Hessian is not strictly positive, so in principle the inverse of the 
 Gauss map of the corresponding graph
 is not differentiable.
To fix this problem we consider the functions
  $$ v_n=\left(1-\frac{1}{n}\right)u_n+\frac{1}{n}f(x)$$
with $f(x)=\sqrt{1+\|x\|^2}$ the graph function of the upper hyperboloid. 
Clearly the Hessian of $v_n$ is strictly positive. On the other hand 
$v_n$ converges  to $u$ uniformly only on compact sets, so we cannot use the simple argument
above to show that the graph of $v_n$ bounds an F-convex set.
This problem is balanced by the following observation.

\emph{Fact 2: $(u_n-f)$ is uniformly bounded from above.}
The F-convex set bounded by the graph of $u_n$ contains the future cone of any of its points,
 so there exists a constant $M$ with $u_n(x)<M+\|x\|$, and $f(x)>\|x\|$,  and the result follows. \qd

 \emph{Fact 3:  $v_n$ is the graph of the boundary of an F-convex set.}
We want to prove that, given $y \in \ball$,  there exists $\alpha$ such that for any $x\in \R^d$,
$$\left\langle \binom{x}{v_n(x)},\binom{y}{1}\right\rangle_-\leq \alpha~. $$
The left-hand side of the above equation
is
$$\langle x,y\rangle-u_n(x)+\frac{1}{n}(u_n-f)(x)~. $$
 As $u_n$ bounds an F-convex set by Fact~1, there exists $\alpha'$ such that
 $\langle x,y\rangle-u_n(x)<\alpha'$. The result follows by Fact~2. \qd

 \emph{Fact 4: The support functions $h_n$ of the graph of $u_n$ converge pointwise to the support function 
 $h$ of the graph of $u$.}
Let $y\in \ball$, and let $\binom{x}{u(x)}$ be a point on the intersection of $\fconv$ and its support plane  orthogonal to $(y,1)$. 
Let  $\epsilon >0$.  First we show that  for $n$ sufficiently large, $h_n(y)\geq h(y)-\epsilon$.
Notice that
$$h_n(y)\geq\left\langle \binom{x}{v_n(x)},\binom{y}{1}\right\rangle_- =\left\langle \binom{x}{u(x)},\binom{y}{1}\right\rangle_-+(u(x)-v_n(x))~. $$
As  
$$h(y)=\left\langle \binom{x}{u(x)},\binom{y}{1}\right\rangle_-$$
we need
$$v_n(x)-u(x)<\epsilon~. $$
This is  true  for $n$ big as $v_n$ converges to $u$.

Let us prove that for $\epsilon >0$ and  $\forall x\in \R^d$,
$$\left\langle \binom{x}{v_n(x)},\binom{y}{1}\right\rangle_- < h(y)+\epsilon~.  $$
We have
$$\left\langle \binom{x}{u(x)},\binom{y}{1}\right\rangle_- < h(y)+\epsilon~,  $$
that gives, adding $v_n(x)-v_n(x)$ on the left-hand side and reordering the terms
$$\left\langle x,y\right\rangle_--v_n(x)<h(y)+u(x)-v_n(x)~. $$
But $u(x)-v_n(x)=u(x)-u_n(x)+\frac{1}{n}(u_n-f)(x)$, and the result follows by Fact~2 and because $u_n$ uniformly
converge to $u$.
\qd

By homogeneity, we obtain the 
convergence on the support functions on $\fut(0)$.
A classical property of convex functions says that the convergence is actually uniform on each compact set  \cite{HUL93,Roc97}.
 \end{proof}

\subsection{Area measure and Monge--Amp\`ere measure}

\old{
    Let $h$ be a convex function on $\ball$.
    For $x\in \ball$, the \emph{normal mapping}, or \emph{subdifferential} of $h$ at $x$, 
    denoted by $\partial h(x)$ \index{ $\partial h(x)$ the normal mapping}, is a subset of $ \R^d$ 
 defined as follow: it is the horizontal projection onto $\R^d$ of the set of inward (Lorentzian) normals with
 last coordinate equal to one of the support hyperplanes 
 of the graph of $h$ at the point $(x,h(x))$. In other terms,
 $p\in \partial h(x)$ if and only if, for all $y\in \ball$, $\langle \binom p 1, \binom{y-x}{h(y)-h(x)}\rangle_- \leq 0$, i.e.
$h(y)\geq h(x)+\langle p, y-x\rangle$.

 If  $\omega$ is a Borel set  of $\ball$, it turns out that the set $\partial h (\omega):=\cup_{x\in\omega} \partial h(x)$.
is still a Borel set of $\R^d$ \cite{Gu01}.

\begin{lemma}
Let $\gauss$ be the Gauss map of the F-convex set supported by $h$.
Let $\proj$ be the orthogonal projection from $\R^{d+1}$ onto  the horizontal plane. Then
 \begin{equation}\label{eq: proj subdif}
 \partial h(\rad^{-1}(\omega))=\proj(\gauss^{-1}(\omega))~.
\end{equation}
\end{lemma}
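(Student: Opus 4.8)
The plan is to reduce this set-valued identity to a pointwise statement and then to read it off from the Legendre--Fenchel duality established above. Since $\rad\colon B\to\H^d$ is a bijection, both sides of \eqref{eq: proj subdif} are unions over $\eta\in\omega$ of their fibers: by definition of the subdifferential on a set, $\partial h(\rad^{-1}(\omega))=\bigcup_{\eta\in\omega}\partial h(\rad^{-1}(\eta))$, while $G^{-1}(\omega)=\bigcup_{\eta\in\omega}G^{-1}(\eta)$ and $P$ commutes with unions. Hence I would show that it suffices to prove, for a single $\eta=\rad(x)\in\H^d$ with $x\in B$, the pointwise identity
\begin{equation*}
\partial h(x)=P\bigl(G^{-1}(\rad(x))\bigr)~.
\end{equation*}

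For the pointwise identity I would use that $\partial K$ is the graph of the convex $1$-Lipschitz function $\tilde u$, and that $\tilde u$ is the Legendre--Fenchel dual of the lower semicontinuous hull $\tilde h$ (which equals $h$ on $B$). By the standard duality of subdifferentials, for $x\in B$ one has $p\in\partial h(x)=\partial\tilde h(x)$ if and only if $x\in\partial\tilde u(p)$. The condition $x\in\partial\tilde u(p)$ means exactly that the affine hyperplane through $(p,\tilde u(p))$ of gradient $x$ supports the graph of $\tilde u$ from below; rewriting its equation as $\langle q,x\rangle-q_{d+1}=\mathrm{const}$ shows that its Minkowski normal is $\hat x=(x,1)$, so the associated future unit normal is $\rad(x)=\hat x/\lambda(x)\in\H^d$. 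Since $x\in B$, the vector $\hat x$ is time-like and the supporting plane is space-like, so $(p,\tilde u(p))\in\partial_s K$ with $\rad(x)\in G(p,\tilde u(p))$, i.e. $(p,\tilde u(p))\in G^{-1}(\rad(x))$; applying $P$ gives $p\in P(G^{-1}(\rad(x)))$. Conversely, every point of $\partial_s K$ with support normal $\rad(x)$ lies on the graph of $\tilde u$, hence has the form $(p,\tilde u(p))$ with $x\in\partial\tilde u(p)$, whence $p\in\partial h(x)$. This yields both inclusions.

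The remaining points are bookkeeping rather than genuine obstacles. First, I would check $\partial\tilde h(x)=\partial h(x)$ for $x\in B$, which holds because $x$ is interior to the set where $\tilde h$ is finite, so the subdifferential is computed from $h$ alone. Second, the set-valuedness of $G$ is harmless, being absorbed into the union over $\eta$. Consistency of the two sides is then automatic: $\partial h(x)\neq\emptyset$ because $x$ is interior to $B$, which matches the fact that $\rad(x)$ is attained as a support normal since $K$ is F-convex. The one step I expect to require the most care is the correct identification of the \emph{Minkowski} normal of the supporting hyperplane, which is precisely what makes $\rad(x)\in\H^d$ — rather than some Euclidean direction — the relevant point, and thereby ties the subgradient $x$ to the Gauss image $\rad(x)$.
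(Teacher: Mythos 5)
Your proof is correct and follows essentially the same route as the paper: both reduce to the pointwise statement, invoke the Legendre--Fenchel duality $\partial \tilde{u} = (\partial \tilde{h})^{-1}$, and identify the condition $x\in\partial\tilde{u}(p)$ with $(p,\tilde{u}(p))$ having support normal $\rad(x)$. You merely spell out the geometric identification of the Minkowski normal more explicitly than the paper's chain of equivalences does.
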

\begin{proof}
Let $\tilde{h}$ be the extension of $h$ to $\R^d$. The definition of $\partial \tilde{h}$ is clear.
Note that for $x\in \ball$, $\partial \tilde{h}(x)=\partial h(x)$.
If $\tilde{u}$ is the conjugate of $\tilde{h}$, we have \cite[11.3]{RW98}
\begin{equation}\label{eq:inv partial}\partial \tilde{u} = \left( \partial \tilde{h} \right)^{-1} \end{equation}
that has to be understood as: 
 $p\in \partial \tilde{h}(x)$ if and only if $x\in \partial\tilde{u}(p)$. So  $p\in \partial h(x)$ if and only if
  $p\in \partial \tilde{h}(x)$ if and only if $x\in \partial\tilde{u}(p)$ if and only if 
 $\hat{x}$ is an inward unit normal to $\fconv$ at $(p,\tilde{u}(p))$ if and only if $(p,\tilde{u}(p))\in \gauss^{-1}(\rad(x))$, 
if and only if $p\in \proj(\gauss^{-1}(\rad(x)))$. 
\end{proof}
}
Let $h$ be a convex function on $\ball$.
 The \emph{Monge--Amp\`ere measure} $\M(h)$ of $h$ is a Borel measure over $\ball$ defined by \index{Measure! $\M$ Monge--Amp\`ere measure}
$$\M(h)(\omega)=\L(\partial h(\omega))~, $$
where $\omega$ is any Borel subset of $\ball$.
The Monge--Amp\`ere measure of $h$
 is  finite
 on compact sets \cite[1.1.13]{Gu01}. As $\ball$ is
$\sigma$-compact (i.e., a countable
union of compact sets),  the Monge--Amp\`ere measure is a Radon measure on $\ball$.

Let us list some immediate properties.
\begin{itemize}
\item If $h_1$ and $h_2$ are convex functions, and coincide over $\omega$, then $\M(h_1)(\omega)=\M(h_2)(\omega)$. 
\item For $c> 0$, $\partial (c h)(x)=c\partial h(x)$, so
\begin{equation}\label{eq:MA homothetie}
 \M(c h)=c^{d} \M(h)~.
\end{equation}
\item If $A$ is an affine function, then $\partial (h+A)(\omega)$ is a translate
of $\partial (h)(\omega)$, so
\begin{equation}\label{eq:MA affine}
 \M( h+A)= \M(h)~.
\end{equation}
\item Let $h=\operatorname{max}\{h_1,h_2\}$ with $h_i$ a convex function on $\ball$.
Denote by $D_1$ the set where $h_1> h_2$ and $D_2$ the set where $h_2>h_1$.
Notice that $D_i$ are open and for each $x\in D_i$ the set of subdifferentials of  $h$ at $x$ coincides
 with the set of subdifferentials of $h_i$.
 Now let $E$ be where $h_1$ coincides with $h_2$. If $x\in E$ each subdifferential of $h_i$ at
 $x$ is also a subdifferential of $h$ at $x$.  It follows that
 \begin{equation}\label{eq: MA max}
  \M(\operatorname{max}\{h_1,h_2\})(\omega)\geq \operatorname{min}\{\M(h_1)(\omega),\M(h_2)(\omega)\}~.
 \end{equation}
\item If $h$ is $C^1$, then $\partial h(x)=\grad_xh$, and then
$$\M(h)(\omega)=\L(\grad h(\omega))~, $$
for which it can be deduced that  \cite[1.1.14]{Gu01}, if $h$ is $C^2$
\begin{equation}\label{eq:ma c2}\M(h)(\omega)= \int_{\omega}\det \mathrm{Hess}_x h \d x~.
\end{equation}
\item If $h$ is a convex function on $\ball$, by the Alexandrov Theorem, it is twice differentiable almost everywhere
and then the right-hand side of \eqref{eq:ma c2} is still meaningful. It is actually 
 the regular part of the Lebesgue decomposition of $\M(h)$ \cite[Lemma~2.3]{TW08}.
So, for a general convex function $h$ we get
\begin{equation}\label{eq:ma c3}\M(h)(\omega)\geq \int_{\omega}\det \mathrm{Hess}_x h \d x~.
\end{equation}
 \end{itemize}

Let $h$ be a convex function on $\ball$ and $\fconv$ be the corresponding F-convex set.
There is a precise relation between the Monge--Amp\`ere measure of $h$ and the area
measure $\A(\fconv)$.
To this aim it is convenient to  push forward the area measure $\A(\fconv)$  to $\ball$ by the radial
map $\rad^{-1}:\H^d\to \ball$. We denote by $\A(h)$ the corresponding area measure on $\ball$:
$$\A(h)(\omega)=\A(\fconv)(\rad(\omega))~. $$

 \begin{proposition}\label{prop: area=ma}
 Let $h$ be a convex function on $\ball$. Then $\A(h)$ and $\M(h)$
 are equivalent. More precisely,
  $$\A(h)=\lambda \M(h)~,\;  \M(h)=\frac{1}{\lambda} \A(h)~.$$
 \end{proposition}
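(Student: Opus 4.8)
The plan is to prove the identity $\A(h)=\lambda\M(h)$ first for support functions $h$ of class $C^2_+$, where it follows by a direct change of variables from the three formulas already at our disposal, and then to obtain the general case by approximation together with the weak continuity of both measures.

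For the smooth case, fix a Borel set $\omega\subset B$ and recall that by definition $\A(h)(\omega)=\A(K)(\rad(\omega))$. Since $h$ is $C^2_+$, formula \eqref{eq: area measure c2+} gives $\A(K)=\phi\,\d\H^d$ on $\H^d$, so that $\A(h)(\omega)=\int_{\rad(\omega)}\phi\,\d\H^d$. I would then transport this integral to $B$ through the radial map: by \eqref{eq:density vol ball} the hyperbolic volume reads $\d\H^d=\lambda^{-d-1}\L$, whence $\A(h)(\omega)=\int_{\omega}\phi(\rad(x))\,\lambda^{-d-1}(x)\,\d\L(x)$. Now \eqref{eq: det hess plan} says $\det\Hess_x h=\lambda^{-d-2}(x)\,\phi(\rad(x))$, i.e.\ $\phi(\rad(x))=\lambda^{d+2}(x)\det\Hess_x h$; substituting and cancelling powers of $\lambda$ leaves $\A(h)(\omega)=\int_{\omega}\lambda(x)\det\Hess_x h\,\d\L(x)$. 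Finally, since $h$ is $C^2$, formula \eqref{eq:ma c2} identifies $\det\Hess_x h\,\d\L$ with $\d\M(h)$, so $\A(h)(\omega)=\int_{\omega}\lambda\,\d\M(h)$, which is exactly $\A(h)=\lambda\M(h)$ in the $C^2_+$ case.

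For a general convex function $h$ on $B$, I would use Lemma~\ref{lem: conv unif sur compt} to choose a sequence $h_n$ of $C^\infty_+$ support functions converging to $h$ uniformly on compact subsets of $B$; recall that $C^\infty_+$ functions have positive definite Hessian, so the computation above applies to each $h_n$ and gives $\A(h_n)=\lambda\M(h_n)$. Fix a continuous $f$ with compact support in $B$. On the one hand $\int f\,\d\A(h_n)\to\int f\,\d\A(h)$ by the weak convergence of area measures, Lemma~\ref{lem: weak conv are fconv}. On the other hand, since $\lambda$ is continuous and bounded on the support of $f$, the product $f\lambda$ is again continuous with compact support, so $\int f\,\d\A(h_n)=\int f\lambda\,\d\M(h_n)\to\int f\lambda\,\d\M(h)$ by the weak convergence of Monge--Amp\`ere measures under local uniform convergence of convex functions. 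Comparing the two limits yields $\int f\,\d\A(h)=\int f\lambda\,\d\M(h)$ for every such $f$, hence $\A(h)=\lambda\M(h)$. Because $\lambda>0$ throughout $B$, this equality is equivalent to $\M(h)=\lambda^{-1}\A(h)$ and shows that the two measures are mutually absolutely continuous, proving the proposition.

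The main obstacle I anticipate is the limiting step, specifically justifying $\M(h_n)\to\M(h)$ weakly: this is the classical continuity of the Monge--Amp\`ere operator with respect to local uniform convergence of convex functions, which I would invoke from \cite{Gu01}. The only delicate point is that the two weak convergences (of $\A$ and of $\M$) must be made compatible through multiplication by the continuous weight $\lambda$; this is harmless precisely because $f$ has compact support in the \emph{open} ball, on which $\lambda$ is continuous and bounded, so no difficulty is incurred near $\partial B$.
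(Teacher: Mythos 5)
Your proposal is correct and follows essentially the same route as the paper's proof: the identical $C^2_+$ computation combining \eqref{eq: area measure c2+}, \eqref{eq: det hess plan}, \eqref{eq:density vol ball} and \eqref{eq:ma c2}, followed by approximation via Lemma~\ref{lem: conv unif sur compt}, weak convergence of $\M(h_n)$ from \cite[1.2.3]{Gu01} and of $\A(h_n)$ from Lemma~\ref{lem: weak conv are fconv}. Your explicit testing against compactly supported continuous functions weighted by $\lambda$ is just a spelled-out version of the paper's appeal to the uniqueness part of the Riesz representation theorem.
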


\begin{proof}
The second relation will follow from the first one, because $\lambda$ is positive on $\ball$.
 Let $h\in C^2(\ball)$ with positive definite Hessian.
From \eqref{eq: area measure c2+}, \eqref{eq: det hess plan} and \eqref{eq:density vol ball}
 we obtain $$\A(h)=\lambda \det (\hess h) \mathcal{L}$$ and the result follows from  \eqref{eq:ma c2}.
 
Now let $h$ be convex on $\ball$. From Lemma~\ref{lem: conv unif sur compt},
we can find  a sequence $(h_n)$ of elements of $C^2(\ball)$ with positive definite Hessian,
 converging uniformly to $h$ on compact sets. From \cite[1.2.3]{Gu01}, $\M(h_n)$ weakly converges  to $\M(h)$.
 From Lemma~\ref{lem: weak conv are fconv}, $\A(h_n)$ also weakly converge to $\A(h)$. 
 Since $\A(h_n)=\lambda \M(h_n)$, the result follows by the uniqueness part 
 of the Riesz Representation Theorem. 
\end{proof}

\begin{corollary}\label{cor: relation area ma}
 Let $c>0$. Then on $\ball$
 $$\A(h)\geq c (\rad^{-1})_*(\d\H^d) \Rightarrow \M(h)\geq c \L~. $$
 Conversely if $\M(h)\geq c\L$ then on any ball of radius $r<1$, 
 $\displaystyle\A(h)\geq c(1-r^2)^{(d+2)/2}\d\H^d$.
\end{corollary}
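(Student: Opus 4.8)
The plan is to derive both inequalities directly from the equivalence of measures established in Proposition~\ref{prop: area=ma}, namely $\A(h)=\lambda\M(h)$ and $\M(h)=\lambda^{-1}\A(h)$, together with the volume-density identity \eqref{eq:density vol ball}, which I rewrite as $\d\H^d=\lambda^{-d-1}\L$ and, equivalently, $\L=\lambda^{d+1}\d\H^d$. The only analytic input beyond these identities is the elementary estimate $0<\lambda\leq 1$ on $B$, sharpened to $\lambda\geq(1-r^2)^{1/2}$ on the ball of radius $r$. Throughout I read an inequality such as $\A(h)\geq c\,\d\H^d$ as an inequality of Radon measures, so that $\int f\,\d\A(h)\geq c\int f\,\d\H^d$ for every nonnegative Borel function $f$; this is precisely what allows me to multiply by the positive continuous weight $\lambda^{\pm1}$ and integrate.

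For the first implication I would start from $\A(h)\geq c\,\d\H^d=c\,\lambda^{-d-1}\L$ and apply the reweighting $\M(h)=\lambda^{-1}\A(h)$. Integrating the density $\lambda^{-1}$ against this measure inequality gives, for every Borel set $\omega\subset B$,
\[
\M(h)(\omega)=\int_\omega \lambda^{-1}\,\d\A(h)\geq c\int_\omega \lambda^{-d-2}\,\d\L\geq c\,\L(\omega),
\]
where the last inequality uses $\lambda^{-d-2}\geq 1$. This is the desired bound $\M(h)\geq c\L$.

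For the converse I reverse the roles: from $\M(h)\geq c\L$ and $\A(h)=\lambda\M(h)$ I obtain, for every Borel set $\omega$,
\[
\A(h)(\omega)=\int_\omega \lambda\,\d\M(h)\geq c\int_\omega \lambda\,\d\L=c\int_\omega \lambda^{d+2}\,\d\H^d,
\]
using $\L=\lambda^{d+1}\d\H^d$. Restricting to a ball of radius $r<1$, where $\lambda^{d+2}\geq(1-r^2)^{(d+2)/2}$, yields exactly $\A(h)\geq c(1-r^2)^{(d+2)/2}\,\d\H^d$ on that ball.

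I do not anticipate a genuine obstacle: the corollary is a bookkeeping consequence of Proposition~\ref{prop: area=ma}. The one point deserving care is the measure-theoretic justification for multiplying a measure inequality by the positive continuous weight $\lambda^{\pm1}$ and integrating against nonnegative functions; this is valid because $\mu\geq\nu$ implies $\int f\,\d\mu\geq\int f\,\d\nu$ for $f\geq0$, and it persists even when $\M(h)$ has a singular part, since reweighting by $\lambda^{\pm1}$ respects the Lebesgue decomposition.
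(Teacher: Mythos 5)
Your proof is correct and is exactly the argument the paper intends: the corollary follows immediately from Proposition~\ref{prop: area=ma} together with the identity $\d\H^d=\lambda^{-d-1}\L$ and the elementary bounds $\sqrt{1-r^2}\leq\lambda\leq 1$, which is precisely the computation you carry out (the paper leaves this unwritten). The only superfluous remark is the one about the Lebesgue decomposition, which is not needed since reweighting a measure inequality by the positive continuous density $\lambda^{\pm 1}$ is valid without reference to any decomposition.
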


Let us denote by $\partial_{\mathrm{reg}}\fconv$ \index{$\partial_{\mathrm{reg}}\fconv$ }
the regular part of the  space-like boundary of $\fconv$, that is the set of 
points such that the Gauss map $\gauss$ of $\fconv$ is well defined. Equivalently $\partial_{\mathrm{reg}}\fconv$ 
is the differentiable part of $\partial_s\fconv$, i.e.~the set of points where $\fconv$ admits a  unique space-like support plane. 
 Notice that $\partial_{\mathrm{reg}}\fconv$ is the intersection of the region of differentiable points of $\partial\fconv$ and
 $\partial_s\fconv$. In general it can be strictly   smaller than the the region of differentiable points in 
the boundary of $\fconv$. For instance if $\fconv=\fut(0)$ then $\partial_{\mathrm{reg}}\fconv$ is empty.

  \begin{proposition}\label{prop: area zero not c1}
   Let $\fconv$ be an F-convex set. The Gauss image
   of  $\partial_s\fconv\setminus \partial_{\mathrm{reg}} \fconv$  has zero area measure. 
    \end{proposition}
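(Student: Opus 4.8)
The plan is to push everything to the ball $B$ through the support function $h$ of $K$ and to exploit the correspondence between the area measure and the Monge--Amp\`ere measure. Let $N$ be the Gauss image of $\partial_sK\setminus\partial_{\mathrm{reg}}K$, and let $E\subset B$ be the set of $x$ such that \emph{every} point of $\partial_sK$ admitting $\rad(x)$ as a support normal is non-regular; thus $\rad(E)$ is precisely the set of directions realized only at non-regular points, and the content of the proposition is that $\A(K)$ is concentrated on $G(\partial_{\mathrm{reg}}K)$, i.e. $\A(K)(\rad(E))=0$. Since $\A(K)(\rad(E))=\A(h)(E)$ and, by Proposition~\ref{prop: area=ma}, $\A(h)=\lambda\,\M(h)$ with $\lambda>0$ on $B$, it is enough to prove $\M(h)(E)=0$.

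First I would set up the dictionary between $h$ and the graph function. Let $\tilde h$ be the lower semicontinuous extension of $h$ to $\R^d$ and $\tilde u$ its Legendre--Fenchel dual, whose graph is $\partial K$; on $B$ one has $\partial h=\partial\tilde h$, and the inversion \eqref{eq:inv partial} reads $p\in\partial h(x)\iff x\in\partial\tilde u(p)$. Together with \eqref{eq: proj subdif} this identifies the boundary points carrying $\rad(x)$ as a normal with the points $(p,\tilde u(p))$ for $p\in\partial h(x)$, and shows that such a point is regular exactly when $\partial\tilde u(p)=\{x\}$, i.e. when $\tilde u$ is differentiable at $p$.

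The key step is the inclusion $\partial h(E)\subseteq\Sigma$, where $\Sigma\subset\R^d$ is the set of points of non-differentiability of $\tilde u$. Indeed, if $x\in E$ then for every $p\in\partial h(x)$ the boundary point $(p,\tilde u(p))$ is non-regular, so $\partial\tilde u(p)$ is not a singleton, i.e. $p\in\Sigma$; hence $\partial h(x)\subseteq\Sigma$ for all $x\in E$, and therefore $\partial h(E)\subseteq\Sigma$. Since $\tilde u$ is convex and $1$-Lipschitz on $\R^d$, Rademacher's theorem gives $\L(\Sigma)=0$. From the definition of the Monge--Amp\`ere measure, $\M(h)(E)=\L(\partial h(E))\le\L(\Sigma)=0$, which is what we wanted.

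I expect the main obstacle to be purely bookkeeping: keeping the set-valued maps $\partial h$ and $\partial\tilde u$ straight across the duality so that the single inclusion $\partial h(E)\subseteq\Sigma$ comes out cleanly, and checking the measurability needed to write $\M(h)(E)=\L(\partial h(E))$ (handled via the Borel regularity of $\M(h)$, or by replacing $E$ with a Borel set having the same subdifferential image). Once the inclusion is established the statement collapses to Rademacher's theorem for the graph function $\tilde u$, so there is no genuine analytic difficulty beyond that.
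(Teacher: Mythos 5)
Your proof is correct, and at the level of tools it is the same as the paper's: reduce to the Monge--Amp\`ere measure via Proposition~\ref{prop: area=ma}, pass to the graph function $\tilde u$ through the duality \eqref{eq:inv partial}--\eqref{eq: proj subdif}, and conclude by almost-everywhere differentiability of the convex function $\tilde u$. The genuine difference lies in \emph{which} set is pushed through the subdifferential, and this difference is essential, not cosmetic. The paper takes $S=\rad^{-1}\left(G(\partial_sK\setminus\partial_{\mathrm{reg}}K)\right)$ and asserts that $\partial h(S)$ equals the projection of the non-differentiable points of $\partial_sK$; only the inclusion $\supseteq$ holds in general, because $G^{-1}\left(G(\partial_sK\setminus\partial_{\mathrm{reg}}K)\right)$ may contain regular points as well: a normal attained at a non-regular point can also be attained at regular ones.

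In fact the literal statement of the Proposition fails, so your reformulation is not a weakening but the necessary correction. Take $K=\{(p,z)\in\R^d\times\R \mid z\geq \max(\|p\|-1,0)\}$: this is an F-convex set with ball support function $h(x)=\|x\|$, so $\M(h)$ is the Dirac mass at $0$ of total mass $\L(B^d)$, and by Proposition~\ref{prop: area=ma} the area measure $\A(K)$ is a positive multiple of the Dirac mass at $\eta_0=\rad(0)$. But $\eta_0$ is a support normal at every edge point $(p_0,0)$, $\|p_0\|=1$, and all these points are non-regular; hence $G(\partial_sK\setminus\partial_{\mathrm{reg}}K)$ carries the \emph{whole} area measure, and in this example $\partial h(S)$ is the closed unit ball, not the (null) projection of the non-differentiable points. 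Your set $E$ --- the directions attained \emph{only} at non-regular points --- is exactly what repairs the argument: for $x\in E$ every $p\in\partial h(x)$ satisfies $x\in\partial\tilde u(p)$ with $\partial\tilde u(p)$ not a singleton, so the inclusion $\partial h(E)\subseteq\Sigma$ genuinely holds and Rademacher finishes. Moreover the statement you prove, namely that $\A(K)$ is concentrated on $G(\partial_{\mathrm{reg}}K)$, is precisely the form in which the Proposition is used later in the paper (the implication from emptiness of $\partial_{\mathrm{reg}}K$ to $\A(K)=0$ in Proposition~\ref{prop:a=0}, and the claim that ``the Gauss image of the set of regular points has full area measure'' in Lemma~\ref{lem:pte pt reg}), so nothing downstream is lost. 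Your measurability caveat is indeed mere bookkeeping: it suffices that every Borel subset of $E$ has vanishing Monge--Amp\`ere measure, which your inclusion gives directly.
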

  \begin{proof}
  Let $\hat E$ be the Gauss image in $\H^d$ of the non-regular points of $\partial_s\fconv$, and denote by
  $E$ its radial projection to $\ball$.
  By Proposition~\ref{prop: area=ma} and the definition of the Monge--Amp\`ere measure, it suffices to show that 
  the Lebesgue measure of $\partial h(E)$ is zero.
 From  \eqref{eq: proj subdif}, $\partial h(E)$ is the projection onto $\R^d$ of the non-differentiable points of $\partial_s \fconv$.
As $\partial \fconv$ is the graph of a convex function, the set of non-differentiable points has zero measure.
  \end{proof}

Notice that Proposition \ref{prop: area zero not c1}  implies that if $\partial_{\mathrm{reg}}\fconv$ is empty, the area measure is $0$. 
In Section~\ref{sec:ext sphere} we will give a precise characterization of F-convex sets with empty regular boundary.

Another simple remark implied by Proposition~\ref{prop: area zero not c1} is the following:

\begin{corollary}\label{cor: area grad}
 Let $\fconv$ be an F-convex set,  with Gauss map $\gauss$ and $\partial \fconv$ the graph of a function $\tilde{u}$. Then 
 $$\A(\fconv)(\omega)=\int_{\proj(\gauss^{-1}(\omega))} \sqrt{1-\|\grad \tilde{u}\|^2} \d x~,$$
 where $\proj:\mink\to\R^d$ is the orthogonal projection.
\end{corollary}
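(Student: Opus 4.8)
The plan is to reduce the statement to the change-of-variables identity $\M(h)=(\grad\tilde u)_*\L$ and then to transport the resulting measure through the radial map. Write $h$ for the restriction to $B$ of the support function of $K$. Proposition~\ref{prop: area=ma} gives $\A(h)=\lambda\M(h)$, and since $\A(h)(\rad^{-1}(\omega))=\A(K)(\omega)$ holds by the very definition of $\A(h)$, the quantity to compute becomes
$$\A(K)(\omega)=\int_{\rad^{-1}(\omega)}\lambda\,\d\M(h)~.$$
So everything reduces to understanding the measure $\lambda\,\d\M(h)$ once it is pushed forward by the inverse gradient.

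The key step is to identify $\M(h)$ with the image of Lebesgue measure under $\grad\tilde u$. By definition $\M(h)(S)=\L(\partial h(S))$ for any Borel $S\subset B$. Since $\tilde u$ is convex and $1$-Lipschitz, Rademacher's theorem guarantees it is differentiable off a Lebesgue-null set $N\subset\R^d$, and on $\R^d\setminus N$ the subdifferential $\partial\tilde u(p)$ reduces to the single point $\grad_p\tilde u$. The inverse relation \eqref{eq:inv partial}, namely $\partial\tilde u=(\partial\tilde h)^{-1}$, then shows that for a.e.\ $p$ one has $p\in\partial h(x)\iff x=\grad_p\tilde u$; hence $\partial h(S)$ and $(\grad\tilde u)^{-1}(S)=\{p:\grad_p\tilde u\in S\}$ coincide up to a null set, and therefore $\M(h)(S)=\L\big((\grad\tilde u)^{-1}(S)\big)$, i.e.\ $\M(h)=(\grad\tilde u)_*\L$. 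Here Proposition~\ref{prop: area zero not c1} plays its role: it ensures that the non-regular part of $\partial_sK$, whose radial image carries zero area measure and hence zero Monge--Amp\`ere mass, can be discarded, so that replacing the full set-valued subdifferential by the a.e.-defined $\grad\tilde u$ loses nothing.

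With this identity in hand, the pushforward formula applied to $f=\lambda\cdot\mathbf 1_{\rad^{-1}(\omega)}$ gives
$$\int_{\rad^{-1}(\omega)}\lambda\,\d\M(h)=\int_{\R^d}\lambda(\grad_p\tilde u)\,\mathbf 1_{\rad^{-1}(\omega)}(\grad_p\tilde u)\,\d p=\int_{(\grad\tilde u)^{-1}(\rad^{-1}(\omega))}\sqrt{1-\|\grad\tilde u\|^2}\,\d p~,$$
using $\lambda(y)=\sqrt{1-\|y\|^2}$. Finally \eqref{eq: proj subdif} identifies $\partial h(\rad^{-1}(\omega))=P(G^{-1}(\omega))$, and by the previous paragraph this set agrees up to measure zero with $(\grad\tilde u)^{-1}(\rad^{-1}(\omega))$, which yields the claimed formula.

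I expect the main obstacle to be the rigorous justification of $\M(h)=(\grad\tilde u)_*\L$ for a merely convex, possibly non-smooth $h$: one must control the exceptional sets (the non-differentiability points of $\tilde u$ and the non-regular boundary points of $K$) and check that they are genuinely $\L$-null on both sides of the identity. This is exactly what Rademacher's theorem and Proposition~\ref{prop: area zero not c1} supply, so no extra approximation should be required. Alternatively, one could first establish the identity for $C^2_+$ support functions by the ordinary change of variables (Jacobian $\det\Hess h$, inverted through the Legendre transform) and then pass to the limit via Lemma~\ref{lem: conv unif sur compt} together with weak convergence of the associated measures; but the measure-theoretic route above seems the most direct.
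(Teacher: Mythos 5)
Your proof is correct and follows essentially the same route as the paper: both rest on the identity $\M(h)=(\grad\,\tilde u)_*\mathcal{L}$ obtained from the subdifferential inversion \eqref{eq:inv partial}, combined with Proposition~\ref{prop: area=ma} and the identification \eqref{eq: proj subdif}. The only difference is organizational --- the paper first treats the case where $\tilde u$ is $C^1$ and then disposes of the exceptional set via Rademacher's theorem and Proposition~\ref{prop: area zero not c1}, whereas you run the almost-everywhere argument in a single pass (where, in fact, Rademacher alone already suffices) --- and this works equally well.
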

\begin{proof}
 Suppose that $\tilde{u}$ is $C^1$   and
 $h$ is the support function of $\fconv$ on $\ball$. Let $b=\rad^{-1}(\omega)$.
  Then $$\M(h)(b)=\mathcal{L}(\partial h (b))=\mathcal{L}(\partial \tilde{h} (b)) 
  \stackrel{\eqref{eq:inv partial} }{=}\mathcal{L}((\grad \tilde{u})^{-1}(b))=
  (\grad \tilde{u})_{*} \mathcal{L}(b)~,$$
 so 
 $$\A(\fconv)(\omega)=\int_b \lambda \d (\grad \tilde{u})_{*} \mathcal{L} = \int_{\partial h (b)} (\lambda\circ \grad \tilde{u}) \d x 
 = \int_{\proj(\gauss^{-1}(\omega))} \sqrt{1-\|\grad \tilde{u} \|^2} \d x~. $$
 
Now for any convex $h$, $\tilde{u}$ is convex and by Rademacher Theorem, $\| \grad \tilde{u} \|$ is continuous except on 
a zero measure set $N$. 
Let us define the following measure on $\H^d$:   $$\mu(\omega)=\int_{\proj(\gauss^{-1}(\omega)) } \sqrt{1-\|\grad \tilde{u} \|^2} \d x~.$$ 
By Proposition~\ref{prop: area zero not c1} $\A(\fconv)(N)=0$, so $\A(\fconv)$ and $\mu$ coincide on $N$.
Out of $N$ they also coincide by the argument above.  
\end{proof}

 \begin{corollary}\label{cor: area C1}
  Let $\fconv$ be an F-convex set with $\partial_s \fconv$ a $C^1$ hypersurface. Then for  any Borel set $\omega$ of $\H^d$, $\A(\fconv)(\omega)$ is the measure of the inverse image of $\omega$ by the Gauss map,
  for the measure given by the induced Riemannian metric on $\partial_s \fconv$.
 \end{corollary}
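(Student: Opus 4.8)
The plan is to identify the integrand in Corollary~\ref{cor: area grad} with the Riemannian volume element of the metric induced on $\partial_s K$, so that the statement reduces to that corollary. Since $\partial K$ is the graph of the convex function $\tilde u$ over $\R^d$ and $\partial_s K$ is $C^1$, the space-like part $\partial_s K$ is the graph of the $C^1$ restriction $\tilde u|_D$ over the open set $D=P(\partial_s K)\subset\R^d$, with $\|\grad\tilde u\|<1$ on $D$ because the support planes are space-like.

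First I would compute the induced metric on this graph. Parametrizing $\partial_s K$ by $F(x)=(x,\tilde u(x))$ for $x\in D$, the coordinate fields are $\partial_iF=e_i+(\partial_i\tilde u)\,e_{d+1}$, so the induced metric has components $g_{ij}=\langle\partial_iF,\partial_jF\rangle_-=\delta_{ij}-\partial_i\tilde u\,\partial_j\tilde u$, that is $g=\mathrm{Id}-\grad\tilde u\otimes\grad\tilde u$. The matrix determinant lemma gives $\det g=1-\|\grad\tilde u\|^2>0$, confirming that the metric is Riemannian and that its volume element is $\sqrt{1-\|\grad\tilde u\|^2}\,\d x$.

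Then, since $P$ restricts to a $C^1$ diffeomorphism from the graph $\partial_s K$ onto $D$ (its inverse being $x\mapsto(x,\tilde u(x))$), for any Borel set $\omega\subset\H^d$ the set $G^{-1}(\omega)\subset\partial_s K$ is carried bijectively onto the measurable set $P(G^{-1}(\omega))\subset D$, measurability following from the continuity of $G$ on the $C^1$ surface. The change of variables formula for the Riemannian measure then yields that the induced Riemannian volume of $G^{-1}(\omega)$ equals $\int_{P(G^{-1}(\omega))}\sqrt{1-\|\grad\tilde u\|^2}\,\d x$, which is exactly $\A(K)(\omega)$ by Corollary~\ref{cor: area grad}.

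The only delicate points are the elementary determinant identity $\det(\mathrm{Id}-vv^{\top})=1-\|v\|^2$, which is the heart of the identification, and checking that $P$ is genuinely a diffeomorphism onto $D$ so that the area formula applies verbatim; both are immediate once the $C^1$ hypothesis is in force, so I expect no serious obstacle beyond bookkeeping the measurability of $P(G^{-1}(\omega))$.
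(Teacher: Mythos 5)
Your proposal is correct and follows essentially the same route as the paper: both compute the first fundamental form of the graph parametrization, $\operatorname{I}=\mathrm{Id}_d-\d \tilde u\otimes \d \tilde u$, identify the Riemannian volume element as $\sqrt{1-\|\grad \tilde u\|^2}\,\mathcal L$ via the determinant identity, and conclude by Corollary~\ref{cor: area grad}. The extra remarks on measurability and the projection being a diffeomorphism are harmless bookkeeping that the paper leaves implicit.
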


  \begin{proof}
  Let $\partial_s \fconv$ be the graph of $u$. For two tangent vectors $U,V$ in $\R^d$, the first fundamental form 
of $\partial_s\fconv$ is given by
$$\operatorname{I}(U,V)=\left\langle \binom{U}{\langle \grad_xu,U\rangle},\binom{V}{\langle \grad_xu,V\rangle} \right\rangle_-=\langle U,V\rangle - \langle \grad_x u,U\rangle\langle \grad_xu ,V\rangle$$
   so $\operatorname{I}=\mathrm{Id}_d-\d u\otimes \d u$. So the area element is $\sqrt{\det \operatorname{I}}\mathcal{L}=\sqrt{1-\|\grad u\|^2}\mathcal{L}$ and the result follows from Corollary~\ref{cor: area grad}. 
  \end{proof}

  \begin{remark}[\textbf{Euclidean area}]\rm{ \label{rem: euclidean area} To define the Monge--Amp\`ere measure,
 we used the Lorentzian normal vectors. Classically the definition is given by taking the Euclidean outward unit normal vectors with 
last coordinate equal to $-1$. 
 Actually, if $\hat{x}=\binom{x}{1}$ is a Lorentzian  inward normal vector  of some space-like hypersurface $S$ in $\R^{d+1}$,  
 then $\binom{x}{-1}$ is a Euclidean outward  normal vector.
 So the definition of the Monge--Amp\`ere measure is independent of this choice.
 
 Considering a convex function $h$ on $\ball$, it can be seen as the restriction to $\ball \times \{1\}$ of the Lorentzian extended support function 
 of an F-convex set $\fconv$, or the restriction to $\ball\times \{-1\}$ of the Euclidean extended support function 
 (defined on the past cone) of an unbounded convex set $\fconv'$.
 Actually $\fconv=\{p\in \R^{d+1}|\langle p, \binom x 1 \rangle_- \leq h(x), \forall x\in \ball\}$ and  $\fconv'=\{p\in \R^{d+1}|\langle p, \binom{x}{-1} \rangle_{d+1} \leq h(x), \forall x\in \ball\}$,
 with $\langle \cdot, \cdot \rangle_{d+1}$ the usual scalar product of $\R^{d+1}$, so $\fconv=\fconv'$.
 
  If we denote by $\A^e(h)$ the Euclidean area measure of $\fconv$, we have in the same manner as above (or see e.g.~\cite{CY77})
 $$\A^e(h)=\lambda^e \M (h)~,\qquad\text{where } \lambda^e(x)=\sqrt{1+\|x\|^2}~.$$ 
 In particular the Euclidean area measure and the Lorentzian area measure are equivalent. 
 Heuristically, this is not surprising: if $\omega$ is  a Borel set of $\ball$,  the pre-image
 by the Lorentzian or the Euclidean Gauss map of $\omega$ gives the same set on $\partial \fconv$.
 This because the hyperplane  defined by the equation  $\langle p,\hat{x}\rangle_-=h(x)$ coincides with  the 
 hyperplane  defined by the equation $\langle p,\hat{x}\rangle_{d+1}=h(x)$.  
 
 The fact that  area measure is null on $\omega$ means that for any small parallel displacement of the support planes defined by 
 $\omega$, the first order variation of the volume is zero. Parallel displacement and volume 
 determined on $\R^{d+1}$ by the Euclidean metric and by the Minkowski metric coincide
  (although the parallel  displacement used in the definition of the area measure
   is not in the same direction in each case).
 It also follows that the Lorentzian area measure and the Hausdorff measure are equivalent
 \cite[Theorem~4.2.5]{sch93}.
 }  \end{remark}

\subsection{Support functions with continuous extension on the sphere}\label{sec:ext sphere}
 
Let $g\in C^0(\partial \ball)$. A \emph{$g$-convex set}\index{Convex set! $g$-convex set}\label{g} is an F-convex set $\fconv$ with  support
function $\tilde{h}$  on $\R^d$ such that $\tilde{h}_{|\partial \ball}=g$.
  Not all F-convex sets are $g$-convex sets for some $g$ as $h=\lambda^{-1}$ shows. The following result gives 
 a sufficient condition. 
 \begin{proposition}[{\cite{li95}}]\label{prop: li}
  Let $\fconv$ be a $C^2_+$ F-convex set with  mean radius of curvature bounded from above.
  Then $\tilde{h}_{|\partial \ball}$  is continuous.
 \end{proposition}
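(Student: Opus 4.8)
The plan is to reduce the statement to the concavity of a single auxiliary function on the ball $B$, and then to read off the continuity of $\tilde h|_{\partial B}$ from a semicontinuity argument. First I would convert the hypothesis on the \emph{mean} radius of curvature into a pointwise bound on \emph{all} the radii of curvature. Since $K$ is $C^2_+$, Lemma~\ref{lm:c2+} gives $\nabla^2\oh-\oh g_\H>0$, so the radii of curvature $r_1,\dots,r_d$, i.e.\ the eigenvalues of $\nabla^2\oh-\oh g_\H$ (see \eqref{eqref:det hess}), are all positive. If the mean radius of curvature $\varphi=\tfrac1d\sum_i r_i$ is bounded above by a constant $C$, then each $r_i\le\sum_j r_j=d\varphi\le dC=:r_0$, so that, as quadratic forms on $\H^d$,
\[
 \nabla^2\oh-\oh g_\H\le r_0\,g_\H .
\]

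The crucial step is that this bound forces $h+r_0\lambda$ to be concave on $B$. By \eqref{eq: hess plan},
\[
 \mathrm{Hess}_x h=\lambda(x)\,(\nabla^2\oh-\oh g_\H)\bigl(D_x\rad\,\cdot,\,D_x\rad\,\cdot\bigr).
\]
Applying the same identity to the hyperboloid $K(\H)$ of radius $1$, whose restriction to $B$ is $-\lambda$ and whose restriction to $\H^d$ is the constant $-1$, gives $\mathrm{Hess}_x\lambda=-\lambda(x)\,g_\H\bigl(D_x\rad\,\cdot,\,D_x\rad\,\cdot\bigr)$. Combining the two identities,
\[
 \mathrm{Hess}_x\bigl(h+r_0\lambda\bigr)=\lambda(x)\,\bigl(\nabla^2\oh-\oh g_\H-r_0\,g_\H\bigr)\bigl(D_x\rad\,\cdot,\,D_x\rad\,\cdot\bigr)\le 0,
\]
because $\lambda>0$ on $B$ and $D_x\rad$ is an isomorphism onto $T\H^d$. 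Hence $A:=h+r_0\lambda$ is concave on $B$; in particular it is bounded above, which already yields that $\tilde h$ is finite on $\overline B$.

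It then remains to show that $\tilde h|_{\partial B}$ is continuous, and I would prove that it is simultaneously lower and upper semicontinuous. Lower semicontinuity is immediate: $\tilde h$ is convex and lower semicontinuous on $\overline B$, so its restriction to the closed set $\partial B$ is lower semicontinuous. For upper semicontinuity I exploit the concavity of $A$. Let $\hat A$ be the upper semicontinuous extension of $A$ to $\overline B$ (its value at $\ell\in\partial B$ being the limit of $A$ along any segment from an interior point, which exists by concavity). Since $r_0\lambda$ is continuous on $\overline B$ and vanishes on $\partial B$, $A$ has the same segment limits as $h$; applying the boundary formula recalled before the statement to $h$ (giving $\tilde h(\ell)$) and to the convex function $-A$ (giving $\hat A(\ell)$ after changing sign), one gets for every $\ell\in\partial B$
\[
 \hat A(\ell)=\lim_{t\downarrow0}A(\ell+t(x-\ell))=\lim_{t\downarrow0}h(\ell+t(x-\ell))=\tilde h(\ell).
\]
Therefore $\tilde h=\hat A-r_0\lambda$ on $\overline B$; as $\hat A$ is upper semicontinuous and $r_0\lambda$ is continuous, $\tilde h$ is upper semicontinuous. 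Being both lower and upper semicontinuous on $\partial B$, it is continuous there.

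The main obstacle is precisely this upper semicontinuity, that is, ruling out that $h$ jumps upward as one approaches $\partial B$ along non-radial directions; this is exactly what the concavity of $h+r_0\lambda$ prevents. Without a curvature bound a convex $h$ may even blow up near $\partial B$, as the example $h=\lambda^{-1}$ preceding the statement shows, so some hypothesis is genuinely needed. The bound on the radii of curvature is the quantitative input that tames the boundary behaviour, by sandwiching $\mathrm{Hess}_x h$ between $0$ and $-r_0\,\mathrm{Hess}_x\lambda$; the only point requiring care in a full write-up is the justification that the segment-limit values of a concave function give its upper semicontinuous extension, which follows from the cited properties of lower semicontinuous convex functions applied to $-A$.
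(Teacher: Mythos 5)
Your proof is correct, and it is worth pointing out that the paper itself contains no proof of this statement: Proposition~\ref{prop: li} is quoted from \cite{li95}, so what you have written is a genuinely self-contained alternative built only from tools the paper develops. Each step checks out. The passage from the mean-radius bound to the uniform bound $r_0=dC$ on all radii is legitimate because the $C^2_+$ hypothesis (Lemma~\ref{lm:c2+}) makes every radius of curvature positive; the resulting inequality $\nabla^2\oh-\oh g_\H\le r_0\,g_\H$ transfers to the ball through \eqref{eq: hess plan}, and comparing with the hyperboloid ($\oh\equiv -1$, $h=-\lambda$) indeed gives $\mathrm{Hess}_x(h+r_0\lambda)\le 0$, so $A=h+r_0\lambda$ is concave on $B$. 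The semicontinuity bookkeeping is also sound: $\tilde h$ is lsc as a supremum of linear functions, and the identity $\tilde h=\hat A-r_0\lambda$ on $\overline B$ (valid because the radial limits of $A$ and $h$ coincide at each $\ell\in\partial B$, $\lambda$ being continuous and vanishing there) exhibits $\tilde h$ as usc, since the radial-limit extension $\hat A$ of the concave function $A$ is its usc hull --- this is the closure theorem for convex functions applied to $-A$ extended by $+\infty$ off $B$, i.e.\ the same fact the paper invokes for $\tilde h$ in the display preceding the statement, so no new input is needed. Two further observations: your argument actually proves more than asserted, namely continuity of $\tilde h$ on all of $\overline B$ rather than only of its restriction to $\partial B$, which is exactly the form in which the result is used later (Proposition~\ref{prop:g tau}); and your closing remark correctly identifies where the hypothesis is indispensable, since for a general convex $h$ on $B$ the boundary trace of the lsc hull can fail to be usc even when it is finite, so the upper bound on the Hessian (equivalently, the sandwich $0<\mathrm{Hess}_x h\le -r_0\,\mathrm{Hess}_x\lambda$) is what rules out upward jumps along $\partial B$.
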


For any $g\in C^0(\partial \ball)$, there is a distinguished $g$-convex set, that can be introduced in many equivalent ways. 
They are listed in the following proposition.

\begin{proposition}\label{prop:a=0}
Let $\fconv$ be an F-convex set with support function $h\in C^0(\overline \ball)$, Gauss map $\gauss$, and set $g=h|_{\partial \ball}$. 
The following are equivalent:
\begin{enumerate}[nolistsep]
\item $h$ is the \emph{convex envelope} of $g$, i.e.
$h(x)=\mathrm{sup}\{l(x) | l \mbox{ affine and } l\leq g \mbox{ on } \partial \ball \} ;$ \label{car1}
\item $\A(\fconv)=0$;\label{car2}
\item $\partial_{\mathrm{reg}}\fconv$ is empty; \label{car3}
\item $\forall p\in \partial_s \fconv$, $\rad^{-1}(\gauss(p))$ is the convex hull of points of $\partial \ball$;\label{car4}
\item $\fconv=\bigcap_{\ell \in \partial \ball} \overline{\fut(P_g(\ell))}$, 
with $P_g(\ell)$ are the null hyperplane defined by $\langle \cdot,\hat{\ell}\rangle_- = g(\ell)$;
 \label{car6}
\item any $g$-convex set is contained in $\fconv$; \label{car5}
\item the interior of $\fconv$ union $\partial_s \fconv$ is the \emph{domain of dependence} (or Cauchy development) of 
the boundary $S$ of any $g$-convex set: it is the set of points $p$ of $ \mink$ 
such that all inextensible causal curve through $p$ meet 
$S$. \label{car7}
\end{enumerate}
\end{proposition}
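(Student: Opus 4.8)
The plan is to organize the seven conditions into an \emph{intrinsic} block (1)--(4), controlled by the Monge--Amp\`ere measure of $h$, and a \emph{maximality} block (5)--(7), controlled by the duality between support functions and intersections of futures of null hyperplanes. The constant bridge between the two measure-theoretic notions is Proposition~\ref{prop: area=ma}: since $\A(h)=\lambda\,\M(h)$ with $\lambda>0$ on $B$, one has $\A(K)=0$ if and only if $\M(h)=0$. I would close the intrinsic block through the chain $(1)\Rightarrow(4)\Rightarrow(3)\Rightarrow(2)\Rightarrow(1)$ and the maximality block through $(1)\Leftrightarrow(5)\Leftrightarrow(6)$ and $(5)\Leftrightarrow(7)$.

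For the intrinsic chain I would first note that for $p\in\partial_sK$, writing $l_p(x)=\langle P(p),x\rangle-p_{d+1}$, the set $\rad^{-1}(G(p))=\{x\in B:\ h(x)=l_p(x)\}$ is exactly the contact set of the supporting affine function $l_p$. If $h$ is the convex envelope of $g$ (condition (1)), then $l_p$ realizes the supremum defining $h$, so its closed contact set in $\overline B$ is the convex hull of the boundary points of $\partial B$ at which $l_p$ meets $g$; this is (4). Such a convex hull meets the open ball only if it has more than one vertex, so $\rad^{-1}(G(p))$ is never a single point, $G$ is nowhere single-valued, and $\partial_{\mathrm{reg}}K=\emptyset$, which is (3). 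Then $(3)\Rightarrow(2)$ is Proposition~\ref{prop: area zero not c1} and the remark following it. Finally, for $(2)\Rightarrow(1)$ I would use that the convex envelope $h^{*}$ of $g$ is the \emph{largest} convex function on $\overline B$ with boundary values $\le g$ --- every affine minorant of a competitor is $\le g$ on $\partial B$, hence below $h^{*}$ --- so $h\le h^{*}$ with common boundary value $g$. As both solve $\M(\cdot)=0$ with the same continuous boundary data, the classical comparison principle for the Monge--Amp\`ere equation \cite{Gu01} forces $h=h^{*}$.

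For the maximality block the crux is a dictionary: an affine function $l(x)=\langle a,x\rangle+b$ is the ball-restriction of the support function of the future cone of the single point $q_l=(a,-b)$, and $l\le g$ on $\partial B$ is equivalent to $q_l\in\overline{\operatorname{I}^+(P_g(\ell))}$ for every $\ell\in\partial B$. Taking the supremum over all such $l$ then identifies the convex envelope of $g$ with the support function of $\bigcap_{\ell\in\partial B}\overline{\operatorname{I}^+(P_g(\ell))}$, giving $(1)\Leftrightarrow(5)$. The same observation yields $(5)\Leftrightarrow(6)$: any $g$-convex set $K'$ has all its affine minorants below $g$ on $\partial B$, hence $K'$ sits inside the set of (5); and since that set is itself a $g$-convex set, a $K$ satisfying (6) both contains it and is contained in it, forcing equality. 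Lastly $(5)\Leftrightarrow(7)$ is the Lorentzian identification of the Cauchy development of the boundary of a $g$-convex set with the intersection of the futures of its null support planes at infinity, for which I would appeal to the causal structure of $\R^{d,1}$ and the description in \cite{Bon05}.

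The two steps I expect to require real care are the structural claim inside $(1)\Rightarrow(4)$ --- that each face of the graph of the convex envelope is the convex hull of the boundary points it touches, so that no face degenerates to a point --- and the causal identification $(5)\Leftrightarrow(7)$ of the domain of dependence. The Monge--Amp\`ere implications, by contrast, are routine once the equivalence $\A=\lambda\,\M$ and the comparison principle are available.
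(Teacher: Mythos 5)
Your architecture coincides with the paper's own proof: the paper runs the same intrinsic chain (1)$\Rightarrow$(4)$\Rightarrow$(3)$\Rightarrow$(2), uses the same bridge $\A(h)=\lambda\,\M(h)$ together with Guti\'errez's characterization of measure-zero convex functions, and its maximality block is exactly your dictionary between affine minorants of $g$ and points of $\bigcap_{\ell}\overline{\operatorname{I}^+(P_g(\ell))}$, with the domain-of-dependence equivalence likewise delegated to the literature (the paper cites \cite{Bar10} where you cite \cite{Bon05}). Your two small deviations are both legitimate: closing the intrinsic loop by the comparison principle (Theorem~\ref{thm: uniq MA}) applied to $h$ and the envelope $h^*$, both with vanishing Monge--Amp\`ere measure and boundary values $g$, instead of invoking the full ``if and only if'' of \cite[1.5.2]{Gu01}; and proving (1)$\Leftrightarrow$(5) in one stroke rather than going around the cycle (1)$\Rightarrow$(5)$\Rightarrow$(6)$\Rightarrow$(1).

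The genuine gap is the step you name but do not prove: that when $h$ is the convex envelope of $g$, the closed contact set of a supporting affine function $l_p$ is the convex hull of the boundary points where $l_p$ meets $g$. This does not follow from the phrase ``$l_p$ realizes the supremum''; it is the one place in the whole proposition requiring a new argument, and it is where the paper's proof does its real work. After normalizing $l_p\equiv 0$ (so $h\geq 0$, $g\geq 0$), the easy parts are that $\Lambda=\{\ell\in\partial B \mid g(\ell)=0\}$ is non-empty (otherwise $g\geq c>0$ by compactness, so the constant $c$ is an admissible affine minorant and $h\geq c>0$, contradicting contact at an interior point) and that the convex hull of $\Lambda$ lies in the contact set (a non-negative convex function vanishing on $\Lambda$ vanishes on its hull). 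The part you cannot wave through is the reverse inclusion: if $x$ is a contact point outside the hull of $\Lambda$, one must separate, i.e.\ choose an affine $l$ with $l(x)>0$ and $l<0$ on $\Lambda$; since $g>0$ on the compact set $\{l\geq 0\}\cap\partial B$, it is bounded below there by some $k>0$, hence $\epsilon l\leq g$ on all of $\partial B$ for $\epsilon$ small, so $h\geq \epsilon l$, and evaluating at $x$ gives $h(x)>0$, a contradiction. Without this separation-plus-compactness argument your chain (1)$\Rightarrow$(4)$\Rightarrow$(3)$\Rightarrow$(2) has no starting point; with it, your proposal becomes essentially the paper's proof.
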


\begin{proof}

(\ref{car1}$\Leftrightarrow$\ref{car2}) By Proposition \ref{prop: area=ma}, $\A(\fconv)=0$ is equivalent to  $\M(h)=0$, which
happens exactly when  $h$  is the convex envelope of $h|_{\partial \ball}$ (see \cite[1.5.2]{Gu01}).

(\ref{car1}$\Rightarrow$\ref{car4}) 
Let $x\in \rad^{-1}(\gauss(p))=:C(p)$.
Up to composing  $h$ by an affine function we may suppose that $h=0$ on $C(p)$ and $g\geq 0$ (it suffices to translate $p$ to the origin). 
Suppose that $g\geq c >0$. Then by definition of $h$, $h\geq c$, that contradicts the assumption. 
So  $g^{-1}(0)=:\Lambda$ is not empty.
As $C(p)$ is convex, its closure contains the convex hull $C(\Lambda)$ of $\Lambda$. 
We want to prove that the closure of $C(p)$ is contained in $C(\Lambda)$. 
Let us suppose that this is false: let $x\in C(p)$ and $x\notin C(\Lambda)$. 
\new{
As $C(\Lambda)$ is convex, the Separating Hyperplane Theorem implies the existence of a hyperplane
separating $x$ form $C(\Lambda)$, that is} 
\old{As the support function of $C(\Lambda)$ at $v\in\R^d$ is the max of the 
$\langle v,\eta\rangle $ for all $\eta\in\Lambda$,  $x\notin C(\Lambda)$ means that }there exists a vector $v$ such
that $\langle v, \eta\rangle<a$
 and $\langle v, x\rangle >a$.
Take the affine function $l(y)=\langle v,y\rangle- a$. 
Note that $l(x)>0$, and that  $g$ is strictly positive on $l^{-1}[0,+\infty)\cap\partial \ball$, 
 so   there is $k>0$ such that $g\geq k$ whenever $l>0$. It follows that there is $\epsilon>0$ such that
 $\epsilon l<g$, so  $h\geq \epsilon l$ on $\ball$, but evaluating the inequality at $p$ we get a  contradiction.

(\ref{car4}$\Rightarrow$\ref{car3})
is clear.
 
(\ref{car3}$\Rightarrow$\ref{car2}) By Proposition \ref{prop: area zero not c1} that if  $\partial_{\mathrm{reg}}\fconv$  is empty
then $\A(\fconv)=0$. \\
Here have that (\ref{car1}$\Leftrightarrow$\ref{car2}$\Leftrightarrow$\ref{car3}$\Leftrightarrow$\ref{car4}).

(\ref{car7}$\Leftrightarrow$\ref{car6}) It is proved in  \cite{Bar10}.

(\ref{car6}$\Rightarrow$\ref{car5}) If $H$ is a $g$-convex set then $H$ is contained in $\overline{I^+(P_g(\ell))}$ for every $\ell\in\partial \ball$
so $H$ is contained in $\fconv$.

(\ref{car5}$\Rightarrow$\ref{car1}) Let $h'$ be the convex envelope of $g$. Then $h\leq h'$ so $\fconv\subset \fconv(h')$. On the other hand
by hypothesis $\fconv(h')\subset \fconv$ so $\fconv=\fconv(h')$, that is $h=h'$.

(\ref{car1}$\Rightarrow$\ref{car6}) 
Let $\fconv_0=\bigcap_{\ell\in\partial \ball} \overline{I^+(P_g(\ell))}$. Notice that if $p\in \fconv_0$ then the affine function 
on $\ball$, $l_p(x)=\langle \hat x, p\rangle_-$ satisfies $l_p\leq g$ on $\partial \ball$. Conversely if
$l(x)=\langle x,y\rangle+c$ is an affine function on $\ball$ with $f\leq g$ then the point $p_l=\binom{y}{-c}$ lies on $\fconv_0$.
That is there is a $1$-to-$1$ correspondence between points of $\fconv_0$ and affine functions $l$ with  $l\leq g$.
Now, as the support function of $\fconv_0$ is defined as $h_0(x)=\sup_{p\in \fconv_0}\langle x,p\rangle$, the correspondence
above shows that $h_0$ is the convex envelope of $g$.

\end{proof}

The unique $g$-convex set satisfying the properties above will be denoted by $\Omega_g$\index{Convex set! $\Omega_g$}.
Its support function on $\ball$ will be denoted by $h_g$.

\begin{remark}{\rm
If $g$ is the restriction to $\partial \ball$ of an affine map (in particular if $g$ is constant), then $\Omega_g$
is the future cone of a point.

Actually, any $g\in C^0(\partial \ball)$  has a maximal value $m$. 
 Then $\Omega_{g}$ is in the future side of light-like hyperplanes $\langle \cdot,\hat{\ell}\rangle_-=m$
 $\forall \ell\in\partial \ball$. They meet at the point $(0,\ldots,0,-m)$: $\Omega_g$ is always contained in the future cone of a point.
}
\end{remark}

\begin{remark}{\rm

 With the terminology of \cite{Bon05}, from Property \ref{car6},
 $\Omega_g$ is a \emph{regular domain}. Moreover,  the space-like boundary $\partial_s\Omega_g$ corresponds
to the \emph{initial singularity}. 
Given an element $p\in\partial_s\Omega_g$ then $\gauss(p)$ is a ideal convex subset of $\H^d$.
Since on $\gauss(p)$ the support function coincides with the scalar product by $p$, the restriction of the support
function to the radial projection of $\gauss(p)$ to $\ball$ is an affine function.
So the dual stratification pointed out in \cite{Bon05} gives a decomposition of $\H^d$ into ideal convex regions
where the support function is affine. 

}
\end{remark}

For $x\in \ball$, we will denote by $F_g(x)$ the maximal convex set of $\ball$ containing $x$ on which $h_g$ is affine. In other words,
$F_g(x)=\rad^{-1}(\gauss(\gauss^{-1}(\rad (x)))$.

\begin{definition}\label{def:simple}
We say that $\Omega_g$  is \emph{ simple}\index{Convex set! Simple} if 
for any $x\in\partial_s\Omega_g$, 
 $F_g(x)$ has codimension $k$, with $k\leq d/2$, $k\in \N$.
\end{definition}
Sometimes for brevity we will say that the support function $h_g$ is simple to mean that $\Omega_g$ is simple.

Note that when $d=2$, $\Omega_g$ is always simple. In higher dimensions $d>2$, $\Omega_g$ is not necessarily simple. However those simple domains
are of some interest in the study of MGHCF space-times (see Section \ref{sub: pogo}). 

\begin{lemma}\label{lem: cuahcy int}
 If $\fconv$ is a $g$-convex set contained in the interior of $\Omega_g$, then $\partial_s\fconv=\partial \fconv$. In particular, $\partial_s\fconv$ is an entire space-like hypersurface. 
   \end{lemma}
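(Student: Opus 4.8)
The plan is to reduce everything to the single equality $\partial_sK=\partial K$, and to obtain this by contradiction. Recall from the subsection ``F-convex sets as graphs'' that the boundary of any F-convex set, in particular of $K$, is the graph over the whole horizontal plane $\R^d$ of a $1$-Lipschitz convex function $\tilde u$. Thus $\partial K$ is already an entire graph; the only thing missing for the ``in particular'' is that this graph be space-like. But $\partial_sK$ is by definition the set of boundary points meeting a space-like support plane, so once we know $\partial_sK=\partial K$ every boundary point is space-like and $\partial_sK$ is precisely an entire space-like surface. Hence the whole content of the lemma is the equality $\partial_sK=\partial K$, equivalently that $K$ has no light-like boundary point.

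So suppose, for contradiction, that $p\in\partial K\setminus\partial_sK$. Being a boundary point of a closed convex set, $p$ lies on at least one support hyperplane of $K$. Since $K$ is an F-convex set it admits no time-like support plane, and since $p\notin\partial_sK$ it lies on no space-like support plane; therefore the support plane through $p$ is light-like. Its normal is a future-directed null vector, so after rescaling it is $\hat\ell=(\ell,1)$ for a unique $\ell\in\partial B$. By definition of a support plane, $p$ realizes the maximum of $\langle\,\cdot\,,\hat\ell\rangle_-$ over $K$, i.e. $\langle p,\hat\ell\rangle_-=\sup_{k\in K}\langle k,\hat\ell\rangle_-=\tilde h(\ell)$. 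Here I use crucially that $K$ is a \emph{$g$}-convex set: its support function satisfies $\tilde h|_{\partial B}=g$, whence $\langle p,\hat\ell\rangle_-=g(\ell)$, that is $p\in P_g(\ell)$.

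The final step compares $K$ with the null half-spaces defining $\Omega_g$. By the characterization in Proposition~\ref{prop:a=0} we have $\Omega_g=\bigcap_{\ell'\in\partial B}\overline{\operatorname{I}^+(P_g(\ell'))}$, and since $K$ is $g$-convex it is contained in $\Omega_g$, so in particular $K\subset\overline{\operatorname{I}^+(P_g(\ell))}=\{q:\langle q,\hat\ell\rangle_-\le g(\ell)\}$, a half-space whose interior is $\{q:\langle q,\hat\ell\rangle_-<g(\ell)\}$. As $\langle p,\hat\ell\rangle_-=g(\ell)$, the point $p$ sits on the bounding null hyperplane and therefore not in $\operatorname{int}\big(\overline{\operatorname{I}^+(P_g(\ell))}\big)$. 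Since $\operatorname{int}(\Omega_g)\subset\operatorname{int}\big(\overline{\operatorname{I}^+(P_g(\ell))}\big)$, we conclude $p\notin\operatorname{int}(\Omega_g)$, contradicting $p\in K\subset\operatorname{int}(\Omega_g)$. This contradiction forces $\partial_sK=\partial K$ and finishes the proof.

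The argument is short, and the only genuinely delicate point is the matching of constants in the second step: one must know that the light-like support plane through $p$ is \emph{exactly} $P_g(\ell)$ with the prescribed value $g(\ell)$, and not merely some null plane in the future of it. This is where the hypothesis that $K$ is a $g$-convex set (so that $\tilde h|_{\partial B}=g$), rather than just a subset of $\Omega_g$, is indispensable; everything else is a direct comparison of $K$ with the defining null half-spaces of $\Omega_g$.
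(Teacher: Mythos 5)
Your proof is correct and follows essentially the same route as the paper's: assume a boundary point $p$ of $K$ not in $\partial_sK$, produce a null direction $\ell\in\partial B$ with $\langle p,\hat{\ell}\rangle_-=g(\ell)$, and conclude that $p$ lies on the null hyperplane $P_g(\ell)$, hence outside $\operatorname{int}\Omega_g$, contradicting $K\subset\operatorname{int}\Omega_g$. The only cosmetic difference is that you obtain $\ell$ from the supporting-hyperplane theorem together with the causal trichotomy of support planes (no time-like ones for an F-convex set, no space-like ones at $p$), whereas the paper gets it from the representation $K=\{q\,|\,\langle q,\hat{x}\rangle_-\leq \tilde{h}(x)\ \forall x\in\overline{B}\}$ and the observation that strict inequality on all of $\overline{B}$ would force $p$ to be an interior point.
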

   \begin{proof}
Suppose that the statement is false.
Then there would be a point $p\in\partial \fconv$ such that
$\langle p, \hat{x}\rangle_-<h(x)$ for every $x\in \ball$.
Since $p$ is in the boundary of $\fconv$ and 
\[
   \fconv=\{q\in \R^{d+1}|\langle q, \hat{x}\rangle_-\leq h(x)\, \forall x\in\overline{\ball}\}~,
\]
 there is $\ell\in\partial \ball$ such that $\langle p, \hat{\ell}\rangle_-=h(\ell)$.
 
 On the other hand, on $\partial \ball$ the function $h$ coincides with $h_g$, so if the  point $p $
satisfies $\langle p, \hat{\ell}\rangle_-=h(\ell)=h_g(\ell)$, it cannot lie in the interior of $\Omega_g$,
contradicting the assumption on $\fconv$.
\end{proof}

We will also need the following definition, adapted from a base concept of general relativity. 

 \begin{definition}\label{def:cauchy}
 A   \emph{Cauchy} $g$-convex\index{Convex set! Cauchy $g$-convex} set is a $g$-convex set 
 with support function satisfying $h<h_g$.
 \end{definition}

Clearly, a $g$-convex set contained in the interior of $\Omega_g$ is Cauchy. The converse does not hold, as  the 
convex set with support function $h(x)=\frac{1}{2}\|x\|^2$ shows.  In this case $g=h|_{\partial \ball}$ 
is constant equal to $1/2$ so $h_g=1/2$ and $h<1/2$ on $\ball$. On the other hand $\grad h(\ball)=\ball\not= \R^d$. Note that 
$h$ is the only convex function equal to its conjugate $u$ \cite[p.~106]{Roc97}. This example also shows that for a $g$-convex set 
$\fconv$, $\partial_s \fconv$  can be a non-complete $C^1$ hypersurface.

\new{
In this case $\Omega_g$ is the future of the point $(0,\ldots,0,-1/2)$, whereas 
the boundary of the domain $\fconv$ corresponding to $h$ is the graph of $\tilde{u}$, where $\tilde{u}(x)=u(x)=h(x)$  if $x\in\ball$ and 
$\tilde{u}(x)=\|x\|-\frac{1}{2}$ otherwise.
 }
 
The $g$-convex sets that we will consider in the next section will have the property that Cauchy implies contained in the interior of $\Omega_g$.

 \subsection{Some classical results of Monge--Amp\`ere equation}\label{sub: MA regularity}

We quote some results on  Monge--Amp\`ere equation with prescribed continuous data on the boundary of 
the ball that will be used in the next section.
The first one is about uniqueness.

\begin{theorem}[{Comparison Principle, Rauch--Taylor, \cite{RT77}, \cite[1.4.6]{Gu01}}]\label{thm: uniq MA}
Let $O$ be a bounded open convex set of $\R^d$. 
If $u,v \in C(\overline{O})$ are convex functions
such that for any
Borel set $E\subset O$, $\M (u) (E) \leq \M (v) (E)$, then 
$$\operatorname{min}\{u(x)-v(x)|x\in\overline{O}\}=\operatorname{min}\{u(x)-v(x)|x\in\partial O\}~.$$
\end{theorem}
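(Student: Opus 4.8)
The statement is the classical comparison principle for the Monge--Amp\`ere measure, and I would prove it by the subdifferential (normal mapping) method, using throughout the paper's definition $\M(w)(E)=\mathcal{L}(\partial w(E))$. The plan is to argue by contradiction: suppose $\operatorname{min}_{\overline O}(u-v)<\operatorname{min}_{\partial O}(u-v)$. Since adding a constant to $v$ changes neither $\M(v)$, by \eqref{eq:MA affine}, nor the hypothesis, I would first choose $s$ strictly between the interior and the boundary minima of $u-v$ and replace $v$ by $v+s$. After this normalization there is $a_0>0$ with $u-v\geq a_0$ on $\partial O$, while $u(x_1)<v(x_1)$ at some interior point $x_1$. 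Set $A=\{x\in O\mid u(x)<v(x)\}$. The uniform positive gap on $\partial O$ forces $\overline A\subset O$, so $A$ is compactly contained, open and nonempty, with $u=v$ on $\partial A$ and $u<v$ in $A$ of positive depth $\delta:=\operatorname{max}_{\overline A}(v-u)>0$.

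The first technical ingredient is a sliding-plane lemma: for convex functions with $u\leq v$ in $A$ and $u=v$ on $\partial A$, one has $\partial v(A)\subseteq\partial u(A)$. To see it, take $p\in\partial v(x_0)$ with $x_0\in A$; the affine map $\ell(y)=v(x_0)+\langle p,y-x_0\rangle$ lies below $v$, hence below $u$ on $\partial A$, while at $x_0$ it lies on or above $u$. Sliding $\ell$ downwards until it first meets the graph of $u$ from below, the contact point must be interior, because along $\partial A$ there remains a strictly positive gap; this exhibits $p$ as a subgradient of $u$ at an interior point. Taking Lebesgue measures gives $\M(v)(A)\leq\M(u)(A)$, so together with the hypothesis $\M(u)(A)\leq\M(v)(A)$ we obtain equality of the total masses on $A$.

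The heart of the proof, and the step I expect to be the main obstacle, is to promote this to a \emph{strict} inequality $\M(u)(A)>\M(v)(A)$, which directly contradicts the hypothesis. The strictness must come from the positive depth $\delta$: since the graph of $u$ dips a definite amount below that of $v$ while the two share boundary values on $\partial A$, the function $u$ realizes supporting slopes that are steeper, in every direction, than any slope attained by $v$ over $A$. Quantitatively this is Aleksandrov's maximum principle, giving an estimate of the form $|\partial u(A)\setminus\partial v(A)|\geq c_d\,(\delta/\operatorname{diam}A)^{d}>0$ for a dimensional constant $c_d$. I would isolate this as a separate lemma, the one-dimensional picture (where the extra slopes are precisely the extreme ones lying beyond the slope-range of $v$) being the model to generalize, since it is the only genuinely non-formal point; everything else is bookkeeping with subdifferentials and with the fact that $E\mapsto\mathcal{L}(\partial w(E))$ is a measure.

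The strict inequality contradicts $\M(u)(A)\leq\M(v)(A)$, closing the argument. The statement is exactly the uniqueness for the Dirichlet problem $\M(\cdot)=\mu$ with prescribed continuous boundary values, which is how it is used afterwards. For a self-contained write-up one may alternatively just invoke \cite{RT77} and \cite[1.4.6]{Gu01}; the outline above is the standard route, and the only delicate input is the Aleksandrov estimate that provides the strictness.
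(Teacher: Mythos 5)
Your first half (normalization by a constant, the compactly contained set $A=\{u<v\}$ with $u=v$ on $\partial A$, and the sliding-plane lemma giving $\partial v(A)\subseteq\partial u(A)$, hence $\M(v)(A)\leq\M(u)(A)$) is correct, and it matches the opening of the classical Rauch--Taylor/Guti\'errez argument that the paper cites without proof; the only detail to add is that the plane produced at the interior contact point a priori supports $u$ only over $\overline A$, and one upgrades this to all of $O$ because a local minimum of the convex function $u-\ell$ is global. The genuine gap is the strictness step, and it cannot be repaired in the form you propose: the inequality $|\partial u(A)\setminus\partial v(A)|\geq c_d\,(\delta/\operatorname{diam}A)^d$ is false, already in dimension one, and Aleksandrov's maximum principle (which bounds $|\partial u(A)|$ from below, not the set difference) does not yield it. Concretely, fix $0<\epsilon<1$, let $O$ be the ball of radius $2$ in $\R^d$, and set
\begin{equation*}
v(x)=\begin{cases} |x|^2-1, & |x|\leq 1,\\ 2|x|-2, & |x|\geq 1,\end{cases}
\qquad
u(x)=\max\bigl(2|x|-2,\ |x|^2-1-\epsilon\bigr)~.
\end{equation*}
Both are convex and continuous on $\overline O$, $A:=\{u<v\}$ is exactly the unit ball $B_1$, $u=v$ on $\partial B_1$, $u-v>0$ on $\partial O$, and the depth is $\delta=\max_{\overline A}(v-u)=\epsilon>0$. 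Yet $\partial v(A)=B_2$ (gradients $2x$, $|x|<1$), while $\partial u(A)=\overline{B}_2$: for $|x|<1-\sqrt\epsilon$ the function $u$ is the paraboloid (gradients filling $B_{2(1-\sqrt\epsilon)}$), for $1-\sqrt\epsilon<|x|<1$ it is the cone $2|x|-2$ (gradients on the sphere of radius $2$), and on the kink sphere $|x|=1-\sqrt\epsilon$ its subdifferential is the radial segment from $2x$ to $2x/|x|$. So $\partial u(A)\setminus\partial v(A)$ is Lebesgue-null and $\M(u)(A)=\M(v)(A)$, despite the positive depth: no contradiction can be extracted from the masses on $A$ alone, which is all your argument uses. (The theorem itself is safe here because $\M(u)$ charges the sphere $\{|x|=1-\sqrt\epsilon\}\subset A$ while $\M(v)$ does not, so the hypothesis $\M(u)\leq\M(v)$ fails --- but one only sees this by testing Borel sets strictly smaller than $A$.)

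The correct way to generate strictness --- and it is what \cite{RT77} and \cite[1.4.6]{Gu01} do --- is to perturb $v$ by a quadratic rather than to look for extra slopes of $u$. Let $\bar x\in A$ realize the depth $\delta$, let $D=\operatorname{diam}O$, and set $w=v+\frac{\delta}{4D^2}|x-\bar x|^2-\frac{\delta}{2}$. Then $w(\bar x)-u(\bar x)=\delta/2>0$ while $w-u\leq-\delta/4$ on $\partial A$, so $A':=\{u<w\}$ is a nonempty open set whose closure is contained in $A$. Your sliding lemma, applied verbatim to the pair $(u,w)$, gives $\partial w(A')\subseteq\partial u(A')$; combining this with the superadditivity $\M(v+q)\geq\M(v)+\M(q)$ for convex $q$ (in the smooth case this is $\det(P+Q)\geq\det P+\det Q$ for nonnegative symmetric matrices, and the general case follows by approximation and weak convergence of Monge--Amp\`ere measures), one gets
\begin{equation*}
\M(u)(A')\ \geq\ \M(w)(A')\ \geq\ \M(v)(A')+\Bigl(\tfrac{\delta}{2D^2}\Bigr)^{d}\L(A')\ >\ \M(v)(A')\ \geq\ \M(u)(A')~,
\end{equation*}
a contradiction. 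Note that this applies the hypothesis $\M(u)\leq\M(v)$ to the smaller set $A'$, which is precisely the information your version discards: the strictness comes from the perturbation, not from the dip of $u$.
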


\begin{corollary}
 A $g$-convex set is uniquely determined by its area measure among $g$-convex sets.
\end{corollary}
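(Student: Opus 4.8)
The plan is to reduce the statement to the uniqueness for the Monge--Amp\`ere equation provided by the Comparison Principle, Theorem~\ref{thm: uniq MA}, applied on the open ball $O=B$. Let $K_1$ and $K_2$ be two $g$-convex sets with the same area measure $\mu$, and let $h_1,h_2$ be their support functions restricted to $B$. By definition of a $g$-convex set the lower semi-continuous extensions $\tilde h_i$ to $\R^d$ satisfy $\tilde h_i|_{\partial B}=g$; since $g$ is continuous, each $h_i$ extends continuously to $\overline B$ with $h_i=g$ on $\partial B$. In particular $h_1,h_2\in C(\overline B)$ and they agree on $\partial B$, which is precisely the regularity needed to fit into the hypotheses of the Comparison Principle.

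First I would transfer the hypothesis from area measures to Monge--Amp\`ere measures. Pushing the equality $\A(K_1)=\A(K_2)=\mu$ forward by $\rad^{-1}$ gives $\A(h_1)=\A(h_2)$ as measures on $B$. By Proposition~\ref{prop: area=ma} one has $\A(h_i)=\lambda\M(h_i)$ with $\lambda>0$ on $B$, hence $\M(h_1)=\M(h_2)$ on every Borel subset of $B$.

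Then I would invoke the Comparison Principle twice. Applying Theorem~\ref{thm: uniq MA} with $u=h_1$, $v=h_2$, the inequality $\M(h_1)(E)\leq\M(h_2)(E)$ (in fact an equality) yields
\[
\min_{\overline B}(h_1-h_2)=\min_{\partial B}(h_1-h_2)=0,
\]
since $h_1=h_2=g$ on $\partial B$; thus $h_1\geq h_2$ on $\overline B$. Exchanging the roles of $h_1$ and $h_2$ gives $h_2\geq h_1$, whence $h_1=h_2$ on $\overline B$. Because the support function determines the F-convex set uniquely, this forces $K_1=K_2$.

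The argument is essentially immediate once these tools are assembled; the only point requiring care is the continuity of the support functions up to $\partial B$, which is exactly what the $g$-convexity hypothesis guarantees and which is indispensable to place oneself within the hypothesis $u,v\in C(\overline O)$ of the Comparison Principle. No assumption on the regularity of $\partial_s K_i$ is needed, so the uniqueness holds for arbitrary $g$-convex sets.
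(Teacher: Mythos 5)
Your proof is correct and is exactly the argument the paper intends: the corollary is stated as an immediate consequence of the Comparison Principle (Theorem~\ref{thm: uniq MA}), with the reduction from area measures to Monge--Amp\`ere measures given by Proposition~\ref{prop: area=ma} and the common boundary datum $g$ supplying the boundary condition, after which two applications of the principle force $h_1=h_2$ and hence $K_1=K_2$. The only point you flag, continuity of the support functions up to $\partial B$, is likewise taken for granted by the paper in its treatment of $g$-convex sets, so your write-up matches the paper's proof in both substance and level of detail.
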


Now we will quote some regularity results. We will need the following assumption: for a bounded open convex set $O$,
$h$ convex on $O$ and for any Borel set $E$ of $O$,
\begin{equation}\label{eq:hyp mesure bornee}
 m\L(E) \leq \M (h)(E) \le M\L(E), \quad \text{for some fixed } M>m>0~.
\end{equation}

\new{
\begin{remark}{\rm
The condition $m\L(E)\leq \M (h)(E)$ for any Borel set $E$ is equivalent to require that
$\det\hess h(x)\geq m$ $\L$-almost everywhere. Indeed as the determinant of the Hessian 
is the density of the regular part of $\M(h)$ with respect to the Lebesgue measure, if it is bounded from below by $m$, 
then $m\L(E)\leq\M(h)(E)$ by definition of density. Conversely suppose that $m\L(E)\leq\M(h)(E)$ for any Borel set and let
$E_0$ be the support of the singular part of $\M(h)$ with respect to $\L$. Consider $E_1=\{x| \det\hess h(x)<m\}$.
Notice that $\M(h)(E_1\setminus E_0)\leq m \L(E_1\setminus E_0)$, but by the assumption the opposite inequality holds.
As a consequence $\L(E_1\setminus E_0)=0$. As $\L(E_0)=0$, we conclude that $\det \hess h(x)\geq m$ $\L$-almost everywhere.

On the other hand the condition $\M(h)(E)\leq M\L(E)$ is equivalent to require that $\M(h)$ is absolutely continuous with respect to $\L$
with density bounded from above by $M$. 
}\end{remark}
}

\begin{theorem}[{Caffarelli, \cite[Theorem~5.2.1]{Gu01}}]\label{thm:strict cvxe}
Let $O$ be a bounded open convex set of $\R^d$. 
 Let $h$ be a convex function in $O$ satisfying \eqref{eq:hyp mesure bornee}.
 Assume that $h\geq 0$ on $O$ and let $C=\{x\in O|h(x)=0\}$. If 
 $C$ is non-empty and contains more than one point, then $C$ has no extremal point
 in $O$.
\end{theorem}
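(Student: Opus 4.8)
The plan is to argue by contradiction, the engine being a sharp form of the Alexandrov maximum principle. First I would record that $C=\{x\in O\mid h(x)=0\}$ is convex, being the set where the convex function $h$ attains its minimum. Suppose, for contradiction, that $x_0\in O$ is an extremal point of $C$ while $C$ contains a second point. Since the Monge--Amp\`ere operator transforms covariantly under affine maps---an affine change of variables multiplies the density of $\M(h)$ with respect to $\L$ by the square of the determinant of its linear part, so that, after a constant rescaling of $h$, the two-sided bound \eqref{eq:hyp mesure bornee} is preserved with adjusted constants $m,M$---I may freely normalise. I translate $x_0$ to the origin and use a supporting hyperplane of the convex set $C$ at the extremal point $0$ to arrange $C\subset\{x_d\geq 0\}$ with $C\cap\{x_d=0\}=\{0\}$; since $C\neq\{0\}$ there is a point $\bar x\in C$ with $\bar x_d>0$, so the contact set issues a nondegenerate segment from the extremal point $0$ into $\{x_d>0\}$, while $h\geq 0$ and $h(0)=0$.

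The main tool is the Alexandrov maximum principle: if $w$ is convex on a bounded convex set $\Sigma$ with $w=0$ on $\partial\Sigma$, then for every $x\in\Sigma$
\begin{equation*}
|w(x)|^{d}\leq c_d\,\operatorname{dist}(x,\partial\Sigma)\,(\operatorname{diam}\Sigma)^{d-1}\,\M(w)(\Sigma)~.
\end{equation*}
I would apply this to the tilted function $v_\epsilon(x)=h(x)+\epsilon x_d$, which is convex and, by \eqref{eq:MA affine}, has the same Monge--Amp\`ere measure as $h$. For small parameters I consider the caps $\Sigma_\epsilon=\{x\in O\mid v_\epsilon(x)<t_\epsilon\}$, choosing $t_\epsilon$ with $t_\epsilon/\epsilon\to 0$. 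Since $v_\epsilon=\epsilon x_d$ on $C$, the portion of $C$ captured by $\Sigma_\epsilon$ is the slice $\{x\in C\mid x_d<t_\epsilon/\epsilon\}$, which collapses onto the extremal point $0$; for small parameters these caps are compactly contained in $O$, so they are convex neighbourhoods of $0$ shrinking toward $0$.

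Applying the maximum principle to $w=v_\epsilon-t_\epsilon$ on $\Sigma_\epsilon$ and inserting the upper bound $\M(h)\leq M\L$ bounds the depth of the cap from above in terms of $|\Sigma_\epsilon|$, its diameter, and the distance of its minimum to $\partial\Sigma_\epsilon$; comparing $w$ with the cone sharing its boundary values and inserting the lower bound $\M(h)\geq m\L$ forces a reverse relation between depth, diameter and volume. The contradiction is meant to come from confronting these estimates with the degenerate geometry of $\Sigma_\epsilon$: because $0$ is extremal and $C$ carries the transverse segment, the caps become infinitely thin tangent to $\{x_d=0\}$ while keeping a definite extent in $x_d$, so the geometric factors entering the two comparisons cannot remain mutually compatible as $\epsilon\to 0$. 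The main obstacle---and the technical heart of Caffarelli's argument---is precisely making ``infinitely thin'' quantitative: one must normalise each section $\Sigma_\epsilon$ to a balanced position by its Fritz John ellipsoid and use the full force of the two-sided bound \eqref{eq:hyp mesure bornee} to keep the normalised sections uniformly comparable to the unit ball. Since the normalising affine maps degenerate as $\epsilon\to 0$ (this degeneration being the analytic signature of extremality), tracking how $m$ and $M$ transform under them is what converts the heuristic into a genuine contradiction; the reduction and the choice of tilt are routine once this control is in hand.
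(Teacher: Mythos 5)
A preliminary remark: the paper does not prove Theorem~\ref{thm:strict cvxe} at all; it imports it as Caffarelli's theorem from \cite[Theorem~5.2.1]{Gu01}, so your attempt has to be judged against the standard Caffarelli--Guti\'errez proof. You have correctly identified its circle of ideas (tilt by an affine function, localize with sections, Alexandrov estimate plus cone comparison, affine normalization), but there is a genuine gap, and it is not only the deferred technical work: your choice of parameters destroys the very degeneracy the contradiction must exploit. In the actual proof one fixes $\delta>0$ once and for all (small enough that $\overline{C}\cap\{x_d\le\delta\}$ is a compact subset of $O$) and takes $t_\epsilon=\epsilon\delta$, i.e. $w_\epsilon=h+\epsilon(x_d-\delta)$, $\Sigma_\epsilon=\{w_\epsilon<0\}$, so that $t_\epsilon/\epsilon=\delta$ is \emph{fixed}. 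As $\epsilon\to0$ these sections collapse onto the fixed slice $C\cap\{x_d\le\delta\}$ (which has empty interior, since $\M(h)\ge m\L$ forbids $h\equiv 0$ on an open set): they stay long, of length $\approx\delta$, along the contact segment issuing from $0$, become thin transversally, and satisfy $\Sigma_\epsilon\subset\{x_d\ge-\eta_\epsilon\}$ with $\eta_\epsilon\to0$. Since $w_\epsilon$ equals the linear tilt on $C$, one gets $|w_\epsilon(0)|=\epsilon\delta\ge\tfrac12|\min w_\epsilon|$; the cone comparison (using $m$) gives the minimum a definite size in John-normalized coordinates, and the Alexandrov estimate (using $M$) then forces the normalized image of $0$ to lie at distance $\ge c(m,M,d)>0$ from the boundary of the normalized section. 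Pulled back, $0$ is an affine center of $\Sigma_\epsilon$: for every $q\in\Sigma_\epsilon$ also $-\theta q\in\Sigma_\epsilon$ for a fixed $\theta=\theta(m,M,d)>0$. Taking $q$ on the contact segment at height $x_d=\delta/2$ produces a point of $\Sigma_\epsilon$ with $x_d\le-\theta\delta/4$, contradicting $\Sigma_\epsilon\subset\{x_d\ge-\eta_\epsilon\}$. Every link in this chain uses that $t_\epsilon/\epsilon$ is fixed. With your requirement $t_\epsilon/\epsilon\to0$, the sections shrink to the single point $\{0\}$ (if $x\in\Sigma_{\epsilon_k}$ for $\epsilon_k\to 0$, then $h(x)<t_{\epsilon_k}-\epsilon_k x_d\to 0$ forces $x\in C$, and $x_d<t_{\epsilon_k}/\epsilon_k\to 0$ forces $x=0$), so, contrary to your own description, they do \emph{not} ``keep a definite extent in $x_d$'': the retained piece of contact segment has length $t_\epsilon/\epsilon\to0$, which now races against the uncontrolled quantity $\eta_\epsilon$, and no contradiction can follow -- the estimates you invoke are mutually consistent on small sections of perfectly good strictly convex solutions, so confronting them cannot by itself produce an absurdity; the fixed transverse extent of the contact segment is what does.

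Two further points. First, you never actually derive the contradiction: your last paragraph describes what it ``is meant to come from'' and explicitly defers the quantitative heart (John normalization, tracking of constants). That step -- the comparability of $|w_\epsilon(0)|$ with the minimum and the resulting affine-centering of the exposed point -- \emph{is} the theorem; without it the proposal is a plan, not a proof. Second, your normalization tacitly assumes that the supporting hyperplane at the extremal point meets $C$ only there; this is the definition of an \emph{exposed} point, and extremal points of a convex set need not be exposed. The repair is standard but must be stated: a point extremal in $C$ is extremal in the compact set $\overline{C}$ (because $C=\overline{C}\cap O$), Straszewicz's theorem provides exposed points of $\overline{C}$ arbitrarily close to it, and since $O$ is open one of them lies in $O$; it then suffices to rule out exposed points, which is what the section argument above does.
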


An extremal point of a convex set $C$ is a point of $\partial C$ which is not a convex combination of other
points of the closure $\overline{C}$ of $C$. A classical result of convex geometry states that if $\overline{C}$ is compact then 
 $\overline{C}$ is the convex hull of the extremal points of $C$.

\begin{theorem}[Multidimensional Alexandrov--Heinz Theorem]\label{thm: alex heinz}
Let $h$ be a convex function on the $d$ dimensional unit ball $\ball^d$ in $\mathbb R^d$ such that the Monge--Amp\`ere
measure of $h$ satisfies $\M(h)\geq c_0\mathcal{L}$ for a positive $c_0$.

Let $W$ be an affine subspace of $\R^d$ passing through $0$, of dimension $d-k$, for $k\leq d/2$.
Suppose that $h\equiv 0$ on the boundary of a $(d-k)$ ball $\ball^{d-k}=W\cap \ball^d$.
Then 
\[
          h(0)<0~.
 \]
 \end{theorem}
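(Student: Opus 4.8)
We read the hypothesis in the form in which it is used: $h\ge 0$ and $h$ vanishes on $\partial D$, so that, since $D=\mathrm{conv}(\partial D)$ and $h$ is convex, in fact $h\equiv 0$ on all of the $(d-k)$-ball $D$. The plan is then to play off two pieces of information carried by $h$: the lower bound $\M(h)\ge c_0\L$ says that the image of the subdifferential map is \emph{large}, while the vanishing of $h$ along $D$ forces that image to be \emph{thin} in the directions tangent to $D$. The hypothesis $k\le d/2$ is precisely what makes these two facts quantitatively incompatible unless $h$ drops by a definite amount at the centre. Concretely I would work in coordinates $\R^d=V\oplus W$ with $\dim V=d-k$ containing $D$ (centred at $0$, of radius $R$) and $\dim W=k$ transverse, writing $x=(v,w)$; the operative fact is that the sublevel sets of $h$ then have full $(d-k)$-dimensional width along $V$.

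The engine is an Alexandrov-type estimate on sublevel sets. For small $\epsilon>0$ set $U_\epsilon=\{h<\epsilon\}\cap\{|v|<R/2\}$; because $h\equiv 0$ on $D$ this region contains the half-ball $D\cap\{|v|<R/2\}$, hence has $V$-extent $\sim R$, while its width in the $W$-directions is some $\rho=\rho(\epsilon)\to 0$. On the inner part of $U_\epsilon$ convexity bounds the subgradients by the oscillation of $h$ divided by the room available in each direction, giving $|p_V|\lesssim \epsilon/R$ and $|p_W|\lesssim \epsilon/\rho$, so that
\[
 \L\bigl(\partial h(U_\epsilon)\bigr)\ \lesssim\ (\epsilon/R)^{\,d-k}\,(\epsilon/\rho)^{\,k}\ =\ \epsilon^{d}\,R^{-(d-k)}\rho^{-k}.
\]
On the other hand the lower bound on the measure gives $\M(h)(U_\epsilon)\ge c_0\,\L(U_\epsilon)\gtrsim c_0\,R^{\,d-k}\rho^{\,k}$. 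Comparing the two yields $\rho^{2k}\lesssim \epsilon^{d}/(c_0R^{2(d-k)})$, i.e. $\rho(\epsilon)\lesssim (\epsilon^{d}/c_0)^{1/2k}$ up to a factor depending on $R$; equivalently, along the transverse slice through the centre, $h(0,w)\gtrsim c_0^{1/d}\,|w|^{\,2k/d}$.

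This is where the dimension count enters. Since $h\ge 0$ and $h(0)=0$ is the minimum of the convex function $t\mapsto h(0,tw)$, that function lies below each of its chords issuing from the origin, so it grows \emph{at most linearly}: $h(0,w)\lesssim |w|$ for small $|w|$. Matching with the lower bound $h(0,w)\gtrsim |w|^{2k/d}$ forces $|w|^{2k/d}\lesssim |w|$ as $|w|\to 0$, i.e. $2k/d\ge 1$. For $k<d/2$ this is a contradiction, so the assumed flat configuration cannot occur; tracing the constants through the two inequalities gives the quantitative version, namely that the centre value must lie below a fixed negative threshold $h(0)<-c$ with $c=c(c_0,d,k,R)>0$.

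I expect the main obstacle to lie in two places. First, one must make the heuristic subgradient bounds rigorous uniformly over the sublevel sets; the clean way is to apply the Alexandrov maximum principle slice-by-slice in the $k$-dimensional transverse planes, or to compare $h$ on $U_\epsilon$ with an explicit cone/ellipsoidal barrier via the comparison principle (Theorem~\ref{thm: uniq MA}), which is the route of \cite{TW08}. Second, the borderline case $k=d/2$ (occurring only for even $d$) is not settled by the single-scale computation, since there both growth rates are linear and only the \emph{constants} differ; handling it requires iterating the estimate over a dyadic sequence of scales $\epsilon_j\downarrow 0$, using that each step sharpens the effective transverse slope, in order to reach a contradiction. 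Controlling the subgradient image in the transverse $W$-directions — which is not pinned directly by the data on $D$ but only through the self-referential width $\rho(\epsilon)$ — is the genuine point where the hypothesis $k\le d/2$ is used.
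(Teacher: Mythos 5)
Your core mechanism --- playing the lower bound $\M(h)\geq c_0\L$ on an anisotropic sublevel set against the smallness of its subgradient image, with the long $V$-directions and thin $W$-directions producing the critical exponent $2k/d$ --- is sound in spirit and is indeed the same mechanism driving the paper's proof. But the gap you flag yourself is fatal as the argument stands: the single-scale computation produces a contradiction only when $2k/d<1$, i.e.\ $k<d/2$, whereas the theorem is stated for $k\leq d/2$, and the borderline case is not a dispensable corner. For $d=2$ the only nontrivial instance is $k=1=d/2$ (this \emph{is} the classical Alexandrov--Heinz statement), and the paper's applications (Definition~\ref{def:simple}, Corollary~\ref{cor: MA regularite}) explicitly allow strata of codimension exactly $d/2$. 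Your proposed repair, a dyadic iteration in $\epsilon$, is precisely the missing idea but is not carried out, and it is where all the work lies. The paper avoids the issue by running a genuinely multi-scale argument from the start: after bounding the tangential derivatives by $|\partial_i h(x)|\lesssim |h(0)|+C\|\bar x\|$, splitting $\det \Hess h\leq D_{[d-k,d-k]}D_{[k,k]}$ by Fischer's inequality, and using Cauchy--Schwarz on slice integrals of the Hessian minors, it arrives at $\int_0^{1/4} t^{k-1}\left(|h(0)|+Ct\right)^{-(d-k)}\d t\leq C' C^k c_0^{-d}$, and the left-hand side diverges as $|h(0)|\to 0$ for every $k\leq d/2$ --- polynomially for $k<d/2$, but only \emph{logarithmically} at $k=d/2$, a divergence that is aggregated over all transverse scales $t$ and is invisible to any single scale $\epsilon$. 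Until your dyadic scheme is written down and shown to reproduce this, the proof does not cover the stated theorem (nor its main intended application).

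Two further points need repair, though both are fixable. First, you assume $h\geq 0$ on $B^d$, which is not a hypothesis; convexity only gives $h\leq 0$ on the $(d-k)$-ball $D$. You can legitimize it: arguing by contradiction from $h(0)\geq 0$, convexity along chords of $D$ through the origin forces $h\equiv 0$ on $D$; then any supporting affine function $\ell$ of $h$ at $0$ vanishes on the span of $D$ (it is $\leq h=0$ on the symmetric set $D$ and equals $0$ at its center), so replacing $h$ by $h-\ell$ preserves convexity, $\M(h)$, and $h\equiv 0$ on $D$, and achieves $h\geq 0$. Without some such reduction your oscillation bound $|p|\lesssim \epsilon/\mathrm{width}$ fails, since $h$ could be very negative inside $U_\epsilon$ off the plane of $D$. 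Second, your volume bound $\L(U_\epsilon)\gtrsim R^{d-k}\rho^k$ and your gradient bound $|p_W|\lesssim \epsilon/\rho$ use a single ``width'' $\rho$, but the lower bound needs the product of the principal widths of the central slice $S_\epsilon=\{w: h(0,w)<\epsilon\}$ (e.g.\ via its John ellipsoid and the inclusion of $D_{R/4}\times\tfrac12 S_\epsilon$ in the convex hull of $D_{R/2}$ and $\{0\}\times S_\epsilon$), while the upper bound needs direction-by-direction widths; as written the two bounds do not refer to the same quantity. Note also that your argument, even once patched, yields only $h(0)<0$ rather than a threshold $c$ controlled by $c_0$, $d$, $k$ and the Lipschitz norm of $h$; this suffices for the theorem as literally stated and for its use in Corollary~\ref{cor: MA regularite}, but the uniform constant is what the paper's proof actually delivers.
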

 
The theorem says that, if the Monge--Amp\`ere measure of $h$ is bounded from below by a positive constant
(times the Lebesgue measure), then $h$ cannot be affine on a $(d-k)$-plane. In particular, $h$ cannot be affine
on a hyperplane. If $d=2$,  this says that  if the Monge--Amp\`ere measure of $h$ is bounded from below by a positive constant, then
$h$ is strictly convex. This is the usual formulation of  Alexandrov--Heinz Theorem, see \cite[Remark~3.2]{TW08}.
For completeness,
we give a proof of Theorem~\ref{thm: alex heinz} at the end of this section.  
  The proof  closely follows the proof in the $2$-d case given in \cite{TW08}.

\begin{remark}\label{rk:ex pogo}
{\rm
The assumption on the dimension on $k$ in the Theorem is optimal.
In fact a famous example due to Pogorelov  \cite[pp.~81--86]{Pog78} 
shows that for $d\geq 3$ there exists a convex function $f$ on $\ball^d$ with strictly positive
 Monge--Amp\`ere measure (in the sense above) which is constant on a segment.
 
 Actually the example can be easily generalized: let $k>d/2$ and for any vector $x\in\R^d$ let
 $r(x)=\sqrt{x_{d-k+1}^2+\cdots+x_{d}^2}$ and $t(x)=\sqrt{x_{1}^2+\cdots+x_{d-k}^2}$. Consider  the function
 $$f(x)=\beta r(x)^\alpha(1+\beta t(x)^2)$$ with $\alpha=2k/d$ and $\beta>1.$
 This function is $0$ on the ball $\ball^{d-k}$ of dimension $d-k$ defined by the equation $x_{d-k+1}=\cdots=x_d=0$. 
 Moreover $f$ is smooth on $\ball^d\setminus \ball^{d-k}$ and $C^1$ on $\ball^d$ (here the assumption on $k$ is needed).
 A direct computation shows that for a suitable choice of $\beta$ the function $f$ satisfies
 $\det \Hess f\geq c_0$ on $\ball^d\setminus \ball^{d-k}$ and some positive constant $c_0$.
 From this estimates \newnew{and \eqref{eq:ma c3}}\oldold{, as by a general fact $\M(f)\geq \det\Hess f\L$  \cite[Lemma~2.3]{TW08},} one deduces that
 $\M(f)$ is bigger than $c_0\mathcal L$.
 
 As we will not use this result in the paper we omit the computation leaving it to the reader (basically it can be done
 following the same line as in \cite[pp.~81--86]{Pog78}).
  
}
\end{remark}

\begin{theorem}[{\cite[Theorem~3.1]{TW08}}]\label{thm: regul}  Let $h$ be a strictly convex solution of 
$$\M(h)=\varphi \mathcal{L} $$
on a ball $O$.
Suppose $\varphi > 0$ and
$\varphi\in C^{1,1}(O)$. Then $h \in C^{3,\alpha} (O)$ for any $\alpha \in (0, 1)$. 
If furthermore $\varphi\in C^{k,\alpha}(O)$ for
some $k \geq 2$ and $\alpha \in (0, 1)$, then $h \in C^{k+2,\alpha}(O)$.
\end{theorem}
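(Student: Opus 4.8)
The final statement, Theorem~\ref{thm: regul}, is quoted directly from \cite[Theorem~3.1]{TW08}, so the natural plan is not to reprove it from scratch but to indicate the standard interior-regularity machinery that underlies it, and to flag that the paper simply imports the result. I present the approach one would take to establish it.

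\textbf{Overall approach.} The plan is to upgrade regularity in three stages: first from the strict convexity hypothesis to interior $C^{1,\beta}$ regularity, then to $C^{2,\alpha}$ via the interior $C^2$ estimates for the Monge--Amp\`ere operator, and finally to $C^{k+2,\alpha}$ by a bootstrap through the linearized (now uniformly elliptic) equation using classical Schauder theory. Throughout, the equation is $\det \hess h = \varphi$ on the ball $O$, with $\varphi$ bounded between two positive constants (this is automatic once $\varphi > 0$ and continuous on a slightly smaller ball), so the hypotheses~\eqref{eq:hyp mesure bornee} of the preceding theorems apply on interior subdomains.

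\textbf{Key steps in order.} First I would record that a strictly convex generalized solution with $\M(h)=\varphi\L$ and $0 < m \le \varphi \le M$ is, by Caffarelli's theory (the same circle of ideas behind Theorem~\ref{thm:strict cvxe}), automatically $C^{1,\beta}_{\mathrm{loc}}(O)$ for some $\beta \in (0,1)$; strict convexity rules out the flat pieces that would otherwise obstruct differentiability. Second, I would invoke Pogorelov's interior second-derivative estimate: for a strictly convex solution with $\varphi \in C^{1,1}$, one controls $\hess h$ from above on interior subdomains in terms of the boundary behaviour of $h$ and the $C^{1,1}$ norm of $\varphi$; combined with $\det \hess h = \varphi \ge m > 0$, this pinches the eigenvalues of $\hess h$ between two positive constants, so the equation becomes uniformly elliptic on any $O' \Subset O$. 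Third, with uniform ellipticity in hand one applies the Evans--Krylov theorem to the fully nonlinear equation $\det \hess h = \varphi$ to obtain an interior $C^{2,\alpha}$ estimate, giving $h \in C^{2,\alpha}_{\mathrm{loc}}(O)$, hence $\varphi \in C^{1,1}$ already forces $h \in C^{3,\alpha}$ by differentiating once and applying Schauder theory to the linearized equation. Finally I would bootstrap: differentiating $\det \hess h = \varphi$ in a coordinate direction $x_i$ yields a linear second-order elliptic equation $a^{jk}\partial_{jk}(\partial_i h) = \partial_i \varphi$, where $a^{jk} = (\hess h)^{-1}_{jk}\det\hess h$ is the cofactor matrix, now H\"older continuous and uniformly elliptic; Schauder estimates then raise the regularity of each $\partial_i h$ by two derivatives relative to that of $\varphi$, and iterating on $k$ gives $h \in C^{k+2,\alpha}_{\mathrm{loc}}(O)$ whenever $\varphi \in C^{k,\alpha}(O)$.

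\textbf{Main obstacle.} The crux is the second step, the interior a priori $C^2$ estimate for strictly convex solutions. This is exactly Pogorelov's estimate, and its delicacy is what makes the dimension and strict-convexity hypotheses essential: without strict convexity the estimate fails, as the Pogorelov-type examples recalled in Remark~\ref{rk:ex pogo} show, where $\det \hess f$ stays bounded below yet $f$ is merely $C^1$ and degenerates along a flat face. Once the uniform two-sided bound on $\hess h$ is secured on interior subdomains, the equation is uniformly elliptic and the remainder is the routine Evans--Krylov plus Schauder bootstrap described above. Since all of this is standard Monge--Amp\`ere regularity theory, in the paper we simply cite \cite[Theorem~3.1]{TW08} rather than reproduce the estimates.
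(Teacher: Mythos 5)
Your proposal matches the paper's treatment exactly: the paper gives no proof of this statement, importing it verbatim as \cite[Theorem~3.1]{TW08}, just as you note. Your supplementary sketch of the underlying machinery (Caffarelli's $C^{1,\beta}$ regularity for strictly convex Alexandrov solutions with pinched right-hand side, Pogorelov's interior second-derivative estimate yielding uniform ellipticity, Evans--Krylov for $C^{2,\alpha}$, and the Schauder bootstrap on the linearized equation $a^{jk}\partial_{jk}(\partial_i h)=\partial_i\varphi$ with cofactor coefficients) is an accurate outline of the standard argument, so there is nothing to correct.
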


\begin{corollary}\label{cor: MA regularite}
 Let $\fconv$ be a $g$-convex set with support function $h$ and such that $\A(\fconv)= f(\rad^{-1})_*(\mathrm{d}\H^d)$ 
 with $f\in C^{k+1}(\H^d)$, $k\geq 2$ and   $c_2\geq f \geq c_1>0$.
 
  If moreover
  \begin{enumerate}[nolistsep]
   \item $h_g$ is simple, or   \label{condition 1}
   \item $h<h_g$ on $\ball$, \label{condition 2}
  \end{enumerate}
then $\partial_s \fconv$ is a strictly convex $C^{k+2}$ hypersurface. 
\end{corollary}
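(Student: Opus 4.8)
The plan is to reduce the statement to the interior regularity theory for the Monge--Amp\`ere equation, the only substantial work being to prove that the support function $h$ is strictly convex on $B$.

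First I would record what the hypothesis says about $\M(h)$. By Proposition~\ref{prop: area=ma}, $\M(h)=\lambda^{-1}\A(h)$, and $\A(K)=f\,\d\H^d$ with $c_1\le f\le c_2$ pushes forward to $\A(h)=(f\circ\rad)\,\lambda^{-d-1}\L$; hence $\M(h)=\varphi\L$ with $\varphi=(f\circ\rad)\lambda^{-d-2}$. Since $0<\lambda\le 1$ on $B$, we get $\varphi\ge c_1$, so $\M(h)\ge c_1\L$ on all of $B$ (compare Corollary~\ref{cor: relation area ma}); on each ball $B_r$ of radius $r<1$ the factor $\lambda$ is bounded below, so there we also have an upper bound and thus a two-sided bound $c_1\L\le\M(h)\le M_r\L$. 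Finally $\varphi$ is positive and lies in $C^{k+1}(B)\subset C^{k,\alpha}(B)$.

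The heart of the argument is strict convexity. Suppose it fails: there is a supporting affine function $l\le h$ with $C=\{x\in B:h(x)=l(x)\}$ containing a nondegenerate segment. Applying Caffarelli's Theorem~\ref{thm:strict cvxe} to $h-l\ge 0$ on each interior ball $B_r$ (where the two-sided bound holds), and noting that an extremal point of $C$ lying in $B$ would, for $r$ close to $1$, be an extremal point of $C\cap B_r$ in $B_r$, I conclude that $C$ has no extremal point in $B$; hence every extremal point of $\overline C$ lies on $\partial B$ and $\overline C$ is their convex hull. Passing to the boundary in $l\le h\le h_g$ gives $l\le g$ on $\partial B$, so $l\le h_g$ on $B$, while at the extremal points $l=g=h_g$; by convexity $l=h_g$ on $\overline C$, so $\overline C$ lies in a maximal affine region $F_g$ of $h_g$. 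Since $l\le h\le h_g=l$ on $F_g$, in fact $h=h_g=l$ there, so $C=F_g$. Now the two hypotheses conclude in complementary ways: if $h<h_g$ on $B$, then $h=h_g$ on the interior segment inside $C=F_g$ is absurd; if instead $\Omega_g$ is simple, then by Definition~\ref{def:simple} the flat region $C=F_g$ has dimension $\ge d/2$, so it contains the boundary sphere of a $(d-k)$-ball with $k\le d/2$ on which $h-l$ vanishes along with its center, and the multidimensional Alexandrov--Heinz Theorem~\ref{thm: alex heinz} (applicable because $\M(h-l)=\M(h)\ge c_1\L$) forces $(h-l)<0$ at that center, a contradiction. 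Hence $h$ is strictly convex.

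With $h$ strictly convex and $\M(h)=\varphi\L$, $\varphi\in C^{k,\alpha}(B)$ positive, Theorem~\ref{thm: regul} yields $h\in C^{k+2,\alpha}(B_r)$ for every $r<1$, so $h\in C^{k+2}(B)$; then \eqref{eq:ma c2} gives $\det\Hess h=\varphi>0$, whence the nonnegative Hessian is positive definite. Consequently the Legendre dual $u$, whose graph is $\partial_sK$, satisfies $\grad u=(\grad h)^{-1}$ with $\Hess u=(\Hess h)^{-1}>0$ and is therefore $C^{k+2}$ and strictly convex, so $\partial_sK$ is a strictly convex $C^{k+2}$ hypersurface. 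I expect the main obstacle to be the strict convexity step, specifically identifying the flat locus $C$ of $h$ with a flat region $F_g$ of $h_g$ while handling the blow-up of the upper Monge--Amp\`ere bound near $\partial B$, which is what forces the Caffarelli argument to be carried out only on interior balls.
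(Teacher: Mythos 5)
Your proof is correct and takes essentially the same route as the paper's: the reduction to $\M(h)=(f\circ\rad)\,\lambda^{-d-2}\L$, strict convexity via Caffarelli's Theorem~\ref{thm:strict cvxe} applied on interior balls combined with the convex-envelope/boundary-extremal-point argument, Theorem~\ref{thm: alex heinz} to exclude flat pieces in the simple case, and finally Theorem~\ref{thm: regul} for the $C^{k+2}$ regularity and the passage to $\partial_s K$. The only deviation is organizational rather than substantive: the paper first shows that simplicity implies $h<h_g$ on $B$ and then argues under that condition, whereas you run a single strict-convexity argument and branch into the two hypotheses at the end; your terse claim that $h_g=l$ on all of $F_g$ (hence $C=F_g$) is the same jump the paper makes when asserting $h=h_g$ on the whole of $F_g(x)$, and both are repaired identically by noting that a nonnegative affine (resp.\ concave) function on a stratum vanishing at a relative-interior point vanishes identically.
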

\begin{proof}
First notice that  \textit{\ref{condition 1}.} implies  \textit{\ref{condition 2}.} 

Indeed suppose that $h=h_g$ at some point $x$. By Proposition~\ref{prop:a=0},
$F:=F_g(x)$ is a convex set with vertices
on the boundary and $h_g$ is affine on $F$. Thus since $h$ coincides with $h_g$ on
$\partial \ball\cap F$ and at some interior point we get that those functions
coincide on the whole $F$. 
Up to compose by an affine transformation we may assume that $h=h_g$ is zero on $F$.

\old{If the dimension of $F$ is $d$,  since $\A(\fconv)$ is zero on $F$  and the interior of $F$ is non empty, this 
contradicts  the assumption on $\A(\fconv)$. Otherwise, } As $h_g$ is simple
 Theorem~\ref{thm: alex heinz} implies that $h$ is strictly negative on $F$, giving a contradiction,
 
 So  we suppose that \textit{\ref{condition 2}.} is fulfilled.

Let us first check that $h$ is strictly convex.
By contradiction suppose that it is not strictly convex. 
\new{
There exists an affine function $l$ such that
its graph is a support plane for the graph of $h$, and the convex region $C=\{x\in\ball |h(x)=l(x)\}$
contains more than two points.

Notice that the convex function $h'(x)=h(x)-l(x)$ is non negative and 
$$C=\{x\in \ball|h'(x)=0\}\,. $$ Moreover  $\M(h)=\M(h')$ by \eqref{eq:MA affine}.
}
\old{There exists a 
convex region $C$ 
on which $h(x)=l(x)$, with $l(x)$ an affine map.
Let us write
$$C=\{x\in \ball|h'(x)=0\} $$
where $h'=h-l$ is a convex function on $\ball$. Note that $\M(h)=\M(h')$ by \eqref{eq:MA affine}.}

Let $\ball_{\epsilon}$ be the open ball centered at the origin of radius $0<\epsilon<1$,
and let $C_{\epsilon}=C\cap \ball_{\epsilon}$. 
Choose $\epsilon$ so that $C_{\epsilon}$ is not empty.
On $\ball_{\epsilon}$, $\lambda$ is bounded from below by a positive constant depending on $\epsilon$,
hence by the previous paragraph, $\M(h')$ satisfies \eqref{eq:hyp mesure bornee}.
So Theorem~\ref{thm:strict cvxe} applies, and the extremal points of $C_{\epsilon}$ are on the boundary of 
$\ball_{\epsilon}$. This is true for any $\epsilon'>\epsilon$, so the extremal points of
$C$ are on $\partial \ball$. Clearly, $C$ has at least two extremal points $x_0,x_1$ in $\partial \ball$,
and $h=l$ on the segment joining $x_0$ to $x_1$. As $h=h_g$ on $\partial \ball$, by definition
of $h_g$, $h=h_g$ on the segment,  that contradicts the hypothesis $h<h_g$ on $\ball$. Hence $h$ is strictly convex.

 For any $0<\epsilon<1$, $f \lambda^{-d-2}$ has the same regularity than $f$ on the ball of radius $\epsilon$,
 so from Theorem~\ref{thm: regul}, $h\in C^{k+2}$ on this ball, hence $h\in C^{k+2}(\ball)$.
The determinant of the Hessian of $h$ is equal to $f \lambda^{-d-2}>0$, so 
the principal radii or curvature
 of $\fconv$ are positive. $\fconv$ is then a $C^2_+$ F-convex set, and in this case
 $\partial_s \fconv$ has the same regularity than $h$ \cite{fv}.
\end{proof}

\begin{remark}{\rm
 Another result of Caffarelli says that if $h$ is the support function of a $g$-convex set,
  \eqref{eq:hyp mesure bornee} is satisfied, and $h(x)<h_g(x)$ at $x\in \ball$, then $h$ is $C^{1,\alpha}$ at $x$, for $0<\alpha<1$,
  see  \cite[5.4.6]{Gu01}. 
 
 For the  particularities of the  $d=2$ case, see also
   \cite{NS85}, \cite{Pog73} for different viewpoints. 
      
   The $g$ affine case has also particularities, see \cite{Gu01,TW08} or \cite[Theorem 7]{CY77}.
}\end{remark}

Finally we give a proof of Theorem \ref{thm: alex heinz}.
\begin{proof}[Proof of Theorem~\ref{thm: alex heinz}]

\old{
The theorem will follow from the following result.

\emph{Fact: Let $h$ be a $C^2$ function on $\ball^d$ such that there is $c_0>0$ with
$\det \Hess h \geq c_0 \operatorname{Id}$. Then there is a constant $c>0$ depending on $c_0$, on the Lipschitz
constant of $h$ on $\frac{3}{4}\ball^d$, on $k$, and on $d$, such that,
$$        \sup_{\partial \ball^{d-k}} h- h(0)>c~.$$}
}
Without loss of generality, we set  $\ball^{d-k}=\{x\in \ball^d| x_{d-k+1}=\cdots= x_{d}=0\}$.
For $x\in \ball^d$, let us denote $\tilde{x}=(x_1,\ldots,x_{d-k},0,\ldots,0)$ and $\bar{x}=(0,\ldots,0,x_{d-k+1},\ldots,x_d)$, so 
that if $x\in \ball^{d-k}$, then $x=\tilde{x}$.

\old{Up to a translation, let us assume that  $ \sup_{\partial \ball^{d-k}} h=0$.}
\new{Since $h\equiv 0$ on $\partial\ball^{d-k}$},  by convexity, $h\leq 0$   on $\ball^{d-k}$. Using this and again convexity, we have actually
on $\ball^{d-k}$:
\begin{equation}\label{eq:premieres ineq conv}
 0\geq h\left(\tilde{x}\right)\geq -2 |h(0)|~.
\end{equation}

Let $$ \mathbb D=\frac{1}{4}\ball^{d-k}\times\frac{1}{4}\ball^{k}=
 \left\{x \left.\right| \|\tilde{x}\|< \frac{1}{4}, \|\bar{x}\|< \frac{1}{4}\right\}~.$$

First we will prove that \new{for almost} every $x\in \mathbb D$ 
the  derivatives of $h$ along the  first $d-k$ directions are estimated as follows:
\begin{equation}\label{eq:hgrad}
    \left|\partial_i h(x)\right|<8\left(|h(0)|+C\|\bar{x} \|\right)\quad\text{for }i\leq d-k~,
 \end{equation}
where $C$ is the Lipschitz constant of the restriction of $h$ to $\frac{3}{4}\ball^d$.
For sake of simplicity we prove the formula for $i=1$.

As $h$ is $C$-Lipschitz on $\frac{3}{4}\ball^d$, using \eqref{eq:premieres ineq conv}, if $x\in\frac{1}{2}\ball^d$ we have

\[
\begin{array}{l}
      h(x)\geq h(\tilde{x})-C\|\bar{x}\|\geq -2|h(0)|-C\|\bar{x}\|~,\\
      h(x)\leq  h(\tilde{x})+C\|\bar{x}\|\leq C\|\bar{x}\|~.
\end{array}             
\]
\old{
As for a differentiable convex function $f:\R\rightarrow\R$ we have
$f'(x)(y-x)\leq f(y)-f(x) $, for $x\in D\subset \frac{1}{2}\ball^d$,
 we have that 
}
\new{
If $x\in\mathbb D\subset\frac{1}{2}\ball$, then $(1/2, x_2,,\ldots, x_d)\in\frac{3}{4}\ball$.
so we have for almost all $x$}
\[
    \partial_1 h (x) \leq \left(\frac{1}{2}-x_1\right)^{-1} \left[h\left(\frac{1}{2},x_2,\ldots, x_d\right)
    - h\left(x\right)\right] \]
$$    \leq 4 (2|h(0)|+2C\|\bar{x}\|)\leq 8(|h(0)|+C\|\bar{x}\|)~, $$   
\new{where we have applied the general estimate $f'(a)(b-a)\leq f(b)-f(a) $ to the convex function $f(t)=h(t,x_2,\ldots,x_d)$.}

Analogously
\[
    \partial_1 h(x)\geq \left(x_1+\frac{1}{2}\right)^{-1}\left[h\left(x\right)-
    h\left(-\frac{1}{2}, x_2,\ldots, x_d\right)\right]\geq
     -8(|h(0)|+C\|\bar{x}\|)~,
\]
and \eqref{eq:hgrad} follows.

Let us denote
$D_{[d-k,d-k]}(x)$ (resp. $D_{[k,k]}(x)$ ) the determinant of the $(d-k)\times(d-k)$ (resp. $k\times k$) 
submatrix of the Hessian of $h$ at $x$  made by  first $d-k$ lines and columns 
(resp.  the  last  $k$ lines and columns). 
\new{
Notice that$D_{[d-k,d-k]}$ and $D_{[k,k]}$ are function defined almost everywhere on $\ball$.}
As the Hessian is positive definite, by Fischer inequality \cite[7.8.3]{HJ90},

$$\det \hess h \leq  D_{[d-k,d-k]} D_{[k,k]}~.$$

Now by \eqref{eq:hgrad} for any $\bar{x}\in\frac{1}{4}\ball^{k}$ we have that the function 
$$\tilde{x}\mapsto \widetilde{\grad h}(\tilde x+\bar x) =(\partial_1 h,\ldots, \partial_{d-k}h, 0, \ldots,0)(\tilde x+\bar x)$$
sends \new{almost all points} in  $\frac{1}{4}\ball^{d-k}$ into a $(d-k)$-dimensional disk of radius $8(d-k)(|h(0)|+C\|\bar{x}\|)$.
In particular with \new{\eqref{eq:ma c3}}:
\[
   \int_{\frac{1}{4} \ball^{d-k}}  D_{[d-k,d-k]}(x) \d \tilde{x} \leq C(d,k)(|h(0)|+C\|\bar{x}\|)^{d-k}~.
\]
Moreover as  $D_{[d-k,d-k]} D_{[k,k]}\geq c_0$, 
for \new{almost} any $\bar{x}\in \frac{1}{4} \ball^k$,
\[
\int_{\frac{1}{4} \ball^{d-k}} \frac{1}{D_{[k,k]}(x)} \d\tilde{x} \leq c_0^{-1} \int_{\frac{1}{4} \ball^{d-k}} D_{[d-k,d-k]}(x)\d \tilde{x}
\leq C(d,k)c_0^{-1}(|h(0)|+C\|\bar{x}\|)^{d-k}~.
\]
Cauchy--Schwarz inequality gives
\[
\int_{\frac{1}{4} \ball^{d-k}}  D_{[k,k]}(x)\d\tilde{x} \geq \frac{ \operatorname{Vol}(\frac{1}{4}\ball^{d-k})^2}{\int_{\frac{1}{4} \ball^{d-k} }
\frac{1}{D_{[k,k]}(x)} \d\tilde{x} }
\geq C'(d,k) c_0(|h(0)|+C\|\bar{x}\|)^{-d+k}~.
\]

Now as $h$ is $C$-Lipschitz on $\frac{3}{4}\ball^d$, for any $\tilde{x}\in\frac{3}{4}\ball^{d-k}$ the function 
$$\bar{x}\mapsto\overline{\grad h}(\tilde x+\bar x)= (0,\ldots,0,\partial_{d-k+1}h,\ldots, \partial_d h)(\tilde x+\bar x)$$ 
sends \new{almost all points} in $\frac{1}{4}\ball_+^k$ ---the part of $\frac{1}{4}\ball^k$ with vectors of positive entries--- into a 
$k$-dimensional disc of radius $C$, 
and again by \eqref{eq:ma c3}, $\int_{\frac{1}{4} \ball_+^{k}} D_{[k,k]}(x)\d\bar{x}$ is less than the volume 
of this disc, so
\begin{equation}\label{eq:bound above AH}\int_{\frac{1}{4}\ball^{d-k}} \int_{\frac{1}{4} \ball_+^{k}} D_{[k,k]}(x)\d\bar{x} \d\tilde{x}\leq C''(d,k) C^k~.\end{equation}

Putting the inequalities together, by Fubini,

$$\int_{\frac{1}{4}\ball_+^k}(|h(0)|+C\|\bar{x}\|)^{-d+k} \d \bar{x} \leq C'''(d,k)C^k c_0^{-1}~. $$

Now integrating in polar coordinates on $\ball_+^k$ we get that
\[
    \int_0^{1/4} \frac{t^{k-1}}{(|h(0)|+Ct)^{d-k}}\d t\leq C''''(d,k)C^k c_0^{-1}~.
\]
The function $F(y)=\int_0^{1/4}\frac{t^{k-1}}{(y+Ct)^{d-k}}\d t$ is decreasing and by Beppo-Levi Theorem, as $k\leq d/2$,
$\lim_{y\to 0^+} F(y)=+\infty$.

This shows that $|h(0)|>c$ where $c$ depends only on $C$, $c_0$, $k$, and $d$.

\end{proof}
\old{
\begin{proof}[Proof of Theorem~\ref{thm: alex heinz}]
Now let $h$ satisfying the hypothesis of the theorem. Let $h_j$ be a $C^2$ convex function on $\ball^d$ such that
$|h_j(x)-h(x)| <1/j$ on $\ball(1-1/j)$,
with $j$ such that $1-1/j>3/4$.

 As 
 $\M(h_j)$ weakly converges to $\M(h)$, one can see that there is positive constant, that we still denote by $c_0$,  
 such that for $j$ sufficiently large,
 $\M(h_j)\geq c_0\mathcal{L}$. Moreover, on $\frac{3}{4}\ball^d$,  $h_j$ is $C$-Lipschitz for the same constant
 $C$ than $h$ \cite[10.6]{Roc97}. Let us introduce the following $C^ 2$ convex function on $\ball$: 
 $h'_j(x)=h_j(r_jx)$ with $r_j=1-1/j$.  Then $h'_j$ is $C/2$ Lipschitz on $\frac{3}{4}\ball$, 
 $\det\hess h'_j \geq r^{2d}c_1'$ and $h'_j(0)=h_j(0)$. 
 As $\sup_{\partial \ball}h'_j-\sup_{\partial \ball}h_j$ is close to $0$ as $j\to+\infty$, the 
 fact then gives that for $j$ big
 $$\sup_{\partial \ball^{d-k}(1-\frac{1}{j})} h_j-h_j(0)>c~, $$
 for some $c$ independent of $j$.
 So 
 $$\sup_{\partial \ball^{d-k}(1-\frac{1}{j})} h-h(0)+\frac{2}{j}>c~. $$
  
 Now $h$ is convex and is $0$ on  $\partial \ball^{d-k}$, so it is non-positive on $ \ball^{d-k}$, so
 
  $$-h(0)+\frac{2}{j}>c~. $$            
  
Letting $j$ go to $\infty$ we get $-h(0)\geq c$.

\end{proof}
}

\section{Equivariant convex sets}\label{sec3}

\subsection{Definition}\label{sub def tau}

We fix a uniform lattice $\Gamma$ in $\operatorname{SO}^+(d,1)$.
Recall that an affine deformation of $\Gamma$ is a subgroup $\Gamma_\tau$ of $\operatorname{Isom}(\mink)$ obtained
by adding a translation part to elements of $\Gamma$. Thus an affine deformation is defined
by the translation function $\tau:\Gamma\to\R^{d+1}$. 
Given $\gamma\in\Gamma$ we denote by $\gamma_\tau$ the corresponding element in $\Gamma_\tau$, i.e. the
affine transformation $\gamma_\tau(x)=\gamma(x)+\tau_\gamma$.

The condition that $\Gamma_\tau$ is a subgroup forces
$\tau$ to verify the cocycle relation
\[
     \tau_{\alpha\beta}=\tau_\alpha+\alpha\tau_\beta~.
\]
The set of cocycles is a vector space of finite dimension denote here by $Z^1(\Gamma, \R^{d+1})$.
\index{Cocycle}
The subgroup of $\operatorname{Isom}(\mink)$ obtained by conjugating $\Gamma$ by a translation in $\R^{d+1}$ of a vector
$t_0$ is an affine deformation $\Gamma_\tau$ of $\Gamma$.\index{$\Gamma_\tau$} We say that such deformation is trivial.
The corresponding cocycle is $\tau_\alpha=\alpha(t_0)-t_0$. 
A cocycle obtained in this way is called a coboundary. The set of coboundary is a subspace of
$Z^1(\Gamma, \R^{d+1})$ of dimension $d+1$. The quotient 
$H^1(\Gamma, \R^{d+1})=Z^1(\Gamma, \R^{d+1})/B^1(\Gamma, \R^{d+1})$\index{$H^1(\Gamma, \R^{d+1})$} is a geometrically meaningful object under
different points of view. 

If $\Gamma$ is torsion free, 
it parameterizes the MGHCF structures on the manifold $\R\times(\H^d/\Gamma)$, but, if $d>2$,
 it is also the Zarinski 
 tangent space at the identity  of the character variety of representations of $\Gamma$ into $\operatorname{SO}(d+1,1)$
 (see \cite{SCAN00}).

Recall from the introduction  that a $\tau$-convex set is a closed future convex set invariant under the action of $\Gamma_{\tau}$. \index{Convex set! $\tau$-convex set}

If $\Gamma$ is not torsion-free, then by Selberg Lemma it contains a torsion free normal subgroup $\Gamma'$ of finite index. 
By \cite{Bon05, Bar05}, there exists a unique maximal
convex subset invariant under $\Gamma'_\tau$. By uniqueness it is also $\Gamma_\tau$-invariant.
That is, $\tau$-convex sets always exist.

\begin{remark}{\rm
 The argument to show the existence of $\tau$-convex set for general uniform lattices is quite general. 
 It turns out that  if  Theorems \ref{thm:main1} and \ref{thm:main2}  hold for  $\Gamma'$, then they also hold for $\Gamma$.
 So there is no loss of generality to assume that $\Gamma$ is torsion-free.

 In the sequel we will always make such assumption, leaving to the reader the details in the general case.
}
\end{remark}
 
First we want to show that the Gauss map of  a $\tau$-convex set is surjective.

\begin{lemma}
 A $\tau$-convex set $\fconv$ is an F-convex set.
\end{lemma}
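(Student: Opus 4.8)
The plan is to show that the Gauss map $G$ of a $\tau$-convex set $K$ is surjective; combined with the defining property of $\tau$-convex sets (they are bounded by space-like planes and so have no time-like support plane), this gives exactly that $K$ is F-convex. Fix $\eta\in\H^d$ and recall that $\eta$ lies in the image of $G$ precisely when the supremum $\oh(\eta)=\sup_{p\in K}\langle p,\eta\rangle_-$ is both finite and attained, since then $\{\langle\cdot,\eta\rangle_-=\oh(\eta)\}$ is a space-like support plane of $K$ with normal $\eta$. So the statement reduces to two points: the extended support function $H$ is finite on all of $\operatorname{I}^+(0)$, and for each time-like $\eta$ the defining supremum is attained.

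First, finiteness. Since $K$ is proper, it is contained in at least one of the space-like half-spaces defining it, whose inner normal $X_0$ is time-like; hence $H(X_0)<\infty$, so the effective domain $\{H<\infty\}$ is a nonempty convex cone meeting $\operatorname{I}^+(0)$. Using $\gamma_\tau(K)=K$ and $\gamma\in\operatorname{SO}^+(d,1)$, one computes the equivariance
\[
H(\gamma X)=H(X)+\langle\tau_\gamma,\gamma X\rangle_-~,
\]
whose correction term is finite for every finite $X$; therefore $\{H<\infty\}$ is $\Gamma$-invariant. Now $\Gamma$ is a uniform lattice, so the orbit $\Gamma\cdot X_0$ accumulates on the whole sphere at infinity $\partial\operatorname{I}^+(0)$, and consequently every point of the open cone $\operatorname{I}^+(0)$ lies in the interior of the convex hull of finitely many orbit points. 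By convexity of $\{H<\infty\}$ this forces $\operatorname{I}^+(0)\subseteq\{H<\infty\}$, i.e. $H<\infty$ on all of $\operatorname{I}^+(0)$, equivalently $\oh<\infty$ on $\H^d$.

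Second, attainment. Since $H$ is finite on $\operatorname{I}^+(0)$, the recession cone of $K$ is contained in the polar of $\operatorname{I}^+(0)$, which is exactly the closed future cone $\overline{\operatorname{I}^+(0)}$. Fix $\eta\in\H^d$ and a maximizing sequence $p_n\in K$ for $\langle\cdot,\eta\rangle_-$. If $\|p_n\|_-\to\infty$ then, up to a subsequence, $p_n/\|p_n\|$ converges to a nonzero recession vector $r\in\overline{\operatorname{I}^+(0)}$ with $\langle r,\eta\rangle_-=0$; but a nonzero future-causal $r$ and a future time-like $\eta$ satisfy $\langle r,\eta\rangle_-<0$, a contradiction. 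Hence $(p_n)$ is bounded and, $K$ being closed, a subsequence converges to a point of $K$ realizing $\oh(\eta)$. Thus $\eta\in G(\partial_sK)$ for every $\eta$, so $G$ is surjective and $K$ is F-convex.

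The main obstacle is the finiteness step, and within it the geometric input that the convex hull of a uniform-lattice orbit exhausts the open future cone; this is where the hypothesis that $\Gamma$ is a uniform lattice is essential. Alternatively, once one knows that the maximal $\tau$-convex set $\Omega_\tau$ has support function finite on $\operatorname{I}^+(0)$, finiteness for $K$ is immediate from $K\subseteq\Omega_\tau$ and $H_K\le H_{\Omega_\tau}$; but the orbit argument has the advantage of keeping the proof self-contained.
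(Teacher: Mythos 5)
Your proof is correct, but it takes a genuinely different route from the paper's. The paper argues directly on the image of the Gauss map inside $\H^d$: invariance of $K$ under $\Gamma_\tau$ makes this image $\Gamma$-invariant, a short argument is claimed to show it is geodesically convex, and then surjectivity follows because the only nonempty $\Gamma$-invariant convex subset of $\H^d$ is $\H^d$ itself when $\Gamma$ is a uniform lattice. You instead work dually with the support function, splitting surjectivity into two separate claims: finiteness of $H$ on $\operatorname{I}^+(0)$, obtained from the equivariance $H(\gamma X)=H(X)+\langle\tau_\gamma,\gamma X\rangle_-$ (the paper's equation \eqref{eq:inv}), which makes the effective domain a $\Gamma$-invariant convex cone that must contain the whole open cone because the convex hull of a $\Gamma$-orbit exhausts it; and attainment, via the observation that finiteness of $H$ on $\operatorname{I}^+(0)$ forces the recession cone of $K$ to consist of future-causal vectors, so maximizing sequences stay bounded. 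Both proofs ultimately rest on the same property of uniform lattices (limit set equal to the whole sphere at infinity, equivalently the nonexistence of proper nonempty invariant convex subsets of $\H^d$), but they apply it to different objects: the paper to the Gauss image, you to the domain of $H$, which is convex for free. What your decomposition buys is that the attainment issue is made explicit and proved; in the paper's proof it is hidden inside the unproved assertions that the Gauss image is nonempty and convex, both of which, if spelled out, require controlling maximizing sequences much as your recession-cone step does (or, alternatively, invoking the standard fact that a convex function is subdifferentiable on the interior of its domain, which your argument re-proves in this setting). The price is length. Two small remarks: the paper defines $\|\cdot\|_-$ only for time-like vectors, so in the attainment step you should normalize by the Euclidean norm throughout, as you in fact do when writing $p_n/\|p_n\|$; and you are right to treat the alternative argument via $K\subseteq\Omega_\tau$ only as a side remark, since invoking the maximal invariant domain here would import machinery the lemma is meant to precede.
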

 \begin{proof}
\old{
 By the invariance of $\fconv$, if $\eta$ is a support vector for $\fconv$ at $p$, then $\gamma(\eta)$ is a support vector
 for $\fconv$ at $\gamma_\tau(p)$.
 Thus the image of the Gauss map is $\Gamma$-invariant.
 
On the other hand,  it is easy to see that if $\eta_1,\eta_2\in\H^d$ are support vectors of  $\fconv$,
  then all the points on the geodesic segment   of $\H^d$ between $\eta_1$ and $\eta_2$ are support vectors. 
That is, the image of the Gauss map is convex.
 
 Since $\Gamma$ is a uniform lattice, the only $\Gamma$-invariant non-empty convex subset is $\H^d$,
 so the Gauss map is surjective.
}
\new{
We need to prove that the support function $\tilde{H}:\fut(0)\to\R\cup\{+\infty\}$ defined by
\[
    \tilde{H}(X)=\sup_{p\in\fconv}\langle X, p\rangle_-
\]
is finite over $\fut(0)$.

First we notice that $\tilde{H}$ is convex and $1$-homogenous.  Thus, the region where $\tilde{H}$ is finite is  a convex cone
with vertex at $0$:  its intersection with $\H^d$ is a hyperbolic convex set.
If we prove that this region contains an orbit of $\Gamma$ in $\H^d$, we conclude that $\tilde{H}$ is finite over $\fut(0)$.
Indeed as $\Gamma$ is a uniform lattice, the convex hull in $\H^d$ of the orbit of any point is the whole $\H^d$.

In order to prove that $\tilde{H}$ is finite on some orbit of $\Gamma$, we remark that $\tilde{H}$ is finite on the image of the Gauss map.
Indeed, if $\eta=\gauss(p)$, for some $p\in \partial_s\fconv$, $\tilde{H}(p)=\langle \eta, p\rangle_-$. 

 By the invariance of $\fconv$, if $\eta$ is a support vector for $\fconv$ at $p$, then $\gamma(\eta)$ is a support vector
 for $\fconv$ at $\gamma_\tau(p)$.
 Thus the image of the Gauss map is $\Gamma$-invariant, i.e. it is union of some orbits of $\Gamma$.
 
To conclude it is sufficient to observe that $\partial_s\fconv$ is not empty, that is  $\fconv$ is not a half-space bounded by any lightlike plane $P$.
Otherwise the lightlike direction orthogonal to the tangent space of $P$ would be fixed by $\Gamma$, but we know that $\Gamma$ does not fix
any point in $\partial\H^d$.}
 \end{proof}

 As stated in the introduction, there is a natural action of $\operatorname{Isom}(\mink)$ on the set of the support functions
 of F-convex sets. Basically $\sigma\cdot H$  is by definition the support function of $\sigma(\fconv)$.
 It turns out that $\fconv$ is a $\tau$-convex set if and only if its support function $H$ is fixed by the action of $\Gamma_\tau$.
 
 Another simple remark is that the action of $\operatorname{Isom}(\mink)$ on the set of support functions preserves the natural
 partial 
 order. Indeed since we have $H\leq H'$ if and only if the corresponding F-convex set satisfy $\fconv\subset \fconv'$, it turns out that 
 \begin{equation}\label{eq:ordpres}
      H\leq H' \Rightarrow \sigma\cdot H\leq \sigma\cdot H'~.
 \end{equation}
 
 The function  $\sigma\cdot H$  can be explicitly described in terms of $\sigma$ and $H$.
 \begin{lemma}
 If $H$ is a support function of an F-convex set, and $\sigma\in \operatorname{Isom}(\mink)$ has linear part
 $\gamma$ and translational part $\tau$, then 
  \begin{equation}\label{eq:act}
   (\sigma\cdot H)(X)=H(\gamma^{-1}X)+\langle \tau, X\rangle_-~.
 \end{equation}
 \end{lemma}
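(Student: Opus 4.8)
The plan is to unwind the definition of the action and compute the supremum directly. By definition $\sigma\cdot H$ is the support function of $\sigma(K)$, where $K$ is the F-convex set whose support function is $H$. So I would start from
\[
(\sigma\cdot H)(X)=\sup\{\langle q,X\rangle_-\mid q\in\sigma(K)\}
\]
for $X\in\operatorname{I}^+(0)$, and parametrize the points of $\sigma(K)$. Since $\sigma$ acts as $\sigma(p)=\gamma(p)+\tau$, every $q\in\sigma(K)$ is of the form $q=\gamma(p)+\tau$ with $p\in K$, and conversely. Substituting this and using the bilinearity of $\langle\cdot,\cdot\rangle_-$ splits the expression into $\langle\gamma(p),X\rangle_-+\langle\tau,X\rangle_-$.

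The next step is to observe that the translation term $\langle\tau,X\rangle_-$ is independent of $p$, so it factors out of the supremum. What remains is $\sup_{p\in K}\langle\gamma(p),X\rangle_-$. Here the one genuine ingredient enters: since $\gamma\in\operatorname{SO}^+(d,1)$ preserves the Minkowski product, $\gamma$ is self-adjoint up to inverse, i.e.
\[
\langle\gamma(p),X\rangle_-=\langle p,\gamma^{-1}(X)\rangle_-~.
\]
Applying this and recognizing the supremum as the defining expression for $H$ yields $\sup_{p\in K}\langle p,\gamma^{-1}(X)\rangle_-=H(\gamma^{-1}X)$, whence $(\sigma\cdot H)(X)=H(\gamma^{-1}X)+\langle\tau,X\rangle_-$, as claimed.

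There is no serious obstacle here; the statement is essentially a direct computation. The only two points requiring a word of care are the adjunction identity above — which I would justify simply from $\gamma$ preserving $\langle\cdot,\cdot\rangle_-$ — and the remark that $H(\gamma^{-1}X)$ is well defined, since $\gamma^{-1}$ preserves the future cone $\operatorname{I}^+(0)$ (as $\gamma\in\operatorname{SO}^+(d,1)$ fixes the future light cone), so $\gamma^{-1}X\in\operatorname{I}^+(0)$ whenever $X\in\operatorname{I}^+(0)$. It is also worth noting in passing that the resulting formula is manifestly sublinear in $X$ (a sum of the sublinear $X\mapsto H(\gamma^{-1}X)$ and the linear term $\langle\tau,\cdot\rangle_-$), confirming that $\sigma\cdot H$ is again the support function of an F-convex set, consistently with the definition of the action.
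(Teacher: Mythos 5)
Your proof is correct and follows essentially the same route as the paper: unwind the definition of $\sigma\cdot H$ as a supremum over $\sigma(K)$, pull the translation term out, and use the fact that $\gamma$ preserves the Minkowski product to rewrite $\sup_{p\in K}\langle \gamma(p),X\rangle_-$ as $H(\gamma^{-1}X)$. The only difference is that you make explicit two points the paper leaves implicit (the adjunction identity $\langle\gamma(p),X\rangle_-=\langle p,\gamma^{-1}X\rangle_-$ and that $\gamma^{-1}X\in\operatorname{I}^+(0)$), which is a harmless and slightly more careful presentation of the same computation.
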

 \begin{proof}
Let $\fconv$ be the F-convex set corresponding to $H$.
We have 
\[
(\sigma \cdot  H)(X)=\sup_{p\in \sigma \fconv}\langle  X, p\rangle_-=
\sup_{q\in \fconv}\langle X, \sigma q\rangle_-=
\sup_{q\in \fconv}\langle  X, \gamma q\rangle_- +\langle 
 X, \tau_\gamma\rangle_-=
H(\gamma^{-1}(X))+\langle  X, \tau_\gamma\rangle_-~.
\]
\end{proof}

In other words the linear part of $\sigma$ acts on $H$ by pull-back, whereas the translation $\tau$ adds a linear
factor to $H$.

 Notice that if $H$ is a $\tau$-support function, then $\gamma_\tau\cdot H=H$ so we have
 $$H(X)=H(\gamma^{-1}X)+\langle \tau_\gamma, X\rangle_-$$ that is, 
 putting $Y=\gamma^{-1} X$
 \begin{equation}\label{eq:inv}
    H(\gamma Y)=H(Y)+\langle \gamma^{-1}\tau_\gamma, Y\rangle_-=
    H(Y)-\langle \tau_{\gamma^{-1}}, Y\rangle_-~.
\end{equation}    

 Notice that equations \eqref{eq:inv} are linear but, if $\tau\neq 0$, not homogeneous. 
 So the set of functions satisfying \eqref{eq:inv}
 is an affine space whose tangent space is the space of $\Gamma$-invariant functions.
  
  It follows that a convex combination of $\tau$-support function is still a $\tau$-support function.
If $\fconv_1$ and $\fconv_2$ are $\tau$-convex set then for any $0\leq \lambda\leq 1$, $\lambda \fconv_1+(1-\lambda)\fconv_2$
is a $\tau$-convex set. In other words, for those operations, the set of $\tau$-convex sets is convex.

Moreover if $\fconv_1$ is a $\tau$-convex set and $\fconv_2$ is a $\Gamma$-invariant set, then $\fconv_1+\fconv_2$ is 
again a $\tau$-convex set.

As explained in Section \ref{sub:back}, a support function $H$ is determined by its restriction on $\H^d$, denoted here
by $\oh$ and by its restriction on the ball $\ball$, say $h$, where more precisely we embed $\ball$ into $\fut(0)$ by the affine
map $x\mapsto\hat{x}=\binom{x}{1}$ and put $h(x)=H(\hat{x})$.

Notice that since $\H^d$ is left invariant by the action of linear isometries of $\mink$, the action of
$\operatorname{Isom}(\mink)$ on the space of hyperbolic support functions can be easily deduced by \eqref{eq:act}.
In fact for $\eta\in\H^d$ 
we have $(\sigma\cdot \oh)(\eta)=\oh(\gamma^{-1}\eta)+\langle \tau, \eta\rangle_-$, so if $\oh$ is the hyperbolic
support function of a $\tau$-convex set we deduce 
$ \oh(\gamma \zeta)=\oh(\zeta)-\langle \tau_{\gamma^{-1}}, \zeta\rangle_-$, for $\zeta\in\H^d$.

The action of $\operatorname{Isom}(\mink)$ on the ball support functions (that are the convex functions on $\ball$) is more
complicated, due to the fact that the immersion of  $\ball$ into $\fut(0)$ is not invariant by the action of $\operatorname{Isom}(\mink)$.
The translation part contributes by adding an affine function, whereas the linear part of $\sigma$ does not act simply by pull-back.
More precisely,
notice that $\ball$ is naturally identified to the projective set of the lines contained in $\fut(0)$, that is a subset of $\RP^{d+1}$.
As $\operatorname{SO}^+(d,1)$ preserves $\fut(0)$ there is a natural projective action of $\operatorname{SO}^+(d,1)$ on $\ball$.
Given $\gamma\in \operatorname{SO}^+(d,1)$ let us denote by $\bar\gamma$ the corresponding projective transformation of $\ball$ 
\[
   \binom{\bar \gamma(x)}{1}=\frac{1}{(\gamma(\hat{x}))_{d+1}}\gamma (\hat{x})~.
\]
Using \eqref{eq:act} and the $1$-homogeneity of $H$ we deduce
how $\operatorname{Isom}(\mink)$ acts on the ball support functions.

\begin{lemma}\label{lem: projective action}
For  $\sigma\in \operatorname{Isom}(\mink)$ and $h$ a convex function on $\ball$,
\[
\begin{split}
  (\sigma \cdot  h)(x)=  (\gamma^{-1}(\hat{x}))_{d+1}  h (\bar \gamma^{-1}(x))+\langle \hat{x}, \tau_\gamma\rangle_-
  \\ =\frac{\lambda(x)}{\lambda(\bar\gamma^{-1}x)} h (\bar \gamma^{-1}(x))+\langle \hat{x}, \tau_\gamma\rangle_-~. 
\end{split}
\]
\end{lemma}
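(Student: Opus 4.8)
The plan is to evaluate the linear formula \eqref{eq:act} at the point $\hat{x}=\binom{x}{1}$ and then rewrite the two resulting terms in terms of the ball data. By definition $(\sigma\cdot h)(x)=(\sigma\cdot H)(\hat{x})$, so \eqref{eq:act} gives at once
$$(\sigma\cdot h)(x)=H(\gamma^{-1}\hat{x})+\langle \hat{x},\tau_\gamma\rangle_-,$$
where I have used the symmetry of $\langle\cdot,\cdot\rangle_-$ to turn $\langle\tau_\gamma,\hat{x}\rangle_-$ into $\langle\hat{x},\tau_\gamma\rangle_-$. The second summand is already the affine term in the statement, so the whole content of the lemma lies in rewriting $H(\gamma^{-1}\hat{x})$.

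First I would invoke the $1$-homogeneity of $H$. Since $\gamma^{-1}\in \operatorname{SO}^+(d,1)$ preserves $\operatorname{I}^+(0)$, the vector $\gamma^{-1}\hat{x}$ is future time-like, so $(\gamma^{-1}(\hat{x}))_{d+1}>0$ and one may factor
$$\gamma^{-1}\hat{x}=(\gamma^{-1}(\hat{x}))_{d+1}\,\widehat{\bar\gamma^{-1}(x)},$$
where the second factor has last coordinate $1$ and, by the very definition of the projective transformation $\bar\gamma$ (applied to $\gamma^{-1}$), its spatial part is $\bar\gamma^{-1}(x)$. Applying $H$ and pulling out the positive scalar by homogeneity gives
$$H(\gamma^{-1}\hat{x})=(\gamma^{-1}(\hat{x}))_{d+1}\,H\bigl(\widehat{\bar\gamma^{-1}(x)}\bigr)=(\gamma^{-1}(\hat{x}))_{d+1}\,h(\bar\gamma^{-1}(x)),$$
which is the first form in the statement.

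To pass to the second, equivalent form I would compute the scalar $(\gamma^{-1}(\hat{x}))_{d+1}$ using that $\gamma$ is a \emph{linear} isometry. Taking the Minkowski norm of the factorization above and using $\|\gamma^{-1}\hat{x}\|_-=\|\hat{x}\|_-=\lambda(x)$ together with $\|\widehat{\bar\gamma^{-1}(x)}\|_-=\lambda(\bar\gamma^{-1}(x))$ yields
$$\lambda(x)=(\gamma^{-1}(\hat{x}))_{d+1}\,\lambda(\bar\gamma^{-1}(x)),$$
whence $(\gamma^{-1}(\hat{x}))_{d+1}=\lambda(x)/\lambda(\bar\gamma^{-1}(x))$; substituting this back produces the second expression. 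None of these steps is genuinely hard: the only point requiring a word of care is the identity $\overline{\gamma^{-1}}=\bar\gamma^{-1}$, i.e.\ that $\gamma\mapsto\bar\gamma$ is a homomorphism into the projective transformations of $B$, and this can in any case be read off directly from the displayed factorization rather than proved separately.
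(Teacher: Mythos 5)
Your proof is correct and follows exactly the paper's argument: evaluate \eqref{eq:act} at $\hat{x}$, use the $1$-homogeneity of $H$ together with the factorization $\gamma^{-1}\hat{x}=(\gamma^{-1}(\hat{x}))_{d+1}\,\widehat{\bar\gamma^{-1}(x)}$ for the first identity, and compare Minkowski norms (using that $\gamma$ is a linear isometry) for the second. The extra remarks you add, on the positivity of $(\gamma^{-1}(\hat{x}))_{d+1}$ and on $\overline{\gamma^{-1}}=\bar\gamma^{-1}$, are points the paper leaves implicit, and they are handled correctly.
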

\begin{proof}
We have $(\sigma\cdot h) (x)=(\sigma\cdot H)(\hat{x})=H(\gamma^{-1}\hat{x})+\langle \hat{x}, \tau_\gamma\rangle_-$.
On the other hand, $$H(\gamma^{-1}\hat{x})=(\gamma^{-1}(\hat{x}))_{d+1}H\left(\binom{\bar\gamma^{-1}x}{1}\right)=
(\gamma^{-1}(\hat{x}))_{d+1}h(\bar\gamma^{-1}x)~.$$ So the first part of the formula is proved.

Remark now that 
\[
  \lambda(x)= \|\hat{x}\|_-=\|\gamma^{-1}\hat{x}\|_-
  =(\gamma^{-1}(\hat{x}))_{d+1}\|\widehat{\bar\gamma^{-1}x}\|_{-}
  = (\gamma^{-1}(\hat{x}))_{d+1}\lambda(\bar\gamma^{-1}x)
\]
and so also the second part of the formula holds.
\end{proof}

Since isometries of Minkowski space preserve the volume, the area measure associated to any F-convex set
is preserved by the action: $\A(\sigma \fconv)=\gamma_*\A(\fconv)$.
In particular if $\fconv$ is a $\tau$-convex set, then $\A(\fconv)$ is a $\Gamma$-invariant measure.

Using the correspondence between $\A(\fconv)$ and the Monge--Amp\`ere measure associated to the ball support function
stated in Proposition \ref{prop: area=ma}, 
we deduce that the action of $\operatorname{Isom}(\mink)$ on the set of convex functions of $\ball$ effects in a very 
controlled way the
Monge--Amp\`ere measure.

\begin{lemma}
If $h$ is a convex function of $\ball$ and $\sigma\in \operatorname{Isom}(\mink)$ then
\[
   \M(\sigma\cdot h)=\frac{\sqrt{1-\|\bar\gamma^{-1}(x)\|^2}}{\sqrt{1-\|x\|^2}}\bar\gamma_*\M(h)~.
\]
\end{lemma}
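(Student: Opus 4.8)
The plan is to deduce this from the equivariance of the Lorentzian area measure, $\A(\sigma K)=\gamma_*\A(K)$, recorded just above, and then transport the result to $B$ through the dictionary $\A(h)=\lambda\,\M(h)$ of Proposition~\ref{prop: area=ma}. The geometric input I would isolate first is that the radial map intertwines the two actions, namely $\rad\circ\bar\gamma=\gamma\circ\rad$ on $B$. This is immediate from the definition $\widehat{\bar\gamma(x)}=\frac{1}{(\gamma(\hat x))_{d+1}}\gamma(\hat x)$ and the fact that $\gamma$ preserves the Minkowski norm: writing $\gamma(\hat x)=(\gamma(\hat x))_{d+1}\,\widehat{\bar\gamma(x)}$ and taking Lorentzian norms gives
\[
\lambda(x)=\|\hat x\|_-=\|\gamma(\hat x)\|_-=(\gamma(\hat x))_{d+1}\,\lambda(\bar\gamma(x)),
\]
which is precisely the identity established in the proof of Lemma~\ref{lem: projective action}. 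Consequently $\gamma(\rad(x))=\frac{1}{\lambda(x)}\gamma(\hat x)=\frac{1}{\lambda(\bar\gamma(x))}\widehat{\bar\gamma(x)}=\rad(\bar\gamma(x))$, and equivalently $\gamma^{-1}\circ\rad=\rad\circ\bar\gamma^{-1}$.

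Using this, I would first compute the pushed-forward area measure on $B$. By the definition $\A(\sigma\cdot h)(\omega)=\A(\sigma K)(\rad(\omega))$ and the equivariance of $\A$,
\[
\A(\sigma\cdot h)(\omega)=\A(K)\bigl(\gamma^{-1}\rad(\omega)\bigr)=\A(K)\bigl(\rad(\bar\gamma^{-1}\omega)\bigr)=\A(h)(\bar\gamma^{-1}\omega)=(\bar\gamma_*\A(h))(\omega),
\]
so that $\A(\sigma\cdot h)=\bar\gamma_*\A(h)$ as measures on $B$. It then remains only to substitute $\A=\lambda\,\M$ on both sides. On the left $\A(\sigma\cdot h)=\lambda\,\M(\sigma\cdot h)$, while on the right I must commute the scalar factor $\lambda$ past the pushforward: for a density representation $\nu=f\mu$ and a homeomorphism $T$ one has $T_*(f\mu)=(f\circ T^{-1})\,T_*\mu$, hence
\[
\bar\gamma_*\A(h)=\bar\gamma_*(\lambda\,\M(h))=(\lambda\circ\bar\gamma^{-1})\,\bar\gamma_*\M(h).
\]
Equating the two expressions and dividing by the everywhere positive function $\lambda$ on $B$ yields
\[
\M(\sigma\cdot h)=\frac{\lambda(\bar\gamma^{-1}(x))}{\lambda(x)}\,\bar\gamma_*\M(h)=\frac{\sqrt{1-\|\bar\gamma^{-1}(x)\|^2}}{\sqrt{1-\|x\|^2}}\,\bar\gamma_*\M(h),
\]
which is the claimed identity.

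The only genuinely delicate point is the bookkeeping in this last step: one must be careful to write each measure consistently as a density times a reference measure and to keep straight whether $\bar\gamma$ or $\bar\gamma^{-1}$ appears when moving the factor $\lambda$ through the pushforward. I would also note, as a sanity check, that the translational part $\tau_\gamma$ of $\sigma$ contributes nothing: by Lemma~\ref{lem: projective action} it only adds an affine function to $h$, and affine functions do not affect the Monge--Amp\`ere measure by \eqref{eq:MA affine}, consistently with the fact that the final formula depends on $\sigma$ solely through its linear part $\gamma$.
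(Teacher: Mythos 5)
Your proof is correct and follows essentially the same route as the paper's: both use the equivariance $\A(\sigma K)=\gamma_*\A(K)$, the intertwining $\rad\circ\bar\gamma=\gamma\circ\rad$, and the dictionary $\A(h)=\lambda\,\M(h)$ of Proposition~\ref{prop: area=ma}. The paper's two-line proof leaves the intertwining and the density-pushforward bookkeeping implicit, which you have simply written out in full.
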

\begin{proof}
If  $\rad:\ball\to\H^d$ is the radial map, we have
$$\M(\sigma\cdot h)=\lambda^{-1}(\rad^{-1})_*(\A(\sigma \fconv))=
\lambda^{-1}(\rad^{-1})_*(\gamma_*\A(\fconv)).$$

Since $\rad^{-1}\gamma=\bar\gamma\rad^{-1}$ we conclude that
$\M(\sigma\cdot h)=\lambda^{-1}(\bar\gamma_*)(\lambda\M(h))$.
\end{proof}

\begin{corollary}\label{cor:mabound}
For any $\sigma\in \operatorname{Isom}(\mink)$, $c_0\in\R$ and $C$ compact subset of $\ball$, there is a constant 
$c=c(\sigma, c_0, C)$ 
such that if $\M(h)\geq c_0\mathcal L$ on $\ball$ then $\M(\sigma\cdot h)\geq c\mathcal L$ \new{on $C$}.

Moreover $c$ is uniformly bounded if the linear part of $\sigma$ lies in a compact subset of $\operatorname{SO}^+(d,1)$.
\end{corollary}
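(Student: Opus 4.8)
The plan is to read off everything from the formula for $\M(\sigma\cdot h)$ established in the previous lemma,
\[
   \M(\sigma\cdot h)=\frac{\lambda(\bar\gamma^{-1}(x))}{\lambda(x)}\,\bar\gamma_*\M(h)~,
\]
where $\gamma$ is the linear part of $\sigma$ and $\bar\gamma$ the induced projective transformation of $B$ (recall $\lambda(x)=\sqrt{1-\|x\|^2}$). First I would observe that the right-hand side does not involve the translation part $\tau_\gamma$ at all: by Lemma~\ref{lem: projective action} the translation only adds an affine function to $h$, which leaves the Monge--Amp\`ere measure unchanged by \eqref{eq:MA affine}. Hence $\M(\sigma\cdot h)$ depends on $\sigma$ only through $\gamma$, which is precisely why the ``moreover'' part may require compactness of the linear parts alone. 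I would also dispose of the case $c_0\leq 0$ at once: since $\M(\sigma\cdot h)\geq 0$ as a Monge--Amp\`ere measure, any $c\leq 0$ works. So from now on assume $c_0>0$.

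Next I would push the hypothesis $\M(h)\geq c_0\mathcal L$ through $\bar\gamma$. Pushforward is monotone, so $\bar\gamma_*\M(h)\geq c_0\,\bar\gamma_*\mathcal L$; and since $\bar\gamma$ is a diffeomorphism of $B$, the change of variables formula gives $\bar\gamma_*\mathcal L=\lvert\det D\bar\gamma^{-1}\rvert\,\mathcal L$. Substituting into the displayed formula yields the pointwise density bound
\[
   \M(\sigma\cdot h)\ \geq\ c_0\,\psi_\gamma\,\mathcal L~,\qquad
   \psi_\gamma(x):=\frac{\lambda(\bar\gamma^{-1}(x))}{\lambda(x)}\,\bigl\lvert\det D\bar\gamma^{-1}(x)\bigr\rvert~.
\]

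It then remains to bound $\psi_\gamma$ from below on the compact set $C$ (the conclusion $\M(\sigma\cdot h)\geq c\mathcal L$ being understood on $C$, which is the only role $C$ plays). Since $\bar\gamma$ is a diffeomorphism of $B$ onto itself, $\bar\gamma^{-1}(C)$ is again a compact subset of $B$, so $\lambda(\bar\gamma^{-1}(x))$ and $\lambda(x)$ are both bounded below by a positive constant on $C$, while $\lvert\det D\bar\gamma^{-1}\rvert$ is continuous and strictly positive; hence $\psi_\gamma$ is continuous and strictly positive on $C$ and attains a positive minimum $m_C$. Taking $c=c_0 m_C$ proves the first assertion. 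For the ``moreover'' part, let $\mathcal K\subset\operatorname{SO}^+(d,1)$ be compact. The map $(x,\gamma)\mapsto\bar\gamma^{-1}(x)$ is continuous, so its image over $C\times\mathcal K$ is a compact subset of $B$; thus $(x,\gamma)\mapsto\psi_\gamma(x)$ is \emph{jointly} continuous and strictly positive on the compact set $C\times\mathcal K$ and attains a positive minimum $m$ there. Then $c=c_0 m$ works uniformly for every $\sigma$ whose linear part lies in $\mathcal K$.

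There is no genuine obstacle here: the whole statement is a monotonicity-plus-compactness reading of the previous lemma. The only point requiring a little care is the uniform statement, where one must verify that $\psi_\gamma(x)$ is jointly continuous in $(x,\gamma)$ and stays bounded away from $0$ uniformly; this follows once one observes that $(x,\gamma)\mapsto\bar\gamma^{-1}(x)$ carries the compact set $C\times\mathcal K$ into a compact subset of $B$, keeping both $\lambda(\bar\gamma^{-1}(x))$ and the Jacobian under control.
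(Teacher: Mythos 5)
Your proof is correct and is essentially the paper's own reading of the statement: the paper gives no separate argument for Corollary~\ref{cor:mabound}, stating it as an immediate consequence of the preceding lemma's formula $\M(\sigma\cdot h)=\frac{\lambda(\bar\gamma^{-1}(x))}{\lambda(x)}\bar\gamma_*\M(h)$, which is exactly what you exploit via monotonicity of pushforward, change of variables, and compactness of $C$ (and of $C\times\mathcal K$ for the uniform statement).
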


\subsection{Extension of $\tau$-support functions to the boundary}

\begin{proposition}\label{prop:g tau}
Let $\tau\in Z^1(\Gamma,\R^{d+1})$. Then there exists $g_{\tau}\in C^0(\partial \ball)$ such that
$\tau$-convex sets are $g_{\tau}$-convex sets.
\end{proposition}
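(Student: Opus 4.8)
The plan is to produce a single smooth strictly convex $\tau$-convex set, read off its boundary value via \cite{li95}, and then show that \emph{every} $\tau$-convex set has the same boundary value. The last point is the conceptual core and is surprisingly soft. If $K,K'$ are two $\tau$-convex sets with hyperbolic support functions $\oh,\oh'$, then by \eqref{eq:inv} the difference $\oh-\oh'$ is $\Gamma$-invariant and continuous, hence bounded by some $M$ on the compact quotient $\H^d/\Gamma$. Since the ball support function is $h(x)=\lambda(x)\oh(\rad(x))$ and $\lambda(x)=\sqrt{1-\|x\|^2}\to 0$ as $x\to\partial B$, we get $|h(x)-h'(x)|\le M\lambda(x)\to 0$. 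Thus the two restrictions to $B$ share the same boundary behaviour: if one extends continuously to $\partial B$, so does the other, with identical boundary values. Hence it suffices to exhibit one $\tau$-convex set whose ball support function extends continuously, and the resulting $g_\tau$ automatically works for all of them.

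To obtain such a set I would build a smooth $\tau$-support function explicitly. Fix a smooth partition of unity $\{\psi_\gamma\}_{\gamma\in\Gamma}$ subordinate to the $\Gamma$-action, with $\psi_\gamma(\zeta)=\psi(\gamma^{-1}\zeta)$ and $\sum_\gamma\psi_\gamma\equiv 1$ (this exists because, after reducing to the torsion-free case as in the excerpt, $\Gamma$ acts freely, properly discontinuously and cocompactly on $\H^d$). Set
\[
\oh_0(\zeta)=\sum_{\gamma\in\Gamma}\psi_\gamma(\zeta)\,\langle\tau_\gamma,\zeta\rangle_-,
\]
a locally finite, hence smooth, sum. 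A direct computation using the cocycle relation $\tau_{\alpha\beta}=\tau_\alpha+\alpha\tau_\beta$ (whence $\alpha\tau_{\alpha^{-1}}=-\tau_\alpha$) gives $\oh_0(\alpha\zeta)-\oh_0(\zeta)=-\langle\tau_{\alpha^{-1}},\zeta\rangle_-$, i.e. $\oh_0$ satisfies the $\tau$-equivariance \eqref{eq:inv}. It need not be convex, but this is easily remedied. The decisive observation is that the curvature form $\nabla^2\oh_0-\oh_0 g_\H$ is $\Gamma$-invariant: the two sides of \eqref{eq:inv} differ by the restriction of a linear functional $\ell=\langle\tau_{\alpha^{-1}},\cdot\rangle_-$, and for any such $\ell$ one has $\nabla^2\ell-\ell g_\H=0$ (its F-convex set is the future cone of a point, so the map $\chi$ is constant). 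Being $\Gamma$-invariant, continuous, and defined over the compact manifold $\H^d/\Gamma$, this tensor is bounded; in particular $\nabla^2\oh_0-\oh_0 g_\H\ge -N g_\H$ for some $N>0$.

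Now set $\oh_1=\oh_0-c$ with $c>N$. Adding a constant preserves the equivariance \eqref{eq:inv} (geometrically it is a Minkowski sum with a large, hence $\Gamma$-invariant, hyperboloid), and
\[
\nabla^2\oh_1-\oh_1 g_\H=(\nabla^2\oh_0-\oh_0 g_\H)+c\,g_\H\ge (c-N)g_\H>0,
\]
while staying bounded from above. By Lemma~\ref{lm:c2+}, $\oh_1$ is the support function of a $C^\infty_+$ $\tau$-convex set $K_1$, and the upper bound on $\nabla^2\oh_1-\oh_1 g_\H$ means precisely that its radii of curvature, hence its mean radius of curvature, are bounded from above. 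Proposition~\ref{prop: li} then applies and yields that the ball support function of $K_1$ extends continuously to $\partial B$; call this boundary function $g_\tau\in C^0(\partial B)$.

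Combining with the first paragraph concludes the argument: for an arbitrary $\tau$-convex set $K$ with ball support function $h$, write $h=h_1+(h-h_1)$, where $h_1$ extends continuously with boundary value $g_\tau$ and $h-h_1\to 0$ at $\partial B$; hence $h$ itself extends continuously to $\partial B$ with value $g_\tau$, so $\tilde h|_{\partial B}=g_\tau$ and $K$ is a $g_\tau$-convex set. I expect the main obstacle to be the construction of the second and third paragraphs, namely manufacturing a genuinely smooth, strictly convex, $\tau$-equivariant support function with controlled curvature: the partition-of-unity Ansatz solves the equivariance equation in closed form, and the crucial structural fact that makes everything fit together is that $\tau$-equivariance perturbs the support function only by linear terms, which are invisible to the curvature operator $\nabla^2(\cdot)-(\cdot)g_\H$. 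This is exactly what turns compactness of $\H^d/\Gamma$ into the curvature bounds needed to invoke \cite{li95}.
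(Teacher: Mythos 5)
Your proof is correct, and its skeleton is the same as the paper's: produce one $\tau$-convex set to which Proposition~\ref{prop: li} applies, then observe that for any other $\tau$-convex set the difference of hyperbolic support functions is $\Gamma$-invariant by \eqref{eq:inv}, hence bounded on the compact quotient, so that on the ball the difference is at most $M\lambda$ and vanishes at $\partial B$; this second half of your argument is essentially word-for-word the paper's. The genuine divergence is in the seed set. The paper gets it by citing \cite{fv} for the existence of a $C^2_+$ $\tau$-convex set with \emph{constant} mean radius of curvature (a PDE existence result), whereas you build one by hand, and your construction is sound: the partition-of-unity Ansatz $\oh_0=\sum_\gamma \psi_\gamma\,\langle\tau_\gamma,\cdot\rangle_-$ does solve \eqref{eq:inv} (the cocycle computation is exactly as you indicate), and the structural observation that equivariance perturbs $\oh_0$ only by restrictions of linear functionals, which satisfy $\nabla^2\ell-\ell g_\H=0$, correctly shows that $\nabla^2\oh_0-\oh_0 g_\H$ is a $\Gamma$-invariant, hence bounded, tensor; subtracting a large constant preserves \eqref{eq:inv} and pinches this tensor between two positive multiples of $g_\H$, so Lemma~\ref{lm:c2+} yields a $C^2_+$ $\tau$-convex set whose radii of curvature --- in particular whose mean radius of curvature --- are bounded above, which is all Proposition~\ref{prop: li} needs (constancy is irrelevant). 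What your route buys is self-containedness, replacing a nontrivial existence theorem from \cite{fv} by an elementary explicit construction; what the paper's route buys is brevity, since that theorem was already available in its background reference. One shared ellipsis, harmless in both write-ups: passing from ``$h$ extends continuously to $\overline B$'' to ``$\tilde h|_{\partial B}=g_\tau$'' uses that $\tilde h(\ell)$ is the radial limit of $h$ at $\ell\in\partial B$, which the paper records earlier in Section~\ref{sec:ext sphere}.
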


See also \cite{Bar05}.

\begin{proof}
As there exists a  $C^2_+$ $\tau$-convex set 
with constant mean radii of curvature \cite{fv}, 
from Proposition~\ref{prop: li},  the corresponding support function on the ball,
say $h_0$, extends to the sphere $\partial \ball$. Denote by $g_\tau$ the extension of $h_0$ to $\partial \ball$.

Let us take any other $\tau$-convex set $\fconv$. We want to prove that its support function on the ball, say $h$,
extends $g_\tau$.
By \eqref{eq:inv} we have that the difference $H-H_0$ is a $\Gamma$-invariant function
on $\fut(0)$. 

So,  its restriction to $\H^d$ reaches a  minimum $a$ and a maximum $b$.
Using that on the ball $h(x)=H(x,1)=\lambda(x)H(\rad(x))$, we deduce that
\[
     a\lambda(x) <h(x)-h_0(x)<b\lambda(x)~
\] 
As $\lambda(x)=\sqrt{1-\|x\|^2}$ goes  uniformly  to $0$ as $\|x\|\rightarrow 1$ we get the result.

So the value of the  extension on the sphere depends only on $ \tau$. 
\end{proof}

\begin{remark}\label{rk:ext inv}
{\rm
The argument of the proof of Proposition \ref{prop:g tau} shows that 
if $\overline{f}$ is any bounded function on $\H^d$, then the restriction of its $1$-homogeneous extension
to the ball, say $f$, is continuous on $\overline{\ball}$ and in fact $f|_{\partial \ball}$ is zero.

This remark can be applied for instance if $\overline{f}$ is a $\Gamma$-invariant function and we will
often use it in the sequel. 
}
\end{remark}

We will denote $\Omega_{g_{\tau}}$ by $\Omega_{\tau}$\index{Convex set! $\Omega_\tau$}. If $\tau$ is a 
coboundary, then $\Omega_{\tau}$ is  the closure of the  future cone of a point, and
$g_{\tau}$ is the restriction of an affine map. 
Despite the fact that in the literature $\Omega_\tau$ is often considered as an open
subset, it is more convenient to  consider its closure in the present paper.

\begin{definition}
For any cocycle $\tau$, $h_\tau$ will be  the support function of $\Omega_\tau$ (that is the convex envelope of $g_\tau$).\index{Support function! $h_\tau$ the support function of $\Omega_\tau$}
\end{definition}
By Proposition \ref{prop:a=0} any $\tau$-convex set is contained in $\Omega_\tau$. 

\new{
For $x\in \Omega_{\tau}$, the \emph{cosmological time} \index{Cosmological time} is the Lorentzian distance 
from $x$ to the boundary of $\Omega_\tau$.
By \cite{Bon05} the cosmological time is a $\Gamma_\tau$-invariant 
$C^{1,1}$-function of the interior of $\Omega_\tau$ with value on $[0,+\infty)$.
Moreover, the level set $\Sigma_t$ is the boundary of the set $\Omega_\tau+t\jplus(\H)$.
In particular the support function of the future of $\Sigma_t$ on $\H^d$ 
is $\oh_\tau-t$.

For a given $\Gamma_\tau$-invariant hypersurface $S$, we will denote by 
$T_{\operatorname{min}}(S)$ (resp.  $T_{\operatorname{max}}(S)$) \index{Cosmological time! $T_{\operatorname{min}}(S)$,  $T_{\operatorname{max}}(S)$}
the minimum (resp. the maximum) of the cosmological time on $S$.

\begin{lemma}\label{lem: supp et cos time}
Let $\fconv$ be a $\tau$-convex set with hyperbolic support function $\oh$, and let $S$ be its boundary.
Then $T_{\operatorname{min}}(S)$ (resp.  $T_{\operatorname{max}}(S)$)  is equal to 
the minimum (resp. maximum) of $\oh_{\tau}-\oh$ on $\H^d$. 
\end{lemma}
\begin{proof}
Let $\Sigma_m$ be the level set of $T$ corresponding to the level $m=T_{\operatorname{min}}(S)$. 
As  $\fconv\subset \overline{\fut(\Sigma_m)}$, then  $h\leq h_{\tau}-m$, i.e. $h_{\tau}-h\geq m$.
On the other hand, if $p$ is the point of  $\fconv$ realizing the minimum  of the cosmological time on $\partial \fconv$,
then the tangent plane of $\Sigma_m$ at $p$ is a support plane of $\fconv$ at $p$.
If $\hat{x}$, $x\in \ball$, is the corresponding normal vector, then $\oh(x)=\oh_{\tau}(x)-m$.
\end{proof}
}

Recall from Definition~\ref{def:cauchy} that 
a 
 $\tau$-convex set is \emph{Cauchy} if $h<h_{\tau}$.
 
\begin{lemma}\label{lem: cauchy at int tau}
 Cauchy $\tau$-convex sets are exactly the $\tau$-convex sets contained in the interior of  $\Omega_{\tau}$. For such a $\fconv$,
 $\partial \fconv=\partial_s \fconv$.
\end{lemma}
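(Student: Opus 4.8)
The plan is to reduce everything to a comparison of the hyperbolic support functions of $K$ and of $\Omega_\tau$. Write $\oh,\oh_\tau$ for the restrictions to $\H^d$ of the support functions $h,h_\tau$ of $K$ and $\Omega_\tau$, and set $\psi=\oh_\tau-\oh$. Since $K$ and $\Omega_\tau$ are both $\tau$-convex sets, \eqref{eq:inv} shows $\psi$ is $\Gamma$-invariant; being continuous, it descends to the compact quotient $\H^d/\Gamma$. Because $h(x)=\lambda(x)\oh(\rad(x))$ and $\lambda>0$ on $B$, the Cauchy condition $h<h_\tau$ is equivalent to $\psi>0$ on $\H^d$, hence — by compactness of $\H^d/\Gamma$ — to the existence of $m>0$ with $\psi\ge m$ everywhere. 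This uniform bound is the only place equivariance enters, and it is exactly what fails for the non-equivariant example $h=\frac{1}{2}\|x\|^2$; so I expect it to be the heart of the matter.

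For the easy inclusion, if $K\subset\operatorname{int}\Omega_\tau$ then it is Cauchy: were $h(x_0)=h_\tau(x_0)$ at some $x_0\in B$, the common space-like support plane orthogonal to $\hat{x}_0$ would touch $\partial_s K\subset K$ at a point that also lies on $\partial\Omega_\tau$, contradicting $K\subset\operatorname{int}\Omega_\tau$. For such $K$, Lemma~\ref{lem: cuahcy int} immediately gives $\partial K=\partial_s K$. It remains to prove the converse: a Cauchy $K$ is contained in $\operatorname{int}\Omega_\tau$.

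Suppose not, so there is $p\in K\cap\partial\Omega_\tau$; let $\eta_0$ be a future unit support normal of $\Omega_\tau$ at $p$, which is time-like or light-like since $\Omega_\tau$ is an F-convex set. If $\eta_0=\rad(x_0)$ is time-like, then $h_\tau(x_0)=\langle p,\hat{x}_0\rangle_-\le h(x_0)\le h_\tau(x_0)$, forcing $h(x_0)=h_\tau(x_0)$ with $x_0\in B$, which contradicts that $K$ is Cauchy. If $\eta_0=\hat\ell$ is light-like, $\ell\in\partial B$, consider the affine function $l_p(x)=\langle p,\hat{x}\rangle_-$; since $p\in K\subset\Omega_\tau$ one has $l_p\le h\le h_\tau$ on $\overline B$, with the boundary equality $l_p(\ell)=g_\tau(\ell)=h_\tau(\ell)$. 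I then probe the three functions along the radial segment $x_s=(1-s)\ell$, $s\downarrow0$, where $\lambda(x_s)=\sqrt{2s-s^2}\sim\sqrt{2}\,\sqrt{s}$. Here $l_p(x_s)=g_\tau(\ell)+D_p s$ is exactly affine in $s$, while convexity of $h_\tau$ together with $h_\tau\ge l_p$ forces its one-sided directional derivative $D$ at $\ell$ to be finite, so $h_\tau(x_s)=g_\tau(\ell)+Ds+o(s)$. Combining $l_p(x_s)\le h(x_s)\le h_\tau(x_s)-\psi(\rad(x_s))\lambda(x_s)\le h_\tau(x_s)-m\lambda(x_s)$ yields
\[
   m\sqrt{2}\,\sqrt{s}\le (D-D_p)\,s+o(\sqrt{s})~,
\]
and dividing by $\sqrt{s}$ and letting $s\downarrow0$ gives $m\sqrt{2}\le0$, contradicting $m>0$.

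Thus both cases are impossible and $K\subset\operatorname{int}\Omega_\tau$, completing the equivalence. The main obstacle is precisely the light-like case: there the support functions of $K$ and $\Omega_\tau$ both converge to $g_\tau$ at the ideal point $\ell$, so no pointwise inequality inside $B$ forbids the contact. What rescues the argument is the mismatch of orders — $h_\tau-h$ is at least of order $\lambda\sim\sqrt{s}$ by the uniform bound $\psi\ge m$, whereas the affine barrier $l_p$ meets $h_\tau$ only to first order $s$ — and it is exactly the compactness of $\H^d/\Gamma$, i.e.\ the equivariance, that makes $\psi\ge m>0$ available.
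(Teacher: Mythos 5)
Your proof is correct, but the route through the main implication (Cauchy $\Rightarrow$ contained in the interior of $\Omega_\tau$) is genuinely different from the paper's. The paper converts the uniform gap $t=\min_{\H^d}(\oh_\tau-\oh)>0$ --- the same compactness consequence you isolate as $\psi\ge m$ --- into the containment $K\subset \Omega_\tau+tK(\H)=\overline{\operatorname{I}^+(\Sigma_t)}$, where $\Sigma_t$ is the level set of the cosmological time of $\Omega_\tau$ (see the discussion after Remark~\ref{rem:hausdorff dist} and Lemma~\ref{lem: supp et cos time}); every point of that set is at Lorentzian distance at least $t$ from $\partial_s\Omega_\tau$, hence interior, and the cosmological-time machinery of \cite{Bon05} does the rest. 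You instead argue by contradiction at a hypothetical contact point $p\in K\cap\partial\Omega_\tau$, splitting on the causal type of a support plane of $\Omega_\tau$ at $p$: the space-like case is killed by the pointwise inequality $h<h_\tau$, and the light-like case by the order mismatch along a radial segment toward $\ell\in\partial B$ --- the gap $h_\tau-h\ge m\lambda\sim m\sqrt{2s}$ beats the first-order agreement $h_\tau(x_s)-l_p(x_s)=O(s)$, which convexity of $h_\tau$ and $h_\tau\ge l_p$ guarantee. What the paper's route buys is brevity, at the cost of importing the structure theory of $\Omega_\tau$; what yours buys is a support-function argument that is self-contained apart from Proposition~\ref{prop:g tau} (continuity of $h_\tau$ on $\overline B$, which you need for $l_p(\ell)=h_\tau(\ell)$) and F-convexity, and that makes explicit why ideal contact at $\partial B$ cannot propagate to actual contact; you also substantiate, via surjectivity of the Gauss map, the easy direction that the paper dismisses as clear. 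Both proofs obtain $\partial K=\partial_s K$ identically from Lemma~\ref{lem: cuahcy int}.
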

\begin{proof}
If $\fconv$ is in the interior of $\Omega_{\tau}$, it is clearly Cauchy.
Now suppose that $\fconv$ is Cauchy and $\tau$-convex.
By Lemma~\ref{lem: supp et cos time}, $\fconv$ is in the future of the level set of the cosmological time $\Sigma_m$ \old{(see after Remark~\ref{rem:hausdorff dist})}, for
$m>0$ the minimum of $\oh_{\tau}-\oh$.\old{ But $\Sigma_t$ has not light-like support plane. This is easy to see as  it is a hypersurface at constant distance 
from $\partial_s \Omega_{\tau}$ and with the same light-like support planes at infinity.}
\new{But $\Sigma_m$ is contained in the interior of $\Omega_\tau$ (\cite{Bon05}) so the same holds for $\fconv$.}

 The last assertion comes from Lemma~\ref{lem: cuahcy int}.
\end{proof}

\begin{remark}\label{rk:action}{\rm
The existence of a maximal $\tau$-convex subset was already proved in \cite{Bon05} and in \cite{Bar05}
by different arguments.
We remark here some consequences of the theory developed in \cite{Bon05} that we will use
here.

A first important property is that the action of $\Gamma_\tau$ on the interior of $\Omega_\tau$  is proper.
The quotient $(\operatorname{int}\Omega_\tau)/\Gamma_\tau$ is foliated by space-like convex compact hypersurfaces.
Moreover if $\fconv$ is any Cauchy $\tau$-convex set  then
$\partial \fconv/\Gamma_\tau$ is compact.

Since any point in the interior of  $\Omega_\tau$ is contained in some space-like convex hypersurface we get that the action
of $\Gamma_\tau$ in $\operatorname{int}\Omega_\tau$ is space-like in the sense that the segment joining two points on
the same orbit is always space-like.}
\end{remark}

\begin{remark}{\rm 
Proposition~\ref{prop:g tau} implies  that for any light-like vector $\ell\in\partial \ball$, 
the plane $P_{g_\tau}(\ell)$ of equation $\langle x,\ell\rangle_-=g_\tau(\ell)$ does not meet the interior of $\Omega_\tau$.

Notice however that this does not mean that any light-like direction is orthogonal to a 
 light-like support plane of $\Omega_{\tau}$. Actually if $d=2$ and if $\tau$ is not 
 a coboundary, then the set of directions of light-like support plane has zero measure 
 in $\partial \H^2$, \cite[Proposition~4.15]{BMS13}.
 For most of the light-like directions  $P_{g_\tau}(\ell)$ is a support plane at infinity: it does not meet $\Omega_\tau$ but 
any parallel displacement of the hyperplane in the future direction will meet the interior of $\Omega_\tau$. 
 }\end{remark}

We say that a continuous function $\oh:\H^d\to\R$ is $\tau$-equivariant if it
satisfies the equivariance relation \eqref{eq:inv}. 
Notice that $\tau$-equivariant functions are precisely those functions $\oh$ that  can be written as $\oh=\oh_{\tau}+\overline{f}$, with
$\overline{f}=\oh-\oh_{\tau}$ a continuous  $\Gamma$-invariant function.

For any  $\Gamma$-invariant function $\overline{f}$, we define the following  convex set:
$$\fconv_{\tau}(\overline{f})=\{p\in \R^{d+1} | \langle p,\eta\rangle_- \leq \oh_{\tau}(\eta)+\overline{f}(\eta),\, \forall \eta\in\H^d\}~. $$

It is immediate to see that if $p\in\Omega_\tau$ and $a>\sup_{\H^d}\overline{f}$, then $p+a\eta\in \fconv_\tau(\overline{f})$ for 
any $\eta\in\H^d$. So this set is not empty. Moreover, a simple computation shows that it is a $\tau$-convex set.

If $f$ denotes the restriction of the $1$-homogeneous extension of $\overline{f}$ to the ball $\ball$,
the support function $h$ of  $\fconv_{\tau}(\bar f)$ is the convex envelope of $f+h_{\tau}$ (because the convex envelope
corresponds to the pointwise supremum of all affine functions majored by $f+h_{\tau}$ \cite[12.1.1]{Roc97}, that is exactly the definition of
the support function). 
Notice that the restriction of $f$ to $\partial \ball$ is $0$ (see Remark~\ref{rk:ext inv}), 
so if $f$ is non negative, then $h$ equals $h_\tau$, that is
 $\fconv_{\tau}(f)=\Omega_{\tau}$. 
 
 Many of the functionals we will introduce below on the set of $\tau$-support functions
 can be then extended to the space of $\tau$-equivariant continuous functions, but we will only need the following.
 
\begin{lemma}\label{lem:pte pt reg}
 Let $\overline{f}$ be any $\Gamma$-invariant function on $\H^d$,  
 and $\oh$ be the support function of $\fconv=\fconv_\tau(\overline{f})$, and $G$ be the Gauss map of $\fconv$.
 Then on $\partial_{\mathrm{reg}}\fconv$, 
 $$(\oh_{\tau}+\overline{f})\circ \gauss= \oh\circ \gauss~. $$
In particular, the set of points such that $\oh<\oh_{\tau}+\overline{f}$ (i.e. such that $\oh_{\tau}+\overline{f}\not= \oh$) has zero area measure. 
 \end{lemma}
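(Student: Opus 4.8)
The plan is to pass to the ball picture and to recognise the claimed identity as a \emph{contact} statement for a convex envelope. Recall from the discussion preceding the lemma that $K=K_\tau(\overline{f})$ is an F-convex set whose ball support function $h$ is the convex envelope of $\phi:=f+h_\tau$, where $f$ is the restriction to $B$ of the $1$-homogeneous extension of $\overline{f}$. Since $\overline{f}$ is $\Gamma$-invariant it is bounded, so by Remark~\ref{rk:ext inv} $f$ extends continuously to $\overline B$ with $f|_{\partial B}=0$; as $h_\tau\in C^0(\overline B)$, the function $\phi$ is continuous on the compact convex set $\overline B$ and $h=\mathrm{conv}(\phi)\le\phi$. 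Working on the ball, the equality $(\oh_\tau+\overline{f})\circ G=\oh\circ G$ at a point $p\in\partial_{\mathrm{reg}}K$ with $\eta:=G(p)$ and $x_0:=\rad^{-1}(\eta)\in B$ is, after multiplying by $\lambda(x_0)$, exactly the contact relation $h(x_0)=\phi(x_0)$ (both $h$ and $\phi$ being ball-restrictions of $1$-homogeneous extensions). So it suffices to prove that $h=\phi$ at every such $x_0$.

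The key point, where regularity enters, is that $p\in\partial_{\mathrm{reg}}K$ forces $x_0$ to be a contact point of the envelope. First I would locate $x_0$: since $G(p)=\eta$, we have $y_0:=P(p)\in P(G^{-1}(\eta))=\partial h(x_0)$ by \eqref{eq: proj subdif}, equivalently $x_0\in\partial\tilde u(y_0)$ by \eqref{eq:inv partial}; and regularity of $\partial K$ at $p$ means the graph function $\tilde u$ is differentiable at $y_0$, so $\partial\tilde u(y_0)=\{x_0\}$ is a singleton. Now suppose for contradiction that $h(x_0)<\phi(x_0)$. Because $h$ is the convex envelope of the continuous function $\phi$ and $x_0$ lies in the \emph{open} ball, the point $(x_0,h(x_0))$ lies on the lower boundary of $\mathrm{conv}(\mathrm{graph}\,\phi)$ strictly below the graph of $\phi$; by the standard structure of convex envelopes it is then a nontrivial convex combination of contact points (a Carath\'eodory decomposition produces at least two of them), hence lies in the relative interior of a nondegenerate segment on which $h$ is affine. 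Shrinking this segment we may assume it is contained in $B$. Since $h$ is affine there and $y_0\in\partial h(x_0)$ with $x_0$ interior to the segment, we get $y_0\in\partial h(z)$ for every $z$ on the segment, so by \eqref{eq:inv partial} every such $z$ lies in $\partial\tilde u(y_0)$. This contradicts $\partial\tilde u(y_0)=\{x_0\}$. Therefore $h(x_0)=\phi(x_0)$, which is the desired equality on $\partial_{\mathrm{reg}}K$.

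Finally, the ``in particular'' statement is immediate. The set $\{\eta\in\H^d:\oh(\eta)<(\oh_\tau+\overline{f})(\eta)\}$ is, by the identity just proved, disjoint from $G(\partial_{\mathrm{reg}}K)$; since $K$ is F-convex its Gauss map is surjective, so this set is contained in $G(\partial_sK\setminus\partial_{\mathrm{reg}}K)$, which has zero area measure by Proposition~\ref{prop: area zero not c1}. Hence the set where $\oh<\oh_\tau+\overline{f}$ is $\A(K)$-null. The main obstacle in this argument is precisely the contact step: one cannot simply assert that the support plane at $p$ is the defining plane of $K=\bigcap_\zeta\overline{\operatorname{I}^+(P_\zeta)}$ orthogonal to $\eta$, because a support plane of an infinite intersection of half-spaces need not be one of the defining planes; it is the regularity of $\partial K$ at $p$, encoded as the uniqueness of $\partial\tilde u(y_0)$, that excludes the affine (non-contact) behaviour of the envelope at $x_0$.
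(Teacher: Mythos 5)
Your proof is correct, and it reaches the conclusion by a genuinely different route than the paper. The paper's proof is a direct compactness argument: since $h_\tau+f$ is continuous on $\overline B$ (Remark~\ref{rk:ext inv} and Proposition~\ref{prop:g tau}) and $K=\{q\,|\,\langle q,\hat{x}\rangle_-\leq (h_\tau+f)(x)\ \forall x\in\overline B\}$, a point $p\in\partial_s K$ must saturate at least one constraint --- otherwise uniform slack over the compact set $\overline B$ would place $p$ in the interior of $K$ --- so if the support vector $\hat{x}$ at $p$ satisfies $h(x)<(h_\tau+f)(x)$, the saturated direction $\hat{y}$ is a second support vector at $p$, and $p$ is not regular. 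You instead exploit the structure of convex envelopes: a non-contact point $x_0$ of the envelope lies in the relative interior of a nondegenerate segment of affinity (via a Carath\'eodory decomposition into contact points, whose attainment uses the same continuity of $\phi$ on $\overline B$ that the paper needs for its compactness step), and the duality \eqref{eq:inv partial} turns that segment into a segment inside $\partial\tilde u(y_0)$, contradicting regularity at $p=(y_0,\tilde u(y_0))$. Both proofs thus rest on the same phenomenon --- a support plane at a non-contact point must also touch the defining data at contact points elsewhere --- and both finish the ``in particular'' clause identically, via surjectivity of $G$ and Proposition~\ref{prop: area zero not c1}. What your version buys: after shrinking, your segment lies in the open ball $B$, so it produces infinitely many \emph{space-like} support planes at $p$, and non-regularity follows verbatim from the definition of $\partial_{\mathrm{reg}}K$; in the paper's proof the second direction $y$ may lie on $\partial B$, i.e.\ the extra support plane may be light-like, so the conclusion that $p$ is not regular tacitly invokes the identification of $\partial_{\mathrm{reg}}K$ with the differentiable part of $\partial_sK$ (equivalently, the fact that a time-like and a light-like support direction at one point span a cone of time-like ones). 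What it costs: you rely on the attainment and contact-point structure of envelopes of continuous functions on compact convex sets, which is standard but heavier machinery than the paper's one-line compactness argument.
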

\begin{proof}

Denote by $f$ the restriction of the $1$-homogeneous extension of $\overline{f}$ to $\ball$. 
Clearly $h<h_\tau+f$ at $x\in \ball$ if and only if $\oh<\oh_\tau+\overline{f}$ at $\rad(x)$.

Now  let $p\in \partial_s \fconv$ and $\hat{x}$, $x\in \ball$, be a support vector at $p$, so that
$\langle \hat{x},p\rangle_-= h(x)$.
 Suppose  that $h(x)<(h_{\tau}+ f)(x)$. 
 
  By Remark~\ref{rk:ext inv} and Proposition~\ref{prop:g tau}, 
 the function $h_{\tau}+f$ is continuous on $\overline \ball$.
 Notice that   $\fconv$ coincides with the set 
 $\{p\in \R^{d+1} | \langle p,\hat{x}\rangle_- \leq h_{\tau}(x)+ f(x),\, \forall x\in\overline \ball\}$.
 Since $p\in\partial_s \fconv$, by compactness of $\overline \ball$, there 
 must exist $y\in \overline \ball$ 
 such that $\langle p,\hat{y}\rangle_-=(h_{\tau}+ f)(y)$ (otherwise,  $p$ would be an interior point of $\fconv$). 
 
 Hence $x$ and $y$ are two different support vectors at $p$, so $p$ is not regular.
 The Gauss image of the set of regular points has full area measure  by Proposition~\ref{prop: area zero not c1}.
\end{proof}

\subsection{Hausdorff distance and convergence}\label{sec:33}

The difference of two $\tau$-equivariant maps is $\Gamma$-invariant. This gives a continuous function on the compact manifold 
$\H^d/\Gamma$. The set of $\tau$-convex sets can be endowed with the following distance.
 
 \begin{definition}[Hausdorff distance]\label{def:haus dist}
The \emph{$\tau$-Hausdorff distance} between two $\tau$-convex sets is the uniform norm of the difference
of their support functions.\index{$\tau$-Hausdorff distance} 
\end{definition}

\begin{remark}\label{rem:hausdorff dist}{\rm 
\oldold{Let $\jplus(\H)$ denote the closure of the future of $\H^d$ in $\R^{d+1}$.
It is an F-convex set with support function on $\H^d$ equal to $-1$.}\newnew{Recall that the support function of $\jplus(\H)$ on $\H^d$ equal to $-1$.}
So if $\fconv$ is any F-convex set with support function $h$, the support function
of $\fconv+\lambda \jplus(\H)$ is $h-\lambda$.

It follows that 
 the distance between two $\tau$-convex sets $\fconv$ and $\fconv'$ is the minimum of the $\lambda \geq 0$
such that $\fconv+\lambda \jplus(\H)\subset \fconv'$ and $\fconv'+\lambda  \jplus(\H)\subset \fconv$. 
}\end{remark}

\old{
For $x\in \Omega_{\tau}$, the \emph{cosmological time} \index{Cosmological time} is the Lorentzian distance 
from $x$ to the boundary of $\Omega_\tau$.
By \cite{Bon05} the cosmological time is a $\Gamma_\tau$-invariant 
$C^{1,1}$-function of the interior of $\Omega_\tau$ with value on $[0,+\infty)$.
Moreover, the level set $\Sigma_t$ is the boundary of the set $\Omega_\tau+t\jplus(\H)$.
In particular the support function of the future of $\Sigma_t$ on $\H^d$ 
is $\oh_\tau-t$.

For a given $\Gamma_\tau$-invariant hypersurface $S$, we will denote by 
$T_{\operatorname{min}}(S)$ (resp.  $T_{\operatorname{max}}(S)$) \index{Cosmological time! $T_{\operatorname{min}}(S)$,  $T_{\operatorname{max}}(S)$}
the minimum (resp. the maximum) of the cosmological time on $S$.

\begin{lemma}\label{lem: supp et cos time}
Let $\fconv$ be a $\tau$-convex set with support function $h$, and let $S$ be its boundary.
Then $T_{\operatorname{min}}(S)$ (resp.  $T_{\operatorname{max}}(S)$)  is equal to 
the minimum (resp. maximum) of $\oh_{\tau}-\oh$ on $\H^d$. 
\end{lemma}
\begin{proof}
Let $\Sigma_m$ be the level set of $T$ corresponding to the level $m=T_{\operatorname{min}}$. 
As  $\fconv\subset \overline{\fut(\Sigma_m)}$, then  $h\leq h_{\tau}-m$, i.e. $h_{\tau}-h\geq m$.
On the other hand, if $p$ is the point of  $\fconv$ realizing the minimum  of the cosmological time on $\partial \fconv$,
then the tangent plane of $\Sigma_m$ at $p$ is a support plane of $\fconv$ at $p$.
If $\hat{x}$, $x\in \ball$, is the corresponding normal vector, then $\oh(x)=\oh_{\tau}(x)-m$.
\end{proof}
}
The main tool will be the following convergence result.

\begin{lemma}\label{lem: conv lip}
 Let $\Sigma_t$ be a level set of the cosmological time, 
 and let $(\fconv_n)$ be a sequence of $\tau$-convex sets which meet  the past of $\Sigma_t$. Then
 --- up to passing to  a subsequence ---
 $(\fconv_n)$ converges to a $\tau$-convex set with respect to the $\tau$-Hausdorff distance.
\end{lemma}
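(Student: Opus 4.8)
The plan is to realize each $\tau$-convex set through its hyperbolic support function and apply an Arzel\`a--Ascoli argument on the compact quotient $\H^d/\Gamma$. Write $\oh_n$ for the hyperbolic support function of $K_n$ and set $f_n=\oh_n-\oh_\tau$. Since both $\oh_n$ and $\oh_\tau$ satisfy the equivariance relation \eqref{eq:inv}, the difference $f_n$ is $\Gamma$-invariant and descends to a continuous function on $\H^d/\Gamma$. By Definition~\ref{def:haus dist} the $\tau$-Hausdorff distance between $K_n$ and $K_m$ is exactly $\|f_n-f_m\|_\infty$, so it suffices to extract a subsequence of the $f_n$ converging uniformly on the compact manifold $\H^d/\Gamma$. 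For this I need a uniform bound and equicontinuity.

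First I would establish the uniform bounds. Since each $K_n$ is contained in $\Omega_\tau$ we have $\oh_n\le\oh_\tau$, that is $f_n\le 0$. For the lower bound I use the hypothesis: as $K_n$ meets the past of $\Sigma_t$ and is future, it contains a point $p_n$ whose cosmological time is at most $t$. By the properness and cocompactness of the $\Gamma_\tau$-action recalled in Remark~\ref{rk:action}, the set of points of $\Omega_\tau$ with cosmological time at most $t$ meets every $\Gamma_\tau$-orbit it touches inside a fixed compact set $\mathcal K$; applying a suitable $\gamma_\tau\in\Gamma_\tau$ (which preserves $K_n$) I may assume $p_n\in\mathcal K$. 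Then for every $\eta$ in a compact fundamental domain $D$ of $\Gamma$ in $\H^d$ one has $\oh_n(\eta)\ge\langle p_n,\eta\rangle_-$, and the right-hand side is bounded below by a constant depending only on $\mathcal K$ and $D$. Since $\oh_\tau$ is bounded on $D$, this gives $f_n\ge -M$ on $D$, hence on all of $\H^d$ by $\Gamma$-invariance.

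Next comes equicontinuity, where I pass to the $1$-homogeneous extensions $H_n$ on $\operatorname{I}^+(0)$, which are convex. The uniform bound on $f_n$ together with the boundedness of $\oh_\tau$ on compacts shows that the $\oh_n$, hence the $H_n$, are uniformly bounded on a fixed compact solid cone-neighborhood of $D$ inside $\operatorname{I}^+(0)$ (with nonempty interior). A locally bounded family of convex functions is uniformly Lipschitz on any interior compact set, with constant controlled by the bound and the distance to the boundary \cite{Roc97}; this yields a uniform Lipschitz bound for the $\oh_n$, and therefore for the $f_n$, on $D$. Thus $\{f_n\}$ is equicontinuous on $\H^d/\Gamma$, and Arzel\`a--Ascoli produces a subsequence converging uniformly to a continuous $\Gamma$-invariant function $f_\infty$; set $\oh_\infty=\oh_\tau+f_\infty$.

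It remains to identify the limit as a $\tau$-convex set. Uniform convergence of $f_n$ gives uniform convergence of $\oh_n$ to $\oh_\infty$ on $\H^d$, hence uniform convergence on compacts of the $1$-homogeneous extensions $H_n\to H_\infty$. Since each $H_n$ is sublinear, the limit $H_\infty$ is sublinear, so by \cite{fv} it is the support function of a unique F-convex set $K_\infty$; passing to the limit in \eqref{eq:inv} shows $\oh_\infty$ is $\tau$-equivariant, so $K_\infty$ is $\tau$-convex (it lies in $\Omega_\tau$ because $\oh_\infty\le\oh_\tau$). Finally $\|f_n-f_\infty\|_\infty\to 0$ is precisely convergence in the $\tau$-Hausdorff distance. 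The main obstacle is the uniform lower bound on the $f_n$: it is exactly here that the hypothesis of meeting the past of $\Sigma_t$ enters, preventing the sets $K_n$ from escaping to the future, and its justification rests on the cocompactness of the $\Gamma_\tau$-action on the sublevel sets of the cosmological time.
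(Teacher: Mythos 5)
Your proof is correct, but it follows a genuinely different route from the paper's. The paper works with the graph functions $\tilde u_n:\R^d\to\R$ of the boundaries $\partial K_n$: since these are automatically $1$-Lipschitz, Ascoli--Arzel\`a applies for free and yields a limit $\tau$-convex set, but only in the topology of local uniform convergence of graphs; the longer second half of the paper's proof is then a contradiction argument (using the convex fundamental region built from $\overline{\operatorname{I}^-(C)}\cap\Omega_\tau$ and points of the form $z_n+\epsilon\eta_n$) needed to upgrade this to convergence in the $\tau$-Hausdorff distance. You instead compactify the right objects from the start: the invariant differences $f_n=\oh_n-\oh_\tau$ on the compact quotient $\H^d/\Gamma$, where uniform convergence \emph{is} $\tau$-Hausdorff convergence by Definition~\ref{def:haus dist}, so no upgrade step is needed. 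The price you pay is that equicontinuity is no longer free: you must prove the uniform two-sided $C^0$ bound (where the hypothesis on $\Sigma_t$ enters) and the uniform Lipschitz bound, which you get correctly from the standard fact that convex functions uniformly bounded on an open set are uniformly Lipschitz on interior compacta, applied to the $1$-homogeneous extensions $H_n$. The identification of the limit via sublinearity and passage to the limit in \eqref{eq:inv} is also sound. One soft spot: your appeal to ``properness and cocompactness'' for the existence of the compact set $\mathcal K$ is not quite the right reason --- the quotient of the sublevel set $\{0<T\le t\}$ by $\Gamma_\tau$ is \emph{not} compact, since it limits onto the initial singularity. The correct justification is the one the paper itself uses: take a compact fundamental domain $C$ of $\Sigma_t$ and set $\mathcal K=\overline{\operatorname{I}^-(C)}\cap\Omega_\tau$, which is compact because $\Omega_\tau$ is contained in the future cone of a point; every $p$ with $T(p)\le t$ lies in the causal past of $\Sigma_t$ (flow up a future time-like ray until the cosmological time reaches $t$), hence its orbit meets $\mathcal K$. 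With that one-line repair your argument is complete, and arguably cleaner in structure than the paper's.
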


\begin{proof}
  Let $(S_{n})$ be the sequence of boundaries of the $\fconv_n$.
 Notice that $S_n$ is the graph of some $1$-Lipschitz function $\tilde u_n$ defined  on the horizontal plane
$\R^d$. By a standard use of Ascoli--Arzel\`a theorem,  if there is a compact subset in $\R^{d+1}$ which meets
all the hypersurfaces $S_n$, then up to passing to a subsequence the functions $\tilde u_n$ converge to a
$1$-Lipschitz  function $u$ uniformly on compact subsets of $\R^d$.

To check that $S_n$ meets a compact region
of $\R^{d+1}$, 
notice that by the assumption there is a point $x_n\in\Sigma_t$ whose past meets  $S_n$.
By the invariance of $S_n$, any point in the orbit of $x_n$ works as well.
Since the quotient $\Sigma_t/\Gamma_\tau$ is compact \cite{Bon05}, then there is a compact fundamental
region $C$ of $\Sigma_t$ for the action of $\Gamma_\tau$.  So we may assume that $x_n\in C$.
Thus, $S_n$ meets $\overline{\operatorname{I}^-(C)}\cap\Omega_\tau$ which is a compact region.

Up to passing to a subsequence, we can deduce that $\tilde u_n$ converges to a
$1$-Lipschitz function $\tilde u$. The graph of $\tilde u$ is the boundary of a $\tau$-convex set $\fconv$.
It remains to prove that $\fconv_n$ converges to $\fconv$ with respect to the $\tau$-Hausdorff distance.

Suppose by contradiction that $(\fconv_n)$ does not converge to $\fconv$ in the sense of Definition~\ref{def:haus dist}. 
That means that there exists a $\epsilon>0$
such that, for arbitrarily large $n$, there exists $x_n\in \fconv_n +\epsilon \jplus(\H)$ with  $x_n\notin \fconv$, 
or $x_n\in \fconv +\epsilon \jplus(\H)$ and $x_n\notin \fconv_n$. 

Let us suppose that we are in the former case. 

Let $P$ be any fundamental region of $\Sigma_t$ and let $\fund$ the union of
the time-like rays in $\Omega_\tau$ orthogonal to $P$.
By \cite{Bon05} it turns out that $\fund$ is a fundamental region of the action of $\Gamma_\tau$  on $\Omega_\tau$,
and the intersection of $\fund$ with the boundary of any $\tau$-convex set is compact.
It turns out that there is a point in the orbit of $x_n$, say $y_n$, that lies in $\overline{\operatorname{I}^-(\fund\cap \partial \fconv)}\cap\Omega_\tau$ 
that is a compact region.

Notice that $y_n$ lies in $\fconv_n+\epsilon \jplus(\H)$, so $y_n=z_n+\epsilon \eta_n$ for some $\eta_n\in\H^d$ and $z_n\in \fconv_n$.
Since $z_n\in \operatorname{I}^-(y_n)\cap\Omega_\tau\subset\overline{\operatorname{I}^-(\fund\cap\partial \fconv)}\cap\Omega_\tau$, 
we can also suppose that $z_n$ converges to $z$.
By the convergence of $\tilde u_n$ to $\tilde u$ we deduce that $z\in \fconv$. Moreover notice that $\langle y-z, y-z\rangle_-=-\epsilon^2$
so $y$ lies in the future of $\fconv$ contradicting the assumption that it is a limit of a sequence of points
in the past of $\fconv$.

The latter case is analogous.
\end{proof}

We can deduce the analogue of the Blaschke Selection theorem for convex bodies.

\begin{corollary}\label{cor:blaschke}
 From each sequence of $\tau$-convex sets uniformly bounded in the future, one can extract a sequence converging to a $\tau$-convex set.
\end{corollary}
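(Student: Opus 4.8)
The plan is to reduce the statement to the convergence result of Lemma~\ref{lem: conv lip}; the only work is to translate the hypothesis ``uniformly bounded in the future'' into the geometric condition required there, namely that all the $K_n$ meet the past of a single level set $\Sigma_t$ of the cosmological time.

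First I would make the boundedness hypothesis explicit in terms of support functions. Let $\oh_n$ denote the support function of $K_n$ on $\H^d$ and let $S_n=\partial K_n$. Since every $\tau$-convex set is contained in $\Omega_\tau$, we have $\oh_n\le\oh_\tau$, and the difference $\oh_\tau-\oh_n$ is $\Gamma$-invariant, hence descends to the compact quotient $\H^d/\Gamma$ and attains its minimum. By Lemma~\ref{lem: supp et cos time} this minimum equals $T_{\operatorname{min}}(S_n)$. Saying that the family is uniformly bounded in the future means precisely that the sets do not escape to future infinity; concretely, there is a constant $t$ with $T_{\operatorname{min}}(S_n)\le t$ for all $n$ (equivalently, the functions $\oh_n$ are uniformly bounded below, since $\oh_\tau-\oh_n\to+\infty$ is exactly what recession to the future would force).

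Next I would observe that the bound $T_{\operatorname{min}}(S_n)\le t$ produces, for each $n$, a point $p_n\in S_n\subset K_n$ whose cosmological time is at most $t$. Fixing any $t'>t$, each such $p_n$ lies in the past of $\Sigma_{t'}$, so every $K_n$ meets the past of $\Sigma_{t'}$. This is exactly the hypothesis of Lemma~\ref{lem: conv lip}, which then yields a subsequence of $(K_n)$ converging, for the $\tau$-Hausdorff distance, to a $\tau$-convex set; this is the assertion of the corollary.

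The only real content lies in the translation carried out in the second step, and that is where one must be careful: being bounded in the future controls only the \emph{minimum} of the cosmological time on each $S_n$, not its maximum. This is, however, all that the selection lemma requires, since its compactness argument rests on trapping the surfaces $S_n$ in the compact region $\overline{\operatorname{I}^-(C)}\cap\Omega_\tau$ sitting below a compact fundamental domain $C$ of $\Sigma_{t'}$, and then applying Ascoli--Arzel\`a to the uniformly $1$-Lipschitz graph functions of the $S_n$. Once the hypothesis is phrased as a uniform upper bound on $T_{\operatorname{min}}(S_n)$, nothing further beyond Lemma~\ref{lem: conv lip} is needed.
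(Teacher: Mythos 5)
Your proof is correct and takes exactly the paper's route: the paper states this corollary as an immediate consequence of Lemma~\ref{lem: conv lip}, and your translation of ``uniformly bounded in the future'' into a uniform upper bound on $T_{\operatorname{min}}(S_n)$ (via Lemma~\ref{lem: supp et cos time}), hence into the hypothesis that every $K_n$ meets the past of one fixed level set $\Sigma_{t'}$, is precisely the unwinding needed to invoke that lemma. Nothing further is required.
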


Another simple consequence of Lemma \ref{lem: conv lip} is that the cosmological time is uniformly bounded for
any $\tau$-invariant hypersurface that meets a fixed level of the cosmological time, see also
\cite[Proposition 7.4.14]{belt} and \cite[Proposition 4.1]{bel}.

\begin{corollary}\label{cor:cosm borne}
Let  $\epsilon>0$. There exists $T=T(\epsilon,\tau)$ such that
 if $\fconv$ is any  $\tau$-convex set whose boundary $S$ meets the level set of the cosmological time
$\Sigma_{\epsilon}$ then $T_{\operatorname{max}}(S)\leq T$, where $T_{\operatorname{max}}(S)$ is 
the maximum of the cosmological time of $S$. 
\end{corollary}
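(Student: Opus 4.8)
The plan is to argue by contradiction, extracting compactness from Lemma~\ref{lem: conv lip}. Suppose the statement fails for some fixed $\epsilon>0$. Then for each $n$ there is a $\tau$-convex set $K_n$ whose boundary $S_n$ meets $\Sigma_\epsilon$ but with $T_{\operatorname{max}}(S_n)>n$, so that $T_{\operatorname{max}}(S_n)\to+\infty$. Since the cosmological time restricted to the graph $S_n$ takes all values in the interval $[T_{\operatorname{min}}(S_n),T_{\operatorname{max}}(S_n)]$, the hypothesis that $S_n$ meets $\Sigma_\epsilon$ forces $\epsilon\in[T_{\operatorname{min}}(S_n),T_{\operatorname{max}}(S_n)]$, and in particular $T_{\operatorname{min}}(S_n)\leq\epsilon$. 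Thus $S_n$ contains a point of cosmological time at most $\epsilon$, so $K_n$ meets the past of $\Sigma_t$ for any fixed $t>\epsilon$.

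First I would apply Lemma~\ref{lem: conv lip} with such a $t$: up to passing to a subsequence, $(K_n)$ converges to a $\tau$-convex set $K$ for the $\tau$-Hausdorff distance. By Definition~\ref{def:haus dist} this means exactly that the support functions $\oh_n$ of the $K_n$ converge to the support function $\oh$ of $K$ in the uniform norm of the difference; equivalently $\oh_\tau-\oh_n\to\oh_\tau-\oh$ uniformly. Note that these differences are $\Gamma$-invariant (being differences of $\tau$-equivariant functions), so the convergence takes place on the compact quotient $\H^d/\Gamma$.

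Then I would invoke Lemma~\ref{lem: supp et cos time}, which identifies $T_{\operatorname{max}}(S_n)$ with the maximum over $\H^d$ of the $\Gamma$-invariant function $\oh_\tau-\oh_n$, equivalently its maximum over $\H^d/\Gamma$. Uniform convergence on the compact quotient then yields $T_{\operatorname{max}}(S_n)\to\max_{\H^d}(\oh_\tau-\oh)=T_{\operatorname{max}}(S)$, which is finite because $\oh_\tau-\oh$ is continuous and $\Gamma$-invariant, hence bounded on $\H^d/\Gamma$. This contradicts $T_{\operatorname{max}}(S_n)\to+\infty$, proving the corollary. The main (and only mild) obstacle is the bookkeeping at the two interfaces: checking that "meeting $\Sigma_\epsilon$" genuinely places the sequence in the hypothesis of Lemma~\ref{lem: conv lip}, and that the $\tau$-Hausdorff limit $K$ is a bona fide $\tau$-convex set with finite maximal cosmological time. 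Both are immediate from the stated results, so the argument is essentially a soft compactness-plus-continuity argument with no new estimates required.
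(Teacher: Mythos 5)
Your proof is correct and follows essentially the same route as the paper: argue by contradiction, use Lemma~\ref{lem: conv lip} to extract a $\tau$-Hausdorff convergent subsequence, and then note via Lemma~\ref{lem: supp et cos time} that convergence forces the $\Gamma$-invariant differences $\oh_\tau-\oh_n$ (hence $T_{\operatorname{max}}(S_n)$) to stay bounded, contradicting $T_{\operatorname{max}}(S_n)\to+\infty$. Your extra bookkeeping (taking $t>\epsilon$ to verify the hypothesis of Lemma~\ref{lem: conv lip}, and passing to the compact quotient) just makes explicit what the paper leaves implicit.
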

\begin{proof}
By contradiction suppose that there is a sequence $(\fconv_n)$ of $\tau$-convex sets such that
the boundary of $\fconv_n$, say $S_n$, meets $\Sigma_\epsilon$ and 
$T_{\operatorname{max}}(S_n)\geq n$.
By Lemma~\ref{lem: conv lip}, up to passing to a subsequence, 
$(\fconv_n)$ converges to  an F-convex set $\fconv$ in the $\tau$-Hausforff sense.
This implies that the corresponding hyperbolic support functions  $\oh_\tau-\oh$ are 
uniformly bounded, but this contradicts Lemma \ref{lem: supp et cos time}. 
\end{proof}

\begin{lemma}\label{lem: max/min} There is a constant $\delta=\delta(\tau)>0$ such that if $T_{\operatorname{min}}(S)\geq 1$ 
then $$T_{\operatorname{max}}(S)/T_{\operatorname{min}}(S)<\delta~.$$
\end{lemma}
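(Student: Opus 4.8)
The plan is to reduce everything to the function $\overline{f}=\oh_\tau-\oh$ and then run a compactness argument after rescaling. By Lemma~\ref{lem: supp et cos time}, if $K$ is the $\tau$-convex set bounded by $S$ with hyperbolic support function $\oh$, then $T_{\operatorname{min}}(S)=\operatorname{min}_{\H^d}(\oh_\tau-\oh)$ and $T_{\operatorname{max}}(S)=\operatorname{max}_{\H^d}(\oh_\tau-\oh)$; since $\oh_\tau-\oh$ is $\Gamma$-invariant, these extrema are attained on the compact quotient $\H^d/\Gamma$. Thus the statement is equivalent to a Harnack-type bound $\operatorname{max}(\oh_\tau-\oh)\le\delta\operatorname{min}(\oh_\tau-\oh)$ valid whenever $\operatorname{min}(\oh_\tau-\oh)\ge 1$.

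The key structural observation is the behaviour under homotheties. If $K$ is $\tau$-convex and $\lambda>0$, then $\lambda K$ is invariant under $x\mapsto\gamma x+\lambda\tau_\gamma$, hence is a $(\lambda\tau)$-convex set, and by maximality $\lambda\Omega_\tau=\Omega_{\lambda\tau}$; since Lorentzian distances scale by $\lambda$, the cosmological time transforms as $T^{\lambda\tau}(\lambda p)=\lambda T^\tau(p)$. In particular $T_{\operatorname{min}}$ and $T_{\operatorname{max}}$ both scale by $\lambda$, so their ratio is a homothety invariant. I would then argue by contradiction: suppose there is a sequence of $\tau$-convex sets $K_n$ with boundaries $S_n$, $m_n:=T_{\operatorname{min}}(S_n)\ge 1$, and $T_{\operatorname{max}}(S_n)/m_n\to\infty$. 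Rescaling by $\lambda_n=1/m_n\in(0,1]$ produces $(\lambda_n\tau)$-convex sets $K_n'=\lambda_nK_n$ with $T_{\operatorname{min}}(\partial K_n')=1$ and $T_{\operatorname{max}}(\partial K_n')\to\infty$.

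Finally I would extract a limit. Up to a subsequence $\lambda_n\to\lambda_\infty\in[0,1]$, so the cocycles $\lambda_n\tau$ converge in the finite-dimensional space $Z^1(\Gamma,\R^{d+1})$ to $\lambda_\infty\tau$, and the maximal domains $\Omega_{\lambda_n\tau}$ converge to $\Omega_{\lambda_\infty\tau}$ (for $\lambda_\infty=0$ this limit is the future cone $\overline{\operatorname{I}^+(0)}$, i.e.\ the Fuchsian case). Because every $\partial K_n'$ meets its level-one cosmological surface, and these surfaces stay in a fixed compact region modulo the groups $\Gamma_{\lambda_n\tau}$ (Remark~\ref{rk:action}), the same Ascoli--Arzel\`a argument as in Lemma~\ref{lem: conv lip}, applied to the $1$-Lipschitz graph functions $\tilde u_n'$, yields a subsequence converging to a $(\lambda_\infty\tau)$-convex set $K_\infty'$ with $T_{\operatorname{min}}(\partial K_\infty')=1$. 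By continuity of the support functions and of the cosmological time under this convergence, $T_{\operatorname{max}}(\partial K_n')\to T_{\operatorname{max}}(\partial K_\infty')<\infty$, contradicting $T_{\operatorname{max}}(\partial K_n')\to\infty$. The main obstacle I anticipate is precisely this last compactness step: since the cocycle varies along the sequence one cannot quote Lemma~\ref{lem: conv lip} verbatim, and one must verify that the cosmological time depends continuously on the pair (domain, convex set) as $\lambda_n\tau\to\lambda_\infty\tau$. The hypothesis $T_{\operatorname{min}}\ge 1$ is exactly what keeps $\lambda_n\le 1$, hence the rescaled cocycles in a bounded subset of $Z^1(\Gamma,\R^{d+1})$, which is what makes this continuity and the uniform compactness possible.
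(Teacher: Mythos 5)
Your strategy is the same as the paper's: argue by contradiction, rescale by $\lambda_n=1/T_{\operatorname{min}}(S_n)$ so that the rescaled sets are $(\lambda_n\tau)$-convex with $T_{\operatorname{min}}=1$, extract a limit by Ascoli--Arzel\`a, and contradict $T_{\operatorname{max}}\to\infty$. The only structural difference is that the paper first disposes of the case of bounded $T_{\operatorname{min}}(S_n)$ (if the $S_n$ meet a fixed compact range of cosmological times, Lemma~\ref{lem: conv lip}/Corollary~\ref{cor:cosm borne} already bound $T_{\operatorname{max}}$), so that in the rescaling step one knows $\lambda_n\to 0$ and the limit is necessarily the Fuchsian cone $\operatorname{I}^+(0)$; you instead keep both cases $\lambda_\infty\in[0,1]$, which is harmless since the case $\lambda_\infty>0$ only involves rescalings of the fixed domain $\Omega_\tau$.

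However, there is a genuine gap, and it is exactly where you flag it: the assertion that ``$T_{\operatorname{max}}(\partial K_n')\to T_{\operatorname{max}}(\partial K_\infty')<\infty$'' by continuity. $T_{\operatorname{max}}$ is a supremum over a \emph{noncompact} surface, while your convergence (of graph functions, and of whatever cosmological times) is only uniform on compact subsets of $\R^{d+1}$; this gives no upper bound on $T_n$ along $\partial K_n'$ outside a fixed compact set. Equivariance would reduce the supremum to a fundamental domain, but the groups $\Gamma_{\lambda_n\tau}$ vary with $n$, so a priori these fundamental domains can escape to infinity, and Remark~\ref{rk:action} (stated for one fixed cocycle) does not provide the needed uniformity. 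The paper fills this hole with two facts imported from \cite{Bon05} about the degeneration $\lambda_n\tau\to 0$: (i) the graph functions of the level sets $T_n^{-1}(c)$ of the cosmological times of $\Omega_{\lambda_n\tau}$ converge in the compact-open topology to those of $\operatorname{I}^+(0)$, and (ii) for each $c$ there is a single compact set $U_c\subset\R^{d+1}$ containing a fundamental region of $T_n^{-1}(c)$ for \emph{every} $n$. With these, instead of proving continuity of $T_{\operatorname{max}}$, the paper shows that the limit Fuchsian surface lies in the past of some level $T^{-1}(c)$ (its quotient by $\Gamma$ being compact), and then, using (i), (ii) and equivariance, that $F_n$ lies in the past of $T_n^{-1}(c)$ for all large $n$, i.e.\ $T_{\operatorname{max}}(F_n)\leq c$, contradicting $T_{\operatorname{max}}(F_n)\geq n$. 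Until you supply statements (i) and (ii) (or an equivalent uniform-compactness input), your final step is an assertion rather than a proof.
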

\begin{proof}
 
Suppose that there is a sequence of 
$\tau$-convex sets $\fconv_n$ with boundaries
 $S_n$ such that $T_{\operatorname{min}}(S_n)\geq 1$ and
$T_{\operatorname{max}}(S_n)/T_{\operatorname{min}}(S_n)>n$. 
From Corollary~\ref{cor:cosm borne},  $ T_{\operatorname{min}}(S_n)\rightarrow +\infty$.
Let us set $t_n=T_{\operatorname{min}}(S_n)$ and consider the sequence  of rescaled hypersurfaces  $S'_n=(1/t_n)S_n$. 
Notice that $S'_n$ is the boundary of the $(1/t_n)\tau$-convex set $(1/t_n)\fconv_n$ and
 satisfies  $T_{\operatorname{min}}(S'_n)=1$.

So for any $n$, $S'_n$ has a common point with the level set $1$ for the cosmological time  $T_n$
of $\frac{1}{t_n}\Omega_\tau=\Omega_{\frac{1}{t_n}\tau}$. 
By \cite[6]{Bon05},  the graph functions of   those level sets  of $T_n$
converge in the compact open topology to the graph function of the cosmological time $T_0$ of
$\Omega_0=\fut(0)$.
Moreover for every $c$ there is a compact region $U_c$ of $\R^{d+1}$  (independent of $n$) that  contains a fundamental region
of the level set $T_n^{-1}(c)$ for the action of $\Gamma_{\frac{1}{t_n}\tau}$ for  every $n$.

 As for each $n$, the intersection point of $S'_n$ with $T_n^{-1}(1)$ can be chosen in $U_1$, 
 we  deduce that $S'_n$ meets $U_1$ for every $n$.
  As $S'_n$ are graphs of  $1$-Lipschitz functions $\tilde v_n$, we deduce that --- up to extracting a subsequence ---
  $\tilde v_n$ converges to a convex $1$-Lipschitz function 
  $\tilde v$ uniformly on compact subsets    as a  consequence of 
 Arzel\`{a}--Ascoli theorem.
The graph of $\tilde v$,  is the boundary of a convex subset $\fconv$
which is invariant by the action of $\Gamma$ by construction.

\old{Since $S'_n$ is contained in the closure of the future of $T_n^{-1}(1)$ then $\partial \fconv$ is contained
in $\overline{\fut(\H^d)}=\jplus(\H)$ in $\R^{d+1}$. In particular, it is a Cauchy surface in $\fut(0)$. }

\new{As $U_1$ contains a fundamental region of the action of $\Gamma_{\frac{1}{t_n}\tau}$ over $T_n^{-1}(1)$,
there is a sequence of points $x_n\in U_1$ such that  $x_n\in S'_n\cap T_n^{-1}(1)$.
Taking the limit (up to a subsequence) of $(x_n)$, we deduce that $\partial\fconv$ meets the level set $T_0^{-1}(1)$.}

By Corollary \ref{cor:cosm borne}  $\partial \fconv$ is contained in the past of the level set $T_0^{-1}(c)$,  
for some fixed constant $c$.
Since the graph functions of $S'_n$ converge to the graph function of $\partial \fconv$ uniformly on compact subsets
of $\R^d$ and the same holds for the graph functions of $T^{-1}_n(c)$, we deduce that for any compact region $V$ 
of $\R^{d+1}$ there is $n_0$ (a priori depending on $V$)
 such that $S'_n\cap V$ is in the past of $T^{-1}_n(c)\cap V$ for $n\geq n_0$.

Since the compact region $U_c$ contains a fundamental region of $T_n^{-1}(c)$ for very $n$, we deduce that for $n$ big
$S'_n$ is contained in the past of $T^{-1}_n(c)$.

But by our starting assumption,  one also has that $T_{\operatorname{max}}(S'_n)=T_{\operatorname{max}}(S_n)/t_n\geq n$, 
and this gives a contradiction.
\end{proof}

\subsection{Area and total area}

In the sequel, except if explicitly mentioned, the integrals will be on the compact manifold $\H^d/\Gamma$.

 Let us recall notations from Subsection~\ref{sub:back}. If $\fconv$ is a $\tau$-convex set, then
$V_{\epsilon}(\fconv)$ is clearly a $\Gamma$-invariant measure, and so are the area measures $S_i(\fconv)$.
Any $\Gamma$-invariant Radon measure $\mu$ on $\H^d$ defines a Radon measures in $\H^d/\Gamma$: 
it is the only measure $\bar\mu$ on $\H^d/\Gamma$ such that if $U\subset \H^d/\Gamma$ is a Borel set and 
$\psi: U \to\H^d$ is a measurable section of the projection $\pi:\H^d\to\H^d/\Gamma$, then 
$\psi_*(\bar\mu)=\mu$.

In practice, if $E$ is a Borel subset of $\H^d$ which meets every orbit of $\Gamma$ exactly once, then 
the measure of any Borel set $\omega$
of $\H^d/\Gamma$ is $\mu(\pi^{-1}(\omega)\cap E)$.

In our case, we obtain  measures $\overline{V}_{\epsilon}(\fconv)$  and $\overline{S}_i(\fconv)$ on 
$\H^d/\Gamma$.

Using \eqref{eq: mu} we deduce that
\begin{equation}\label{eq: muinv}
\overline{V}_{\epsilon}(\fconv)=\frac{1}{d+1}\sum_{i=0}^d\epsilon^{d+1-i}\binom{d+1}{i}\overline{S}_i(\fconv)~.
\end{equation}

\begin{remark}{\rm
Notice that for any Borel subset $\omega\subset\H^d/\Gamma$,
 $\overline{S}_d(\fconv)(\omega)$ is the derivative of $\overline{V}_\epsilon(\fconv)(\omega)$ for $\epsilon=0$.

As $\overline{V}_{\epsilon+h}(\fconv)=\overline{V}_{\epsilon}(\fconv)+\overline{V}_h(\fconv+\epsilon \jplus(\H))$ we get that
for any Borel subset $\omega\subset\H^d/\Gamma$
\[
\frac{\d\overline{V}_\epsilon(\fconv)(\omega)}{\d\epsilon}(\epsilon)=\overline{S}_d(\fconv+\epsilon \jplus(\H))(\omega)~,
\]
or equivalently
\begin{equation}\label{eq:V int}
  \overline{V}_\epsilon(\fconv)(\omega)=\int_0^\epsilon \overline{S}_d(\fconv+t \jplus(\H))\d t~.
\end{equation}
}
\end{remark}

We also get an invariant formulation of the weak-convergence stated in Lemma~\ref{lem: weak conv are fconv}.
\begin{lemma}\label{lem:weak inv}
 If the support functions of $\tau$-convex sets converge for the sup norm to the 
 support function of a $\tau$-convex set, then the associated measures $\overline{V}_\epsilon$
 and $\overline{S}_i$ on $\H^d/\Gamma$ weakly converge.
\end{lemma}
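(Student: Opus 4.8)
The plan is to reduce the statement to the local weak convergence on $\H^d$ already established in Lemma~\ref{lem: weak conv are fconv}, by writing integration over the compact quotient $\H^d/\Gamma$ as integration over $\H^d$ against a compactly supported weight. First I would unpack the hypothesis: convergence for the sup norm (Definition~\ref{def:haus dist}) means exactly that the $\Gamma$-invariant differences $\oh_n-\oh$ tend to $0$ uniformly on $\H^d/\Gamma$, hence uniformly on all of $\H^d$, and in particular uniformly on every compact subset. Thus the regularity hypothesis of Lemma~\ref{lem: weak conv are fconv} is satisfied on any compact $\omega\subset\H^d$. Recall also that for a $\tau$-convex set the measures $V_\epsilon(K)$ and $S_i(K)$ are $\Gamma$-invariant, so the induced measures $\overline{V}_\epsilon(K)$ and $\overline{S}_i(K)$ on $\H^d/\Gamma$ are defined.

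Next I would construct a $\Gamma$-invariant partition of unity. Since $\Gamma$ acts properly discontinuously and cocompactly on $\H^d$, there is a relatively compact open set whose projection covers $\H^d/\Gamma$; choosing a continuous bump function $\psi\geq 0$ with compact support that is positive on the closure of that set, the sum $\Psi(x)=\sum_{\gamma\in\Gamma}\psi(\gamma x)$ is locally finite, continuous, strictly positive, and $\Gamma$-invariant. Setting $\rho=\psi/\Psi$ yields a continuous, compactly supported function with $\sum_{\gamma\in\Gamma}\rho(\gamma x)=1$ for every $x\in\H^d$.

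The key point is then the following identity. Let $\bar f$ be continuous on $\H^d/\Gamma$ and let $f=\bar f\circ\pi$ be its $\Gamma$-invariant lift. For any $\Gamma$-invariant Radon measure $\mu$ on $\H^d$, with induced measure $\bar\mu$ on $\H^d/\Gamma$, a change of variables summed over a Borel fundamental set $E$ (using the invariance of $\mu$ and of $f$ together with $\sum_\gamma\rho(\gamma x)=1$) gives
\begin{equation*}
\int_{\H^d/\Gamma}\bar f\,\d\bar\mu=\int_E f\,\d\mu=\int_{\H^d}\rho\, f\,\d\mu~.
\end{equation*}
Applying this to $\mu=S_i(K_n)$ and $\mu=S_i(K)$ (and likewise to $V_\epsilon(K_n)$, $V_\epsilon(K)$), the claim reduces to $\int_{\H^d}\rho f\,\d S_i(K_n)\to\int_{\H^d}\rho f\,\d S_i(K)$. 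But $\rho f$ is continuous with compact support; choosing a compact $\omega$ containing this support in its interior and noting that $\rho f$ vanishes outside $\omega$, Lemma~\ref{lem: weak conv are fconv} delivers exactly this convergence. This proves weak convergence of $\overline{S}_i(K_n)$ to $\overline{S}_i(K)$, and the identical argument gives it for $\overline{V}_\epsilon$.

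I expect the only delicate points to be the bookkeeping in the change-of-variables identity and the verification that $\rho f$ is supported inside a compact set on which the hypotheses of Lemma~\ref{lem: weak conv are fconv} apply; both are routine once the partition of unity is available. One could instead integrate directly over a fundamental domain, but then one would have to rule out the limiting measures charging its boundary, an issue that the continuous weight $\rho$ bypasses cleanly.
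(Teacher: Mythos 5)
Your proof is correct, and it rests on the same key ingredient as the paper's: the local weak convergence of Lemma~\ref{lem: weak conv are fconv}, whose hypothesis holds because sup-norm convergence of the $\Gamma$-invariant differences $\oh_n-\oh$ gives uniform convergence on every compact subset of $\H^d$. The reduction, however, is organized differently. The paper takes a partition of unity on the compact quotient $\H^d/\Gamma$, so that each piece of the test function is supported in a small chart $U$ admitting a section $\psi:U\to\H^d$ of the projection, and then the very definition of the induced measure gives $\int_U f\,\d\overline{V}(K_n)=\int_{\psi(U)}f\circ\psi^{-1}\,\d V(K_n)$, to which Lemma~\ref{lem: weak conv are fconv} applies chart by chart. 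You instead work upstairs: you build a single continuous, compactly supported weight $\rho$ on $\H^d$ whose $\Gamma$-translates sum to $1$, prove the unfolding identity $\int_{\H^d/\Gamma}\bar f\,\d\bar\mu=\int_{\H^d}\rho f\,\d\mu$ for any $\Gamma$-invariant Radon measure $\mu$, and then apply Lemma~\ref{lem: weak conv are fconv} once to the compactly supported function $\rho f$. The paper's route is shorter because the induced measure is defined through sections, so the chart identity is immediate; your route requires justifying the unfolding identity (an interchange of sum and integral over the translates of a fundamental set, legitimate since $\rho f$ has compact support and the measures are Radon), but in exchange the limit is taken against a single continuous weight, which, as you observe, sidesteps any concern about limiting measures charging the boundary of a fundamental domain.
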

\begin{proof}
As any continuous  function on $\H^d/\Gamma$ can be expressed as the sum of continuous 
functions with supports in small  open sets by means of a partition of the unity, it is sufficient to prove that
$$  \int f \d \overline{V}(\fconv_n)\rightarrow \int f \d \overline{V}(\fconv)$$
assuming that the support of $f$ is contained in a small chart $U$ where a section $\psi:U\to\H^d$ is defined.
But, by definition we have
$\int f\d\overline{V}(\fconv_n)=\int_U f \d\overline{V}(\fconv_n)=\int_{\psi(U)} f\circ\psi^{-1} \d V(\fconv_n)$ and
so the convergence stated directly follows from Lemma~\ref{lem: weak conv are fconv}.

 \end{proof}
 
 We still denote by $\A(\fconv)$ the measure $\overline S_d(\fconv)$ on $\H^d/\Gamma$, and
 by $\operatorname{Area}(\fconv)$ the \emph{total area} of $\fconv$, i.e.$$\operatorname{Area}(\fconv)=\A(\fconv)(\H^d/\Gamma)~.$$ \index{ $\operatorname{Area}(\fconv)$ the total area}
 
 \begin{corollary}\label{cor: weak conv area}
 If the support functions of $\tau$-convex sets converge for the sup norm to the 
 support functions of a $\tau$-convex set, then the corresponding area measures weakly converge. In particular, the total
 area is continuous for this topology.
 \end{corollary}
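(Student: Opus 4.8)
The plan is to obtain both conclusions directly from Lemma~\ref{lem:weak inv}, which under the very same hypothesis already establishes the weak convergence of all the measures $\overline{S}_i$ on the compact manifold $\H^d/\Gamma$; the corollary is then essentially the specialization $i=d$ together with an evaluation against the constant function.

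First I would recall that, by the convention fixed just before the statement, the area measure on $\H^d/\Gamma$ is $\A(K)=\overline{S}_d(K)$. Hence, applying Lemma~\ref{lem:weak inv} with $i=d$, the uniform convergence of the support functions of the $\tau$-convex sets $K_n$ to the support function of the $\tau$-convex set $K$ yields at once that $\A(K_n)$ weakly converges to $\A(K)$. This disposes of the first assertion without further computation.

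For the continuity of the total area I would exploit the compactness of $\H^d/\Gamma$. Because this manifold is compact, the constant function $f\equiv 1$ is continuous with compact support, hence an admissible test function for the weak convergence just obtained. Consequently
\[
  \operatorname{Area}(K_n)=\int_{\H^d/\Gamma}1\,\d\A(K_n)\longrightarrow\int_{\H^d/\Gamma}1\,\d\A(K)=\operatorname{Area}(K)~,
\]
which is exactly the asserted continuity of the total area for the $\tau$-Hausdorff topology.

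The only delicate point, and the reason compactness is indispensable, is that weak convergence of Radon measures controls a priori only integrals against compactly supported continuous functions, so over a non-compact base part of the mass could escape in the limit and the total mass need not pass to the limit. Here the compactness of $\H^d/\Gamma$ removes this difficulty completely, since the constant function is itself compactly supported; thus no genuine obstacle remains and the statement follows.
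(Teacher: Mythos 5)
Your proof is correct and is exactly the route the paper intends: the corollary is stated without proof precisely because it is the $i=d$ case of Lemma~\ref{lem:weak inv} (using $\A(K)=\overline{S}_d(K)$), with the total-area statement following by testing against the constant function $1$, admissible since $\H^d/\Gamma$ is compact. Your closing remark on why compactness is essential (no mass escaping to infinity) is a sensible observation but adds nothing beyond the paper's implicit argument.
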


  If $\partial_s \fconv$ is $C^1$, then the total area is the total volume of the manifold $\partial_s \fconv/\Gamma_{\tau}$
 for the induced metric by Corollary~\ref{cor: area C1}.

We prove below that the total area is monotonic.  

\begin{lemma}\label{lem:area monotonic} 
Let $\fconv_0,\fconv_1$ be two $\tau$-convex sets, with $\fconv_1$ contained in the interior of $\fconv_0$.
 Then $$\Area(\fconv_0)\leq \Area(\fconv_1)~.$$
 \end{lemma}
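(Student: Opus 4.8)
The plan is to establish the inequality first for $C^\infty_+$ $\tau$-convex sets, where the total area is an explicit integral of a determinant, and then to reach the general case by approximation. So suppose first that $K_0$ and $K_1$ are $C^\infty_+$, with hyperbolic support functions $\oh_0$ and $\oh_1$. Since $K_1\subset\mathrm{int}(K_0)$ we have $\oh_1<\oh_0$, and because $\oh_0-\oh_1$ is the difference of two $\tau$-equivariant functions it is $\Gamma$-invariant, hence descends to a continuous positive function on the closed manifold $\H^d/\Gamma$; in particular $\oh_0-\oh_1\geq\delta$ for some $\delta>0$. By \eqref{eq: area measure c2+},
\[
 \Area(K_i)=\int_{\H^d/\Gamma}\det\bigl(\nabla^2\oh_i-\oh_i g_\H\bigr)\,\d\H^d.
\]

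I would then interpolate linearly, setting $\oh_s=(1-s)\oh_0+s\oh_1$ for $s\in[0,1]$ and $A_s=\nabla^2\oh_s-\oh_s g_\H$. Since $A_s=(1-s)(\nabla^2\oh_0-\oh_0 g_\H)+s(\nabla^2\oh_1-\oh_1 g_\H)$ is a convex combination of positive definite tensors it is positive definite, so each $\oh_s\in C^2_+(\H^d)$ and the function $\mathcal A(s):=\int_{\H^d/\Gamma}\det A_s\,\d\H^d$ is smooth. Writing $C_s=\mathrm{adj}(A_s)=\det(A_s)A_s^{-1}$ for the cofactor tensor and $\dot\oh_s=\oh_1-\oh_0$, differentiation of the determinant gives
\[
 \mathcal A'(s)=\int_{\H^d/\Gamma}\tr\!\bigl(C_s\,(\nabla^2\dot\oh_s-\dot\oh_s g_\H)\bigr)\,\d\H^d.
\]

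The crucial ingredient is the identity $\mathrm{div}\,C_s=0$. It holds because $A_s=\nabla^2\oh_s-\oh_s g_\H$ is a Codazzi tensor on the constant curvature space $\H^d$: using the Ricci identity together with $R(X,Y)Z=-(g_\H(Y,Z)X-g_\H(X,Z)Y)$, one checks that $(\nabla_X A_s)(Y,Z)$ is symmetric in $X$ and $Y$, and the top Newton tensor $\mathrm{adj}(A_s)$ of a Codazzi tensor is divergence free. Since $\H^d/\Gamma$ is closed, integration by parts annihilates the Hessian term, leaving
\[
 \mathcal A'(s)=-\int_{\H^d/\Gamma}\dot\oh_s\,\tr(C_s)\,\d\H^d=\int_{\H^d/\Gamma}(\oh_0-\oh_1)\,\tr(C_s)\,\d\H^d\geq 0,
\]
because $\oh_0-\oh_1>0$ and $\tr(C_s)=\tr\,\mathrm{adj}(A_s)>0$ for positive definite $A_s$. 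Hence $\Area(K_0)=\mathcal A(0)\leq\mathcal A(1)=\Area(K_1)$.

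For arbitrary $\tau$-convex sets I would conclude by approximation. Approximating each $K_i$ by $C^\infty_+$ $\tau$-convex sets $K_i^n$ converging in the $\tau$-Hausdorff distance (the density of such sets is established in the appendix), the bound $\oh_0-\oh_1\geq\delta$ forces $\oh_0^n-\oh_1^n\geq\delta/2$ for $n$ large, so $K_1^n\subset\mathrm{int}(K_0^n)$ and the smooth case yields $\Area(K_0^n)\leq\Area(K_1^n)$. Since the total area is continuous for $\tau$-Hausdorff convergence by Corollary~\ref{cor: weak conv area}, the inequality passes to the limit. The main obstacle is the divergence-free identity $\mathrm{div}\,C_s=0$; once it is in hand, the first variation of the total area has a definite sign and the rest is routine.
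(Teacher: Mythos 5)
Your proof is correct, but it reaches the key inequality by a genuinely different mechanism than the paper. The paper deforms the hypersurfaces themselves: it parameterizes $\partial K_\lambda$ by the inverse Gauss map $\chi_\lambda$, identifies the variational field $X=\grad^{\H}(\oh_1-\oh_0)-(\oh_1-\oh_0)\eta$, and invokes the geometric first variation of area, $\partial_\lambda\Area(K_\lambda)=\int s_\lambda(\oh_0-\oh_1)\,\d\A(K_\lambda)\geq 0$, where $s_\lambda>0$ is the sum of principal curvatures. You instead work entirely on the closed quotient $\H^d/\Gamma$ with the density $\det(\nabla^2\oh_s-\oh_s g_\H)$, differentiate via Jacobi's formula, and kill the Hessian term by the divergence-freeness of the adjugate of the Codazzi tensor $A_s$ (a classical fact about Newton tensors of Codazzi tensors on space forms, and your Ricci-identity verification that $\nabla^2\oh-\oh g_\H$ is Codazzi on $\H^d$ is sound). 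The two computations are secretly identical: since $A_s^{-1}$ is the shape operator, $\tr(\mathrm{adj}\,A_s)=\det(A_s)\,\tr(A_s^{-1})=s_\lambda\det A_s$, so your integrand $(\oh_0-\oh_1)\tr(\mathrm{adj}\,A_s)\,\d\H^d$ is exactly the paper's $(\oh_0-\oh_1)s_\lambda\,\d\A(K_\lambda)$. What your route buys is self-containedness: everything happens on a closed Riemannian manifold, with no appeal to hypersurface variation formulas or to equivariance of a variational field; the price is the Codazzi/Newton-tensor lemma, which the paper's route never needs. One point you should make explicit: for the integration by parts on $\H^d/\Gamma$ to be legitimate, you need $A_s$ itself (not merely its determinant) to be $\Gamma$-invariant; this holds because the failure of $\oh_s$ to be $\Gamma$-invariant is by restrictions of linear functions of $\R^{d,1}$, which lie in the kernel of $\nabla^2-g_\H$ (this is proved in the paper's appendix), so $C_s$ and $\dot{\oh}_s$ both descend to the quotient. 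Your final approximation step (density of $C^\infty_+$ $\tau$-convex sets plus continuity of total area under $\tau$-Hausdorff convergence, Corollary~\ref{cor: weak conv area}) coincides with the paper's; note also that the smooth-case argument only uses $\oh_1^n\leq\oh_0^n$, so nothing is lost in passing to the approximants even without re-deriving the containment $K_1^n\subset\operatorname{int}(K_0^n)$.
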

\begin{proof}
Let us first suppose that $\fconv_0$ and $\fconv_1$ are $C^2_+$, with respective hyperbolic 
support functions $\oh_0$ and $\oh_1$ on $\H^d$. Note that $\oh_1<\oh_0$.
 For $\lambda\in(0,1)$, let $\fconv_{\lambda}=(1-\lambda) \fconv_0+\lambda \fconv_1$. It has hyperbolic
 support function $(1-\lambda) \oh_0+\lambda \oh_1$,
so $\fconv_{\lambda}$ is  $C^2_+$. 
We can use the inverse of the Gauss map of $\fconv_\lambda$, say $\chi_\lambda$,
 to get a smooth family of parameterizations of $\partial \fconv_\lambda$.
 Recall  by  \eqref{eq:chi on hyp}
that  $\chi_\lambda(\eta)=\grad^{\H}_\eta\oh_\lambda-\oh_\lambda(\eta)\eta$,
so the variational field $X=\partial_\lambda\chi_\lambda$ at a point $\chi_\lambda(\eta)$ is given by
$$X(\chi_\lambda(\eta))=\grad^{\H}_\eta(\oh_1-\oh_0)-(\oh_1-\oh_0)(\eta)\eta~.$$
The variation of the area of $\partial \fconv_\lambda$ is given by
\[
\new{\frac{d\,}{d\lambda}} \Area(\fconv_\lambda)=-\int_{\partial \fconv_\lambda/\Gamma_\tau}s_\lambda\langle X(x), \gauss_\lambda(x)\rangle_- 
\d\mu_\lambda
\]
where $\mu$ is the intrinsic volume form and $s_\lambda$ is the sum of the principal eigenvalues computed with respect the
normal $\gauss$ (so $s_\lambda$ is positive).

Using the parameterization given by $\chi_\lambda$ and the fact that
$\chi_\lambda^*(\mu_\lambda)= \d\A(\fconv_\lambda)$ we conclude that
\[
\new{\frac{d\,}{d\lambda}}  \Area(\fconv_\lambda)=
\int s_\lambda\langle X(\chi_\lambda(\eta)),\eta\rangle_- \d \A(\fconv_\lambda)=
\int s_\lambda (\oh_0-\oh_1)\d \A(\fconv_\lambda)\geq 0~.
\]

The general case follows by a density argument, see
Appendix~\ref{appen: smooth approx} and Corollary~\ref{cor: weak conv area}. 
\end{proof}

\begin{proposition}\label{prop: c tau}
There exists a non-negative number $c(\tau)$ such that for every $\tau$-convex set $\fconv$, 
 if $\Area(\fconv)>c(\tau)$ then $\fconv$ is a Cauchy convex set (that is, it is contained in the interior of  $\Omega_{\tau}$).
 \end{proposition}

 As already noted, if $\tau$ is a coboundary, i.e.~$\Omega_{\tau}$ is the future cone of a point, 
 and $\fconv$ is a $\tau$-convex set with $\Area(\fconv)\not= 0$, then
 $\fconv \not= \Omega_{\tau}$ that implies that $\fconv$ is Cauchy.
 So in this case $c(\tau)=0$.
 On the other hand, the example constructed in Subsection~\ref{sub: pogo}
implies that in general  $c(\tau)\neq 0$. 
 
 \begin{proof}
For any $n>0$, let $\fconv_n$ be such that $\Area(\fconv_n)>n$ and suppose that $\fconv_n$ touches the boundary of  $\Omega_\tau$.
 By Corollary~\ref{cor:cosm borne},  $\fconv_n$ is in the past of $\Sigma_T$ for some $T$ independent of $n$, 
 hence by Lemma~\ref{lem:area monotonic},
 $\Area(\fconv_n)\leq\Area(\Sigma_T)$, that is again a contradiction if $n$ is sufficiently large. 
\end{proof}

\subsection{The covolume}

 Let $\fconv$ be a $\tau$-convex set.  The \emph{covolume} of $\fconv$, denoted by $\mathrm{covol}(\fconv)$ \index{ $\mathrm{covol}(\fconv)$ the covolume of $\fconv$)}
is the volume of the complementary of $\fconv/\Gamma_\tau$ in the manifold $\Omega_{\tau}/\Gamma_\tau$. 
As this complementary is open, the covolume is well defined. It is also finite. This can be seen for example for level sets 
of the cosmological time, then using the monotonicity of the covolume.
Equivalently, it is the volume (the Lebesgue measure) of $\Omega_{\tau}\setminus \fconv$ in a fundamental
domain for $\Gamma_\tau$ (note that Minkowski isometries preserve the Lebesgue measure).

It can be convenient  to consider the covolume as a function on the set of
$\tau$-equivariant  hyperbolic support functions. For this reason we often will write
$\cov(\oh)$ to denote the covolume of the corresponding convex set. 
In the same way we will denote by $\A(\oh)$ the area measure on $\H^d$ induced by the domain $\fconv$.

\begin{lemma}\label{lem: cov cont}
The function  $\cov$ is continuous on the set of $\tau$-convex sets.
\end{lemma}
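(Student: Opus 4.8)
The plan is to prove sequential continuity: if $(K_n)$ is a sequence of $\tau$-convex sets converging to a $\tau$-convex set $K$ for the $\tau$-Hausdorff distance, then $\cov(K_n)\to\cov(K)$. Writing $\oh_n,\oh$ for the hyperbolic support functions and $\epsilon_n=\|\oh_n-\oh\|_\infty\to 0$ (the distance of Definition~\ref{def:haus dist}), the first observation is that $\cov$ is monotone decreasing: if $A\subseteq B$ are $\tau$-convex sets then $\Omega_\tau\setminus B\subseteq\Omega_\tau\setminus A$, so $\cov(A)\geq\cov(B)$; moreover, if $P$ is a fundamental domain for $\Gamma_\tau$ in $\Omega_\tau$, the difference $\cov(A)-\cov(B)$ is the Lebesgue volume of $(B\setminus A)\cap P$. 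The idea is to squeeze $K_n$ between two controlled $\tau$-convex sets and estimate this shell volume.

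The subtle point, and the main obstacle, is that one cannot sandwich $K_n$ between the sets with support functions $\oh\pm\epsilon_n$: while $\oh-\epsilon_n$ is legitimate (it is the support function of $K+\epsilon_n K(\H)$, by Remark~\ref{rem:hausdorff dist}, $K(\H)$ being $\Gamma$-invariant so the sum is $\tau$-convex), the function $\oh+\epsilon_n$ is \emph{not} a support function in general, because the $1$-homogeneous extension of $\oh+\epsilon_n$ equals $H+\epsilon_n\|\cdot\|_-$, where $H$ extends $\oh$, and the Lorentzian norm $\|\cdot\|_-$ is concave on $\operatorname{I}^+(0)$. I would get around this by only ever adding $K(\H)$, i.e.\ by using the two inclusions $K+\epsilon_n K(\H)\subseteq K_n$ (from $\oh-\epsilon_n\leq\oh_n$) and $K_n+\epsilon_n K(\H)\subseteq K$ (from $\oh_n-\epsilon_n\leq\oh$), which both follow from the order-preserving correspondence $H\le H'\Leftrightarrow K\subseteq K'$. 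Applying monotonicity of $\cov$ gives $\cov(K_n)\leq\cov(K+\epsilon_n K(\H))$ and $\cov(K)\leq\cov(K_n+\epsilon_n K(\H))$, and since all sets are contained in $K\subseteq\Omega_\tau$ the covolumes are finite. Rewriting the two right-hand sides with the shell formula above, and recognising $(K\setminus(K+\epsilon K(\H)))\cap P$ as exactly the set of points of $K$ within Lorentzian distance $\epsilon$ of $\partial_s K$, I obtain
\[
\cov(K)-\overline V_{\epsilon_n}(K_n)(\H^d/\Gamma)\ \le\ \cov(K_n)\ \le\ \cov(K)+\overline V_{\epsilon_n}(K)(\H^d/\Gamma).
\]

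It then remains to check that both correction terms tend to $0$. Here I would invoke the polynomial expansion \eqref{eq: muinv}, namely $\overline V_{\epsilon}(L)(\H^d/\Gamma)=\frac{1}{d+1}\sum_{i=0}^d\epsilon^{d+1-i}\binom{d+1}{i}\overline S_i(L)(\H^d/\Gamma)$; every exponent $d+1-i$ is at least $1$, so each term carries a positive power of $\epsilon_n$. For $L=K$ the coefficients are fixed finite numbers (the $\overline S_i(K)$ are Radon measures on the compact quotient $\H^d/\Gamma$), so $\overline V_{\epsilon_n}(K)(\H^d/\Gamma)=O(\epsilon_n)\to 0$. For $L=K_n$ I would use that, by Lemma~\ref{lem:weak inv} (or Corollary~\ref{cor: weak conv area}), the measures $\overline S_i(K_n)$ converge weakly, and testing against the constant function $1$ on the compact manifold shows their total masses converge, hence are bounded; thus $\overline V_{\epsilon_n}(K_n)(\H^d/\Gamma)=O(\epsilon_n)\to 0$ as well. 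The squeeze then yields $\cov(K_n)\to\cov(K)$, proving continuity. The only delicate step is the asymmetric sandwich forced by the failure of $\oh+\epsilon_n$ to be a support function; once that is handled, everything reduces to the elementary monotonicity of Lebesgue volume and to the weak convergence already established.
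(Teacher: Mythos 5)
Your argument is correct and its core is the same as the paper's: the asymmetric sandwich $K+\epsilon_n K(\H)\subset K_n$ and $K_n+\epsilon_n K(\H)\subset K$ from Remark~\ref{rem:hausdorff dist}, the identity $\cov\left(L+\epsilon K(\H)\right)=\cov(L)+\overline{V}_\epsilon(L)(\H^d/\Gamma)$, and the reduction to showing that $\overline{V}_{\epsilon_n}(K)(\H^d/\Gamma)$ and $\overline{V}_{\epsilon_n}(K_n)(\H^d/\Gamma)$ tend to $0$. The only place you deviate is the term $\overline{V}_{\epsilon_n}(K_n)$, whose coefficients depend on $n$: the paper controls it via the integral formula \eqref{eq:vol approx int} (i.e.~\eqref{eq:V int}) combined with the monotonicity of the total area (Lemma~\ref{lem:area monotonic}) and the fact that all the sets $K_n+K(\H)$ contain a fixed $\tau$-convex set $C$, yielding $\overline{V}_{1/n}(K_n)(\H^d/\Gamma)\leq \frac{1}{n}\Area(C)$; you instead invoke the Steiner-type expansion \eqref{eq: muinv} together with the weak convergence of all the measures $\overline{S}_i(K_n)$ (Lemma~\ref{lem:weak inv}), tested against the constant function $1$ on the compact quotient, to bound the total masses uniformly in $n$. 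Both routes rest only on results established before the lemma, so there is no circularity; the paper's variant uses only the top-order measure and its monotonicity, while yours trades that for the already-proven weak convergence of the whole family $\overline{S}_0,\dots,\overline{S}_d$. One small imprecision: your justification of finiteness of the covolumes (``all sets are contained in $K$'') is not quite right since $K_n\not\subset K$ in general, but this is immaterial, as finiteness of the covolume of an arbitrary $\tau$-convex set is established in the paper just before the lemma.
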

\begin{proof}
 Let $\fconv_n$ be at distance $1/n$ from $\fconv$. From Remark~\ref{rem:hausdorff dist}

$$\fconv+\frac{1}{n} \jplus(\H) \subset \fconv_n \mbox{ and } \fconv_n+ \frac{1}{n} \jplus(\H) \subset \fconv~,$$
so
$$\cov\left(\fconv+\frac{1}{n} \jplus(\H) \right) \geq \cov(\fconv_n)   \mbox{ and  }   \cov\left( \fconv_n+ \frac{1}{n} \jplus(\H)\right)  \geq  \cov(\fconv)~.$$

But clearly $\cov\left(\fconv+\frac{1}{n} \jplus(\H)\right)= \cov(\fconv)+\overline{V}_{1/n}(\fconv)(\H^d/\Gamma)$, 
so 
$$\cov(\fconv_n) - \cov(\fconv) \leq \overline{V}_{1/n}(\fconv)(\H^d/\Gamma)~. $$
\new{By \eqref{eq: muinv}} $\overline{V}_{1/n}(\fconv)(\H^d/\Gamma)$ goes to $0$ when $n$ goes to infinity 
\old{(this is clear using an expression analog to \eqref{eq:vol approx int} below and the argument after it)}.
In the same way 
$$\cov(\fconv) - \cov(\fconv_n) \leq \overline{V}_{1/n}(\fconv_n)(\H^d/\Gamma)~, $$
so to conclude it remains to prove that $\overline{V}_{1/n}(\fconv_n)(\H^d/\Gamma) $ goes to $0$ when $n$ goes to infinity. 
But by \eqref{eq:V int} we have
\begin{equation}\label{eq:vol approx int} \overline{V}_{1/n}(\fconv_n)(\H^d/\Gamma)=\int_0^{1/n} \Area(\fconv_n+t\jplus(\H)) \d t~. \end{equation}
By Lemma~\ref{lem:area monotonic},  $\Area(\fconv_n+t\jplus(\H))\leq \Area (\fconv_n+1/n\jplus(\H))\leq \Area(\fconv_n+\jplus(\H))$. As $\fconv_n$
converges to $\fconv$, $\fconv_n+\jplus(\H)$  are in the past of a $ \tau$-convex set $C$, so again by Lemma~\ref{lem:area monotonic},
$\Area(\fconv_n+t\jplus(\H))\leq \Area(C)$ and $\overline{V}_{1/n}(\fconv_n)(\H^d/\Gamma)\leq 1/n\Area(C)$.
\end{proof}

As we noted in  Section \ref{sub def tau} 
the set of $\tau$-convex sets is convex.
So speaking about convexity of the covolume is meaningful. 
We will first prove that it is convex on the convex subset of Cauchy $\tau$-convex sets. 
The convexity on the set of $\tau$-convex sets will follow by 
continuity. The idea is to write the covolume in terms of a volume of a convex body, obtained by cutting some
convex fundamental domain by a hyperplane. Such convex fundamental is described in the following.

\begin{proposition}\label{prop:fund reg}
Let $S$ be any convex $\Gamma_\tau$-invariant $C^1$ space-like hypersurface in $\Omega_{\tau}$.
Given $x,y\in\mathbb R^{d+1}$, let us set $\psi(x,y)=\la x-y, x-y\ra_-$.
Let us fix a point $x\in S$ and consider the region
\[
  \fund(x)=\{y\in \fut(S)| \psi(x,y)\leq\psi(\gamma x+\tau_\gamma, y)~\forall   \gamma\in\Gamma\}~.
\]
Then $\fund(x)$ is a convex fundamental domain of $\fut(S)$ for the action of $\Gamma_\tau$.

Moreover the interior of $\fund(x)$ is the set
\begin{equation}\label{eq:fundint}
\operatorname{int}\fund=\{y\in \fut(S)|\psi(x,y)<\psi(\gamma x+\tau_\gamma, y)~\forall  \gamma\in\Gamma\setminus\{1\}\}~.
\end{equation}
\end{proposition}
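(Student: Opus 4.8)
The plan is to recognise $P(x)$ as a Lorentzian Dirichlet--Voronoi cell for the orbit $\Gamma_\tau x$ inside $\operatorname{I}^+(S)$, and then to run the classical fundamental-domain argument, the only genuine analytic work being a properness input that makes the cell locally finite. First I would record the affine nature of each defining inequality: expanding $\psi(x,y)=\la x,x\ra_- -2\la x,y\ra_- +\la y,y\ra_-$ and likewise for $\gamma_\tau x$, the condition $\psi(x,y)\le\psi(\gamma_\tau x,y)$ is equivalent to $2\la \gamma_\tau x-x,y\ra_-\le \la \gamma_\tau x,\gamma_\tau x\ra_- -\la x,x\ra_-$, which cuts out a half-space $H_\gamma$. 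Hence $P(x)=\operatorname{I}^+(S)\cap\bigcap_{\gamma\in\Gamma}H_\gamma$ is an intersection of convex sets, so it is convex; this part is immediate. I would also note at the outset two identities used repeatedly: $\psi$ is invariant under $\Gamma_\tau$ because $\Gamma_\tau$ acts by isometries, and $\gamma\mapsto\gamma_\tau$ is a homomorphism, $(\alpha\beta)_\tau=\alpha_\tau\beta_\tau$, which is precisely the cocycle relation $\tau_{\alpha\beta}=\tau_\alpha+\alpha\tau_\beta$.

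For the covering property I would fix $y\in\operatorname{I}^+(S)$, choose $\gamma_0$ minimising $\gamma\mapsto\psi(\gamma_\tau x,y)$ (attainment treated below), and set $y'=\gamma_{0,\tau}^{-1}y$, which again lies in $\operatorname{I}^+(S)$ since $S$, hence $\operatorname{I}^+(S)$, is $\Gamma_\tau$-invariant. Using the two identities, for every $\gamma$ one gets $\psi(\gamma_\tau x,y')=\psi(\gamma_{0,\tau}\gamma_\tau x,y)=\psi((\gamma_0\gamma)_\tau x,y)\ge\psi(\gamma_{0,\tau}x,y)=\psi(x,y')$, so $y'\in P(x)$ and $y\in\gamma_{0,\tau}P(x)$; thus $\Gamma_\tau P(x)=\operatorname{I}^+(S)$. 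The same identities handle disjointness of interiors once the interior is identified with the strict-inequality set \eqref{eq:fundint}: if $y$ and $\gamma_{1,\tau}y$ both lay in that set with $\gamma_1\neq1$, then taking $\delta=\gamma_1^{-1}$ in the defining inequalities for $y$ gives $\psi(x,y)<\psi((\gamma_1^{-1})_\tau x,y)$, while taking $\delta=\gamma_1$ for $\gamma_{1,\tau}y$ and using invariance gives $\psi((\gamma_1^{-1})_\tau x,y)=\psi(x,\gamma_{1,\tau}y)<\psi(x,y)$, a contradiction.

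The heart of the proof, and the step I expect to be the main obstacle, is the attainment of the minimum together with local finiteness of the family $\{H_\gamma\}$. Here I would use that $\operatorname{int}\Omega_\tau$ is globally hyperbolic and that the $\Gamma_\tau$-action on it is proper with $S/\Gamma_\tau$ compact (Remark~\ref{rk:action}); the orbit $\Gamma_\tau x$ is then discrete and, since the cosmological time is $\Gamma_\tau$-invariant, lies on a single cosmological-time level. The key geometric lemma to prove is that for every compact $U\subset\operatorname{I}^+(S)$ one has $\inf_{y\in U}\psi(\gamma_\tau x,y)\to+\infty$ as $\gamma$ leaves finite subsets of $\Gamma$, i.e. escaping orbit points become uniformly space-like separated from $U$. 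I would argue by contradiction: extract $\gamma_n\to\infty$ and $y_n\to y_\ast\in U$ with $\psi(\gamma_{n,\tau}x,y_n)$ bounded; bounded values of $\psi$ relative to a fixed compact set, combined with the fixed cosmological time of the orbit, confine the points $\gamma_{n,\tau}x$ to a compact region, contradicting properness. This confinement is exactly where global hyperbolicity is essential, through compactness of causal diamonds and the resulting boundedness of sublevel sets of $\psi(\cdot,y_\ast)$ on a Cauchy surface; making this uniform near the light-cone asymptotics of $\Omega_\tau$ is the delicate point.

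Granting the lemma, only finitely many $H_\gamma$ are active on any compact neighbourhood, so $P(x)$ is locally a finite intersection of closed half-spaces. Its interior is then the corresponding intersection of open half-spaces: each active constraint that holds with equality at a boundary point genuinely cuts, because torsion-freeness of $\Gamma$ (assumed throughout) makes the action free, so $\gamma_\tau x\neq x$ for $\gamma\neq1$ and the bisector $\{\psi(x,\cdot)=\psi(\gamma_\tau x,\cdot)\}$ is an honest hyperplane. Consequently $\operatorname{int}P(x)$ coincides with the strict-inequality set of \eqref{eq:fundint}, which with the covering and interior-disjointness statements above completes the proof that $P(x)$ is a convex fundamental domain.
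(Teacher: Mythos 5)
Your overall architecture reproduces the paper's: convexity via the bisector half-spaces $H_\gamma$, covering by minimizing $\gamma\mapsto\psi(\gamma_\tau x,y)$ over the orbit, the strict-inequality description of the interior, and the disjointness bookkeeping are all correct (modulo routine details), and you correctly isolate the divergence lemma --- for every compact $C\subset\overline{\operatorname{I}^+(S)}$, $\inf_{y\in C}\psi(\gamma_\tau x+\tau_\gamma,y)\to+\infty$ as $\gamma$ diverges, which is the paper's Lemma~\ref{lem:fund} --- as the heart of the matter. The gap is that your proposed proof of that lemma does not work. You claim that bounded values $\psi(\gamma_{n,\tau}x,y_n)$, with $y_n$ in a compact set, confine the points $\gamma_{n,\tau}x$ to a compact region, and you justify this by ``compactness of causal diamonds and the resulting boundedness of sublevel sets of $\psi(\cdot,y_\ast)$ on a Cauchy surface''. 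In Lorentzian signature this inference fails: the set $\{z\mid \psi(z,y_\ast)\leq c\}$ is unbounded, since it contains points space-like separated from $y_\ast$ arbitrarily far out along directions asymptotic to the light cone of $y_\ast$. Global hyperbolicity controls causally related points ($J^+(p)\cap J^-(q)$ compact, $J^-(y_\ast)\cap S$ compact) but says nothing about these space-like escape directions, which is exactly where diverging orbit points could a priori accumulate near bounded values of $\psi$. So ``sublevel sets of $\psi(\cdot,y_\ast)$ restricted to $S$ are compact'' is not a consequence of the causal-structure facts you invoke; it is precisely the content of the lemma, and you leave it unproven (you even flag the light-cone asymptotics as ``the delicate point'' without resolving it).

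What actually closes this step --- and is the paper's argument --- uses the specific metric geometry of convex space-like surfaces rather than causality. First, for $z,y$ both on $S$ one has $\sqrt{\psi(z,y)}\geq d_S(z,y)$, where $d_S$ is the intrinsic distance: this is the reverse of the Euclidean chord inequality, obtained by comparing the chord with the curve cut on $S$ by a time-like $2$-plane through $z,y$ (concavity of $v\mapsto\sqrt{1-v^2}$ and Jensen). Since $S/\Gamma_\tau$ is compact, $(S,d_S)$ is complete and proper, and properness of the $\Gamma_\tau$-action (Remark~\ref{rk:action}) forces $d_S(\gamma_{n,\tau}x,\,C\cap S)\to\infty$ for any divergent sequence $\gamma_n$; hence $\psi$ diverges along the orbit uniformly on compact subsets of $S$. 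Second, a point $y\in\operatorname{I}^+(S)$ is handled by projecting it orthogonally to $y'\in S$, writing $y=y'+e$ with $e$ time-like orthogonal to $S$ at $y'$, and using $\psi(z,y)=\psi(z,y')+\la e,e\ra_-+2\la y'-z,e\ra_-$ together with the support-plane inequality $\la y'-z,e\ra_-\geq 0$ (this is where convexity of $S$ enters); this yields $\psi(z,y)\geq\psi(z,y')-c$ uniformly for $y$ in a compact set, reducing to the on-surface case. Some input of this kind is indispensable; granting it, the rest of your proof (attainment of the minimum, local finiteness of the active half-spaces, and the identification of the interior) goes through.
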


\new{
\begin{definition}
The domain $\fund(x)$ is the \emph{Lorentzian Dirichlet polyhedron} centered at $x$.
\end{definition}
}
 
Note that  in the Fuchsian case it is possible to construct a convex 
fundamental domain  for the whole domain $\Omega$ (the future cone
of the origin), as it suffices to consider the cone in $\R^{d+1}$ of a Dirichlet polyhedron  for $\Gamma$ in
$\H^d$. Moreover,  if $x\in\H^d$, the polyhedron $\fund(x)$ defined in Proposition~\ref{prop:fund reg} 
coincides with the cone on the  Dirichlet polyhedron centered at $x$.

To prove the Proposition we need the following lemma:
\begin{lemma}\label{lem:fund}
For any divergent sequence $\gamma_n\in\Gamma$ and any compact set $C$ in $\overline{\fut(S)}$
\begin{equation}\label{eq:div}
           \inf_{y\in C}\psi(\gamma_nx+\tau_{\gamma_n}, y)\rightarrow+\infty~.
\end{equation}
\end{lemma}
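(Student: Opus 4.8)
The plan is to prove \eqref{eq:div} by contradiction, reducing it to a statement about the asymptotic direction in which the orbit of $x$ escapes. Suppose the conclusion fails: after passing to a subsequence there are a constant $M$ and points $y_n\in C$ with $\psi(\gamma_n x+\tau_{\gamma_n},y_n)\le M$. Write $p_n=\gamma_n x+\tau_{\gamma_n}=(\gamma_n)_\tau x$. Since $x\in S$ and $S$ is $\Gamma_\tau$-invariant, $p_n\in S$; because $S/\Gamma_\tau$ is compact while $\Gamma_\tau$ acts properly discontinuously on $\operatorname{int}\Omega_\tau$ (Remark~\ref{rk:action}), the divergence of $\gamma_n$ forces $p_n$ to leave every compact set. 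Up to a further subsequence $y_n\to y_\ast\in C$. First I would record the sign of $\psi$. As $C\subset\overline{\operatorname{I}^+(S)}=K$ with $K$ future complete while $p_n\in\partial K$, no $p_n$ can lie in the timelike future of $y_n$ (this would place $p_n$ in the interior of $K$); and since $\Omega_\tau$ is contained in the future cone of a point, the last coordinate of $p_n$ tends to $+\infty$, so $p_n$ cannot lie in the timelike past of the bounded points $y_n$ either. Hence for $n$ large the vector $p_n-y_n$ is spacelike or null, so $\psi(p_n,y_n)\ge 0$.

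Writing $p_n=(\mathbf p_n,t_n)$ and $y_n=(\mathbf y_n,s_n)$, the bound $0\le\psi(p_n,y_n)=\|\mathbf p_n-\mathbf y_n\|^2-(t_n-s_n)^2\le M$, together with $\|\mathbf p_n\|\to\infty$ and the boundedness of the $y_n$, forces $t_n\to+\infty$ and $\|\mathbf p_n-\mathbf y_n\|/(t_n-s_n)\to 1$. Thus the Euclidean direction of $p_n-y_n$ converges to a future null vector $\hat{\mathbf e}=(\mathbf e,1)$ with $\mathbf e\in\partial B$; equivalently $\mathbf p_n/\|\mathbf p_n\|\to\mathbf e$ and $t_n/\|\mathbf p_n\|\to 1$. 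This is exactly what the graph description allows, since $\partial K$ is the graph of a convex $1$-Lipschitz function, so points of $S$ escaping to infinity do so along null asymptotic directions. The whole difficulty is therefore concentrated in this asymptotically null regime, where $\psi$ stays bounded along the escape and the elementary estimate based only on $1$-Lipschitzness fails (the gradient of the graph function approaches the unit sphere).

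The extra ingredient that rules this regime out is Proposition~\ref{prop:g tau}: every $\tau$-convex set, and in particular $\Omega_\tau\supset S\ni p_n$, lies in the future side of the null hyperplane with normal $\hat{\mathbf e}$ determined by $g_\tau$, so that $\langle p_n,\hat{\mathbf e}\rangle_-\le g_\tau(\mathbf e)$ for all $n$ (this uses characterization \ref{car6} of Proposition~\ref{prop:a=0} for the direction $\mathbf e\in\partial B$). Setting $R_n=\langle\mathbf p_n,\mathbf e\rangle$ and $\mathbf a_n=\mathbf p_n-R_n\mathbf e$, the relation $\langle p_n,\hat{\mathbf e}\rangle_-=R_n-t_n\le g_\tau(\mathbf e)$ pins $t_n$ from below by $R_n-g_\tau(\mathbf e)$, while the $1$-Lipschitz bound pins it from above by $\sqrt{R_n^2+\|\mathbf a_n\|^2}+C_0$; expanding $\psi(p_n,y_n)$ in the null frame adapted to $\hat{\mathbf e}$ then produces a dominant term of size $R_n\to+\infty$ plus the nonnegative transverse contribution $\|\mathbf a_n\|^2$ and bounded remainders. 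The key estimate to establish is that these two-sided controls on $t_n$, combined with $\psi(p_n,y_n)\ge 0$, force $\|\mathbf a_n\|^2=O(R_n)$ and hence $t_n=R_n+O(1)$, after which $\psi(p_n,y_n)\to+\infty$, contradicting $\psi\le M$. This final estimate in the null regime is the main obstacle: one must show that a bounded value of $\psi$ along an orbit escaping in a null direction is incompatible with the uniform support bound $\langle\,\cdot\,,\hat{\mathbf e}\rangle_-\le g_\tau(\mathbf e)$ enjoyed by $\Omega_\tau$. Once this is granted, \eqref{eq:div} follows, and with it Proposition~\ref{prop:fund reg}.
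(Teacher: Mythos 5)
Your reductions at the start are sound (the orbit points $p_n=\gamma_nx+\tau_{\gamma_n}$ stay on $S$ and leave every compact set, $\psi(p_n,y_n)\ge 0$ for large $n$, and the directions of $p_n-y_n$ become null), but the step you yourself flag as ``the main obstacle'' is precisely the whole content of the lemma, and the mechanism you sketch for it cannot work. The three quantitative inputs you assemble --- the lower bound $t_n\ge R_n-g_\tau(\mathbf e)$ from the null support plane, the upper bound $t_n\le\sqrt{R_n^2+\|\mathbf a_n\|^2}+C_0$ from $1$-Lipschitzness, and $0\le\psi(p_n,y_n)\le M$ --- are mutually consistent, so no contradiction can be extracted from them alone. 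Concretely, take $S'=\partial \operatorname{I}^+(0)$, $p_n=n(\mathbf e,1)$ and $y_n=0\in\overline{\operatorname{I}^+(S')}$: the graph function $\|\cdot\|$ is convex and $1$-Lipschitz, $\langle p_n,\hat{\mathbf e}\rangle_-=0\le g(\mathbf e)=0$, and yet $\psi(p_n,y_n)\equiv 0$. This $S'$ is of course not spacelike, but nothing in your list of constraints excludes it --- which is exactly the point: your argument never uses the spacelikeness of $S$ nor the cocompactness of the $\Gamma_\tau$-action on $S$, and without those the estimate you are trying to prove is simply false. Relatedly, your final implication is internally inconsistent: if $t_n=R_n+O(1)$ and $\|\mathbf a_n\|^2=O(R_n)$, then $\psi(p_n,y_n)=\|\mathbf a_n\|^2+2R_n\bigl(s_n-\langle\mathbf y_n,\mathbf e\rangle-O(1)\bigr)+O(\sqrt{R_n})$, whose behaviour is undetermined; in the cone example it is identically $0$, so these controls do not force $\psi\to+\infty$.

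The paper's proof supplies exactly the ingredient your outline is missing, in two short steps. First, for $y$ in a compact subset of $S$ itself, it uses the comparison $\sqrt{\psi(p_n,y)}\ge d_S(p_n,y)$ between the Minkowski chordal distance and the intrinsic distance on the convex spacelike surface $S$; since $S/\Gamma_\tau$ is compact, the induced metric on $S$ is complete and the action is proper, so $d_S(p_n,C)\to+\infty$ and hence $\psi\to+\infty$. Second, a general compact $C\subset\overline{\operatorname{I}^+(S)}$ is reduced to this case by projecting orthogonally onto $S$: writing $y=y'+e$ with $e$ future time-like and orthogonal to $S$ at $y'$, one has $\psi(z,y)=\psi(z,y')+\langle e,e\rangle_-+2\langle y'-z,e\rangle_-$, where $\langle y'-z,e\rangle_-\ge 0$ because the plane through $y'$ orthogonal to $e$ supports $S$, and $\langle e,e\rangle_-$ is bounded on $C$; hence $\psi(z,y)\ge\psi(z,y')-c$. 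Both steps use spacelikeness and invariance in an essential, quantitative way. Any completion of your route would have to import them too --- for instance by showing that bounded $\psi$ along an escaping sequence forces, via convexity, a null ray in $\partial\operatorname{I}^+(S)$, contradicting that $S$ is spacelike --- but that is an additional argument you have not given, and it would in any case be considerably longer than the paper's.
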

\begin{proof}
This property is easily checked if $C\subset S$. 
Indeed for $y\in S$,  $\sqrt{\psi(\gamma_n(x)+\tau_{\gamma_n},y)}$ 
is bigger than the intrinsic distance $d_S(\gamma_n(x)+\tau_{\gamma_n},y)$.

On the other hand, since $S/\Gamma_\tau$ is compact, the intrinsic distance on $S$
is complete and since the action of $\Gamma_\tau$ is proper we conclude that 
$d_S(\gamma_n(x)+\tau_{\gamma_n},C)$ diverges as
$n\rightarrow+\infty$.

Now if $C\subset  \fut(S)$, we can consider its projection onto $S$. 
For $y\in \fut(S)$, there is a unique point $y'$ on $S$ such that $y=y'+e$
with $e$ time-like supporting vector of $S$ at $y'$ (\cite{Bon05}).
Then for any $z\in S$ we have that
\[
  \psi(z,y)=\psi(z,y')+\la e, e\ra_- +2\la y'-z, e\ra_-~.
\]
Notice that since the plane through $y'$ and orthogonal to $e$ is a support plane for $S$, we have
that $\la y'-z, e\ra_-\geq 0$.

Then putting $c=\sup_{y'\in \fconv'}|\la e, e\ra_-|$, then $\psi(z,y)\geq \psi(z, y')-c$ for any $z\in S$ and $y\in C$.
Then (\ref{eq:div})  follows. 
\end{proof}

\begin{proof}[Proof of Proposition~\ref{prop:fund reg}]
If $x_1, x_2\in\mathbb R^{d+1}$ are two space-like related points,
then the set of points $y$ such that $\psi(x_1, y)\leq \psi(x_2, y)$ is the closed half-space in
$\mathbb R^{d+1}$ bounded by the plane orthogonal to the segment $[x_1, x_2]$ and passing through
its middle point $(x_1+x_2)/2$.
Since the orbit of any point $x\in\Omega_\tau$ is space-like (see Remark~\ref{rk:action}), 
 $\fund(x)$ is convex.

In order to show that the $\Gamma_\tau$-orbit of any point $y\in \fut(S)$ meets $\fund(x)$, 
notice that by Lemma~\ref{lem:fund},  for any $y\in \fut(S)$, there
is a point $x_1$ on the orbit of $x$ where the function $\psi(\bullet, y)$ attains the minimum.
If $x_1=\gamma_1 x+\tau_{\gamma_1}$, then putting $y_1=\gamma_1^{-1} y+\tau_{\gamma_1^{-1}}$
we have that on the orbit of $x$ the minima of the function  $\psi(\bullet, y_1)$ is attained at $x$ so
$y_1\in \fund(x)$.

Finally proving \eqref{eq:fundint} we will have 
the interior of $\fund(x)$ contains at most $1$ point for each orbit, that concludes the proof that $\fund(x)$ is a fundamental region.

So, it remains to prove that
\[
\operatorname{int}\fund=\{y\in \fut(S)|\psi(x,y)<\psi(\gamma x+\tau_\gamma, y)~\forall  \gamma\in\Gamma\setminus\{1\}\}~.
\]
The inclusion $\subset$ is immediate.
Suppose by contradiction that there is a point $y$ on the boundary of $\fund(x)$ such that
$\psi(y,x)<\psi(y, \gamma x+\tau_\gamma)$  $\forall \gamma\in\Gamma\setminus\{1\}$. 

There exists a sequence $y_n\in \overline{\fut(S)}$ converging to $y$ and $\gamma_n$ such that
\[
    \psi(\gamma_n(x)+\tau_{\gamma_n},y_n)<\psi(x,y_n)~. 
\]
As $y_n$ converges to $y$ we may apply \eqref{eq:div} and deduce that $\gamma_n$ is not diverging.
Since $\Gamma$ is discrete, up to passing to a subsequence we may assume that $\gamma_n=\gamma$ is constant different 
from $1$.

But then passing to the limit in the above inequality we get
\[
  \psi(\gamma(x)+\tau_\gamma, y)\leq \psi(x,y)~,
\]
that contradicts the assumption on $y$.
\end{proof}


The following lemma in the Euclidean setting will be the main ingredient of the convexity
of the covolume. 
Although it is classical, see e.g.~\cite[50.]{BF87}, we reproduce the proof for completeness.

\begin{lemma}\label{lem:vol concave}
In the Euclidean space $\R^{d+1}$ let $R$ be a hyperplane orthogonal (with respect to the Euclidean structure)
to a vector $u$.
Let $D$ be a convex body with non-empty interior in $R$, and 
 $C_0$ and $C_1$ be two convex bodies in $\R^{d+1}$ contained in the same side of  $R$ as $u$, 
such that their orthogonal projection 
onto $R$ is $D$.
Then, for $\lambda\in[0,1]$,
$$V((1-\lambda)C_0+\lambda C_1)\geq (1-\lambda) V(C_0)+\lambda V(C_1)~,$$
where $V$ is the volume.

Equality holds if and only if either
$C_0=C_1+U$ or $C_1=C_0+U$, where $U$  is some segment directed by $u$.
\end{lemma}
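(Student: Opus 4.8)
The plan is to reduce the $(d+1)$-dimensional volume to a one-dimensional integral over $D$ by slicing in the $u$-direction, which turns Minkowski combination into sup/inf-convolutions of the boundary functions. After an isometry we may assume $u=e_{d+1}$ and $R=\R^d\times\{0\}$, with $D\subset\R^d$ and both bodies in $\{x_{d+1}\ge 0\}$. Since each $C_i$ projects orthogonally onto $D$, it is the region between a concave ``ceiling'' $g_i\colon D\to\R$ and a convex ``floor'' $f_i\colon D\to\R$ with $f_i\le g_i$, so that $V(C_i)=\int_D (g_i-f_i)$.

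First I would record the fibre structure of the Minkowski combination. Because the orthogonal projection is linear and $D$ is convex, $M:=(1-\lambda)C_0+\lambda C_1$ again projects onto $(1-\lambda)D+\lambda D=D$, and for each $x\in D$ its fibre $\{t:(x,t)\in M\}$ is the interval $[f_\lambda(x),g_\lambda(x)]$, where $g_\lambda$ is the sup-convolution $g_\lambda(x)=\sup\{(1-\lambda)g_0(x_0)+\lambda g_1(x_1):(1-\lambda)x_0+\lambda x_1=x\}$ and $f_\lambda$ the analogous inf-convolution. Evaluating the sup and inf at the diagonal choice $x_0=x_1=x$ gives $g_\lambda\ge(1-\lambda)g_0+\lambda g_1$ and $f_\lambda\le(1-\lambda)f_0+\lambda f_1$, whence the width $w_\lambda:=g_\lambda-f_\lambda$ satisfies $w_\lambda\ge(1-\lambda)w_0+\lambda w_1$ pointwise. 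Integrating over $D$ (Fubini) yields the claimed inequality $V(M)\ge(1-\lambda)V(C_0)+\lambda V(C_1)$.

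For the equality case, note that equality of the integrals forces $w_\lambda=(1-\lambda)w_0+\lambda w_1$ a.e., hence everywhere on $\operatorname{int}D$ by continuity. Writing the gap as the sum of the two nonnegative quantities $[g_\lambda-(1-\lambda)g_0-\lambda g_1]$ and $[(1-\lambda)f_0+\lambda f_1-f_\lambda]$, both must vanish identically. The heart of the argument is then to show that $g_\lambda=(1-\lambda)g_0+\lambda g_1$ on $\operatorname{int}D$ forces $g_0-g_1$ to be constant: the diagonal being a maximiser of the (concave) sup-convolution at every $x$ means the superdifferentials $\partial^+g_0(x)$ and $\partial^+g_1(x)$ meet for every $x$; at points of differentiability (a.e., since concave functions are locally Lipschitz and a.e.\ differentiable on the interior) this gives $\nabla g_0=\nabla g_1$, and since $g_0-g_1$ is locally Lipschitz with a.e.\ vanishing gradient it is constant, say $g_1=g_0+c_g$. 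Likewise $f_1=f_0+c_f$.

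Finally I would run the short case analysis on the sign of $a:=c_g-c_f$. The fibre of $C_1$ over $x$ is $[f_0(x)+c_f,\,g_0(x)+c_g]$; if $a\ge 0$ this is exactly the fibre of $C_0+U$ with $U=[c_fu,c_gu]$, so $C_1=C_0+U$, while if $a<0$ the fibre of $C_0$ is that of $C_1+U'$ with $U'=[-c_fu,-c_gu]$, so $C_0=C_1+U'$; in both cases $U$ (resp.\ $U'$) is a segment directed by $u$. The converse is an immediate computation: if $C_1=C_0+U$ then $(1-\lambda)C_0+\lambda C_1=C_0+\lambda U$, whose volume is $V(C_0)+\lambda|U|\operatorname{Vol}_d(D)=(1-\lambda)V(C_0)+\lambda V(C_1)$. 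The main obstacle is the equality analysis, specifically justifying the constancy of $g_0-g_1$ and $f_0-f_1$ without assuming smoothness; everything else is slicing bookkeeping.
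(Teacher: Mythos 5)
Your proof is correct and takes essentially the same route as the paper's: both slice the bodies along the lines parallel to $u$, observe that the Minkowski combination of the fibres over each $x\in D$ is contained in the fibre of $(1-\lambda)C_0+\lambda C_1$, integrate via Fubini, and settle the equality case by showing that the ceiling and floor functions of $C_0$ and $C_1$ differ by constants through an a.e.-equal-gradients argument. The only cosmetic difference is in how that gradient step is justified --- the paper uses parallel support hyperplanes at fibre endpoints and phrases the conclusion via support functions, while you use the first-order condition for the diagonal to maximise the sup-convolution --- and these mechanisms are interchangeable.
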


\oldold{Such convex bodies are sometimes called \emph{canal class} \cite{sch93}.}\newnew{Convex bodies with  the same image for the orthogonal projection onto a given hyperplane are said to form a \emph{canal class} \cite{sch93}.}

\begin{proof}
For any $x\in D$, let $L(x)$ be the line orthogonal to $R $ and passing through $x$. 
Take any $k_i\in C_i\cap L(x)$, $i=1,2$. The convex combination of $k_0$ and $k_1$
 belongs to $ ((1-\lambda)C_0+\lambda C_1)=:C_{\lambda}$ and also to $L(x)$.
 So the convex combination of  $C_i\cap L(x)$,  for $i=0,1$, is a subset of  $C_{\lambda}\cap L(x)$.
 Denoting 
 by  $l_{\lambda}(x)$ the  length of the segment $C_\lambda \cap L(x)$, 
 and using the linearity of the length, we get
 \begin{equation}\label{eq:long comb conv}l_{\lambda}(x)\geq (1-\lambda)l_1(x)+\lambda l_2(x)~. \end{equation}
 Of course $C_{\lambda}$ projects orthogonally onto $D$, and by Fubini Theorem,
\begin{equation}\label{eq: fub vol}V(C_{\lambda})=\int_{D} l_{\lambda}(x)d x\geq (1-\lambda) \int_D l_0(x)d x+\lambda \int_D l_1(x)d x=
 (1-\lambda)V(C_0)+\lambda V(C_1)~.\end{equation}

 Suppose that $C_1=C_0+U$ where $U$ is a segment parallel to $u$ of length $c$. Then 
 $C_{\lambda}=C_0+\lambda U$, so
 $l_{\lambda}(x)=l_0(x)+\lambda c$, and equality holds in \eqref{eq:long comb conv} and hence in \eqref{eq: fub vol}.
 
 Now suppose that  equality holds in \eqref{eq: fub vol}.  Then equality holds in \eqref{eq:long comb conv} and so
 for every $x\in D$ and every $\lambda\in[0,1]$
 \begin{equation}\label{eq:ccc}
 (1-\lambda)\left( C_0\cap L(x)\right) + \lambda \left(C_1 \cap L(x)\right)=C_{\lambda}\cap L(x)~.
 \end{equation} 
 
For $x\in D$, let $y_{1/2}$ be an end-point of the vertical segment  $L(x)\cap C_{1/2}$, and let
$v$ be a (Euclidean) support vector at $y_{1/2}$.

By \eqref{eq:ccc} there are end-points $y_0$, $y_1$ on $L(x)\cap C_0$
and $L(x)\cap C_1$ respectively such that $y_{1/2}=\frac{1}{2}y_{0}+\frac{1}{2}y_1$.

Denoting  by $H^e_{\lambda}$  the (Euclidean) support functions of 
 $C_{\lambda}$,
we have
  $$H^e_{1/2}(v)=\langle y_{1/2},v\rangle=\frac{1}{2}\langle y_0,v\rangle +\frac{1}{2}
 \langle y_1,v\rangle~,$$
 where  $\langle \cdot,\cdot\rangle$ is the positive  product of $\R^{d+1}$.

 On the other hand, 
 $$H^e_{1/2}(v)=\frac{1}{2}H^e_0(v)+\frac{1}{2} H^e_1(v)~, $$
 that, together with the previous equation and with  $\langle y_i,v\rangle \leq H^e_i(v)$, leads to
 $H^e_i(v)=\langle y_i,v\rangle$. 
 
 In other words,  $C_0$ and $C_1$ have parallel support hyperplanes 
 along the lines directed by $u$. 
 On any segment in the interior of $D$, part of the boundaries of $C_0$ and $C_1$ are graph of convex or concave functions
 with parallel tangents, so they have the same derivative almost everywhere, hence they differ by a constant $c_1$ for the convex part
 and $c_2$ for the concave part.
 \old{
So for example $H^e_0(v)=\langle y_0,v\rangle = \langle y_1+c_1u,v\rangle=
 H^e_1(v)+c_1\langle v,u\rangle$
for suitable $v\in\R^{d+1}$. 
Note that as
$C_0$ and $C_1$ are contained in the cylinder which projects onto $D$, 
$H_0^e(v)=H_1^e(v)$  for any 
$v\in \R^{d+1}$ with $\langle u,v\rangle=0$. 
Finally,
$H_0-H_1=\operatorname{max}(\langle c_1 u,\cdot\rangle,\langle c_2 u,\cdot\rangle)$ and
 this last term is the support function of the segment between $c_1u$ and $c_2u$.

}
\new{
Let $C_0'=C_0+c_1 u$. Notice that the convex part of the boundary of $C_0'$ coincides with with the convex part of the boundary of $C_1$,
whereas the concave part differ by $c_2-c_1$. If $c_2-c_1>0$ then $C_1=C_0'+U'$, where $U$ is the segment joining the origin to $|c_2-c_1|u$.
On the other hand if $c_2-c_1<0$, then $C_0'=C_1+U'$. In the former case $C_1=C_0+[c_1 u, c_2u]$, in the latter case $C_0=C_1+[-c_1u, -c_2u]$
}
 \end{proof}

From \ref{lem:vol concave} we can simply  deduce an analogous statement in 
the Lorentzian setting.

\begin{corollary}\label{cor:vol concave lor}
In Minkowski space $\mink$ let $R$ be a space-like
hyperplane orthogonal (for the Minkowski product)
to a vector $u$.
Let $D$ be a convex body with non-empty interior in $R$, and 
 $C_0$ and $C_1$ be two convex bodies in $\R^{d+1}$ contained in the same side of  $R$ as $u$, 
such that their orthogonal projection (in Minkowski sense) 
onto $R$ is $D$.
Then, for $\lambda\in[0,1]$,
$$V((1-\lambda)C_0+\lambda C_1)\geq (1-\lambda) V(C_0)+\lambda V(C_1)~,$$
where $V$ is the volume.

Equality holds if and only if either
$C_0=C_1+U$ or $C_1=C_0+U$, where $U$ is some segment directed by $u$.
\end{corollary}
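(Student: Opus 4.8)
The plan is to deduce the Lorentzian statement directly from the Euclidean Lemma~\ref{lem:vol concave} by transporting the whole configuration through a linear isometry of Minkowski space, using that such isometries preserve the Lebesgue volume, convexity and Minkowski addition. Since $R$ is space-like, its Minkowski normal $u$ is time-like, and we may assume it is future-directed; up to rescaling, $u/\|u\|_-\in\H^d$. Because $\operatorname{SO}^+(d,1)$ acts transitively on $\H^d$, there is $\gamma\in\operatorname{SO}^+(d,1)$ with $\gamma(u/\|u\|_-)=e_{d+1}$. As $\gamma$ preserves the Minkowski product, it carries $R=\{p\mid\langle p,u\rangle_-=a\}$ to $\gamma(R)=\{y\mid\langle y,\gamma u\rangle_-=a\}$, and since $\gamma u=\|u\|_-\,e_{d+1}$ this is a horizontal hyperplane $R_0=\{y_{d+1}=c\}$; likewise $u$ is sent to the positive multiple $\|u\|_-\,e_{d+1}$ of $e_{d+1}$.

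The key observation is that on the horizontal hyperplane $R_0$ the Minkowski-orthogonal and the Euclidean-orthogonal projections coincide: a vector is Minkowski-orthogonal to $R_0$ exactly when it has vanishing first $d$ coordinates, which is also the condition to be Euclidean-orthogonal to $R_0$, so in both cases the projection simply sends $(x,x_{d+1})$ to $(x,c)$. Setting $C_i'=\gamma(C_i)$ and $D'=\gamma(D)$, the bodies $C_0',C_1'$ therefore lie on the $e_{d+1}$-side of $R_0$ and project \emph{Euclidean}-orthogonally onto $D'\subset R_0$. These are precisely the hypotheses of Lemma~\ref{lem:vol concave} with the role of $u$ played by $e_{d+1}$.

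Applying Lemma~\ref{lem:vol concave} to $C_0',C_1'$ yields
\[
V\bigl((1-\lambda)C_0'+\lambda C_1'\bigr)\geq (1-\lambda)V(C_0')+\lambda V(C_1')~,
\]
with equality if and only if $C_0'=C_1'+U'$ or $C_1'=C_0'+U'$ for a segment $U'$ directed by $e_{d+1}$. Since $\gamma$ is linear it commutes with convex combinations and with Minkowski addition, so $(1-\lambda)C_0'+\lambda C_1'=\gamma\bigl((1-\lambda)C_0+\lambda C_1\bigr)$, and since $\gamma\in\operatorname{SO}^+(d,1)$ has determinant $1$ it preserves volume; hence the inequality transfers verbatim to $C_0,C_1$. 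For the equality case, pulling back by $\gamma^{-1}$ turns a segment directed by $e_{d+1}$ into a segment directed by $u$, because $\gamma^{-1}$ maps the $e_{d+1}$-direction to the $u$-direction, which gives the stated characterization.

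The only genuinely delicate point is the identification in the second paragraph: in general the Minkowski-orthogonal projection onto a space-like hyperplane differs from the Euclidean one, and the role of the Lorentz transformation $\gamma$ is exactly to straighten $R$ into a horizontal hyperplane, where the two projections agree. Once this normalization is in place, the corollary is a routine transport of Lemma~\ref{lem:vol concave} along the isometry $\gamma$.
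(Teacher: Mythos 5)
Your proof is correct and follows essentially the same route as the paper's: straighten $R$ to a horizontal hyperplane by a Lorentzian isometry, observe that Minkowski and Euclidean orthogonal projections agree there, apply Lemma~\ref{lem:vol concave}, and transfer back using that the isometry is linear and volume-preserving. The only difference is cosmetic: you spell out the boost via transitivity of $\operatorname{SO}^+(d,1)$ on $\H^d$ and explicitly transport the equality case, which the paper leaves implicit.
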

\begin{proof}
If $R$ is the horizontal plane $\R^d\subset\R^{d+1}$, then the Lorentzian
orthogonal projection coincides with the Euclidean projection, so the statement is 
an immediate consequence of Lemma~\ref{lem:vol concave}. 

If $R$ is not horizontal, there is a Lorentzian isometry $f$ sending $R$ to the horizontal 
plane $\R^d\subset\R^{d+1}$. 
Notice that the orthogonal projections of $f(C_i)$ to $\R^{d}$ coincide with $f(D)$. So we deduce
$$V((1-\lambda)f(C_0)+\lambda f(C_1))\geq (1-\lambda) V(f(C_0))+\lambda V(f(C_1))~.$$
Since $f$ is volume preserving and $(1-\lambda)f(C_0)+\lambda f(C_1)=f((1-\lambda)C_0+\lambda C_1)$
we obtain the result.
\end{proof}

\begin{figure}[h!]
\centering
\includegraphics[scale=0.15]{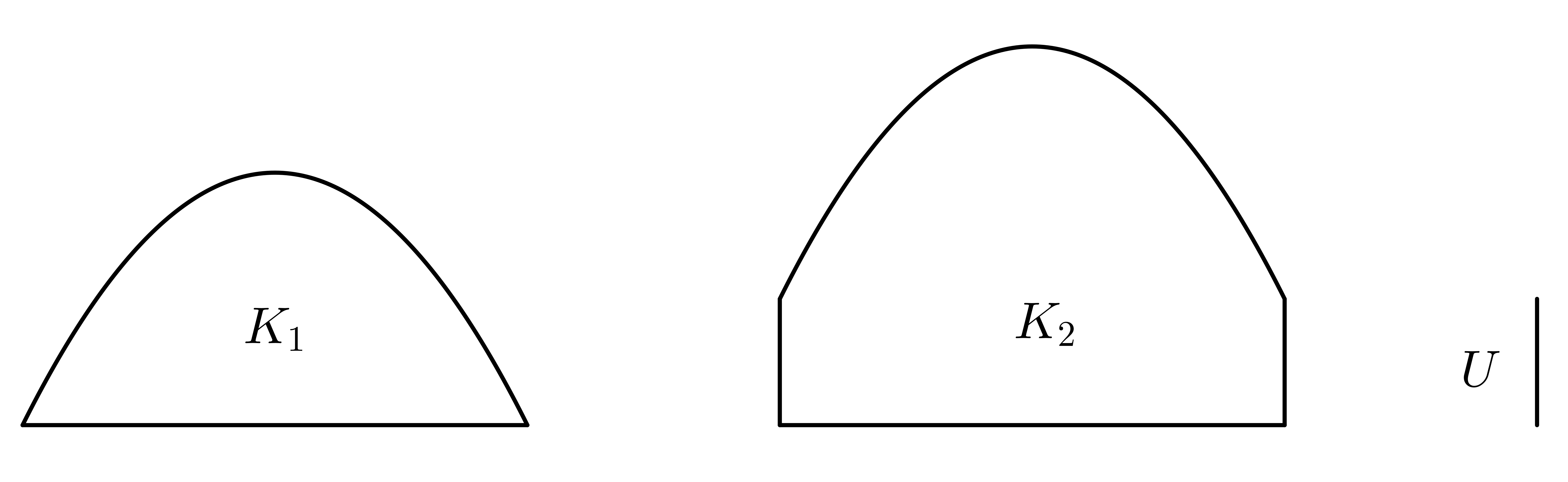}
\caption{$\fconv_1$ is a convex cap, and $\fconv_2=\fconv_1+U$. For $\lambda\in[0,1]$,  the volume is linear along
$(1-\lambda)\fconv_1+\lambda \fconv_2$, but for $\lambda\not= 0$, this is not a convex cap.}
\label{fig:caps}
 \end{figure}

Let $D$ be a convex body contained in a space-like hyperplane $R$ with non-empty relative interior.
A (Lorentzian)  \emph{convex cap} based on $D$ is a convex body in $\R^{d+1}$ with no empty interior such that
\begin{itemize}
\item $R$ is a support plane of $\fconv$ and $\fconv\cap R=D$
\item For each $x\in\partial D$ the line $L(x)$ through $x$ orthogonal in the Minkowski sense to $R$ meets $\fconv$ only at $x$.
\end{itemize}

Notice that if $\fconv$ is a convex cap based on $D$ then the second condition implies that
$D$ is the orthogonal projection of $\fconv$ to $R$, but the condition is not equivalent (see Figure \ref{fig:caps}).

Since two caps based on $D$ cannot differ by a segment, Corollary \ref{cor:vol concave lor}
immediately gives the following consequence.
 
\begin{corollary}\label{cor:vol cap}
 The volume is strictly concave on the set of convex caps (of same base).
\end{corollary}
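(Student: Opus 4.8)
The plan is to deduce the statement directly from the equality discussion in Corollary~\ref{cor:vol concave lor}. First I would record that the hypotheses of that corollary are met by any pair of convex caps based on the same $D$: each such cap lies on the side of $R$ determined by $u$, has $R$ as a support plane with $K\cap R=D$, and---as observed right after the definition---projects orthogonally (in the Minkowski sense) onto $D$. Hence for two convex caps $C_0, C_1$ based on $D$ and any $\lambda\in[0,1]$, Corollary~\ref{cor:vol concave lor} already yields
$$V((1-\lambda)C_0+\lambda C_1)\geq (1-\lambda)V(C_0)+\lambda V(C_1)~.$$

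The only remaining task is to upgrade this to a strict inequality whenever $C_0\neq C_1$ and $\lambda\in(0,1)$. By the equality clause of Corollary~\ref{cor:vol concave lor}, equality forces $C_0=C_1+U$ or $C_1=C_0+U$ with $U$ a nondegenerate segment directed by $u$. The key step is then to show that two distinct convex caps based on $D$ can never differ by such a segment. Assuming, say, $C_1=C_0+U$ with $U$ the segment from $0$ to $cu$, $c>0$, I would pick any boundary point $x\in\partial D$ and consider the Minkowski-normal line $L(x)$ through $x$. Since $x\in C_0$ and $cu\in U$, the point $x+cu$ lies in $C_1$ and on $L(x)$, so $L(x)$ meets $C_1$ in at least the two distinct points $x$ and $x+cu$. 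This contradicts the defining property of a convex cap, which requires $L(x)\cap C_1=\{x\}$ for every $x\in\partial D$.

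Therefore the equality case is impossible, strict inequality holds, and the volume is strictly convex on convex caps with fixed base. I do not expect a serious obstacle: the convexity is inherited verbatim from Corollary~\ref{cor:vol concave lor}, and the only real content is the short geometric observation---flagged in the remark preceding the statement---that the second cap axiom at a boundary point of $D$ rules out two caps differing by a segment directed by $u$. The mild point to verify carefully is that $C_1=C_0+U$ still has base $D$ (adding nonnegative multiples of $u$ moves the cap off $R$ except along $D$), so that the contradiction is genuinely with the cap axiom rather than with the base condition.
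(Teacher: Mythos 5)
Your proof is correct and is essentially the paper's own argument: the paper deduces the corollary from Corollary~\ref{cor:vol concave lor} together with the (unproved) remark that two caps based on the same $D$ cannot differ by a segment directed by $u$, which is exactly the equality-case analysis you spell out via the boundary point $x\in\partial D$. The only micro-caveat is that the equality case allows a segment $U$ that does not have $0$ as an endpoint, but since $x\in C_1$ and $x+U\subset C_0+U=C_1$ for $x\in\partial D$, the line $L(x)$ still meets $C_1$ in at least two points, so your contradiction goes through unchanged.
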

Notice that the same result holds as well for caps in Euclidean setting.

We can prove now the convexity of the covolume.

\begin{proposition}\label{prop: vol conv}
 The covolume is strictly convex on the set of Cauchy $\tau$-convex hypersurfaces.
\end{proposition}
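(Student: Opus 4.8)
The plan is to express the covolume, up to an additive constant, as the negative of the volume of a convex body obtained by truncating $K$ inside a convex fundamental domain, and then to feed this into the concavity of the volume for canal-type bodies (Corollary~\ref{cor:vol concave lor}). Fix two distinct Cauchy $\tau$-convex sets $K_0,K_1$ and set $K_\lambda=(1-\lambda)K_0+\lambda K_1$, so that the support functions are $h_\lambda=(1-\lambda)h_0+\lambda h_1$. Since $h_0,h_1<h_\tau$ we get $h_\lambda<h_\tau$, hence every $K_\lambda$ is Cauchy, and by Lemma~\ref{lem: supp et cos time} together with continuity in $\lambda$ the cosmological time of $\partial K_\lambda$ stays in a fixed interval $[m,M]$ with $m>0$ (use Corollary~\ref{cor:cosm borne} for the upper bound). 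First I would choose a $C^1$ convex $\Gamma_\tau$-invariant surface $S$, namely a level set $\Sigma_t$ of the cosmological time with $t<m$ (it is $C^{1,1}$ by \cite{Bon05}), lying in the past of every $K_\lambda$, and let $P=P(x)$, $x\in S$, be the convex fundamental domain of $\overline{\operatorname{I}^+(S)}$ provided by Proposition~\ref{prop:fund reg}; recall that $P\cap\partial K$ is compact for every $\tau$-convex $K$.

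Next I would reduce the covolume to a volume of truncated bodies. Let $R=\{x_{d+1}=c\}$ be a (space-like) horizontal hyperplane with $c$ so large that all the sets $\partial K_\lambda\cap P$ lie in $\operatorname{I}^-(R)$ and that $P\cap\{x_{d+1}\ge c\}\subset K_\lambda$ for every $\lambda$; this is possible uniformly in $\lambda$ because the $\partial K_\lambda\cap P$ form a bounded family and each $K_\lambda$ is future-complete. Put $C_K=K\cap P\cap\overline{\operatorname{I}^-(R)}$, a convex body. Writing $\cov(K)$ as the volume of $\Omega_\tau\setminus K$ over a fundamental domain, the part below $S$ is a constant, while the part above $S$ equals $\operatorname{vol}\bigl((\overline{\operatorname{I}^+(S)}\cap P)\setminus K\bigr)$; decomposing the fixed region $T=P\cap\overline{\operatorname{I}^+(S)}\cap\overline{\operatorname{I}^-(R)}$ as the disjoint union of $C_K$ and the slab between $S$ and $\partial K$ yields $\cov(K)=a-\operatorname{vol}(C_K)$ with $a$ independent of $K$ in the family.

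The inequality then comes in two steps. Since Minkowski combinations are contained in each of the convex sets $K_\lambda$, $P$ and $\overline{\operatorname{I}^-(R)}$, one has the inclusion $(1-\lambda)C_{K_0}+\lambda C_{K_1}\subseteq C_{K_\lambda}$. Moreover, the choice of $c$ guarantees that the top of every vertical column of $P\cap\overline{\operatorname{I}^-(R)}$ lies in $K$, so each $C_{K_i}$, and hence their Minkowski combination, projects orthogonally onto $R$ to one and the same convex base $D$. Corollary~\ref{cor:vol concave lor} then gives $\operatorname{vol}\bigl((1-\lambda)C_{K_0}+\lambda C_{K_1}\bigr)\ge(1-\lambda)\operatorname{vol}(C_{K_0})+\lambda\operatorname{vol}(C_{K_1})$, and combined with the inclusion this produces $\operatorname{vol}(C_{K_\lambda})\ge(1-\lambda)\operatorname{vol}(C_{K_0})+\lambda\operatorname{vol}(C_{K_1})$, i.e. $\cov(K_\lambda)\le(1-\lambda)\cov(K_0)+\lambda\cov(K_1)$. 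For \emph{strict} convexity, equality would force equality in Corollary~\ref{cor:vol concave lor}, hence $C_{K_0}$ and $C_{K_1}$ would differ by a segment parallel to the normal of $R$; but both share the face $D$ on the common support plane $R$, which (exactly as in Corollary~\ref{cor:vol cap}) rules out a nonzero such segment, so $C_{K_0}=C_{K_1}$. This means $\partial K_0=\partial K_1$ on the fundamental region $P$, and by $\Gamma_\tau$-invariance $K_0=K_1$.

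The step I expect to be the main obstacle is the bookkeeping in the second paragraph: because Minkowski addition does not commute with intersection by $P$, one must push $R$ far enough into the future, uniformly in $\lambda$, so that all the truncated bodies $C_{K_\lambda}$ acquire one and the same orthogonal projection $D$ onto $R$. Securing this common base (controlling the "short columns" of $P$ whose top could fall below $\partial K$) is precisely what makes Corollary~\ref{cor:vol concave lor} applicable, and it is here that the Cauchy hypothesis is genuinely used, since the convex fundamental domain of Proposition~\ref{prop:fund reg} is available only when $K$ lies in the interior of $\Omega_\tau$.
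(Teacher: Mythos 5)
Your strategy is the paper's own (truncate $K$ inside the Dirichlet-type fundamental domain $P$ of Proposition~\ref{prop:fund reg} by a space-like hyperplane $R$, write $\cov(K)$ as a constant minus the volume of the truncated body, and feed this into Corollary~\ref{cor:vol concave lor}), but there is a genuine gap exactly at the step you flag as the main obstacle, and your proposed fix does not close it. With $R$ horizontal and the Dirichlet center $x\in S$ arbitrary, the domain $P=P(x)$ need not be monotone in the vertical direction: its walls are \emph{time-like} hyperplanes, Minkowski-orthogonal to the space-like segments $[x,\gamma_\tau(x)]$, and moving vertically upward from a point of $P$ stays inside $P$ only if $\langle e_{d+1},x-\gamma_\tau(x)\rangle_-\geq 0$ for all $\gamma$, i.e.\ only if $x$ happens to be a lowest point of its orbit. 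Consequently, no matter how large $c$ is, there can be vertical lines that exit $P$ through a wall strictly below $R$ (your ``short columns''); such columns do not disappear as $c\to\infty$, because the walls of $P$ are fixed time-like hyperplanes and enlarging $c$ only moves $R$ upward. For such a column, whether its top (a point of $\partial P$) belongs to $K_0$ or to $K_1$ depends on the convex set, so neither $\mathrm{proj}_R(C_{K_i})=D$ nor even $\mathrm{proj}_R(C_{K_0})=\mathrm{proj}_R(C_{K_1})$ is guaranteed, and Corollary~\ref{cor:vol concave lor} cannot be invoked. (Restricting the bodies to the cylinder over $D$ would restore the common projection, but then the identity $\cov(K)=a-\operatorname{vol}(C_K)$ breaks, since the discarded volume depends on $K$.)

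The same missing monotonicity undermines your equality-case argument: sharing the face $D=C_{K_0}\cap R=C_{K_1}\cap R$ does \emph{not} by itself exclude $C_{K_0}=C_{K_1}+U$ with $U$ a nonzero segment pointing into the past, since such a translate still meets $R$ exactly in $D$. What excludes it is the convex-cap condition --- for every $x\in\partial D$ the orthogonal line $L(x)$ meets the body only at $x$ (this is the content of Corollary~\ref{cor:vol cap}) --- and that condition is again a consequence of monotonicity of $P$ in the direction orthogonal to $R$. This is precisely why the paper does not take $R$ horizontal: it fixes $s\in S$, takes $u$ a support vector of $S$ at $s$, takes $R$ orthogonal to \emph{this same} $u$, and proves the ``Fact'' that $y\in P(s)$ implies $y+u\in\operatorname{int}P(s)$, using the strict inequality $\langle u,s-\gamma_\tau(s)\rangle_->0$; that inequality in turn requires $S$ to be \emph{strictly} convex, which your choice $S=\Sigma_t$ does not satisfy in general ($\Sigma_t$ is only $C^{1,1}$ and typically contains flat pieces over the faces of the initial singularity). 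Your proof becomes correct once you add exactly this ingredient: choose the Dirichlet center $s$ on a strictly convex $S$ and the hyperplane $R$ orthogonal to a support vector of $S$ at $s$ (equivalently, normalize by an isometry so that this direction is $e_{d+1}$), and prove the monotonicity Fact; as written, the common-base claim is unsupported and the argument fails.
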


\new{
First we prove  the following technical lemma that is needed in the proof of Proposition \ref{prop: vol conv}.
\begin{lemma}\label{lm: fund}
Let $S$ be a $\tau$-invariant strictly convex $C^1$ hypersurface, and fix $s\in S$.
Let $\fund=\fund(s)$ be the Lorentzian Dirichlet polyhedron centered at $s$ of $\fut(S)$.
For any support vector \newnew{$u$} of $S$ at $s$ and for any $y\in\fund$, the point
   $y+u$ lies in the interior of $\fund$.
\end{lemma}
\begin{proof}
  By a direct computation we have 
  \begin{equation}  \label{eq:condom}
  \psi(y+u,\gamma_\tau(s))-\psi(y+u, s)=\psi(y, \gamma_\tau(s))-\psi(y,s)+2\langle u, s-\gamma_\tau(s)\rangle_-~.
  \end{equation}
  Notice that the last term is strictly positive since $u$ is a support vector of $S$ at $s$, whereas
  $\psi(y, \gamma_\tau(s))-\psi(y,s)\geq 0$ by the assumption on $y$.
  By \eqref{eq:fundint} it follows that $y+u$ is in the interior of $\fund$.
\end{proof}
}

\begin{proof}[Proof of Proposition \ref{prop: vol conv}]
 Let $\fconv_1$ and $\fconv_2$ be two Cauchy $\tau$-convex hypersurfaces,   and for $\lambda\in[0,1]$
 let $$\fconv_{\lambda}=(1-\lambda)\fconv_1+\lambda \fconv_2~.$$ Let us choose
 a strictly convex space-like $C^1$ $\tau$-invariant convex hypersurface $S$ such that the $\fconv_{\lambda}$ are in the
 future of $S$.
   Let $s\in S$ and $\fund:=\fund(s)$ be the Lorentzian Dirichlet polyhedron of $\fut(S)$ centered at $s$.
\old{
   given by Proposition~\ref{prop:fund reg}.
   \emph{Fact: Let $u$ be any support vector of $S$ at $s$. Then
   $y+u$ lies in the interior of $\fund$ for any  $y\in \fund$.}
  
     By a direct computation we have 
  \begin{equation}  \label{eq:condom}
  \psi(y+u,\gamma_\tau(s))-\psi(y+u, s)=\psi(y, \gamma_\tau(s))-\psi(y,s)+2\langle u, s-\gamma_\tau(s)\rangle_-~.
  \end{equation}
  Notice that the last term is strictly positive since $u$ is a support vector of $S$ at $s$, whereas
  $\psi(y, \gamma_\tau(s))-\psi(y,s)\geq 0$ by the assumption on $y$.
  By \eqref{eq:fundint} it follows that $y+u$ is in the interior of $\fund$.
\qd
}
Let us fix a support vector $u$ of $S$ at $s$.
   Let $R$ be an affine  hyperplane orthogonal to $u$
     chosen so that  $\fconv_{\lambda}\cap \fund$ is in the past side $R^-$ of $R$ 
   for every $\lambda\in [0,1]$.
Note that $D=R\cap \fund$ is compact, as it is part of the boundary of the compact convex set $R^-\cap\Omega_\tau$.

\begin{figure}[h]
\centering
\input 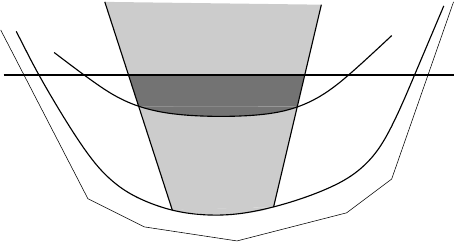_t
\caption{The light-gray  region represents $\fund$ whereas the dark-grey region represents $C_i$.}\label{fig:ci}
\label{fig:covol}
 \end{figure}

Finally let us denote $C_i=\fconv_i\cap \fund\cap R^-, i=1,2$.
By the choice of $R$, we have that $C_i\cap R=\fund\cap R =D$.
\new{Lemma~\ref{lm: fund} shows that  if $x-\epsilon u\in\fund$ then $x\in\operatorname{int}\fund$.
So for  $x\in R\cap\partial\fund$, there is no $\epsilon>0$ such that
the point $x-\epsilon u$ is  contained
in $\fund$.} It follows that $C_i$ is a convex cap (see Figure~\ref{fig:ci}).

 Define   $C_{\lambda}=(1-\lambda)C_1+\lambda C_2$. 
 For $x\in C_\lambda$, there are $x_i\in C_i$ such that $x$ is a convex combination of
 $x_1$ and $x_2$. By convexity, $x$ belongs to $\fund$ and $R^-$, and by definition $x$ belongs to
 $\fconv_{\lambda}$. So $C_{\lambda}\subset \fconv_{\lambda}\cap \fund\cap R^-$.
 
 If $\fconv_1\not= \fconv_2$, then $C_1$ and   $C_2$, are different caps so 
 Corollary~\ref{cor:vol cap} gives
 \begin{equation}\label{eq: vol comp}
 \begin{array}{ll}&
 (1-\lambda)V(\fconv_1\cap \fund\cap R^-)+\lambda V(\fconv_2\cap \fund\cap R^-)= \\
& (1-\lambda)V(C_1)+\lambda V(C_2)< V(C_{\lambda}) \leq V(\fconv_{\lambda}\cap \fund\cap R^-)~.
\end{array}
 \end{equation}
  By definition, for any  $\lambda\in[0,1]$,
 \begin{equation}\label{eq: vol covol}
 \cov(\fconv_{\lambda})=-V(\fconv_{\lambda}\cap \fund\cap R^-)+c \end{equation}
 where $c$ is a constant depending only on $S, \fund $ and $H$ (actually $c=V(\fund\cap R^-)+\cov(S)$).
  \end{proof}
  
 Since each $\tau$-convex set is limit of Cauchy $\tau$-convex sets, 
and the covolume is continuous,  Proposition \ref{prop: vol conv} implies that the covolume
is convex on the set of $\tau$-convex sets.
Then,  Theorem~\ref{thm: cov con} is proved.

\begin{remark}{\rm
One could also consider the covolume  as a function on the set of all $\tau$-convex sets, for all $\tau\in Z^1(\Gamma,\R^{d+1})$.
This set is a convex cone. But the covolume is certainly not convex on this bigger set.
Otherwise one should have
$$\cov((1-\lambda)\Omega_{\tau}+\lambda \Omega_{\tau'})\leq (1-\lambda)\cov(\Omega_{\tau})+\lambda \cov(\Omega_{\tau'}) $$
that implies that the covolume of $(1-\lambda)\Omega_{\tau}+\lambda \Omega_{\tau'}$ is zero.
But  $(1-\lambda)\Omega_{\tau}+\lambda \Omega_{\tau'}\subset \Omega_{(1-\lambda)\tau + \lambda\tau'}$, and in general the 
inclusion is strict. Then contradiction follows because for any $\tau$, $\Omega_{\tau}$
is the only $\tau$-convex set with zero covolume. 

}\end{remark}

For $\Gamma$-invariant hypersurfaces, there is a representation formula for the covolume, namely
\begin{equation}\label{eq:for cov fuch}\cov(\oh)=-\frac{1}{d+1}\int \oh \d \A(\oh)  \end{equation}
where $\oh$ is the hyperbolic support function of $\fconv$.
We cannot hope such a simple formula holds in  general, but we have the following relation.

\begin{lemma}
If $\fconv_1\subset \fconv_0$ are $\tau$-convex sets with hyperbolic support functions 
$\oh_1 \leq \oh_0$,   then
   \begin{equation}\label{eq: enc diff vol}\int (\oh_0-\oh_1) \d \A(\oh_0) \leq \cov(\oh_1)-\cov(\oh_0)\leq 
   \int (\oh_0-\oh_1) \d \A(\oh_1)~.\end{equation}
\end{lemma}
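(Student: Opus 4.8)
The plan is to read the two inequalities as a statement about the one-sided derivatives of the covolume along the segment joining the two sets, and to exploit the convexity of the covolume already proved. Set $\oh_t=(1-t)\oh_0+t\oh_1$ for $t\in[0,1]$, which is the support function of $K_t=(1-t)K_0+tK_1$; since the set of $\tau$-support functions is convex, each $K_t$ is a $\tau$-convex set, so $g(t):=\cov(\oh_t)$ is a well-defined convex function on $[0,1]$ by Theorem~\ref{thm: cov con}. For any convex $g$ one has $g'_+(0)\le g(1)-g(0)\le g'_-(1)$, and here $g(1)-g(0)=\cov(\oh_1)-\cov(\oh_0)$. Hence the whole statement reduces to identifying the endpoint derivatives:
\[
g'_+(0)=\int(\oh_0-\oh_1)\,\d\A(\oh_0),\qquad g'_-(1)=\int(\oh_0-\oh_1)\,\d\A(\oh_1).
\]
Both are special cases of the first variation formula $\tfrac{\d}{\d t}\cov(\oh_t)=\int(\oh_0-\oh_1)\,\d\A(\oh_t)$, which I would establish first.

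To prove the first variation formula I would assume momentarily that $K_0,K_1$ are $C^\infty_+$. Then every $\oh_t$ satisfies $\nabla^2\oh_t-\oh_t g_\H>0$ (a convex combination of positive forms), so each $K_t$ is $C^\infty_+$ with a smooth inverse Gauss map $\chi_t$. Exactly as in the proof of Lemma~\ref{lem:area monotonic}, the variational field along $\partial K_t$ is, by \eqref{eq:chi on hyp},
\[
\partial_t\chi_t(\eta)=\grad^\H_\eta(\oh_1-\oh_0)-(\oh_1-\oh_0)(\eta)\,\eta,
\]
and since $\grad^\H$ is tangent to $\H^d$ (hence Minkowski-orthogonal to $\eta$) its future unit-normal component is $-\langle\partial_t\chi_t,\eta\rangle_-=\oh_0-\oh_1$. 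The covolume is the volume of the complement $\Omega_\tau\setminus K_t$ in a fundamental domain, whose only moving boundary is $\partial K_t$; the flux formula for the variation of a volume under a normal displacement (here \emph{without} the curvature factor $s_\lambda$ that appears in Lemma~\ref{lem:area monotonic}, because volume is varied rather than area) gives
\[
\frac{\d}{\d t}\cov(\oh_t)=\int_{\partial K_t/\Gamma_\tau}(\oh_0-\oh_1)\,\d\mu_t=\int_{\H^d/\Gamma}(\oh_0-\oh_1)\,\d\A(\oh_t),
\]
where $\mu_t$ is the intrinsic volume form, using $\chi_t^{*}\mu_t=\d\A(K_t)$ and the $\Gamma$-invariance of both $\oh_0-\oh_1$ and $\A(\oh_t)$. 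As $t\mapsto\oh_t$ is continuous for the sup norm, Corollary~\ref{cor: weak conv area} shows $g'$ is continuous, so $g$ is $C^1$ and convex; evaluating the displayed formula at $t=0$ and $t=1$ yields \eqref{eq: enc diff vol} in the smooth case.

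Finally I would drop the smoothness hypothesis by approximation. Using the density of $C^\infty_+$ $\tau$-convex sets (Appendix~\ref{appen: smooth approx}), pick smooth $\tau$-convex sets $K_0^n,K_1^n$ whose support functions converge to $\oh_0,\oh_1$ uniformly on $\H^d/\Gamma$; the inequalities hold for each pair by the smooth case. By Lemma~\ref{lem: cov cont} the covolumes converge, by Corollary~\ref{cor: weak conv area} the measures $\A(\oh_i^n)$ converge weakly with uniformly bounded total mass $\Area(\oh_i^n)$, and $\oh_0^n-\oh_1^n\to\oh_0-\oh_1$ uniformly. On the compact space $\H^d/\Gamma$, weak convergence of measures of bounded mass tested against uniformly convergent continuous integrands gives $\int(\oh_0^n-\oh_1^n)\,\d\A(\oh_i^n)\to\int(\oh_0-\oh_1)\,\d\A(\oh_i)$, so the inequalities pass to the limit; note that nestedness $K_1\subset K_0$ is never used, the inequalities holding for any two $\tau$-convex sets. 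The main obstacle is getting the first variation formula right: the flux computation must use the Minkowski metric consistently, so that the future-normal speed $\oh_0-\oh_1$ integrated against the intrinsic area element reproduces $\int(\oh_0-\oh_1)\,\d\A(\oh_t)$ — the relation $\chi_t^{*}\mu_t=\d\A(K_t)$ together with the normalization of the future unit normal is precisely what makes the curvature factor of Lemma~\ref{lem:area monotonic} disappear. The approximation step is then routine given the compactness of $\H^d/\Gamma$ and the continuity results already at hand.
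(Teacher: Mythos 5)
Your proposal is correct and takes essentially the same route as the paper: the paper also works along the segment $\oh_t=(1-t)\oh_0+t\oh_1$, computes (in the $C^2_+$ case) the pullback identity $\chi^*(\d V)=(\oh_0-\oh_1)\,\d t\wedge\d\A(\oh_t)$ — which is exactly the rigorous form of your flux formula, using the same variational field $\grad^\H(\oh_1-\oh_0)-(\oh_1-\oh_0)\eta$ and the relation $\chi_t^*\mu_t=\d\A(K_t)$ — to get $\cov(\oh_1)-\cov(\oh_0)=\int_0^1\!\int(\oh_0-\oh_1)\,\d\A(\oh_t)\,\d t$, then invokes the convexity of the covolume through the inequality $f'(0)\leq f(1)-f(0)\leq f'(1)$, and concludes by the same smooth-approximation and weak-convergence argument. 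The only notable divergence is that the paper does use the hypothesis $K_1\subset K_0$ (to read the covolume difference as the volume of the swept region $K_0\setminus K_1$, and to arrange $\oh_1(n)<\oh_0(n)$ in the approximation step), whereas your first-variation packaging of the same computation makes the nestedness inessential.
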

\begin{proof}
First assume that  $\oh_0$ and $\oh_1$ are $C^2_+$ and  $\oh_1<\oh_0$.
Notice that the region $\fconv_0\setminus \fconv_1$ is foliated by the convex combinations $\fconv_t$ of $\fconv_0$ and $\fconv_1$.
Thus   $\cov(\oh_1)-\cov(\oh_0)$ is the volume of the quotient of the region
that is image of the equivariant map
from $[0,1]\times \H^d$ to $\Omega_\tau$    defined by $\chi(t,\eta)= \grad^{\H}_\eta \oh_t-\oh_t(\eta)\eta$
where $\oh_t=(1-t)\oh_0+t\oh_1$ (compare with \eqref{eq:chi on hyp}).

Notice that $\cov(\oh_1)-\cov(\oh_0)=\int_{[0,1]\times\H^d/\Gamma}\chi^*(\d V)$ where $\d V$ is the $(d+1)$-volume form on
$\R^{d+1}$. Since for each $t$ the map $\chi(t,\bullet)$ is inverse of the Gauss map of  the boundary of $\fconv_t=(1-t)\fconv_0+t\fconv_1$, 
and since the area form of a $C^1$-space-like hypersurface is defined by contracting the volume form by the normal vector, we deduce that

\begin{align*}
\chi^*(\d V)(\partial_t, e_1,\ldots, e_d)=\d V(\chi_*(\partial_t),\chi_*(e_1),\ldots, \chi_*(e_d))=\\
-\langle \chi_*(\partial_t), \eta\rangle_-\d V(\eta, \chi_*(e_1), \ldots, \chi_*(e_d))=\\
-\langle \chi_*(\partial_t), \eta\rangle_-\d\A(\fconv_t)(e_1,\ldots e_d)~.
\end{align*}

In other words,  $\chi^*(\d V)=-\langle \chi_*(\partial_t), \eta\rangle_-\d t\wedge \d\A(\oh_t)$.
Now $$D\chi(\partial_t)=\grad^{\H}(\oh_1-\oh_0)-(\oh_1-\oh_0)\eta$$ so we deduce that  
$\chi^*(\d V)=(\oh_0-\oh_1)\d t\wedge \d\A(\oh_t)$.
Finally integrating,  we obtain
$$\cov(\oh_1)-\cov(\oh_0)=\int_0^1 \int (\oh_0-\oh_1) \d \A(\oh_t)\d t~.$$
As for a convex function $f:[0,1]\rightarrow \R$ 
       $$f'(0)\leq f(1)-f(0)\leq f'(1)~,$$
by Theorem~\ref{thm: cov con} we deduce the formula above.

Now for the general case, take  sequences of $C^2_+$ 
$\tau$-equivariant 
support functions $(\oh_0(n))_n$ and $(\oh_1(n))_n$ converging respectively to $\oh_0$ and $\oh_1$, 
see Appendix~\ref{appen: smooth approx}.
 For large $n$, up to add a suitable constant, we can suppose that  $\oh_1(n)<\oh_0(n)$. 
By continuity of covolume we have that 
$$    \cov(\oh_1(n))-\cov(\oh_0(n)) \underset{n\rightarrow +\infty}{\longrightarrow} \cov(\oh_1)-\cov(\oh_0)~.$$
So in order to prove \eqref{eq: enc diff vol} it is sufficient to prove that
$$    \int  (\oh_0(n)-\oh_1(n)) \d \A(\oh_0(n))\rightarrow \int (\oh_0-\oh_1) \d \A(\oh_0)~,  $$
as well as  the convergence for the analog right hand terms.
As $\oh_0(n)-\oh_1(n)$ uniformly converges to $\oh_0-\oh_1$, by 
Corollary~\ref{cor: weak conv area}, 
$\A(\oh_0(n))$ weakly converge to  $\A(\oh_0)$.
Putting $ f_n=\oh_0(n)-\oh_1(n)$  and $f=\oh_0-\oh_1$ we have
$$\left| \int  f_n \d\A(\oh_0(n) ) -  \int  f \d\A(\oh_0)\right|\leq \|f_n-f\|_\infty \Area(\oh_0(n)) + \left|\int f \d\A(\oh_0(n))-\int f\d \A(\oh_0))\right|~.$$
As $\Area(\oh_0(n))$ are uniformly bounded we get the result.
\end{proof}

\begin{remark}{\rm
With $\oh_1\leq \oh_0$, putting $\oh_t=\oh_0+t(\oh_1-\oh_0)$, 
the previous lemma and the weak convergence of the area measure implies that
 $$\underset{t\rightarrow 0}{\mathrm{lim}}\frac{\cov_{\tau}(\oh_t)-\cov_{\tau}(\oh_0)}{t}=\int(\oh_1-\oh_0)\d\A(\oh_0)~.$$
More generally, adapting the Euclidean argument \cite{car04}, one can prove that if $\oh<\oh_\tau$, the area measure is
the G\^ateaux gradient of the covolume.
}\end{remark}

\begin{remark}{\rm
In the $C^2_+$ case, the area measure is the determinant of a symmetric matrix \eqref{eqref:det hess}.
The determinant of $d\times d$ symmetric matrix can be polarized as a 
$d$-linear form. If moreover we are in the Fuchsian case,
it follows from the representation formula  \eqref{eq:for cov fuch} that
the covolume can be polarized as a $(d+1)$-linear form. Using a density argument,
the covolume can be polarized  on the set of all $\Gamma$-invariant
convex sets. This leads to a theory similar to the mixed-volume theory in the convex bodies case
\cite{Fil12}. There is no such similar theory in the general $\tau$-convex case.
The point is that the set of $\Gamma$ invariant convex set is a (convex) cone, and we can take a convex cone as 
fundamental domain in $\fut(0)$. Actually, 
a mixed-covolume theory can be done
for suitable convex sets in any convex cone --- without mention of an ambient metric, see \cite{KT13}.

Although some questions remain, for example to find an
isoperimetric inequality in the general $\tau$-convex case, see \cite{Fil12} for the
Fuchsian case.
}\end{remark}

\subsection{Proof of the theorems}

We fix a $\Gamma$-invariant positive Radon measure $\mu$ on $\H^d$ and a cocycle $\tau$. The aim of this section is to construct
a $\tau$-convex set with area measure $\mu$.

Consider the functional on the space of  hyperbolic $\tau$-equivariant support functions

$$ L_{\mu}(\oh)=\cov(\oh)-\int(\oh_{\tau}-\oh)\d\mu~.$$

Note that $L_{\mu}$ is continuous (and even convex) by Lemma~\ref{lem: cov cont}.

\begin{lemma}\label{lem:coercitif}
 $L_{\mu}$ is coercive: if $\|\oh_n-\oh_\tau\|_{\infty}\rightarrow +\infty$, then $L_{\mu}(\oh_n)\rightarrow +\infty$.
 \end{lemma}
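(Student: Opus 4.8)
The plan is to set $\ol f_n=\oh_\tau-\oh_n$, the $\Gamma$-invariant function measuring the gap between $K_n$ and the maximal domain $\Omega_\tau$. Since every $\tau$-convex set is contained in $\Omega_\tau$ we have $\ol f_n\ge 0$, and the hypothesis $\|\oh_n\|_\infty\to+\infty$ means exactly that $\|\ol f_n\|_\infty=\max_{\H^d/\Gamma}\ol f_n\to+\infty$; by Lemma~\ref{lem: supp et cos time} this maximum equals $T_{\operatorname{max}}(S_n)$, where $S_n=\partial K_n$. The integral term is then immediately controlled from above: writing $m=\mu(\H^d/\Gamma)<\infty$ (a Radon measure on the compact quotient has finite total mass), one has $0\le\int(\oh_\tau-\oh_n)\,\d\mu=\int\ol f_n\,\d\mu\le m\,T_{\operatorname{max}}(S_n)$, so this term grows at most linearly in $T_{\operatorname{max}}(S_n)$. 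Everything therefore reduces to showing that $\cov(K_n)$ grows strictly faster than linearly in $T_{\operatorname{max}}(S_n)$.

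First I would show that $T_{\operatorname{min}}(S_n)\to+\infty$ as well. Indeed, if $T_{\operatorname{min}}(S_{n_k})\le 1$ along a subsequence, then since $T_{\operatorname{max}}(S_{n_k})\to+\infty\ge 1$ the cosmological time takes the value $1$ somewhere on $S_{n_k}$, i.e. $S_{n_k}$ meets $\Sigma_1$; Corollary~\ref{cor:cosm borne} would then bound $T_{\operatorname{max}}(S_{n_k})$, a contradiction. Once $T_{\operatorname{min}}(S_n)\ge 1$ for $n$ large, Lemma~\ref{lem: max/min} gives $T_{\operatorname{min}}(S_n)\ge T_{\operatorname{max}}(S_n)/\delta$, so $T_{\operatorname{min}}(S_n)$ is comparable to $\|\ol f_n\|_\infty$. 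This step — ruling out surfaces that are deep in the future at one point but drop to low cosmological time elsewhere — is exactly what prevents the covolume from remaining small while $\|\oh_n\|_\infty$ blows up.

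The main step is a superlinear lower bound for the covolume in terms of $T_{\operatorname{min}}$. Since cosmological time increases to the future, $K_n$ is contained in the $\tau$-convex set $K(\Sigma_t)=\{T\ge t\}=\Omega_\tau+tK(\H)$ with $t=T_{\operatorname{min}}(S_n)$, whence $\cov(K_n)\ge\cov(K(\Sigma_t))=\operatorname{vol}(\{T<t\}/\Gamma_\tau)$. By \eqref{eq:V int} this last quantity equals $\int_0^t\Area(\Sigma_u)\,\d u$. To bound $\Area(\Sigma_u)=\Area(\Omega_\tau+uK(\H))$ from below I would expand it as a polynomial in $u$: differentiating \eqref{eq: muinv} and using that $\frac{\d}{\d\epsilon}\ol V_\epsilon(K)=\ol S_d(K+\epsilon K(\H))$, one gets $\Area(\Omega_\tau+uK(\H))=\frac{1}{d+1}\sum_{i=0}^d(d+1-i)\binom{d+1}{i}\ol S_i(\Omega_\tau)(\H^d/\Gamma)\,u^{d-i}$. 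All coefficients are nonnegative (they are total masses of the measures $\ol S_i(\Omega_\tau)$), and the top coefficient is the total mass of $\ol S_0(\Omega_\tau)$, namely $V_0:=\operatorname{vol}(\H^d/\Gamma)>0$; hence $\Area(\Sigma_u)\ge V_0\,u^d$ and, integrating, $\cov(K_n)\ge\frac{V_0}{d+1}\,T_{\operatorname{min}}(S_n)^{d+1}$.

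Putting the three estimates together, for $n$ large,
\begin{equation*}
L_\mu(\oh_n)=\cov(\oh_n)-\int(\oh_\tau-\oh_n)\,\d\mu
\ \ge\ \frac{V_0}{d+1}\Big(\frac{T_{\operatorname{max}}(S_n)}{\delta}\Big)^{d+1}-m\,T_{\operatorname{max}}(S_n)~,
\end{equation*}
and since $d+1\ge 3$ the right-hand side tends to $+\infty$ as $T_{\operatorname{max}}(S_n)=\|\ol f_n\|_\infty\to+\infty$. The hard part will be the polynomial lower bound on $\Area(\Sigma_u)$ combined with the inclusion $K_n\subset K(\Sigma_{T_{\operatorname{min}}})$; the control of $T_{\operatorname{min}}$ through Corollary~\ref{cor:cosm borne} and Lemma~\ref{lem: max/min} is what converts that bound into a usable estimate, while the linear bound on the $\mu$-integral is routine.
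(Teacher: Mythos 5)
Your proof is correct and follows essentially the same route as the paper's: linear control of the $\mu$-integral term by $T_{\operatorname{max}}(S_n)$, the comparability $T_{\operatorname{min}}(S_n)\sim T_{\operatorname{max}}(S_n)$ obtained from Corollary~\ref{cor:cosm borne} together with Lemma~\ref{lem: max/min}, and the degree-$(d+1)$ polynomial lower bound $\cov(\oh_n)\geq\cov(\oh_\tau-T_{\operatorname{min}})\geq c\,T_{\operatorname{min}}^{\,d+1}$ coming from \eqref{eq: muinv}. If anything, your write-up is more explicit than the paper's proof, notably the intermediate-value argument showing that $S_n$ meets $\Sigma_1$ when $T_{\operatorname{min}}(S_n)\leq 1$, which the paper leaves implicit in the sentence invoking Lemma~\ref{lem: max/min}.
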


 \begin{proof}
 By Lemma~\ref{lem: supp et cos time}, 
the maximum of the cosmological time on the corresponding convex sets goes to infinity. 
By Lemma~\ref{lem: max/min}, the minimum of the cosmological time also goes to infinity. 
Let $a_n$ be the minimum of the cosmological time on the convex set with support function $h_n$. Again by \old{Lemma~\ref{lem: supp et cos time} and }Lemma~\ref{lem: max/min},
there is a constant $\delta$ such that
$(\oh_{\tau}-\oh_n)\leq \delta a_n$. So the linear part of $L_{\mu}$ grows (in absolute value) linearly in $a_n$.
On the other hand $\cov(\oh_n)\geq \cov(\oh_{\tau}-a_n)$. 

But  the volume of the past of the level set of the cosmological time coincides with $\overline{V}_{a_n}(\oh_\tau)$ that is
a polynomial of degree $d+1$ by \eqref{eq: muinv}.
It follows that  $\cov(\oh_{\tau}-a_n) \geq c' a_n^{d+1}$.
 \end{proof}

\begin{lemma}
 $L_{\mu}$ attains its  minimum.
\end{lemma}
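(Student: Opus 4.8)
The plan is to use the direct method of the calculus of variations, relying on the coercivity just established together with the compactness and continuity properties proved earlier. First I would fix a minimizing sequence $(\oh_n)_n$ of hyperbolic $\tau$-equivariant support functions, so that $L_\mu(\oh_n)\to m:=\inf L_\mu$. Since $L_\mu(\oh_\tau)=\cov(\oh_\tau)=0$ is finite, we have $m<+\infty$, and in particular $L_\mu(\oh_n)$ is bounded from above. Coercivity (Lemma~\ref{lem:coercitif}) then forces the minimizing sequence to be bounded: if $\|\oh_n\|_\infty$ were unbounded, then along a subsequence with $\|\oh_n\|_\infty\to+\infty$ we would have $L_\mu(\oh_n)\to+\infty$, contradicting $L_\mu(\oh_n)\to m<+\infty$. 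Hence there is a constant $C$ with $\|\oh_n-\oh_\tau\|_\infty\leq C$, which means exactly that the $\tau$-convex sets $K_n$ associated with $\oh_n$ are uniformly bounded in the future.

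Next I would invoke the compactness result. Being uniformly bounded in the future, by Corollary~\ref{cor:blaschke} the sequence $(K_n)$ admits a subsequence $(K_{n_k})$ converging, in the $\tau$-Hausdorff distance, to a $\tau$-convex set $K$. Denoting by $\oh$ the hyperbolic support function of $K$, this means $\oh_{n_k}\to\oh$ uniformly on $\H^d/\Gamma$, and $\oh$ is again a $\tau$-equivariant support function, so it lies in the admissible class over which the minimum is sought.

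Finally I would pass to the limit term by term. The covolume is continuous by Lemma~\ref{lem: cov cont}, so $\cov(\oh_{n_k})\to\cov(\oh)$. For the linear term, note that $\oh_\tau-\oh_{n_k}$ is a $\Gamma$-invariant function converging uniformly to $\oh_\tau-\oh$ on the compact quotient $\H^d/\Gamma$, on which the descended measure $\bar\mu$ is a finite Radon measure; hence $\int(\oh_\tau-\oh_{n_k})\d\mu\to\int(\oh_\tau-\oh)\d\mu$. Therefore $L_\mu(\oh_{n_k})\to L_\mu(\oh)$, and since also $L_\mu(\oh_{n_k})\to m$, we conclude $L_\mu(\oh)=m$, so $L_\mu$ attains its minimum at $\oh$.

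The argument is essentially routine once coercivity and the Blaschke-type selection property are in hand, so I do not expect a genuine obstacle; the only point deserving a brief verification is the continuity of the linear functional $\oh\mapsto\int(\oh_\tau-\oh)\d\mu$, which I would settle simply by recalling that $\mu$ descends to a finite Radon measure on the compact manifold $\H^d/\Gamma$ and that uniform convergence of the $\Gamma$-invariant differences $\oh_\tau-\oh_n$ suffices to pass to the limit under the integral sign.
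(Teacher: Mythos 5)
Your proof is correct and follows essentially the same route as the paper's: a minimizing sequence, coercivity (Lemma~\ref{lem:coercitif}) to get a uniform bound, the Blaschke-type selection result (Corollary~\ref{cor:blaschke}) to extract a convergent subsequence, and continuity of $L_\mu$ to conclude. The extra details you supply (finiteness of the infimum via $L_\mu(\oh_\tau)=0$, and continuity of the linear term via the finite descended measure on the compact quotient) are exactly the points the paper leaves implicit.
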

\begin{proof}
 Let $\alpha\in [-\infty,+\infty)$ be the infimum of the range of $L_{\mu}$.
 Let  $(\oh_n)$ be a minimizing sequence for $L_\mu$ of $\tau$-invariant hyperbolic support functions.
 By Lemma~\ref{lem:coercitif}, the $L^\infty$-norm of $(\oh_n-\oh_\tau)$ are uniformly bounded from above, 
 so from Corollary~\ref{cor:blaschke}  one can extract a  subsequence
 converging to some support function $\oh$. By continuity, $L_{\mu}(\oh)=\alpha$. 
\end{proof}

\begin{lemma}\label{lem: premier estimee aire}
Let $\oh_0$ be a support function on which $L_{\mu}$ attains its minimum. Then
for any $\Gamma$-invariant support function $f$,
$$\int(-f)\d\A(\oh_0)\geq  \int(-f) \d\mu~.$$
\end{lemma}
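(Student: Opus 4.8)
The plan is to exploit the minimality of $\oh_0$ by perturbing it in the direction of $f$ and comparing the first-order behaviour of the two terms of $L_\mu$. The starting observation is that for every $s\ge 0$ the function $\oh_s:=\oh_0+sf$ is again a $\tau$-support function: indeed $sf$ is the hyperbolic support function of the $\Gamma$-invariant convex set $sK_f$, where $K_f$ is the $\Gamma$-invariant F-convex set with support function $f$, and the Minkowski sum of a $\tau$-convex set with a $\Gamma$-invariant one is $\tau$-convex (Section~\ref{sub def tau}). Hence $\oh_s$ is an admissible competitor, and minimality gives $L_\mu(\oh_s)\ge L_\mu(\oh_0)$. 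Since $L_\mu(\oh)=\cov(\oh)+\int\oh\,\d\mu-\int\oh_\tau\,\d\mu$, subtracting the value at $\oh_0$ turns this into
\begin{equation}
\cov(\oh_s)-\cov(\oh_0)\ge -s\int f\,\d\mu. \tag{$\ast$}
\end{equation}

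The second ingredient is the matching upper bound for the covolume increment,
\begin{equation}
\cov(\oh_s)-\cov(\oh_0)\le -s\int f\,\d\A(\oh_s). \tag{$\ast\ast$}
\end{equation}
When $f\ge 0$ on $\H^d$ one has $\oh_0\le\oh_s$, so the corresponding convex sets are nested and $(\ast\ast)$ is exactly the right-hand inequality of \eqref{eq: enc diff vol}. For a general, sign-changing support function $f$ the two convex sets are no longer nested, so \eqref{eq: enc diff vol} does not apply verbatim; however the only role played by the nesting in that estimate is to identify the first variation of the covolume, and that computation is local. Along the smooth family $\chi_s(\eta)=\grad^\H_\eta\oh_s-\oh_s(\eta)\eta$ the normal component of the variation field of $\partial K_s$ is $f$, exactly as in the computation behind Lemma~\ref{lem:area monotonic}, so $\tfrac{\d}{\d s}\cov(\oh_s)=-\int f\,\d\A(\oh_s)$ in the $C^2_+$ case; combined with the convexity of the covolume (Theorem~\ref{thm: cov con}), that is with the monotonicity of $s\mapsto -\int f\,\d\A(\oh_s)$, this yields $(\ast\ast)$ for smooth data. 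The general case then follows by the smooth approximation of Appendix~\ref{appen: smooth approx} together with the weak continuity of the area measure (Corollary~\ref{cor: weak conv area}) and the continuity of the covolume (Lemma~\ref{lem: cov cont}), precisely as in the proof of \eqref{eq: enc diff vol}.

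Granting $(\ast)$ and $(\ast\ast)$, for every $s>0$ we get $-s\int f\,\d\mu\le -s\int f\,\d\A(\oh_s)$, hence $\int f\,\d\A(\oh_s)\le\int f\,\d\mu$. Since $\|\oh_s-\oh_0\|_\infty=s\|f\|_\infty\to 0$ as $s\to 0^+$, Corollary~\ref{cor: weak conv area} gives that $\A(\oh_s)\to\A(\oh_0)$ weakly on the compact manifold $\H^d/\Gamma$; as $f$ descends to a continuous function there, $\int f\,\d\A(\oh_s)\to\int f\,\d\A(\oh_0)$. Passing to the limit yields $\int f\,\d\A(\oh_0)\le\int f\,\d\mu$, which is the claimed inequality $\int(-f)\,\d\A(\oh_0)\ge\int(-f)\,\d\mu$.

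The main obstacle is precisely estimate $(\ast\ast)$ for directions $f$ that change sign: the convexity machinery and \eqref{eq: enc diff vol} are designed for monotone comparisons $\oh_1\le\oh_0$, and one must check that both the first-variation formula for the covolume and the one-sided bound extracted from convexity survive for an arbitrary support-function direction. Everything else—admissibility of the competitor $\oh_s$, the minimality inequality $(\ast)$, and the concluding weak-convergence passage—is routine.
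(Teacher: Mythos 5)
Your skeleton is exactly the paper's: perturb the minimizer along $\oh_s=\oh_0+sf$ (admissible because Minkowski sum with a $\Gamma$-invariant set preserves $\tau$-convexity), extract $(\ast)$ from minimality, bound the covolume increment via \eqref{eq: enc diff vol} to get $(\ast\ast)$, divide by $s$ and let $s\to 0$ using Corollary~\ref{cor: weak conv area}. However, the step you yourself flag as ``the main obstacle'' --- proving $(\ast\ast)$ when $f$ changes sign --- is a genuine gap as written. The paper's proof of \eqref{eq: enc diff vol} uses the nesting $K_1\subset K_0$ in an essential way (the region $K_0\setminus K_1$ is foliated by the boundaries of the convex combinations, and the one-sided bounds come from convexity of $\cov$ restricted to that monotone segment), and your proposed substitute --- a first-variation formula for non-nested smooth families plus approximation --- is only sketched, with the crucial identification of the signed pulled-back volume with the covolume difference left unverified.

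The point you are missing is that this hard case never occurs; this is exactly the paper's one-line remark ``if $f\neq 0$, we know that $f<0$''. Indeed, a $\Gamma$-invariant support function is bounded, so its ball support function $h_f$ extends continuously to $\overline B$ with $h_f|_{\partial B}=0$ (Remark~\ref{rk:ext inv}). A convex function on $\overline B$ attains its maximum on $\partial B$, so $h_f\leq 0$, i.e.\ $f\leq 0$. Moreover, if $h_f(x_0)=0$ at some interior point, then on any chord of $B$ through $x_0$ the convex function $h_f$ is nonpositive, vanishes at both endpoints, and attains its maximum $0$ at the interior point $x_0$; convexity then forces it to vanish on the whole chord, hence $h_f\equiv 0$. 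So either $f\equiv 0$ (where the lemma is trivial) or $f<0$ everywhere, in which case $\oh_s<\oh_0$, the convex sets are nested as $K_s\subset K_0$, and $(\ast\ast)$ is literally the right-hand inequality of \eqref{eq: enc diff vol} applied to the pair $(K_s,K_0)$ --- no new variational formula is needed. (Your ``$f\geq 0$'' case is vacuous for the same reason.) With this observation inserted, your argument closes and coincides with the paper's proof.
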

\begin{proof}
If $f=0$, the result is obvious. If $f\not=0$, we know that $f<0$.
Let $\oh_a=\oh_0+af$  for $a\in[0,1]$. As 
$L_{\mu}$ attains its minimum at $\oh_0$, 
$$0\leq L_{\mu}(\oh_a)-L_{\mu}(\oh_0)=\cov(\oh_a)-\cov(\oh_0)+\int af \d\mu~. $$
As $h_a<h_0$, we can use \eqref{eq: enc diff vol}, which gives
$$0\leq \int (-af) \d\A(\oh_a) + \int a f\d\mu$$
so we get
$$\int (-f) \d\A(\oh_a)\geq \int (-f)\d\mu$$
for any $a$. The result follows by letting $a\rightarrow 0$ and using the weak convergence of the area measures (Corollary~\ref{cor: weak conv area}).
\end{proof}

\begin{lemma}\label{lem: egalite cst} With the notations of Lemma~\ref{lem: premier estimee aire},
 $$ \int \d \A(\oh_0)= \int \d\mu~. $$
\end{lemma}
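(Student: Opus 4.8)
The plan is to prove the two inequalities between the total masses separately. Write $\nu=\A(\oh_0)-\mu$, a finite (signed) Radon measure on $\H^d/\Gamma$; the claim is $\nu(\H^d/\Gamma)=0$. For the inequality $\int\d\A(\oh_0)\geq\int\d\mu$ I would simply feed Lemma~\ref{lem: premier estimee aire} the $\Gamma$-invariant support function $f\equiv -1$, i.e.\ the hyperbolic support function of $K(\H)$ (see Remark~\ref{rem:hausdorff dist}). Since $-f\equiv 1$, the lemma gives $\int\d\A(\oh_0)\geq\int\d\mu$ at once, so $\Area(\oh_0)\geq\mu(\H^d/\Gamma)$.

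The reverse inequality is the delicate point. The naive idea would be to feed the variational inequality the ``function'' $f\equiv +1$ as well, but $+1$ is not the restriction of a sublinear function (equivalently $\oh_0+1$ need not be a support function), so one cannot simply raise the support function by a constant. What is available is the \emph{upward} companion of Lemma~\ref{lem: premier estimee aire}: for every $\tau$-support function $\oh'\geq\oh_0$ one has
\[
   \int(\oh'-\oh_0)\,\d\A(\oh_0)\leq\int(\oh'-\oh_0)\,\d\mu .
\]
I would obtain this by running the argument of Lemma~\ref{lem: premier estimee aire} along the segment $\oh_t=(1-t)\oh_0+t\oh'$, which stays in the convex set of $\tau$-support functions and satisfies $\oh_t\geq\oh_0$; minimality of $L_\mu$ together with the \emph{left} inequality of \eqref{eq: enc diff vol} (now with the larger set in the role of $K_0$) gives $\int(\oh_t-\oh_0)\,\d\A(\oh_t)\leq\int(\oh_t-\oh_0)\,\d\mu$, and dividing by $t$ and letting $t\to0$, using the weak convergence of the area measures (Corollary~\ref{cor: weak conv area}), yields the displayed inequality.

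It remains to upgrade this weighted inequality, valid for the nonnegative weights $\oh'-\oh_0$, to the constant weight $1$. Here I would first treat the case in which $\oh_0$ is the support function of a smooth strictly convex Cauchy surface: then the radii of curvature are bounded below by some $\rho>0$ and $\oh_\tau-\oh_0\geq\delta>0$, so for $0<c<\min(\rho,\delta)$ both $\oh_0+c$ and $\oh_0-c$ are again $\tau$-support functions (adding a constant only lowers the radii by $c$, cf.\ \eqref{eq:twdff}). Thus $\oh_0$ is an interior point of the line $c\mapsto\oh_0+c$, the two-sided derivative of $L_\mu$ along it must vanish, and computing this derivative from \eqref{eq: enc diff vol} exactly as in the proof of Lemma~\ref{lem: premier estimee aire} gives $-\Area(\oh_0)+\mu(\H^d/\Gamma)=0$, which is the desired equality. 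For a general positive Radon measure $\mu$ I would pass to the limit: approximate $\mu$ by $\Gamma$-invariant measures $\mu_n=f_n\,\d\H^d$ with $f_n$ smooth, strictly positive and of large mass, so that by Proposition~\ref{prop: c tau} and Corollary~\ref{cor: MA regularite} the corresponding minimizers are smooth strictly convex Cauchy surfaces for which the equality of total masses already holds; the compactness Corollary~\ref{cor:blaschke} and the continuity of the total area (Corollary~\ref{cor: weak conv area}) then transport the equality to $\oh_0$.

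The main obstacle is exactly this reverse inequality, and it reflects an intrinsic asymmetry: lowering a support function always keeps it a support function (the radii of curvature increase), whereas raising it lowers the radii and may destroy convexity. This is what blocks a one-line two-sided variation with the constant weight and forces either the regularity-plus-approximation route sketched above or an explicit construction of upward competitors whose increments $\oh'-\oh_0$ approximate the constant $1$ uniformly. The genuine work lies in controlling the convergence of the approximate minimizers, and in verifying that $\mu_n(\H^d/\Gamma)\to\mu(\H^d/\Gamma)$ simultaneously with $\Area(\oh_0^{(n)})\to\Area(\oh_0)$; I expect this continuity/compactness bookkeeping to be the technically heaviest step.
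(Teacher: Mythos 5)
Your opening moves are sound: feeding $f\equiv-1$ into Lemma~\ref{lem: premier estimee aire} gives $\Area(\oh_0)\geq\int\d\mu$ exactly as in the paper, and your ``upward companion'' inequality is correct --- indeed, before you take $t\to 0$, your chain of inequalities evaluated at $t=1$ reads $\int(\oh'-\oh_0)\,\d\mu\geq\int(\oh'-\oh_0)\,\d\A(\oh')$ for every $\tau$-support function $\oh'\geq\oh_0$, which is precisely the estimate the paper exploits. The gap is in how you convert these weighted inequalities into a statement about total masses. Your plan --- first handle minimizers bounding smooth strictly convex Cauchy surfaces, then approximate $\mu$ by nice measures $\mu_n=f_n\,\d\H^d$ --- is circular: to know that the minimizer of $L_{\mu_n}$ is smooth and strictly convex you invoke Corollary~\ref{cor: MA regularite}, but that corollary applies to a convex set whose area measure is \emph{already known} to equal $f_n\,\d\H^d$, and identifying the area measure of the minimizer with $\mu_n$ is exactly the Minkowski problem, whose solution (the proof of Theorem~\ref{thm:main1}) uses the lemma you are trying to prove. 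Your bootstrap has no base case. Moreover, even granting regularity, the scheme requires $\mu_n$ to have large total mass (to force the Cauchy property via Proposition~\ref{prop: c tau}) while also $\mu_n(\H^d/\Gamma)\to\mu(\H^d/\Gamma)$; these are incompatible when $\mu$ has small total mass, and Section~\ref{sub: pogo} shows the difficulty is genuine: there exist $\tau$-convex sets with area measure bounded below by $c_0\,\d\H^d$ which touch $\partial_s\Omega_\tau$ and are not strictly convex, so smoothness of minimizers can actually fail.

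The missing idea is the choice of competitor, and it avoids regularity altogether. For $a>0$ the function $\oh_0+a$ is in general not a support function, but the $\tau$-convex set $K_a=\{p\in\R^{d+1}\,|\,\langle p,\eta\rangle_-\leq\oh_0(\eta)+a\ \forall\eta\in\H^d\}$ --- that is, $K_\tau(\oh_0-\oh_\tau+a)$ in the notation of Section~\ref{sec3} --- always exists, and its support function $\oh_a$ satisfies $\oh_0\leq\oh_a\leq\oh_0+a$. The crucial point is Lemma~\ref{lem:pte pt reg}: the set where $\oh_a<\oh_0+a$ has zero $\A(\oh_a)$-measure, so $\oh_a-\oh_0=a$ holds $\A(\oh_a)$-almost everywhere. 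Plugging $\oh'=\oh_a$ into your own inequality then gives
\[
a\int\d\mu\ \geq\ \int(\oh_a-\oh_0)\,\d\mu\ \geq\ \int(\oh_a-\oh_0)\,\d\A(\oh_a)\ =\ a\,\Area(\oh_a)~,
\]
where the first inequality uses $\oh_a-\oh_0\leq a$ and the positivity of $\mu$. Dividing by $a$ and letting $a\to0$ (so that $\oh_a\to\oh_0$ uniformly), the continuity of the total area (Corollary~\ref{cor: weak conv area}) yields $\int\d\mu\geq\Area(\oh_0)$, which closes the proof. So you were one competitor away: the convexification $\oh_a$ of $\oh_0+a$, combined with Lemma~\ref{lem:pte pt reg}, does the job that your regularity-plus-approximation step cannot.
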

\begin{proof}
From Lemma~\ref{lem: premier estimee aire}, with $f=-1$, we get   $$ \Area(h_0)\geq \int \d\mu~.$$ 
We have to prove the other inequality.

For a positive $a$ consider the $\tau$-equivariant function $\oh_0+a$. Notice that in general
it is not a support function. However we consider  the domain
  $$\fconv_a=\{p\in \R^{d+1}|  \langle p, \eta\rangle_- \leq \oh_0(\eta)+a, \forall \eta\in \H^d\}~,$$
and let $\oh_a$ be the support function of $\fconv_a$. 
\new{As $\fconv\subset\fconv_a$  we have  $\oh_0\leq \oh_a$.
On the other hand by the definition of support function  $\oh_a \leq \oh_0+a$.}
From \eqref{eq: enc diff vol} we get 

$$\int (\oh_a-\oh_0)\d\A(\oh_a) \leq  \cov(\oh_0)-\cov(\oh_a)\leq \int (\oh_a-\oh_0) \d\A(\oh_0)$$
or in a more convenient way

\begin{equation}\label{eq: dble ineq}
- \int (\oh_a-\oh_0) \d\A(\oh_a) \geq \cov(\oh_a) -\cov(\oh_0) \geq   - \int(\oh_a-\oh_0) \d\A(\oh_0)~. 
\end{equation}

As $L_{\mu}(\oh_a)\geq L_{\mu}(\oh_0)$,

 $$   \cov(\oh_a)-\cov(\oh_0) +\int (\oh_a-\oh_0)\d\mu\geq 0~,$$

and using \eqref{eq: dble ineq} 

$$-\int (\oh_a-\oh_0)\d\A(\oh_a) +\int(\oh_a-\oh_0)\d\mu \geq 0$$

that is

$$\int(\oh_a-\oh_0)\d\mu \geq \int (\oh_a-\oh_0)\d\A(\oh_a)~.$$

Now using that $ a \geq \oh_a-\oh_0$ we get
   
$$    a\int \d\mu\geq \int (\oh_a-\oh_0) \d\A(\oh_a)~.$$

From Lemma~\ref{lem:pte pt reg}, $\oh_a$ coincides with $\oh_0+a$  almost everywhere for $\A(\oh_a)$, hence
$$     \int \d\mu\geq \Area(\oh_a)~.$$
Letting $a \rightarrow 0$ and using the  convergence of the total area, we get the estimate we need.

\end{proof}

\begin{remark}\label{rem: bakelman}{\rm
 In the Fuchsian case, using the representation formula \eqref{eq:for cov fuch} for the covolume, the functional 
 $L_{\mu}$ has a form very similar to the functional used in the variational approach for the Monge--Amp\`ere equation, see
 \cite[4.5]{TW08} and the references therein.
}\end{remark}

\begin{proof}[Proof of Theorem~\ref{thm:main1}]

Let $f$ be a $C^2$ $\Gamma$-invariant function on $\H^d$. 
There is a constant $c>0$ such that $\nabla^2f -(f-c)g_{\H}\geq 0$.
In particular $f-c$ is a $\Gamma$-invariant support function, see \eqref{eq:twdff}.
By Lemma~\ref{lem: premier estimee aire}:
$$   \int (f-c)\d\mu\geq \int (f-c)\d\A(h_0)~.$$
From Lemma~\ref{lem: egalite cst}, we know that the equality holds for constants. Then we can conclude that
$$\int f\d\mu\geq\int f\d\A(h_0)$$
for any $C^2$-function. Considering $-f$,  for any $C^2$-function 
we get 
$$\int f\d\mu=\int f\d\A(h_0)~.$$
By approximation we obtain that this equality is true for any $C^0$ function, hence $\mu=\A(h_0)$, and the theorem is proved.
\end{proof}

Using the correspondence between the Area measure of $\fconv$ and the Monge--Amp\`ere measure
$\M(h)$ described in Proposition~\ref{prop: area=ma}, we can  prove Theorem~\ref{thm:main2} stated in the introduction.

\begin{proof}[Proof of Theorem~\ref{thm:main2}]
Notice that for each $\gamma$ there is a vector $s_\gamma\in \R^d$ and $a_\gamma\in \R$ such that
\[
(\gamma\cdot h_0)(x)-h_0(x)=\langle s_\gamma, x\rangle-a_{\gamma}~=\left\langle \hat{x}, \binom{s_\gamma}{a_\gamma}\right\rangle_-.
\]
In particular letting $H_0$ be the $1$-homogeneous extension of $h$ to $\fut(0)$ we conclude that
\[
   H_0(\gamma^{-1}\hat{x})-H_0(\hat{x})=\left\langle \hat{x}, \binom{s_\gamma}{a_\gamma}\right\rangle_-~.
\]
Putting $\tau_\gamma:=-\binom{s_{\gamma}}{a_{\gamma}}$ the formula above easily shows that
$\tau$ is a cocycle and the equivariance
\[
    H_0(\gamma Y)-H_0(Y)=-\langle \tau_{\gamma^{-1}}, Y\rangle_- 
\]
holds. By \eqref{eq:act}  $H_0$ is a $\tau$-equivariant convex function.

So by Theorem~\ref{thm:main1}, there is a $\tau$-equivariant convex function $h$ on $\ball$ such that 
$$\A(h)=\sqrt{1-\|x\|^2}\mu~.$$
By Proposition~\ref{prop: area=ma} 
we have $\M(h)=\mu$. As $h$ is $\tau$-equivariant as  $h_0$, then on the boundary $h=h_0$.

\end{proof}

Finally let us consider the regularity problem.

 From Proposition~\ref{prop: c tau} there is a constant $c=c(\tau)\geq 0$ such that if $\mu>c \d\H^d$, then 
 the $\tau$-convex set $\fconv$ with $\A(\fconv)=\mu$ is contained in the interior of $\Omega_\tau$.

 By Corollary~\ref{cor: MA regularite} we get:

\begin{theorem}\label{thm:reg equi}
Let $f$ be  a $\Gamma$-invariant $C^{k+1}$-function, $k\geq 2$, with $f(x)>c(\tau)$ for every $x\in\H^d$.
Then the boundary of the $\tau$-convex set $\fconv$ with $\A(\fconv)=f\d\H^d$  is
a strictly convex space-like hypersurface of class $C^{k+2}$ with Gauss--Kronecker curvature $f^{-1}$.
\end{theorem}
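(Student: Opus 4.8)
The plan is to obtain this theorem by assembling three results established above: the existence and uniqueness of the equivariant solution (Theorem~\ref{thm:main1}), the fact that a sufficiently large area measure forces the solution into the interior of $\Omega_\tau$ (Proposition~\ref{prop: c tau}), and the interior regularity for the associated Monge--Amp\`ere equation under the Cauchy hypothesis (Corollary~\ref{cor: MA regularite}). No genuinely new analysis is needed; the work is in checking that the hypotheses of these statements are met.

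First I would note that $f\d\H^d$ is a positive $\Gamma$-invariant Radon measure: $f$ is $\Gamma$-invariant, positive and continuous, while $\d\H^d$ is the $\Gamma$-invariant hyperbolic volume. Applying Theorem~\ref{thm:main1} produces a unique $\tau$-convex set $K$ with $\A(K)=f\d\H^d$; let $\oh$ be its hyperbolic support function and $h$ its restriction to $B$. By Proposition~\ref{prop:g tau}, $K$ is a $g_\tau$-convex set. Since $f$ is $\Gamma$-invariant and continuous, it descends to a continuous function on the compact quotient $\H^d/\Gamma$, hence attains a finite maximum $c_2=\max f$ and a minimum $c_1=\min f$, and by hypothesis $c_2\geq f\geq c_1> c(\tau)\geq 0$. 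In particular $f$ is squeezed between two positive constants, exactly the boundedness required by Corollary~\ref{cor: MA regularite}.

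Next I would invoke Proposition~\ref{prop: c tau}: because $\A(K)=f\d\H^d>c(\tau)\d\H^d$, the set $K$ is Cauchy, that is, contained in the interior of $\Omega_\tau$. By Lemma~\ref{lem: cauchy at int tau} this gives $h<h_\tau$ on $B$ and $\partial K=\partial_s K$, so condition~\textit{\ref{condition 2}.} of Corollary~\ref{cor: MA regularite} holds. That corollary then applies with $g=g_\tau$, $\A(K)=f\d\H^d$, $f\in C^{k+1}(\H^d)$, $k\geq 2$, and $c_2\geq f\geq c_1>0$, and yields that $\partial_s K$ is a strictly convex hypersurface of class $C^{k+2}$. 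Since $\partial K=\partial_s K$, the whole boundary of $K$ enjoys this regularity. For the curvature, a strictly convex $C^{k+2}$ boundary is in particular $C^2_+$, so by \eqref{eq: area measure c2+} the area measure equals $\phi\d\H^d$ with $\phi$ the curvature function; comparing with $\A(K)=f\d\H^d$ forces $\phi=f$, and by the discussion following \eqref{eqref:det hess} the Gauss--Kronecker curvature of $\partial_s K$ at $G^{-1}(x)$ equals $1/\phi(x)=f(x)^{-1}$.

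The main point requiring care is not analytic but logical: one must verify that the strict inequality $f>c(\tau)$, together with the compactness of $\H^d/\Gamma$ (giving $\min f>c(\tau)$), is precisely what is needed to push $K$ into the \emph{interior} of $\Omega_\tau$ via Proposition~\ref{prop: c tau}, since it is only this interior containment that unlocks the regularity of Corollary~\ref{cor: MA regularite}. Once that chain is correctly threaded, the remaining steps are bookkeeping.
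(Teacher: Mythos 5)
Your proof is correct and follows essentially the same route as the paper: Theorem~\ref{thm:main1} for existence/uniqueness, Proposition~\ref{prop: c tau} (with compactness of $\H^d/\Gamma$ giving $\min f>c(\tau)$) to place $K$ in the interior of $\Omega_\tau$, Lemma~\ref{lem: cauchy at int tau} for $\partial K=\partial_s K$ and the Cauchy condition $h<h_\tau$, and then Corollary~\ref{cor: MA regularite} under its condition~\textit{2} for the $C^{k+2}$ strictly convex regularity, with the curvature identification $\phi=f$ read off from \eqref{eq: area measure c2+}. The paper's proof is just a terser version of exactly this chain, so nothing further is needed.
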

\begin{proof}
Corollary~\ref{cor: MA regularite} immediately gives that $\partial_s\fconv$ is strictly convex $C^{k+2}$ hypersurface.
Notice that its Gauss--Kronecker curvature is $f^{-1}>0$ so the Gauss map is a $C^{k+1}$-homeomorphism with inverse map of class $C^{k+1}$.
Moreover, $\partial \fconv=\partial_s \fconv$ by Lemma~\ref{lem: cauchy at int tau}.
 \end{proof}

 Theorem~\ref{thm GH1} and Theorem~\ref{thm GH2} are just quotient versions of preceding results.


\subsection{Equivariant Pogorelov example}\label{sub: pogo}

\new{
In this section we fix an uniform lattice $\Gamma$ and  a cocycle $\tau\in Z^1(\Gamma, \mink)$ 
such that $\partial_s\Omega_{\tau}$
contains a $(d-1)$ Euclidean polyhedron $\poly$. }

\old{The goal of this section is to prove that} 
\new{We will prove that }there exists a $\tau$-convex set with positive area measure
(in the sense that $\A(\fconv)\geq c_0\d\H^d$ for some $c_0$)
which meets the boundary of $\Omega_\tau$.

\begin{remark}\rm{
\old{
Recall by \cite{Bon05} that $\partial_s\Omega_\tau$ is simplicial if the corresponding stratification
in $\H^d$ is locally finite. In dimension $d=3$ if $\partial_s\Omega_\tau$ is simplicial, then either
it is a tree (that is the space is simple), or it must contain a $2$-polygon.
Examples of simplicial space-times which are not simple are for instance constructed in \cite{BMS13}.
This shows the existence of cocyles satisfying the condition we consider in this section.
}
\new{
In dimension $d=3$ examples of pairs $(\Gamma,\tau)$ such that $\partial_s\Omega_\tau$ contains a 
$2$-polygon were constructed in \cite{BMS13}.
}

It remains open to understand whether there is a lattice $\Gamma<\operatorname{SO}^+(3,1)$ and a cocycle $\tau$ such that
the corresponding domain is not simple but $c(\tau)=0$.}
\end{remark}

In practice we will construct a $\tau$-support function $h$ on $\ball$ with $\M(h)>c_0\mathcal L$, \new{ and $h=h_\tau$ on a segment}.
\old{Considering some $r<1$ 
such that a fundamental domain for the projective action of $\Gamma$ on $\ball$ is contained in the ball of radius $r$, 
 from Corollary~\ref{cor: relation area ma} there will be a constant $c'$ such that   $\A(\oh)\geq c'\d\H^d$.}
\new{ The latter condition implies that $\partial_s\Omega_\tau$ meets the $\tau$-convex set supported by $h$.
 On the other hand, the former condition and Corollary~\ref{cor: relation area ma} show that on a compact fundamental domain  
 $\A(\oh)\geq c'\d\H^d$, for some constant $c'$. As the area measure is $\Gamma$ invariant  we will conclude that 
 $\A(\oh)\geq c'\d\H^d$ everywhere.
 }
 
 Let $f_0$ be a function  given in Remark~\ref{rk:ex pogo} for $k=d-1$:
  $$f_0(x)=\beta^2(1+\beta^2 x_1^2)r^{\alpha}(x)~.$$
where $r(x)=\sqrt{x_2^2+\cdots +x_d^2}$, $\alpha=2-2/d$ and $\beta$ is a suitable constant $\geq 1$.
We have
 $f_0\equiv 0$ on  $[\ell_-,\ell_+]$, with $\ell_\pm=(\pm 1,0,\ldots,0).$

Up to compose by a global isometry, we can suppose that the polyhedron
 $\poly\subset\partial_s\Omega_\tau$ contains $0$ in its interior and is contained in the $(d-1)$-linear space
$$R=\{x\in\R^{d+1}| x_{d+1}=x_1=0\}~.$$

Recall that $H_\tau$ is the support function of $\Omega_\tau$ on $\fut(0)$ and 
$h_\tau$ is  its restriction to the ball $\ball$. 
We notice that $h_{\tau}\geq 0$ and $h_\tau=0$ exactly on $[\ell_-,\ell_+]$.

\begin{lemma}
There is $\epsilon>0$ such that $\epsilon f_0<  h_\tau$ on $\overline \ball\setminus [\ell_-,\ell_+]$.
\end{lemma}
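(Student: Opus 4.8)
The plan is to establish a \emph{linear} lower bound for $h_\tau$ in the directions transverse to the segment $[\ell_-,\ell_+]$ and to compare it with the growth of $f_0$, which is only of order $r^\alpha$ with $\alpha=2-2/d\geq 1$. Since both functions vanish exactly on $[\ell_-,\ell_+]=\{x\in\overline B\,|\,r(x)=0\}$, everything reduces to controlling their relative rate of vanishing transverse to this segment.

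First I would exploit the presence of the $(d-1)$-polyhedron $T$. Since $0$ lies in the relative interior of $T$ inside $R=\{x_{d+1}=x_1=0\}$, there is $s_0>0$ such that the $(d-1)$-ball of radius $s_0$ centered at $0$ in $R$ is contained in $T\subset\partial_s\Omega_\tau\subset\Omega_\tau$. For $x\in\overline B$ write $\bar x=(x_2,\dots,x_d)$, so that $r(x)=\|\bar x\|$. When $r(x)>0$, let $q$ be the point of $R$ whose nonzero coordinates are $s_0\bar x/\|\bar x\|$; then $q\in T\subset\Omega_\tau$, and since $q$ has vanishing first and last coordinates a direct computation gives $\langle q,\hat x\rangle_-=s_0\langle\bar x,\bar x\rangle/\|\bar x\|=s_0 r(x)$. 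By definition of the support function this yields
$$ h_\tau(x)=\sup_{p\in\Omega_\tau}\langle p,\hat x\rangle_-\geq \langle q,\hat x\rangle_-=s_0 r(x), $$
an inequality that holds trivially (both sides zero) when $r(x)=0$, hence for all $x\in\overline B$. This is the key step, and it is precisely where the hypothesis that the stratum $T$ has dimension $d-1$ enters: it guarantees that $0$ has a full transverse neighborhood inside $T$, so that every transverse direction is realized by a singular point of $\Omega_\tau$.

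Next I would bound $f_0$ from above by the same quantity $r$. On $\overline B$ one has $|x_1|\leq 1$ and $0\leq r(x)\leq 1$; since $\alpha\geq 1$ this gives $r^\alpha\leq r$, whence
$$ f_0(x)=\beta^2(1+\beta^2 x_1^2)\,r^\alpha(x)\leq \beta^2(1+\beta^2)\,r(x). $$
Choosing $\epsilon<s_0/(\beta^2(1+\beta^2))$ then gives, for every $x\in\overline B$ with $r(x)>0$,
$$ \epsilon f_0(x)\leq \epsilon\beta^2(1+\beta^2)\,r(x)<s_0\, r(x)\leq h_\tau(x), $$
which is the desired strict inequality on $\overline B\setminus[\ell_-,\ell_+]$. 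The only delicate point is the linear lower bound $h_\tau\geq s_0 r$; once it is in place, the conclusion is a routine comparison of homogeneities resting on $\alpha\geq 1$. I expect the main obstacle to be purely expository, namely setting up the coordinates so that the identity $\langle q,\hat x\rangle_-=s_0 r(x)$ is transparent and confirming that the radius $s_0$ can be chosen uniformly over all transverse directions, which follows from the compactness of the unit sphere in $R$ together with $0\in\operatorname{relint}T$.
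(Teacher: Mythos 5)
Your proof is correct and follows essentially the same route as the paper's: both arguments reduce the lemma to the two bounds $f_0\leq c\,r$ (from $\alpha\geq 1$ and $r\leq 1$ on $\overline B$) and $h_\tau\geq s_0\,r$ (from a transverse $(d-1)$-ball of radius $s_0$ centered at $0$ contained in $T$). The only cosmetic difference is that you exhibit a witness point $q\in T$ directly inside the supremum defining $h_\tau$, whereas the paper passes through the support function of $\operatorname{I}^+(T)$, computed as $\max_{y\in T}\langle x',y\rangle_R$, and bounds it below by the support function of the small ball --- the same computation.
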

\begin{proof}
By the definition of $ f_0$ we have that $f_0(x)\leq cr(x)$ for any $x\in\overline \ball$ and some $c$.
So it is sufficient to show that there is $M$ such that $ h_\tau(x)> M r(x)$ on $\overline \ball\setminus [\ell_-,\ell_+]$. 

Let $\Omega=\fut(\poly)$. It is an F-convex set contained in $\Omega_\tau$
with support function  $h$ on $\ball$. Note that $h=0$ on $[\ell_-,\ell_+]$. So to prove the claim it is sufficient to check that
for $x\in \ball\setminus [\ell_-,\ell_+]$ we have $h(x)> c r(x)$ for some $c$.

But in $R$, $0$ is contained in the interior of the convex body $\poly$,  so $\poly$ contains in its interior a small ball
centered at $0$, whose support function is $c\|\cdot\|_R$  some $c$, where $\|\cdot\|_R$
is the norm associated to $\langle\cdot,\cdot\rangle_R$, the scalar
product on $R$ induced by the Minkowski product. For any $x'\in R\setminus\{0\}$
$$\operatorname{max}_{y\in \poly}\langle x',y\rangle_R > c\|x'\|_R~.$$
But for any $y\in \poly$, $\langle y,e_1\rangle_-=\langle y,e_{d+1}\rangle_-=0$, so for any $x=(x_1,x')\in \ball$ 
$$h(x)=\operatorname{max}_{y\in \poly}\left\langle \hat{x},y\right\rangle_-= \operatorname{max}_{y\in \poly}\langle x',y\rangle_R  $$
and $\|x'\|=r(x)$. 
\end{proof}

Now we take $ f=\epsilon f_0$. Notice that
\begin{itemize}
\item $\M( f)\stackrel{\eqref{eq:MA homothetie}}{=} \epsilon^{d+1}\M(f_0)>\epsilon^{d+1}c\L$;
\item $ f=0$ on the segment $[\ell_-,\ell_+]$;
\item $ f<  h_\tau$ on $\overline \ball\setminus [\ell_-,\ell_+]$.
\end{itemize}

We then define
\[
  f_\infty(x)=\sup_{\gamma\in\Gamma_\tau}(\gamma\cdot f)(x)
\]
where $\Gamma_\tau$ acts on the set of convex functions of $\ball$ as explained in Section \ref{sub def tau}.

By definition $f_\infty $ is a convex function on $\ball$. Actually it is the support function of the convex hull
of  the union of all the $\gamma\cdot \fconv(f)$, so it is a fixed point for the action of $\Gamma_\tau$.
Moreover, since $ f\leq  h_\tau$, we have that $ f_\infty(x)\leq  h_\tau(x)$ for any $x\in\overline{\ball}$,
 and $ f_\infty= h_\tau=0$ on the segment $[\ell_-,\ell_+]$

The rest of this section is devoted to prove the following proposition.

\begin{proposition} \label{pr:cex}
For any compact subset $C$ of $\ball$, there is $c_C>0$ such that
\[
M(f_\infty)\geq c_C\L
\]
on $C$.
\end{proposition}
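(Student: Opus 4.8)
The plan is to realise $f_\infty$, over any fixed compact $C\subset B$, as the maximum of \emph{finitely many} translates $\gamma\cdot f$ on a neighbourhood of $C$, and then to conclude by Corollary~\ref{cor:mabound} and the max--inequality \eqref{eq: MA max}. The mechanism of the conclusion is immediate: since $\M(f)=\M(\epsilon f_0)\geq\epsilon^{d+1}c\,\mathcal L$ on $B$ by \eqref{eq:MA homothetie} and Remark~\ref{rk:ex pogo}, Corollary~\ref{cor:mabound} gives for each $\gamma$ a constant $c_\gamma>0$ with $\M(\gamma\cdot f)\geq c_\gamma\mathcal L$ on $C$. If $f_\infty=\max_{i=1}^N(\gamma_i\cdot f)$ on an open set $U\supset C$, then by locality of the Monge--Amp\`ere measure and by iterating \eqref{eq: MA max},
\[
\M(f_\infty)(\omega)\geq\min_{1\le i\le N}\M(\gamma_i\cdot f)(\omega)\geq\Big(\min_{1\le i\le N}c_{\gamma_i}\Big)\mathcal L(\omega)
\]
for every Borel $\omega\subseteq C$, so $c_C=\min_i c_{\gamma_i}>0$ does the job. (Points of $C$ lying on the segment $[\ell_-,\ell_+]$, where $f_\infty=h_\tau$ is affine, are harmless, as that segment is $\mathcal L$-null.)

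To produce the finite maximum I would first rewrite the supremum on $\H^d$. Let $\oh_\tau$ be the hyperbolic support function of $\Omega_\tau$ and $\bar f$ that of $K(f)$. Using the invariance $\gamma_\tau\cdot\oh_\tau=\oh_\tau$ in the action formula $(\gamma_\tau\cdot\oh)(\eta)=\oh(\gamma^{-1}\eta)+\langle\tau_\gamma,\eta\rangle_-$ yields, for $\eta\in\H^d$,
\[
(\gamma_\tau\cdot\bar f)(\eta)=\oh_\tau(\eta)-\Phi(\gamma^{-1}\eta),\qquad \Phi:=\oh_\tau-\bar f\ge 0,
\]
and hence
\[
\oh_\tau(\eta)-\bar f_\infty(\eta)=\inf_{\gamma\in\Gamma}\Phi(\gamma^{-1}\eta).
\]
Thus the deficit of $f_\infty$ below $h_\tau$ equals the infimum of $\Phi$ over the $\Gamma$-orbit of $\eta$. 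By construction $\Phi$ vanishes exactly on the complete geodesic $g=\rad((\ell_-,\ell_+))$; moreover, using $h_\tau\ge h$ where $h$ is the support function of $\operatorname{I}^+(T)$ with $h\ge c\,r$, together with $f=O(r^{\alpha})$, $\alpha=2-2/d>1$, near the segment, one obtains a tube estimate $\Phi(\zeta)\ge\tfrac c2\sinh\operatorname{dist}(\zeta,g)$ in a fixed neighbourhood of $g$, while $\Phi(\zeta)\to+\infty$ as $\zeta$ tends to any ideal point other than $\ell_\pm$.

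This dichotomy is what forces finiteness. For $\eta$ in a compact $\tilde C=\rad(C)\subset\H^d$ the infimum is bounded above by $\Phi(\eta)\le M$, so it is realised (or approximated) only by orbit points in the sublevel set $\{\Phi\le M\}$, a bounded-radius tube about $g$; the proper discontinuity of the $\Gamma_\tau$-action on $\operatorname{int}\Omega_\tau$ (Remark~\ref{rk:action}) and the discreteness of $\Gamma$ then confine the minimising translates $\gamma^{-1}\eta$ to a fixed compact region, uniformly for $\eta\in\tilde C$. Only finitely many $\gamma$ occur, the infimum is attained, and translating back to $B$ gives $f_\infty=\max_{i=1}^N(\gamma_i\cdot f)$ on a neighbourhood of $C$, as required by the first paragraph.

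The main obstacle is exactly this confinement step. The delicate case is when $g$ covers a closed geodesic, so that an orbit can run out to an ideal endpoint $\ell_\pm$ while staying within bounded distance of $g$; along such a sequence $\Phi$ does not tend to $0$, and one must verify, via the tube estimate $\Phi\gtrsim\sinh\operatorname{dist}(\cdot,g)$, that $\Phi$ stays bounded below by a positive constant so that the genuine minimum is attained at bounded distance rather than escaping to infinity. Checking that the minimisers stay in a fixed compact set, uniformly over $\tilde C$, is where the geometry of the initial singularity $\partial_s\Omega_\tau$ from \cite{Bon05} (in particular that $g$ is transverse to the face $T$) must be invoked; once this is established, the remainder of the argument is the routine application of Corollary~\ref{cor:mabound} and \eqref{eq: MA max} described above.
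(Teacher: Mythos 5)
Your overall scheme---represent $f_\infty$ on a neighbourhood of $C$ as a finite maximum of translates of $f$, then apply Corollary~\ref{cor:mabound} and \eqref{eq: MA max}---is exactly the paper's conclusion mechanism, and your reformulation on $\H^d$ is correct: indeed $(\gamma_\tau\cdot\bar f)(\eta)=\oh_\tau(\eta)-\Phi(\gamma^{-1}\eta)$ with $\Phi=\oh_\tau-\bar f$, so $\oh_\tau-\bar f_\infty=\inf_\gamma\Phi(\gamma^{-1}\cdot)$. The genuine gap is at precisely the step you flag and then defer: the confinement of (near-)minimisers. This is not a routine verification to be ``invoked''; it is the entire difficulty, and the tools you propose cannot close it. Note first that the ``delicate case'' is the only case: by the paper's Lemma~\ref{lm:stab} the stabilizer of $T$ is the cyclic group generated by a hyperbolic element $\mu$ whose axis is $g$, so orbits do run out along the tube. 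Along such an escape one computes $\Phi(\mu^{-n}\eta)\to\oh_\tau(\eta)$ as $n\to\pm\infty$, so what must be proved, uniformly for $\eta$ near $g$, is $\Phi(\mu^{-n}\eta)\geq\Phi(\eta)$ for $|n|$ large, i.e.\ $(\mu^{n}\cdot f)\leq f$ on $C$. Near $g$, writing $\rho=\operatorname{dist}(\eta,g)$, one has $\bar f(\eta)=O(\sinh^{\alpha}\rho)=o(\sinh\rho)$, hence $\Phi(\eta)=\oh_\tau(\eta)-\bar f(\eta)$ is comparable to $\oh_\tau(\eta)$, which lies between $c\sinh\rho$ and $C\sinh\rho$ with $C>c$. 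Your tube estimate only gives $\Phi(\mu^{-n}\eta)\geq\tfrac c2\sinh\rho$, which does \emph{not} dominate $\Phi(\eta)$: both the candidate infimum and your lower bound for the tail vanish at the same linear rate, with the wrong constants. So an additive bound plus properness/discreteness provably cannot decide the competition; one needs a multiplicative statement, namely that the ratio $(\mu^{n}\cdot f)/r^{\alpha}$ tends to $0$ uniformly. (Your auxiliary claims also fail when $C$ meets the segment: for $\eta\in g$ every $\mu^{-n}\eta$ is a minimiser, so minimisers do not stay in any compact set, and $\Phi\equiv 0$ along that escaping orbit, not bounded below by a positive constant.)

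This is exactly why the paper splits the argument in two steps rather than running one confinement argument. First, after Lemma~\ref{lm:stab}, its Lemma~\ref{lm:delta} computes explicitly, using the Pogorelov form $f=\epsilon\beta^2(1+\beta^2x_1^2)r^{\alpha}$ and $\tau_\mu=0$, that $(\mu^n\cdot f)(x)=\epsilon\beta^2 r^{\alpha}(x)\,(\ch an-x_1\sh an)^{1-\alpha}\bigl(1+\beta^2\bigl(\tfrac{x_1-\tanh an}{1-x_1\tanh an}\bigr)^2\bigr)$, where the factor multiplying $r^{\alpha}$ tends to $0$ uniformly on $\{|x_1|\leq 1-\delta\}$ because $\alpha-1>0$; this uniform ratio decay is what handles the degeneracy along $g$ and gives the finite maximum for $f_1=\sup_n\mu^n\cdot f$. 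Only then does it treat the cosets $\Gamma/\langle\mu\rangle$, where your ``escape to ideal points other than $\ell_\pm$'' mechanism is made rigorous via Corollary~\ref{cor:less} ($f_1<h_\tau$ on $\partial B\setminus\{\ell_\pm\}$), the choice-of-representative trick with the disks $\bar\mu^{i}(\Delta)$ (ensuring the limit ideal point is not $\ell_\pm$), and the blow-up of the factor $(\gamma_n^{-1}\hat x)_{d+1}$. To repair your proof you would have to import the content of Lemma~\ref{lm:delta}, or an equivalent uniform multiplicative estimate along the $\langle\mu\rangle$-orbit; it cannot be replaced by the tube estimate and proper discontinuity.
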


The first step is to compute the stabilizer of $\poly$ in $\Gamma_\tau$.

\begin{lemma}\label{lm:stab}
 The stabilizer of $\poly$ in $\Gamma_\tau$ is
a $\mathbb Z$-group generated by an element, say $\mu_\tau$.
\end{lemma}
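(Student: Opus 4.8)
The plan is to compute $\operatorname{Stab}_{\Gamma_\tau}(T)$ by transferring the question to $\H^d$, via the dual geodesic $g=[\ell_-,\ell_+]$. First I would exploit that $T$ is a compact $(d-1)$-polyhedron with $0$ in its relative interior, so its affine hull is the linear subspace $R=\{x\in\R^{d+1}\mid x_1=x_{d+1}=0\}$. If $\gamma_\tau=\gamma+\tau_\gamma$ stabilizes $T$, then it preserves $\operatorname{aff}(T)=R$; writing $\gamma(R)+\tau_\gamma=R$ and using that $R$ is a \emph{linear} subspace forces $\gamma(R)=R$. Since $\gamma\in\operatorname{SO}^+(d,1)$ preserves the Minkowski form and the spacelike plane $R$, it preserves the Minkowski-orthogonal timelike $2$-plane $R^\perp=\operatorname{span}(e_1,e_{d+1})$ together with its time orientation, hence permutes the two future null rays of $R^\perp$, which are exactly $\R_{>0}\hat\ell_\pm$ with $\hat\ell_\pm=(\pm1,0,\ldots,0,1)$. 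Thus $\gamma$ fixes the pair $\{\ell_-,\ell_+\}\subset\partial_\infty\H^d$, i.e.\ preserves the geodesic $g$. As the linear-part map $\Gamma_\tau\to\Gamma$ is an isomorphism, this shows that $\gamma_\tau\mapsto\gamma$ embeds $\operatorname{Stab}_{\Gamma_\tau}(T)$ into the setwise stabilizer $\operatorname{Stab}_\Gamma(\{\ell_-,\ell_+\})=\operatorname{Stab}_\Gamma(g)$.

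Next I would analyze $\operatorname{Stab}_\Gamma(g)$. It is a discrete subgroup of the isometries preserving the complete geodesic $g$, a group of the shape $(\operatorname{O}(d-1)\times\R)\rtimes\Z/2$ (rotations about $g$, translation along $g$, and the flip exchanging the endpoints). The translation-length homomorphism to $\R$ has discrete image, by discreteness of $\Gamma$, and its kernel consists of elements fixing $g$ pointwise; such an element has finite order, so it is trivial since $\Gamma$ is torsion-free. Hence $\operatorname{Stab}_\Gamma(g)$ is either trivial or infinite cyclic, and therefore so is $\operatorname{Stab}_{\Gamma_\tau}(T)$. To see that the embedding of the previous paragraph is onto, I would invoke the bijective, $\Gamma_\tau$-equivariant correspondence between strata of $\partial_s\Omega_\tau$ and the maximal affine regions of $h_\tau$ (the dual stratification of \cite{Bon05}): any $\gamma_0\in\operatorname{Stab}_\Gamma(g)$ lifts to $\gamma_{0,\tau}\in\Gamma_\tau$, which carries the stratum $T$ dual to $g$ to the stratum dual to $\gamma_0(g)=g$, namely $T$ itself; hence $\gamma_{0,\tau}(T)=T$ and $\operatorname{Stab}_\Gamma(g)$ lifts isomorphically onto $\operatorname{Stab}_{\Gamma_\tau}(T)$.

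The main obstacle is to rule out the trivial case, i.e.\ to show that $\operatorname{Stab}_\Gamma(g)$ is \emph{infinite}, equivalently that $g$ projects to a closed geodesic of the compact manifold $\H^d/\Gamma$. Here I would argue by recurrence, using that the geodesic $g$ dual to the top-dimensional stratum $T$ is an atom of positive mass for the $\Gamma$-invariant measured geodesic stratification associated with $\tau$ in \cite{Bon05}. If $\operatorname{Stab}_\Gamma(g)$ were trivial, then $g$ would project to a nonclosed geodesic of the compact quotient; such a geodesic accumulates, so there is a sequence of \emph{distinct} translates $\gamma_n(g)$ all meeting a fixed compact ball $K$ (distinct because coincidence of two translates would put a nontrivial element in $\operatorname{Stab}_\Gamma(g)$). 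Each $\gamma_n(g)$ then carries the same positive atomic mass, producing infinitely many disjoint atoms of mass bounded below inside the set of geodesics meeting $K$, which has finite transverse measure — a contradiction. Therefore $g$ is the axis of a hyperbolic element of $\Gamma$, and combining this with the two preceding paragraphs gives $\operatorname{Stab}_{\Gamma_\tau}(T)\cong\Z$, generated by the lift $\mu_\tau$ of a primitive such element.
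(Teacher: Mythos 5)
Your reduction to the dual geodesic $g$ is sound: the linear-part map does embed $\operatorname{Stab}_{\Gamma_\tau}(T)$ into $\operatorname{Stab}_\Gamma(g)$, and a discrete, torsion-free subgroup preserving a geodesic is trivial or infinite cyclic; this is exactly how the paper finishes as well (it does not even need your surjectivity step, since a non-trivial subgroup of an infinite cyclic group is already infinite cyclic). The genuine gap is in your last step, where you rule out triviality. There you treat the $\Gamma$-invariant data attached to $\tau$ as if it were a measured geodesic lamination: $g$ carries ``an atom of positive mass'', and ``the set of geodesics meeting $K$ has finite transverse measure''. These are facts from the surface case $d=2$, where atomic leaves are precisely the closed ones and the transverse measure is a Radon measure on the space of geodesics. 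For $d\geq 3$ --- the case of interest here --- neither statement is established in \cite{Bon05}: the measured geodesic stratifications of that paper are not measures on the space of geodesics, and a $1$-dimensional stratum such as $g$ has codimension $d-1\geq 2$, so generic transverse arcs miss it entirely; there is no well-defined ``atomic mass'' that it carries, nor any local finiteness statement of the kind your recurrence argument needs. In effect, the entire difficulty of the lemma --- showing the stabilizer is infinite --- is delegated to an unproved higher-dimensional analogue of ``atomic leaves are closed geodesics''.

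What the paper does instead, and what you would need to substitute, is a direct volume argument. Let $T_1$ be the part of the cosmological-time level set $\Sigma_1$ that retracts onto $\operatorname{int}T$. By \cite{Bon05}, $T_1$ is isometric to the product $(\operatorname{int}T)\times\R$, hence has infinite volume, while for each $\gamma_\tau\in\Gamma_\tau$ either $\gamma_\tau(T_1)=T_1$ or $\gamma_\tau(T_1)\cap T_1=\emptyset$ (by the equivariance of the retraction and the corresponding dichotomy for $\operatorname{int}T$). If the stabilizer of $T$ were trivial, $T_1$ would therefore inject isometrically into the compact manifold $\Sigma_1/\Gamma_\tau$, which is impossible. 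This packing argument replaces your recurrence argument and makes no appeal to any transverse measure on geodesics; the rest of your proof can then be kept as is.
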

\begin{proof}
First we prove that the stabilizer of $\poly$ is not trivial.
Notice that for each $\gamma_\tau\in\Gamma_\tau$ either 
$\gamma_\tau(\poly)=\poly$ or $\gamma_\tau(\operatorname{int} \poly)$ is disjoint from
$\operatorname{int} \poly$.

If $\poly_1$ denotes the subset of the level set $\Sigma_1$ of the cosmological time that retracts in $\operatorname{int} \poly$,
by the equivariance of the retraction  we have that either $\gamma_\tau(\poly_1)=\poly_1$ or they are disjoint.

By \cite{Bon05} $\poly_1$ is isometric to the Cartesian product $(\operatorname{int} \poly)\times\R$, so its volume is infinite.
If the stabilizer of $\poly$ were trivial the projection $\poly_1\to \Sigma_1/\Gamma_\tau$ would be an injective
isometric immersion of a Riemann manifold of infinite volume into a compact manifold, and this is impossible.

Now if $\gamma_\tau$ is in the stabilizer of $\poly$, then $\gamma$ stabilizes the image of $\poly$ through the Gauss map
in $\H^d$. By \cite{Bon05} this is the geodesic in $\H^d$ orthogonal to the hyperplane $R$ containing $\poly$.
Notice that the stabilizer of a geodesic in a uniform lattice can be either trivial or a cyclic group. Since we have proved that
it is not trivial, the Lemma follows.
\end{proof}

By the proof of Lemma \ref{lm:stab}, the projective transformation 
 $\bar \mu$ stabilizes the segment $[\ell_-, \ell_+]$.  
Moreover  $\mu_\tau$ acts as an isometry on $\poly$,  so it fixes a point of $\poly$.
Up to a translation we may suppose that $\mu_\tau$ fixes $0$, that is, $\tau_\mu=0$ and $\mu_\tau=\mu$.

Consider the function
\[
    f_1(x)=\sup_{n\in\mathbb Z} (\mu^n\cdot  f)(x)~.
\]
It is $<\mu>$-invariant by construction.

\begin{lemma}\label{lm:delta}
For any $\delta>0$, let $\left. C=\left\{z\in \overline \ball\right| |x_1(z)|<1-\delta\right\}$.
Then  there are a finite number of $n_1\ldots n_k\in\mathbb Z$ such that
\[
    f_1(z)=\max_{i}(\mu^{n_i}\cdot f)(z)
\]
for any $z\in C$.
\end{lemma}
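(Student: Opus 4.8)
The plan is to reduce everything to the explicit ball formula for the action and to exploit the fact that $\bar\mu$ is a hyperbolic isometry whose axis is the geodesic $\ell$ joining $\ell_-$ to $\ell_+$. Since $\mu$ is linear ($\tau_\mu=0$), Lemma~\ref{lem: projective action} gives $(\mu^n\cdot f)(x)=\frac{\lambda(x)}{\lambda(\bar\mu^{-n}x)}f(\bar\mu^{-n}x)$; write $w_n=\bar\mu^{-n}(z)$ and $g_n(z)=(\mu^n\cdot f)(z)$. The first observation is that $\bar\mu$ preserves the hyperbolic distance $\rho$ to $\ell$, and in Klein coordinates $\sinh\rho=r/\lambda$; hence $r(w_n)/\lambda(w_n)=r(z)/\lambda(z)=\sinh\rho$ is \emph{independent of }$n$. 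Inserting $r(w_n)=\sinh\rho\cdot\lambda(w_n)$ into $f=\epsilon f_0$, $f_0=\beta^2(1+\beta^2x_1^2)r^\alpha$ with $\alpha=2-2/d$, factorizes
\[
g_n(z)=\underbrace{\epsilon\beta^2(\sinh\rho)^\alpha\lambda(z)}_{A(z)\ge 0}\cdot\underbrace{\bigl(1+\beta^2x_1(w_n)^2\bigr)\lambda(w_n)^{\alpha-1}}_{B(w_n)}.
\]
Because $A(z)$ does not depend on $n$, computing $f_1(z)=\sup_n g_n(z)$ is the same as maximizing $B(w_n)$.

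Next I would track the orthogonal projection (the \emph{foot}) of the orbit onto $\ell$. A direct hyperboloid computation shows the foot of $z$ has the same $x_1$-coordinate $x_1(z)$, so its arclength coordinate is $s_0=\operatorname{arctanh}(x_1(z))$; as $\bar\mu$ translates $\ell$ by its translation length $T_0$, the foot of $w_n$ has coordinate $s_n=s_0-nT_0$ and $c_n:=\tanh s_n$. Using the right-angle relation $\cosh d(0,w)=\cosh d(0,\mathrm{foot})\cosh\rho$ one gets $\lambda(w_n)=\sqrt{1-c_n^2}/\cosh\rho$, whence
\[
B(w_n)=(\cosh\rho)^{-(\alpha-1)}\bigl(1+\beta^2x_1(w_n)^2\bigr)(1-c_n^2)^{(\alpha-1)/2},
\]
with the factor $(\cosh\rho)^{-(\alpha-1)}$ independent of $n$ and $1\le 1+\beta^2x_1(w_n)^2\le 1+\beta^2$.

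The decisive step, and the main obstacle, is to bound the range of maximizing indices \emph{uniformly in }$z$. Here $d\ge3$ (equivalently $k=d-1>d/2$) enters: then $\alpha-1=1-2/d>0$, so $(1-c^2)^{(\alpha-1)/2}$ is maximal near $c=0$ and tends to $0$ as $|c|\to1$. Some orbit point $n^{*}$ has its foot within $T_0/2$ of the origin, giving $B(w_{n^*})\ge c_1(\cosh\rho)^{-(\alpha-1)}$ for a constant $c_1>0$ depending only on $T_0,\beta,d$. Any maximizer $n$ must then satisfy $(1+\beta^2)(1-c_n^2)^{(\alpha-1)/2}\ge c_1$, forcing $|s_n|\le R$ for a fixed constant $R$. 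Since $z\in C$ means $|x_1(z)|\le 1-\delta$, hence $|s_0|=|\operatorname{arctanh}(x_1(z))|\le\operatorname{arctanh}(1-\delta)=:R_0$, the relation $s_n=s_0-nT_0$ yields $|n|\le(R_0+R)/T_0$.

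Thus, for every $z\in C$ off the axis, the supremum defining $f_1$ is attained on the fixed finite set $\{n\in\mathbb Z:\ |n|\le(R_0+R)/T_0\}$; on the axis $[\ell_-,\ell_+]$ one has $r\equiv0$, so $g_n\equiv0$ and $n=0$ works. Taking $\{n_1,\dots,n_k\}$ to be this finite set gives $f_1(z)=\max_i(\mu^{n_i}\cdot f)(z)$ for all $z\in C$, proving the lemma. The only genuinely delicate point is the uniform index bound; the explicit factorization plus the invariance of $\rho$ and of the foot's $x_1$-coordinate reduces it to the elementary observation that the feet are spaced $T_0$ apart and the decay of $(1-c^2)^{(\alpha-1)/2}$ confines the maximizers to a bounded window.
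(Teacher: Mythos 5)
Your argument is correct, and at its core it is the same proof as the paper's, written in invariant hyperbolic-geometric language rather than in boost coordinates. The paper computes the matrices of $\mu^{n}$ explicitly and obtains
$(\mu^n\cdot f)(x)=\epsilon\beta^2 r^\alpha(x)\,\bigl(1+\beta^2 c_n^2\bigr)\,(\cosh an-x_1\sinh an)^{-(\alpha-1)}$,
where $c_n=\frac{x_1-\tanh an}{1-x_1\tanh an}$ is exactly your $\tanh(s_0-nT_0)$; the identity $1-c_n^2=(1-x_1^2)(\cosh an-x_1\sinh an)^{-2}$ shows that your factor $(1+\beta^2c_n^2)(1-c_n^2)^{(\alpha-1)/2}$ agrees with the paper's $n$-dependent factor up to an $n$-independent positive multiple, so the two maximization problems over $n$ coincide. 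The paper concludes by proving that this factor tends to $0$ uniformly on $C$ as $|n|\to\infty$ (using $\alpha-1>0$, i.e.\ $d\geq 3$, and $|x_1|\leq 1-\delta$) and comparing with the $n=0$ term; you instead confine the feet of maximizers to a bounded interval of the axis, which is the same mechanism but yields the explicit window $|n|\leq (R_0+R)/T_0$ — a slightly sharper and arguably more transparent bookkeeping. The one point you should patch: $C$ is a subset of $\overline B$, so it contains ideal points, where $\rho=\infty$ and your factorization $g_n=A(z)B(w_n)$ literally reads $\infty\cdot 0$. The fix is immediate: combining the $n$-independent pieces gives $g_n(z)=\epsilon\beta^2 r^\alpha(z)\,(1-x_1^2)^{(1-\alpha)/2}\cdot(1+\beta^2c_n^2)(1-c_n^2)^{(\alpha-1)/2}$, whose prefactor is finite and positive on all of $C$ off the axis (including $\partial B$, since there $r^2=1-x_1^2>0$); alternatively, the inequality $g_n\leq\max_{|m|\leq N}g_m$ established at interior points extends to $\partial B$ by continuity of the finitely many functions involved. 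With that remark your proof covers all of $C$, as the lemma requires.
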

\begin{proof}
The transformation $\mu$ leaves  $R$ and $\poly$ invariant (in particular
$\mu|_R$ is an orthogonal transformation of finite order).
On the other hand we have  

\[\begin{split}
\mu^n(1,0,\ldots, 0)=(\ch na,  0,\ldots,0,\sh na)\,,\\
\mu^n_{d+1}(0,0\ldots,0,1)=(\sh na,0,\ldots, 0, \ch na)
\end{split}\]
 for any $n\in\mathbb Z$.

Since $\tau_{\mu}=0$,  by Lemma~\ref{lem: projective action},
\[
  (\mu^n\cdot  f)(x)=(\mu^{-n}\hat{x})_{d+1} f\left (\frac{\mu^{-n}(\hat{x})}{(\mu^{-n}\hat{x})_{d+1}}\right)
\]
so a simple computation shows that
\[
  (\mu^n\cdot  f)(x)=\frac{\epsilon \beta^2 r^\alpha(x)}{(\ch an- x_1\sh an)^{\alpha-1}}\left(1+\beta^2\left(\frac{x_1-\tanh an}
  {1-x_1\tanh an}\right)^2\right).
\]

Since $x_1\in[-1,1]$  we have that $\frac{x_1-\tanh an}{1-x_1\tanh an}\in[-1,1]$. 

By the definition of  $C$,  we have $|x_1|\leq 1-\delta$ for every $x\in C$.
So we easily see that 
\[
\frac{1}{(\ch an- x_1\sh an)^{\alpha-1}}\left(1+\beta^2\left(\frac{x_1-\tanh an}{1-x_1\tanh an}\right)^2\right)\rightarrow 0
\]
uniformly on $C$ both for $n\rightarrow+\infty$ and for $n\rightarrow -\infty$.

It follows that, putting $a_0=\min f|_{\new{C}}$,
 there is a finite list $I=\{n_1\ldots, n_k\}$ such that for $n\notin I$
\[
\frac{1}{(\ch an- x_1\sh an)^{\alpha-1}}\left(1+\beta^2\left(\frac{x_1-\tanh an}{1-x_1\tanh an}\right)^2\right)\leq a_0
\]
for every $x\in C$.

So for $x\in C$
\[
   f_1(x)=\max_{i}(\mu^{n_i} f)(x)~.
\]
\end{proof}

The previous lemma, Corollary \ref{cor:mabound} and formula \eqref{eq: MA max}  imply the following:

\begin{corollary}\label{co ma bound fini}
For any compact subset $C$ in  $\ball$ there is a constant $c_0>0$ such that
$\M( f_1)>c_0\L$ on $C$.
\end{corollary}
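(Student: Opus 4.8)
The plan is to reduce the infinite supremum defining $f_1$ to a finite maximum on a neighbourhood of $C$, to bound the Monge--Amp\`ere measure of each term of that maximum by the two cited facts, and then to recombine the pieces by \eqref{eq: MA max}. First I would fix the compact set $C\subset B$; since $C$ is a compact subset of the open ball, $\sup_{z\in C}|x_1(z)|<1$, so there is $\delta>0$ with $C$ contained in the open set $U=\{z\in B\mid |x_1(z)|<1-\delta\}$. Applying the previous lemma (Lemma~\ref{lm:delta}) to this $\delta$ produces finitely many integers $n_1,\dots,n_k$ such that, setting $g:=\max_{i}(\mu^{n_i}\cdot f)$, one has $f_1=g$ on $U$, and in particular on $C$.

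Next I would bound $\M(g)$ from below on $C$. By construction $\M(f)=\epsilon^{d+1}\M(f_0)\ge c_1\mathcal L$ on $B$ for some $c_1>0$. For each $i$ the map $\mu^{n_i}$ lies in $\operatorname{Isom}(\R^{d,1})$, so Corollary~\ref{cor:mabound}, applied with $\sigma=\mu^{n_i}$, the constant $c_1$, and the compact set $C$, yields $c_i>0$ with $\M(\mu^{n_i}\cdot f)\ge c_i\mathcal L$ on $C$. Iterating \eqref{eq: MA max} over the finite family $\{\mu^{n_i}\cdot f\}$ (each partial maximum is again convex, so \eqref{eq: MA max} applies at every step) gives, for every Borel set $\omega\subset C$,
\[
\M(g)(\omega)=\M\bigl(\max_i(\mu^{n_i}\cdot f)\bigr)(\omega)\ge \min_i \M(\mu^{n_i}\cdot f)(\omega)\ge \Bigl(\min_i c_i\Bigr)\mathcal L(\omega).
\]
Since there are only finitely many $n_i$, the number $c_0:=\min_i c_i$ is strictly positive.

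Finally I would transfer this bound from $g$ to $f_1$, which is the only delicate point. The two convex functions $f_1$ and $g$ agree on the open set $U\supset C$, and the Monge--Amp\`ere measure is local: two convex functions coinciding on an open set have the same Monge--Amp\`ere measure there. Concretely, for $x\in U$ any affine function supporting $f_1|_U$ at $x$ automatically supports $f_1$ globally, by a one--variable convexity argument along segments issuing from $x$ (if $\phi$ is convex with $\phi(0)=0$ and $\phi\ge 0$ just to the right of $0$, then $\phi\ge 0$ throughout); hence $\partial f_1(x)=\partial g(x)$ for $x\in U$, and the subdifferential images over any $\omega\subset U$ coincide. Therefore $\M(f_1)=\M(g)\ge c_0\mathcal L$ on $C$, which is the assertion. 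Everything apart from this locality step is a direct bookkeeping combination of Lemma~\ref{lm:delta}, Corollary~\ref{cor:mabound}, and \eqref{eq: MA max}.
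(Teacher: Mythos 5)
Your proof is correct and is precisely the argument the paper intends: the corollary is stated there as an immediate consequence of Lemma~\ref{lm:delta}, Corollary~\ref{cor:mabound} and \eqref{eq: MA max}, exactly the three ingredients you combine in the same way. The locality of the Monge--Amp\`ere measure (equal subdifferentials, hence equal measures, for convex functions agreeing on an open set), which you rightly flag as the one delicate point, is the only step the paper leaves implicit, and your one-variable convexity justification of it is sound.
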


Another simple consequence of Lemma \ref{lm:delta} is the following

\begin{corollary} \label{cor:less}
For any $\xi\in \partial \ball\setminus\{\ell_\pm\}$ we have $ f_1(\xi)< h_\tau(\xi)$ .
\end{corollary}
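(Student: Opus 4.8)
The plan is to prove the strict inequality termwise, $(\mu^n\cdot f)(\xi)<h_\tau(\xi)$ for every $n\in\mathbb Z$, and then promote it to the supremum using the finiteness coming from Lemma~\ref{lm:delta}. The two facts I would lean on are, first, that $h_\tau$ is invariant under $\mu$ (since $\mu_\tau=\mu\in\Gamma_\tau$ after the normalization $\tau_\mu=0$, and $\Omega_\tau$ is $\Gamma_\tau$-invariant, so $\mu\cdot h_\tau=h_\tau$), and second, that $f<h_\tau$ on $\overline B\setminus[\ell_-,\ell_+]$ while $h_\tau>0$ exactly off $[\ell_-,\ell_+]$, both recorded just above.

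The key computation is a comparison of the transformation laws of $f$ and $h_\tau$. Applying Lemma~\ref{lem: projective action} with $\sigma=\mu^n$ (whose translational part vanishes, as $\tau_{\mu^n}=0$), one gets on $\overline B$
\[
(\mu^n\cdot f)(\xi)=(\mu^{-n}\hat\xi)_{d+1}\,\tilde f(\bar\mu^{-n}\xi),\qquad
h_\tau(\xi)=(\mu^n\cdot h_\tau)(\xi)=(\mu^{-n}\hat\xi)_{d+1}\,h_\tau(\bar\mu^{-n}\xi),
\]
where $\tilde f$ is the continuous extension of $f$ to $\overline B$ and the second identity uses the $\mu$-invariance of $h_\tau$. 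Now $\bar\mu$ is the projectivization of a hyperbolic isometry whose axis is the geodesic $[\ell_-,\ell_+]$ (see the proof of Lemma~\ref{lm:stab}), hence fixes exactly $\ell_+$ and $\ell_-$ on $\partial B$; therefore the orbit point $\bar\mu^{-n}\xi$ stays in $\partial B\setminus\{\ell_\pm\}$ whenever $\xi$ does. For $\xi\in\partial B\setminus\{\ell_\pm\}$ we have $h_\tau(\xi)>0$, so dividing the two displayed identities gives
\[
\frac{(\mu^n\cdot f)(\xi)}{h_\tau(\xi)}=\frac{\tilde f(\bar\mu^{-n}\xi)}{h_\tau(\bar\mu^{-n}\xi)}<1,
\]
the strict inequality being exactly $f<h_\tau$ applied at the point $\bar\mu^{-n}\xi\notin[\ell_-,\ell_+]$.

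It remains to pass from the termwise strict inequality to $f_1(\xi)=\sup_n(\mu^n\cdot f)(\xi)<h_\tau(\xi)$, and this is the only delicate point, since a supremum of quantities each strictly below a bound need not be strictly below it. Here Lemma~\ref{lm:delta} saves the day: choosing $\delta$ small enough that $\xi$ lies in the compact set $C=\{|x_1|<1-\delta\}$, the supremum defining $f_1$ reduces to a maximum over a finite index set $\{n_1,\dots,n_k\}$, and a maximum of finitely many numbers each $<h_\tau(\xi)$ is again $<h_\tau(\xi)$. Equivalently, the explicit formula in the proof of Lemma~\ref{lm:delta} shows $(\mu^n\cdot f)(\xi)\to 0$ as $n\to\pm\infty$ while $h_\tau(\xi)$ is a fixed positive number, so the sup is attained. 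I expect the bookkeeping around the boundary extension — verifying that the radial formula of Lemma~\ref{lem: projective action} extends continuously to $\partial B$ for both $f$ and $h_\tau$, and that $f_1|_{\partial B}$ is the function governed by Lemma~\ref{lm:delta} — to be the main routine obstacle; it is dispatched by the continuity of $\tilde f$, the positivity of $(\mu^{-n}\hat\xi)_{d+1}$, and the continuity of $\bar\mu^{-n}$ on $\overline B$.
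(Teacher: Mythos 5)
Your proposal is correct and is essentially the paper's own argument, merely reorganized: the paper picks, via Lemma~\ref{lm:delta}, an index $n$ at which the supremum $f_1(\xi)$ is attained and then applies the strict inequality $(\mu^n\cdot f)(\xi)<(\mu^n\cdot h_\tau)(\xi)=h_\tau(\xi)$, while you prove the termwise strict bound for all $n$ first (using the same projective action formula, the $\mu$-invariance of $h_\tau$, and $f<h_\tau$ off $[\ell_-,\ell_+]$) and then invoke the same finiteness from Lemma~\ref{lm:delta} to pass to the supremum. The two orderings are interchangeable, and your attention to the boundary-extension bookkeeping matches what the paper leaves implicit.
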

\begin{proof}
By Lemma \ref{lm:delta} there exists $n\in\mathbb Z$ such that
$ f_1(\xi)=(\mu^n\cdot  f)(\xi)$. Now, as $\bar \mu^{-n}(\xi)\notin\{\ell_\pm\}$ (because 
$\bar \mu$ stabilizes $[\ell_-,\ell_+]$) we have that $(\mu^n\cdot  f)(\xi)<(\mu^n\cdot h_{\tau}(\xi))$,
 and then  
 $$ f_1(\xi)<(\mu^n\cdot  h_\tau)(\xi)= h_\tau(\xi)~,$$
 where the last equality holds since $ h_\tau$ is $\Gamma_\tau$-invariant. 
\end{proof}

Notice now that the function $ f_\infty$ can be  obtained as
\[
  f_\infty(x)=\sup_{\gamma_\tau\in\Gamma_\tau}(\gamma_\tau\cdot  f_1)~.
\]
Since $f_1$ is $\mu$-invariant then 
$\gamma_\tau\cdot f_1=(\gamma_\tau\mu^n)\cdot f_1$ so the function
$\gamma_\tau\cdot  f_1$ depends only on the coset of $\gamma$ in the set of cosets $X=\Gamma/<\mu>$.
So putting $f_{[\gamma]}=\gamma_\tau\cdot f$ we have that
\[
   f_\infty(x)=\sup_{[\gamma]\in X}( f_{[\gamma]})~.
\]

Proposition \ref{pr:cex} follows then  from the following lemma, together with 
Corollary~\ref{co ma bound fini}, Corollary \ref{cor:mabound}, and  \eqref{eq: MA max} imply the following:

\begin{lemma}
For any compact subset $C$ in  $\ball$ there exist a finite number of cosets $[\gamma_1],\ldots,[\gamma_k]$ such that
on $C$
\[
     f_\infty(x)=\max_{i} f_{[\gamma_i]}~.
    \]
\end{lemma}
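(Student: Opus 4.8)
The plan is to view $f_\infty$ as a support function, reduce the statement to a finiteness property of the coset family, and then settle that finiteness by properness of the $\Gamma$-action.

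First I would record that $f_\infty$ is the support function on $B$ of $K_\infty=\overline{\mathrm{conv}}\bigl(\bigcup_{[\gamma]\in X}\gamma_\tau(K_1)\bigr)$, where $K_1$ is the F-convex set with support function $f_1$; since $f_1$ is $\langle\mu\rangle$-invariant, $\gamma_\tau(K_1)$ and hence $f_{[\gamma]}$ depend only on the coset $[\gamma]\in X=\Gamma/\langle\mu\rangle$. Because support functions on $B$ and on $\H^d$ determine one another through $h(x)=\lambda(x)\bar h(\rad(x))$ with $\lambda>0$ on $B$, and because a compact $C\subset B$ corresponds to a compact $\omega:=\rad(C)\subset\H^d$ bounded away from $\partial\H^d$, it suffices to argue with the hyperbolic support functions $\bar f_{[\gamma]}$ on $\omega$. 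The statement then follows once I show that all but finitely many cosets $[\gamma]$ satisfy $\bar f_{[\gamma]}\le\bar f_1$ on $\omega$: if $F\subset X$ denotes the (finite) exceptional set, then on $C$ one gets $f_\infty=\max\bigl(f_1,\max_{[\gamma]\in F}f_{[\gamma]}\bigr)$, a finite maximum (the values are attained pointwise, so this is an equality).

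The key simplification is to pass to the nonnegative function $D:=\bar h_\tau-\bar f_1\ge 0$ on $\H^d$ (recall $K_1\subset\Omega_\tau$), which vanishes exactly on the geodesic $g$ of $\H^d$ corresponding to the chord $[\ell_-,\ell_+]$ and is strictly positive elsewhere: on $\partial\H^d$ this is Corollary~\ref{cor:less}, and in the interior it follows from the same local finiteness as in Lemma~\ref{lm:delta}, since each $\mu^n\cdot f<h_\tau$ off $[\ell_-,\ell_+]$. As $\bar h_\tau$ is $\Gamma_\tau$-invariant and the action on hyperbolic support functions is affine, one has the clean identity $\bar h_\tau-\gamma_\tau\cdot\bar f_1=D\circ\gamma^{-1}$. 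Hence $\bar f_{[\gamma]}\le\bar f_1$ on $\omega$ is equivalent to $D\circ\gamma^{-1}\ge D$ on $\omega$, for which it is enough that $D\ge m$ on $\gamma^{-1}\omega$, where $m:=\max_\omega D<\infty$; equivalently $\gamma(\{D<m\})\cap\omega=\emptyset$. Now $\{D<m\}$ is open, $\langle\mu\rangle$-invariant, and contains $g$. I would argue it is \emph{proper modulo} $\langle\mu\rangle$, i.e. its image in $\H^d/\langle\mu\rangle$ is relatively compact; granting this, write $\{D<m\}\subset\langle\mu\rangle\cdot T_0$ for a compact $T_0\subset\H^d$. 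Then $\gamma(\{D<m\})\cap\omega\ne\emptyset$ forces $\gamma\mu^n(T_0)\cap\omega\ne\emptyset$ for some $n$, and proper discontinuity of the (projective) $\Gamma$-action on $\H^d$ leaves only finitely many group elements $\gamma\mu^n$, hence only finitely many cosets $[\gamma]$. This is exactly the desired finiteness, after which Corollary~\ref{cor:mabound}, Corollary~\ref{co ma bound fini} and \eqref{eq: MA max} combine to give the lower bound on $\M(f_\infty)$ in Proposition~\ref{pr:cex}.

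The main obstacle is the relative compactness of the image of $\{D<m\}$ in the cylinder $\H^d/\langle\mu\rangle$. Away from the two ends this is immediate: the sublevel set cannot approach $\partial\H^d$ outside the endpoints $\ell_\pm$ of $g$, since in ball terms $D(\rad(x))=(h_\tau(x)-f_1(x))/\lambda(x)$ has a strictly positive limit there while $\lambda\to 0$, so $D\to+\infty$. The delicate point is the behaviour of $D$ along the two ends (the directions $\ell_\pm$), where $\bar h_\tau$ and $\bar f_1$ share the same light-like support data and both ball values tend to $0$, giving an indeterminate quotient. Here I would control $D$ using that $\Omega_\tau$ and $K_1$ have the same light-like support planes at $\ell_\pm$ together with the cosmological-time description of $\Omega_\tau$ from \cite{Bon05}, or equivalently replace the tube argument by the genuinely compact fundamental pieces furnished by Proposition~\ref{prop:fund reg}, to conclude that $D$ also tends to infinity out the ends and that $\{D<m\}$ stays within a bounded neighbourhood of $g$ modulo $\langle\mu\rangle$.
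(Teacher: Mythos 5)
Your overall architecture is sound and genuinely different from the paper's: you recast the finiteness as a properness statement for the $\langle\mu\rangle$-invariant function $D=\oh_\tau-\overline{f}_1$, reducing the lemma to (i) the identity $\oh_\tau-\gamma_\tau\cdot\overline{f}_1=D\circ\gamma^{-1}$, (ii) relative compactness of the image of $\{D<m\}$ in $\H^d/\langle\mu\rangle$, and (iii) proper discontinuity of $\Gamma$ on $\H^d$. Steps (i) and (iii) are correct as stated (the paper uses the same identity, written in ball coordinates with the conformal factor $(\gamma_n^{-1}\hat{x})_{d+1}$, and runs a sequential contradiction instead of (iii)). But step (ii), which you yourself flag as the delicate point, is exactly where the content of the lemma lives, and you do not prove it: you only say you \emph{would} control $D$ near $\ell_\pm$ using the shared light-like support data and the cosmological time, or Proposition~\ref{prop:fund reg}. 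That is a genuine gap, and the suggested repairs do not work as stated: Proposition~\ref{prop:fund reg} gives a fundamental domain for the full group $\Gamma_\tau$ acting on the future of a surface in Minkowski space, not for $\langle\mu\rangle$ acting on $\H^d$, and the fact that $\Omega_\tau$ and $K_1$ have the \emph{same} light-like support planes at $\ell_\pm$ is precisely why no lower bound on $D$ can come from there. Without (ii) the argument collapses: your ``away from the ends'' observation only shows that $\overline{\{D<m\}}$ meets $\partial \H^d$ at most in $\{\ell_\pm\}$, but a set with that property can still contain points arbitrarily far from the axis $g$ (accumulating at $\ell_\pm$), hence leave every compact subset of $\H^d/\langle\mu\rangle$, and then the covering $\{D<m\}\subset\langle\mu\rangle\cdot T_0$ with $T_0$ compact is unavailable.

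The gap can be closed, and with exactly the device the paper uses in the ``Fact'' inside its own proof: the hyperplanes $\rad(\Delta)$ and $\rad(\bar\mu(\Delta))$, with $\Delta=\{x\in B\,|\,x_1=0\}$, bound a fundamental slab for the action of $\langle\mu\rangle$ on $\H^d$, and the closure of this slab in $\overline{B}$ meets $\partial B$ in a compact annulus that avoids $\ell_\pm$. Since $D$ is $\langle\mu\rangle$-invariant, it suffices to show that $\{D<m\}\cap(\mathrm{slab})$ is relatively compact in $\H^d$; and there your ``easy'' argument applies verbatim: a sequence in the slab escaping to the boundary subconverges in $\overline{B}$ to some $\xi\in\partial B$ with $\xi\neq\ell_\pm$, where $h_\tau-f_1\to h_\tau(\xi)-f_1(\xi)>0$ by Corollary~\ref{cor:less} together with Lemma~\ref{lm:delta} (which also gives continuity of $f_1$ on the closed slab), while $\lambda\to0$, so $D\to+\infty$. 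With this single step inserted, your proof is complete, and it is arguably cleaner than the paper's: proper discontinuity replaces the paper's normalization ``choose $\gamma_n$ in its coset so that the limit point $\ell$ is not $\ell_\pm$''. As submitted, however, the decisive step is missing.
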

\begin{proof}
We will prove that for any infinite sequence $[\gamma_n]$ of distinct elements in $X$ we have
that  $f_{[\gamma_n]}$ converges uniformly to $-\infty$ on $C$.

Suppose by contradiction that there exists a sequence $[\gamma_n]$ such that 
$\min f_{[\gamma_n]}$ is bounded on $C$.
As $\Gamma$ is cocompact, its elements are hyperbolic. So,
up to pass to a subsequence  there is a point $\ell\in \partial \ball$ such that
$\bar \gamma_n^{-1}(x)$ converges uniformly to $\ell$ on $C$.

\emph{Fact: we may choose $\gamma_n$ in their cosets so that $\ell\notin\{\ell_-, \ell_+\}$.}

Take the disk $\Delta=\{x\in \ball|x_1=0\}$. Its $<\bar\mu>$-orbit  is a union of disjoint disks $\Delta_i=\bar\mu^i(\Delta)$
which converge to $\ell_+$ (for $n\rightarrow+\infty$) and to $\ell_-$ (for $n\rightarrow -\infty$) and whose
hyperbolic distance is $a$.

In particular, for each $n$ we have that $\bar \gamma_n^{-1}(C)$ is contained between $\Delta_{k_n}$ and $\Delta_{k_n+h}$ where
$h$ is any integer number less than the hyperbolic diameter of $C$ divided by $a$.
It follows that $\bar\mu^{-k_n}\bar\gamma_n^{-1}$ sends $C$ into the region bounded by $\Delta_1$ and $\Delta_h$.

In particular changing $\gamma_n$ with $\gamma_n\mu^{k_n}$ we have that the point $\ell$ is contained in the region
annulus on $\partial \ball$ bounded $\Delta_1$ and $\Delta_h$, so it is neither $\ell_-$ nor $\ell_+$. The fact is proved.
\qd

Now by Lemma \ref{lem: projective action} we have
\[
  (\gamma_n\cdot  f_1)(x)=(\gamma_n^{-1}(\hat{x}))_{d+1}( f_1(\bar \gamma_n^{-1}x))+\langle \hat{x},  \tau_{\gamma_n}\rangle_-~.
\]
Since $ h_\tau$ is invariant by the action of $\Gamma_\tau$ we deduce that 
\[
  -\langle \hat{x}, \tau_{\gamma_n}\rangle_-= (\gamma_n^{-1}(\hat{x}))_{d+1}( h_\tau( \bar \gamma_n^{-1}\hat{x}))-  h_\tau(x)
\]
so
\[
  (\gamma_n\cdot  f_1)(x)=(\gamma_n^{-1}(\hat{x}))_{d+1}( f_1(\bar \gamma_n^{-1}x)- h_\tau(\bar \gamma_n^{-1}x))+ h_\tau(x)~.
\]
On the other hand since $\bar \gamma_n^{-1}x\rightarrow\ell$ uniformly on $C$, and 
$ f_1(\new{\ell})< h_{\tau}(\ell)$ (see Corollary~\ref{cor:less})
there is $\epsilon$ such that $ f_1(\bar \gamma_n^{-1}(x))- h_{\tau}(\bar \gamma_n^{-1}(x))\leq -\epsilon$ for $x\in C$ and for $n$ big.

Notice that $(\gamma_n^{-1}(\hat{x}))_{d+1}\rightarrow+\infty$ on $C$ so 
\[
  (\gamma_n\cdot  f_1)(x)\rightarrow-\infty
\]
uniformly on $C$ contradicts the assumption.
\end{proof}

Now let $C$ be a compact set of $\H^d$ containing a fundamental domain for the action of $\Gamma$. 
From Proposition~\ref{pr:cex} and Proposition~\ref{prop: area=ma}, on $C$ the area measure of 
$\fconv(f_{\infty})$ is bounded from below by a positive constant times the hyperbolic volume measure. 
As the area measure is invariant under the action of $\Gamma$, it follows that on $\H^d$  the area measure of 
$\fconv(f_{\infty})$ is bounded from below by a positive constant times the hyperbolic volume measure. But
$\fconv(f_{\infty})$ touches the boundary of $\Omega_{\tau}$, because $f_{\infty}=h_{\tau}$ on $[\ell_-,\ell_+]$.

\begin{corollary}\label{cor:count}
Let $\fconv_\infty$ be the $\tau$-convex set corresponding to $f_\infty$. 
There is a constant $c_0$ such that $\A(\fconv_\infty)\geq c_0\d\H^d$, but $\fconv_\infty$ meets $\partial_s\Omega_{\tau}$ and in particular it
is not strictly convex.
\end{corollary}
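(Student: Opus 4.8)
The plan is to assemble the estimates produced in the course of this subsection; no new construction is needed. The statement has two essentially independent parts: the lower bound $\A(K_\infty)\geq c_0\d\H^d$, and the fact that $K_\infty$ reaches $\partial_s\Omega_\tau$ and is therefore not strictly convex. I would treat them in this order, the first being a globalization of Proposition~\ref{pr:cex} and the second a consequence of the comparison $f_\infty\leq h_\tau$ with equality on $[\ell_-,\ell_+]$.

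For the area bound, I would start from Proposition~\ref{pr:cex}, which gives, for any compact $C\subset B$, a constant $c_C>0$ with $\M(f_\infty)\geq c_C\L$ on $C$. Choosing $C$ to contain a fundamental domain for the projective action of $\Gamma$ on $B$, I would note that $C$ is bounded away from $\partial B$, so $\lambda=\sqrt{1-\|\cdot\|^2}$ is bounded below by a positive constant $\lambda_0$ on $C$. Proposition~\ref{prop: area=ma} then yields $\A(f_\infty)=\lambda\M(f_\infty)\geq \lambda_0 c_C\L$ on $C$; rewriting $\L=\lambda^{d+1}\d\H^d$ via \eqref{eq:density vol ball} produces a bound $\A(K_\infty)\geq c_0\d\H^d$ on $C$ for a suitable $c_0>0$. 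Since $K_\infty$ is $\tau$-convex, $\A(K_\infty)$ is $\Gamma$-invariant, and $\d\H^d$ is $\Gamma$-invariant as well; because $C$ meets every $\Gamma$-orbit, the inequality propagates from $C$ to all of $\H^d$. This is precisely the paragraph preceding the statement, so the only care needed is the bookkeeping of the density factor $\lambda$.

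For the second part I would use that $f_\infty\leq h_\tau$ on $\overline B$ with equality on $[\ell_-,\ell_+]$, both already established: the inequality follows because $f=\epsilon f_0\leq h_\tau$ and the action of $\Gamma_\tau$ preserves the order \eqref{eq:ordpres} while fixing $h_\tau$, so $f_\infty=\sup_{\gamma_\tau}\gamma_\tau\cdot f\leq h_\tau$; the equality on the segment is forced by $0=f\leq f_\infty\leq h_\tau=0$ there. In particular $f_\infty(x_0)=h_\tau(x_0)$ at an interior point $x_0\in(\ell_-,\ell_+)\subset B$ (e.g.\ $x_0=0$), so $K_\infty$ is not Cauchy; by Lemma~\ref{lem: cauchy at int tau} it is not contained in the interior of $\Omega_\tau$ and hence meets $\partial\Omega_\tau$. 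To see that it meets the \emph{space-like} part $\partial_s\Omega_\tau$, I would observe that the support plane orthogonal to the time-like direction $\rad(x_0)$ is common to $K_\infty$ and $\Omega_\tau$ (their support functions agree at $x_0$); the support point $q$ of $K_\infty$ in this direction lies in $K_\infty\subset\Omega_\tau$ and on this plane, hence in the corresponding space-like face of $\Omega_\tau$, so $q\in\partial_s\Omega_\tau$. Finally, $f_\infty$ is affine (identically $0$) on the non-degenerate segment $[\ell_-,\ell_+]\cap B$, so the support function of $K_\infty$ fails to be strictly convex; by the correspondence used in the proof of Corollary~\ref{cor: MA regularite} this means $\partial_s K_\infty$ is not a strictly convex hypersurface.

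I do not expect a genuine obstacle, since the substantive estimate, Proposition~\ref{pr:cex}, is already proved. The two points deserving attention are: first, correctly tracking the weight $\lambda$ when converting a Monge--Amp\`ere lower bound on $B$ into an area bound on $\H^d$, and invoking $\Gamma$-invariance to globalize it from a fundamental domain; and second, locating the touching point on $\partial_s\Omega_\tau$ rather than merely on $\partial\Omega_\tau$, for which the coincidence of the two support functions along $[\ell_-,\ell_+]$ is the decisive input.
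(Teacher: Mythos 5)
Your proposal is correct and takes essentially the same route as the paper: the area bound is the paper's own globalization of Proposition~\ref{pr:cex} via Proposition~\ref{prop: area=ma} and $\Gamma$-invariance over a compact set containing a fundamental domain, and the contact with $\partial_s\Omega_\tau$ comes from $f_\infty=h_\tau$ on $[\ell_-,\ell_+]$, which is all the paper itself says. The two details you add beyond the paper's text — producing the contact point as the support point of $K_\infty$ in a direction $\rad(x_0)$ with $x_0$ interior to the segment (which lands on a space-like support plane of $\Omega_\tau$, hence in $\partial_s\Omega_\tau$), and reading non-strict convexity off the affineness of $f_\infty$ on $[\ell_-,\ell_+]$ in the sense used in the proof of Corollary~\ref{cor: MA regularite} — are exactly the steps left implicit there, and both are sound.
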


As a corollary we get

\begin{corollary}
\new{If $\partial_s\Omega_\tau$ contains a $(d-1)$-dimensional polyhedron $\poly$, then}
there is no $\tau$-convex set with  smooth boundary and with  constant curvature function $c_0$,
\new{where $c_0$ is the constant in Corollary \ref{cor:count}} .
\end{corollary}
\begin{proof}
Suppose by contradiction  that such set, say $\fconv$, exists and let $h$ its support function on $\ball$.
By Proposition \ref{prop: area=ma} and \eqref{eq:density vol ball} we should have
\[
   \M(h)=c_0(1-\|x\|^2)^{(-d-2)/2}\mathcal L~.
\]
On the other hand by Corollary \ref{cor:count}  we have
\[
   \M(f_\infty)\geq c_0(1-\|x\|^2)^{(-d-2)/2}\mathcal L~.
\]

By the comparison principle  (Theorem~\ref{thm: uniq MA})
\[
   h_\tau \geq h\geq f_\infty\geq 0~.
\]

This implies that $h$ coincides with $h_\tau$ on $[\ell_-, \ell_+]$, so 
$\partial \fconv$ contains the points of $\partial\Omega_\tau$ which satisfy either
 $\langle p, \new{\ell_-}\rangle_-=0$ or $\langle p, \new{\ell_+}\rangle_-=0$.
In particular for each point $p$ in the interior of $\poly$, the two light-like
vectors through $p$ directed as $\new{\ell_-}$ and $\new{\ell_+}$ are contained in
$\partial \fconv$.

Thus the intersection of $\partial \fconv$ with the time-like $2$-plane $U$ passing through $x$ 
and orthogonal to the plane $R$ containing $\poly$
is the union of two geodesic rays with different directions. Since this intersection is transverse, $\partial \fconv$ 
cannot be smooth.
 
\end{proof}


\appendix

\section{Smooth approximation}\label{appen: smooth approx}

In this section we will prove that any $\tau$-convex set $\fconv$ can be approximated by a sequence
$(\fconv_n)$ of $C^2_+$ $\tau$-convex sets.  We limit ourself to consider $C^2$-approximation only for
simplicity. In fact the same techniques can be implemented to produce a smooth approximation.
In Remark~\ref{rk:highorder} some details will be given in this direction.

The main idea is to use an average procedure to construct a sequence $\oh_n$ of $C^2$-
support functions converging to $\oh$. 
Basically, given any $r>0$ we define by the hyperbolic average of any continuous function 
$\oh$ of radius $r$ the following function
\[
 \hat h_r(x)=\displaystyle \frac{d}{\omega_{d-1}(\sh r)^d} \int_{B(x,r)}\oh(y)\d\mathbb H^d(y)~,
\]
where $B(x,r)$ is the hyperbolic ball centered at $x$ of a radius $r$ and $\omega_{d-1}$ is the volume
of the standard sphere $S^{d-1}$ of constant curvature $1$.
For brevity we denote by  $\const(r)=\displaystyle \frac{d}{\omega_{d-1}(\sh r)^d}$.
Note that  $\const(r)V(r)\rightarrow 1$ when $r\rightarrow 0$, where $V(r)$ is the volume of the hyperbolic ball
of radius $r$ \cite[3.4]{Rat06}.

We will show the following facts
\begin{itemize}
\item $\hat h_r\rightarrow \oh$  as $r\rightarrow 0$ uniformly on compact subsets of $\H^d$. 
\item  if $\oh$ is $\tau$-equivariant, so is $\hat h_r$.
\item  $\hat h_r$ is of class $C^1$. More generally, 
if $\oh$ is of class $C^k$, then $\hat h_r$ is of class $C^{k+1}$.
\end{itemize}

A further difficulty is that the average procedure in general does not produce support functions.
However  if $\oh$ is $\tau$-equivariant, we will prove that there exists a constant $C>0$ such that $\hat h_r-Cr$
is a support function for any $r$. 
To prove this result  it will suffice the following property about $\hat h_r$.

\begin{itemize}
\item If $\oh$ is a support function,
for any convex compact domain $K$ of $\mathbb H^d$, there is a constant
$C=C(\|\oh\|_{\operatorname{Lip}(K)})$ such that $\hat h_r-Cr$ is a support function on $K$.
\end{itemize}

(Here being a support function on $K\subset\H^d$ means that its $1$-homogeneous extension on the cone over
$K$ is convex.)

First let us show how the enlisted properties of the hyperbolic 
average  imply the density statement we want to prove in this section.

\begin{theorem}
 For a given  $\tau$-support function $\oh$ and for any $\delta>0$, there is 
 a $C^2_+$ $\tau$-support function $\oh_\delta$ such that $\|\oh-\oh_\delta\|_{L^{\infty}}\leq \delta$.
 \end{theorem}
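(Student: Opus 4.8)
The plan is to derive the theorem directly from the four properties of the hyperbolic average $\hat h_r$ enlisted above, iterating the averaging twice and then perturbing by a constant to gain strict convexity. Since a $\tau$-support function is only Lipschitz, one average produces a $C^1$ function, and a second average of a $C^1$ function produces a $C^2$ function; this is the regularity I must reach before adjusting for the $C^2_+$ condition of Lemma~\ref{lm:c2+}.

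First I would fix a small $r>0$ and form $\hat h_r$. By the second enlisted property it is $\tau$-equivariant, and by the third it is $C^1$. It need not be a support function, but the fourth property provides a constant $C$, depending only on the Lipschitz constant of $\oh$ on a convex compact domain, such that $\hat h_r - Cr$ is a support function there. The key point is that $\tau$-equivariance makes this globally effective: the difference $\oh-\oh_\tau$ is $\Gamma$-invariant, so the Lipschitz constant of $\oh$ is uniform over $\H^d$ because $\H^d/\Gamma$ is compact. Thus a single $C$ works on all of $\H^d$, and $\oh_1:=\hat h_r-Cr$ is a genuine $C^1$ $\tau$-support function. By the first enlisted property the convergence $\hat h_r\to\oh$ is uniform on a fundamental domain, hence by equivariance uniform on $\H^d$, so $\|\oh_1-\oh\|_{L^\infty}\to 0$ as $r\to 0$.

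Next I would average $\oh_1$ again, with radius $r'$. Since $\oh_1$ is $C^1$, the third property gives that its average is $C^2$; subtracting $C'r'$ exactly as before, I obtain a genuine $C^2$ $\tau$-support function $\oh_2$ with $\|\oh_2-\oh_1\|_{L^\infty}$ arbitrarily small. Being a $C^2$ support function, $\oh_2$ satisfies $\nabla^2\oh_2-\oh_2 g_\H\geq 0$, but possibly with degenerate directions. To upgrade to strict positivity I would subtract a small constant $\epsilon>0$, setting $\oh_\delta:=\oh_2-\epsilon$. This corresponds to the Minkowski sum with $\epsilon K(\H)$, whose hyperbolic support function is constant, so it preserves $\tau$-equivariance, and it yields $\nabla^2\oh_\delta-\oh_\delta g_\H=(\nabla^2\oh_2-\oh_2 g_\H)+\epsilon g_\H\geq\epsilon g_\H>0$. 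By Lemma~\ref{lm:c2+}, $\oh_\delta$ is then a $C^2_+$ $\tau$-support function. Choosing $r$, $r'$, $\epsilon$ successively small forces $\|\oh-\oh_\delta\|_{L^\infty}\leq\delta$, which proves the theorem.

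The main obstacle is the step that turns an average into an honest support function uniformly over the non-compact $\H^d$: the convexity inequality must hold at every point, whereas the naive bound from the fourth property is only local. Resolving this requires exploiting $\tau$-equivariance to replace the compact domain by a fundamental domain and to transport the uniform Lipschitz and convergence estimates to all of $\H^d$ through the deck transformations. The remaining verifications — that averaging commutes with the equivariance relation \eqref{eq:inv} and that it raises regularity by one — are the content of the enlisted properties and are comparatively routine.
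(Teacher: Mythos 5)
Your skeleton is exactly the paper's own proof: average once to reach $C^1$, subtract $Cr$ to restore convexity, average a second time to reach $C^2$, and finally subtract a small constant so that $\nabla^2\oh_2-\oh_2 g_\H\geq 0$ becomes the strict inequality of Lemma~\ref{lm:c2+}. Your last step is correct, and so is the observation that $\hat h_r-\oh$ is $\Gamma$-invariant, so uniform convergence on a fundamental domain propagates to all of $\H^d$. But the justification you give for the step you rightly call the main obstacle is wrong. You claim that since $\oh-\oh_\tau$ is $\Gamma$-invariant and $\H^d/\Gamma$ is compact, ``the Lipschitz constant of $\oh$ is uniform over $\H^d$''. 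This is false except in the Fuchsian case. By \eqref{eq:inv}, $\oh(\gamma\eta)=\oh(\eta)-\langle\tau_{\gamma^{-1}},\eta\rangle_-$, and the correction term is the restriction to $\H^d$ of a linear function of $\R^{d,1}$; such a restriction has hyperbolic gradient of norm $\sqrt{\langle v,v\rangle_-+\langle v,\eta\rangle_-^2}$, which is unbounded on $\H^d$. So while $\oh-\oh_\tau$ is indeed uniformly Lipschitz, $\oh_\tau$ — hence $\oh$ — is not: the Lipschitz constant of $\oh$ on the translates $\gamma(K)$ of a compact set grows with $\|\tau_{\gamma^{-1}}\|$. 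For the same reason, the mechanism in your closing paragraph (``transport the uniform Lipschitz estimates through the deck transformations'') cannot work: Lipschitz bounds degrade under the action precisely by these unbounded linear terms, so ``a single $C$ works on all of $\H^d$'' does not follow from your argument.

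What does transport under the action is convexity, and that is how the paper closes this gap. Apply the fourth enlisted property on one compact convex $K$ whose $\Gamma$-translates cover $\H^d$; this gives a constant $C=C(\|\oh\|_{\mathrm{Lip}(K)})$ such that the $1$-homogeneous extension of $\hat h_r-Cr$ is convex on the cone over $K$. Now $\hat h_r-Cr$ is $\tau$-equivariant, and by \eqref{eq:act} the $\Gamma_\tau$-action on functions consists of precomposition with a linear map plus addition of a linear function, both of which preserve convexity; hence the extension is convex on the cone over every $\gamma(K)$. These cones cover $\operatorname{I}^+(0)$, so the extension is locally convex, hence convex, i.e. $\hat h_r-Cr$ is a support function on all of $\H^d$ — with the single constant $C$ coming from $K$ alone, and with no global Lipschitz control on $\oh$ needed. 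Replacing your uniform-Lipschitz claim by this equivariance-of-convexity argument makes your proof complete, and it then coincides with the paper's.
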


\begin{proof}
Let $K$ be a convex domain such that the $\Gamma$-translates  of the interior of $K$ cover $\H^d$, and $C$ be the constant
appearing in the last property.
If $r$ is sufficiently small, $\|\hat h_r-Cr-\oh\|_K<\delta/3$.
Notice that the $1$-homogeneous extension of  $\hat h_r-Cr$ is convex in the cone over $K$. Since $\hat h_r-Cr$ is $\tau$-equivariant,
it turns out that $\oh'=\hat h_r-Cr$ is a $C^1$ $\tau$-support function over $\H^d$. Moreover, since 
$\oh'-\oh=\hat h_r-Cr-\oh$ is $\Gamma$-invariant we get that $\|\oh'-\oh\|_{\H^d}\leq\delta/3$.

Repeating the average procedure on $\oh'$ we obtain  a $C^2$ $\tau$-support function \newnew{$\oh''$} whose distance from $\oh'$ is less than $\delta/3$.
Notice that $\nabla^2 (\oh'')-\oh'' g_{\H}$ is semidefinite positive by Equation \eqref{eq:hessh}, and $\|\oh-\oh''\|<2\delta/3$, 
so  $\oh_\delta:=\oh''-\delta/3$ is a $C^2_+$ $\tau$-support function whose distance from $\oh$ is less than $\delta$.
\end{proof}

Let us now prove the properties of this average process.

The first property is a direct consequence of the next proposition and the fact
that the function $\const(r)$ behaves as the inverse of the volume $V(r)$ of the hyperbolic ball
of radius $r$ for small $r$.

\begin{proposition}
Let $\oh$ be a continuous function of $\mathbb H^d$.
Given a compact subset $K$, denote by $$\delta(r,K)=\sup\{|\oh(x)-\oh(y)|, x\in K, y\in B(x,r)\}~.$$
Then $\|\hat h_r- \oh\|_{K}\leq \left|\const(r)V(r)-1\right|\|\oh\|_{K_r}+\delta(r,K)$.
\end{proposition}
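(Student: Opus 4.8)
The plan is to estimate the difference $\hat h_r(x)-\oh(x)$ pointwise for $x\in K$ by inserting the constant value $\oh(x)$ into the averaging integral and isolating the two sources of error: the variation of $\oh$ over the ball, and the failure of $A(r)$ to be an exact normalizing constant. The starting observation is that $\H^d$ is homogeneous, so every metric ball of radius $r$ has the same volume, $\int_{B(x,r)}\d\H^d=V(r)$ for all $x$; thus $A(r)V(r)$ is precisely the factor by which the operator $\oh\mapsto\hat h_r$ differs from a genuine probability average.

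The first step is the exact algebraic decomposition
\begin{equation*}
\hat h_r(x)-\oh(x)=A(r)\int_{B(x,r)}\bigl(\oh(y)-\oh(x)\bigr)\d\H^d(y)+\bigl(A(r)V(r)-1\bigr)\oh(x)~,
\end{equation*}
which follows at once by writing $A(r)\int_{B(x,r)}\oh(x)\d\H^d=A(r)V(r)\oh(x)$ and regrouping. The second term is controlled directly: since $x\in K\subset K_r$, one has $|\oh(x)|\le\|\oh\|_{K_r}$, producing the contribution $|A(r)V(r)-1|\,\|\oh\|_{K_r}$. For the first term I would bound the integrand pointwise: if $x\in K$ and $y\in B(x,r)$, then by the very definition of $\delta(r,K)$ we have $|\oh(y)-\oh(x)|\le\delta(r,K)$, whence the first term is at most $A(r)V(r)\,\delta(r,K)$ in absolute value.

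The one point requiring genuine care — and the reason the claimed bound carries a bare $\delta(r,K)$ rather than $A(r)V(r)\,\delta(r,K)$ — is the inequality $A(r)V(r)\le 1$. This is the main (though mild) obstacle, and I would establish it by a one-line monotonicity argument: set $f(r)=(\sh r)^d-d\int_0^r(\sh t)^{d-1}\d t$, so that $f(0)=0$ and $f'(r)=d(\sh r)^{d-1}(\ch r-1)\ge 0$, giving $f(r)\ge 0$ for all $r>0$. Recalling that $A(r)V(r)=d\int_0^r(\sh t)^{d-1}\d t/(\sh r)^d$, this is exactly the desired inequality $A(r)V(r)\le 1$ (and it also makes the earlier remark $A(r)V(r)\to 1$ transparent, since near $r=0$ the ratio tends to $1$). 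Consequently $A(r)V(r)\,\delta(r,K)\le\delta(r,K)$, and adding the two estimates yields
\begin{equation*}
\|\hat h_r-\oh\|_K\le|A(r)V(r)-1|\,\|\oh\|_{K_r}+\delta(r,K)~,
\end{equation*}
as asserted. Everything apart from the monotonicity of $f$ is routine bookkeeping with the averaging integral.
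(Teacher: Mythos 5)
Your proof is correct, and you correctly identified the one non-routine point: with your decomposition the oscillation term carries the weight $A(r)V(r)$, so you genuinely need $A(r)V(r)\le 1$, and your monotonicity argument for $f(r)=(\sh r)^d-d\int_0^r(\sh t)^{d-1}\d t$ establishes it. The paper's proof avoids this issue by splitting differently: for $x\in K$ it writes
\begin{equation*}
\hat h_r(x)-\oh(x)=\left(A(r)-\frac{1}{V(r)}\right)\int_{B(x,r)}\oh(y)\,\d\H^d(y)
+\frac{1}{V(r)}\int_{B(x,r)}\bigl(\oh(y)-\oh(x)\bigr)\,\d\H^d(y)~,
\end{equation*}
so the oscillation is weighted by the \emph{exact} probability average $\frac{1}{V(r)}\int_{B(x,r)}$ and is bounded by $\delta(r,K)$ with no further input, while the first term yields $|A(r)V(r)-1|\,\|\oh\|_{K_r}$ after pulling out $V(r)\|\oh\|_{K_r}$. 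The trade-off between the two routes: the paper's needs only the homogeneity of $\H^d$ (every $r$-ball has volume $V(r)$) and no information whatsoever about the size of $A(r)V(r)$, so it is a two-line computation; yours additionally requires the explicit formula $V(r)=\omega_{d-1}\int_0^r(\sh t)^{d-1}\d t$, but in exchange it proves the structural fact $A(r)\le 1/V(r)$, i.e.\ that $\hat h_r$ is a slight under-average, which is a genuine (if mild) extra piece of information the paper never establishes. One small caveat: your closing aside that the monotonicity computation ``makes $A(r)V(r)\to 1$ transparent'' overstates it slightly --- the inequality $f\ge 0$ alone gives only $A(r)V(r)\le 1$, and the limit still needs a separate (elementary) expansion --- but this is an aside and does not affect the proof.
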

\begin{proof}
The estimate is obtained by a direct computation. For $x\in K$:
$$
\left|\hat h_r(x)-\oh(x)\right|=$$ $$\left|\const(r)\int_{B(x,r)} \oh(y)\d\mathbb H^d(y)-\frac{1}{V(r)}\int_{B(x,r)} \oh(y)\d\mathbb H^d(y)+
\frac{1}{V(r)}\int_{B(x,r)} (\oh(y)-\oh(x))\d\mathbb H^d(y)\right|$$
$$\leq
\left|\const(r)V(r)-1\right|\|\oh\|_{K_r}+\delta(r,K)~.
$$
\end{proof}

We now show that if $\oh$ is $\tau$-equivariant so is $\hat h_r$.
Notice that if $\gamma\in\Gamma$ then 
\[
  \hat h_r(\gamma(x))-\hat h_r(x)=\const(r)\left(\int_{B(\gamma(x),r)}\oh(y) \d\mathbb H^d(y)-
  \int_{B(x,r)}\oh(y)\d\mathbb H^d(y)\right)\]�\[=\const(r)\left(\int_{B(x,r)}(\oh(\gamma y)-\oh(y))\d\mathbb H^d(y)\right)~.
 \]
 By \eqref{eq:inv} we have that $\bar f(y)=\oh(\gamma y)-\oh(y)$ is the restriction of the linear function
 $$\bar f(x)=\langle x, \gamma^{-1}\tau_{\gamma}\rangle_-~,$$ and  we have that 
 $\hat h_r(\gamma x)-\hat h_r(x)=\hat f_r(x)$.

Now the $\tau$-equivariance of $\hat h_r$ is ensured by this remark and the following
Lemma.

\begin{lemma}
If $\bar f$ is the restriction of a linear function on $\mathbb H^d$, then $\hat f_r=\bar f$ for every $r$.
\end{lemma}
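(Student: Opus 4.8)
The plan is to exploit the linearity of $\bar f$ together with the rotational symmetry of the hyperbolic ball, reducing the identity to a single elementary integral. Write $\bar f(y)=\langle y, v\rangle_-$ for the fixed vector $v\in\R^{d+1}$ that defines the linear function. First I would pull the Minkowski product out of the integral: since $y\mapsto\langle y,v\rangle_-$ is linear, the average becomes a pairing with a \emph{vector-valued} barycenter of the ball,
\[
\hat f_r(x)=A(r)\int_{B(x,r)}\langle y,v\rangle_-\,\d\H^d(y)=\langle \bar y_x,\,v\rangle_-,\qquad \bar y_x:=A(r)\int_{B(x,r)}y\,\d\H^d(y)\in\R^{d+1}.
\]
It therefore suffices to prove the clean geometric statement $\bar y_x=x$ for every $x\in\H^d$ and every $r>0$, after which $\hat f_r(x)=\langle x,v\rangle_-=\bar f(x)$ follows immediately.

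Next I would use symmetry to pin down the \emph{direction} of $\bar y_x$. The stabilizer of $x$ in $\operatorname{SO}^+(d,1)$ acts on the splitting $\R^{d+1}=\R x\oplus T_x\H^d$ trivially on the factor $\R x$ and as the full rotation group $\operatorname{SO}(d)$ on $T_x\H^d$; in particular the only vectors it fixes are the scalar multiples of $x$. Since both the ball $B(x,r)$ and the volume measure $\d\H^d$ are invariant under this stabilizer, so is the barycenter $\bar y_x$, and hence $\bar y_x=c\,x$ for some scalar $c$. (By homogeneity of $\H^d$ one sees $c$ is independent of $x$, but this will also drop out of the computation below.)

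Finally I would compute $c$ by pairing with $x$, using $\langle x,x\rangle_-=-1$, the standard identity $\langle x,y\rangle_-=-\ch\operatorname{dist}(x,y)$ for $x,y\in\H^d$, and geodesic polar coordinates $\d\H^d=(\sh\rho)^{d-1}\,\d\rho\,\d\sigma$ about $x$, where $\d\sigma$ is the measure of total mass $\omega_{d-1}$ on $S^{d-1}$:
\[
-c=\langle \bar y_x,x\rangle_-=-A(r)\,\omega_{d-1}\int_0^r \ch\rho\,(\sh\rho)^{d-1}\,\d\rho
=-\frac{d}{(\sh r)^d}\cdot\frac{(\sh r)^d}{d}=-1,
\]
using $\int_0^r \ch\rho\,(\sh\rho)^{d-1}\,\d\rho=(\sh r)^d/d$ and the definition $A(r)=\tfrac{d}{\omega_{d-1}(\sh r)^d}$. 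Thus $c=1$, so $\bar y_x=x$ and the lemma follows. The only mildly delicate point is the symmetry reduction that forces $\bar y_x$ to be a multiple of $x$; the rest is bookkeeping, and the normalizing constant $A(r)$ is arranged precisely so that the elementary integral cancels it, so there is no genuine obstacle here.
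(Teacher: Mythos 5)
Your proof is correct, but it takes a genuinely different route from the paper's. The paper argues via PDE machinery: it first observes that the restriction of a linear function satisfies $\nabla^2 \bar f-\bar f g_{\H}=0$ (using the Hessian identity for support functions developed earlier), hence $\Delta\bar f=d\cdot\bar f$, then applies the divergence theorem to turn the ball average into a flux integral over the geodesic sphere $S(x,r)$, which it evaluates by parameterizing the sphere through the exponential map $\phi(v)=\ch(r)\,x+\sh(r)\,v$ and noting that the tangential contribution $\int_{S^{d-1}}F(v)\,\d S^{d-1}$ vanishes by linearity. You instead reduce everything to the clean geometric claim that the Minkowski-space barycenter of the geodesic ball equals its center: linearity lets you pull $\langle\cdot,v\rangle_-$ out of the integral, the $\operatorname{SO}(d)$-stabilizer of $x$ (which fixes only $\R x$ in $\R^{d+1}$, valid since the paper assumes $d\geq 2$ throughout) forces $\bar y_x=c\,x$, and the scalar $c=1$ falls out of a single polar-coordinate integral via $\langle x,y\rangle_-=-\ch\operatorname{dist}(x,y)$ and the choice of $A(r)$. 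Your argument is more elementary and self-contained --- no Laplacian, no divergence theorem, no appeal to the support-function Hessian identity --- while the paper's route has the advantage of exposing the structural fact that such $\bar f$ are eigenfunctions of the hyperbolic Laplacian and of reusing the Hessian formalism that the rest of the appendix (notably the proof of Proposition~\ref{prop: app apro}) is built on. Both proofs ultimately exploit linearity to kill the residual term, so they are comparable in length; yours would serve equally well as a replacement.
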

\begin{proof}
Since the $1$-homogeneous extension, $F$, of  $\bar f$ is linear, we have that its Minkowski gradient is constant.
By the computation in  Lemma~\ref{lm:c2+}  the derivative of $F$ at a point $\H^d$ coincides with
$\nabla \grad^{\H}(\bar f)-\bar f\operatorname{Id}$, so we conclude that
$\nabla^2\bar f-\bar f g_{\H}=0$ on $\H^d$.  

In particular we have $\Delta \bar f=d\cdot \bar f$. So we deduce from the divergence theorem that
\[
\hat f_r(x)=\frac{d}{\omega_{d-1}\sh(r)^{d}}\int_{B(x,r)}\frac{1}{d}\Delta \bar f(y) \d\mathbb H^d(y)=
\frac{1}{\omega_{d-1}\sh(r)^{d}}\int_{S(x,r)} D\bar f(y)[\nu] \d S_r(y)
\]
where $S(x,r)$ is the boundary of $B(x,r)$, $\nu$ is the unit exterior normal and 
$\d S_r$ is the area form of the sphere $S(x,r)$.

In order to compute the last integral consider the unit tangent sphere at $x$,
$$S^{d-1}=\{v\in\mathbb R^{d,1}|\langle v,v\rangle_-=1, \langle v,x\rangle_-=0\}~,$$ and the parameterization
given by the exponential map:
\[
    \sigma:S^{d-1}\rightarrow S(x,r)~,\qquad \sigma(v)=\ch(r) x+\sh(r) v~.
\]
We have $\sigma^*(\d S_r)=\sh (r)^{d-1} \d S^{d-1}$. From
$\nu(\sigma(v))=\sh(r)x+\ch(r)v$  we deduce that
\[
  D\bar f(x)[\nu]=DF(x)[\nu]=F(\nu)=\sh(r) F(x)+\ch(r)F(v)=\sh(r)\bar f(x)+\ch(r)F(v),
\]
so we get
\[
\hat f_r(x)=\frac{1}{\omega_{d-1}\sh(r)^d}\int_{S^{d-1}}(\sh(r)^d \bar f(x)+\sh (r)^{d-1}\ch(r)F(v))\d S^{d-1}~.
\]
Since $F$ is linear $\int_{S^{d-1}} F\d S^{d-1}=0$, so the previous formula shows that
$\hat f_r(x)=\bar f(x)$.
\end{proof}

Let us consider now the regularizing properties of the average process. 
Until  the end of  the section we use the 
 notation  $\langle\cdot,\cdot\rangle$ to denote the metric on $\H^d$.

\begin{lemma}\label{lem:c1app}
If $\oh$ is a continuous function, then $\hat h_r$ is $C^1$ and given a tangent 
vector $v$ at $x$,  we have
\[
  D\hat h_r(x)[v]=\const(r)\int_{S(x,r)}\oh(y) \langle \nu(y), \tilde V(y)\rangle \d S_r(y)
\]
where $\tilde V$ is any Killing vector field such that $\tilde V(x)=v$.
\end{lemma}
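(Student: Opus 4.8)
The plan is to exploit the homogeneity of $\H^d$ and to differentiate the domain integral by \emph{moving} the ball $B(x,r)$ along an isometric flow, so that the derivative falls on the boundary of the ball rather than on the merely continuous integrand $\oh$. Concretely, fix $x$ and a tangent vector $v$ at $x$, choose a Killing field $\tilde V$ with $\tilde V(x)=v$, and let $\phi_t$ be its flow: a one-parameter group of isometries of $\H^d$ with $\phi_0=\mathrm{id}$ and $\tfrac{d}{dt}\big|_{t=0}\phi_t(x)=v$. Since each $\phi_t$ is an isometry it carries $B(x,r)$ onto $B(\phi_t(x),r)$ and preserves the volume form, so
\[
\hat h_r(\phi_t(x))=A(r)\int_{B(\phi_t(x),r)}\oh\,\d\H^d=A(r)\int_{\phi_t(B(x,r))}\oh\,\d\H^d .
\]
Because $\tfrac{d}{dt}\big|_{t=0}\phi_t(x)=v$, the value $D\hat h_r(x)[v]$ is the $t$-derivative at $0$ of the right-hand side, and the problem reduces to a transport (Reynolds) formula for a domain swept by $\phi_t$.

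First I would establish that for a continuous $f$ and a flow $\phi_t$ with generator $\tilde V$,
\[
\frac{d}{dt}\Big|_{t=0}\int_{\phi_t(\Omega)}f\,\d\H^d=\int_{\partial\Omega}f\,\langle \tilde V,\nu\rangle\,\d S .
\]
The subtlety is that $f=\oh$ is only continuous, so the usual route through $\int_\Omega (f\circ\phi_t)\det(D\phi_t)$ is unavailable. Instead I would argue geometrically: the symmetric difference $\phi_t(\Omega)\,\triangle\,\Omega$ is, up to $o(t)$, a thin collar over $\partial\Omega$ of signed normal width $t\langle \tilde V,\nu\rangle$, so dividing the difference of the two integrals by $t$ and using the uniform continuity of $\oh$ on a neighborhood of the compact sphere $S(x,r)$ produces the boundary integral in the limit. (Equivalently, one may approximate $\oh$ uniformly near $S(x,r)$ by smooth functions, apply the classical transport theorem, and pass to the limit.) Applying this with $\Omega=B(x,r)$ and multiplying by $A(r)$ gives exactly the asserted formula.

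It then remains to check two points. The formula must not depend on the extension $\tilde V$ of $v$: if $\tilde V_1(x)=\tilde V_2(x)=v$, then $W=\tilde V_1-\tilde V_2$ is a Killing field vanishing at $x$, hence its flow consists of isometries fixing $x$ and therefore preserving each geodesic sphere $S(x,r)$; thus $W$ is tangent to $S(x,r)$, so $\langle W,\nu\rangle\equiv 0$ there and the two boundary integrals coincide. Finally, to conclude that $\hat h_r$ is genuinely $C^1$ I would note that $v\mapsto D\hat h_r(x)[v]$ is linear (choose $\tilde V$ depending linearly on $v$) and that the right-hand side is continuous in $x$: parametrizing $S(x,r)$ by the exponential map $u\mapsto\exp_x(r\,u)$ over a fixed unit sphere and carrying $\nu$ and $\tilde V$ along smoothly, the integrand becomes $\oh(\exp_x(r\,u))$ times a factor smooth in $x$, so joint continuity of the exponential map together with continuity of $\oh$ yields continuity of the integral in $x$. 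A function whose directional derivatives exist, are linear in the direction, and vary continuously with the base point is $C^1$.

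The hard part will be the rigorous passage to the boundary integral for a merely continuous integrand along a moving domain; once this transport formula is in hand, the independence of the extension and the $C^1$ conclusion are routine.
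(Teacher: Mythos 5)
Your proposal is correct, but it follows a genuinely different route from the paper's. The paper first treats the case where $\oh$ is $C^1$: differentiating under the integral sign along the isometric flow gives
$D\hat h_r(x)[v]=A(r)\int_{B(x,r)}\langle \grad^{\H}\oh(y),\tilde V(y)\rangle \,\d\H^d(y)$,
and since Killing fields are divergence free, $\langle\grad^{\H}\oh,\tilde V\rangle=\operatorname{div}(\oh\,\tilde V)$, so the divergence theorem converts this into the boundary integral; the merely continuous case is then handled by approximating $\oh$ uniformly on compact sets by $C^1$ functions $\oh_n$ and observing that the differentials $D(\hat h_n)_r$ converge uniformly to the boundary-integral $1$-form, which yields differentiability of the limit with that form as its differential. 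You instead put the derivative on the moving domain from the start: a Reynolds-type transport formula, proved by the collar estimate, which is valid for a merely continuous integrand. What your approach buys is that the formula is obtained in one stroke for continuous $\oh$, bypassing both mollification and the divergence theorem (you keep smoothing only as a fallback); what the paper's approach buys is that every step is a standard quotable fact (differentiation under the integral, divergence-free Killing fields, uniform convergence of derivatives), whereas your collar argument, though sound, must be carried out by hand -- Fermi coordinates around $S(x,r)$, the uniform estimate that $\phi_t(S(x,r))$ is a normal graph of height $t\langle\tilde V,\nu\rangle+o(t)$, and uniform continuity of $\oh$ near the sphere.

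One point in your closing step deserves care. What you actually establish is the existence of derivatives of $\hat h_r$ along Killing flow lines; for a function not yet known to be differentiable, these need not coincide with directional derivatives along geodesics or coordinate lines, so the criterion ``directional derivatives exist, are linear in the direction, and vary continuously, hence $C^1$'' does not apply verbatim. The fix is routine: choose Killing fields $V_1,\dots,V_d$ spanning $T_{x_0}\H^d$, note that $F(t_1,\dots,t_d)=\phi^{V_1}_{t_1}\circ\cdots\circ\phi^{V_d}_{t_d}(x_0)$ is a local diffeomorphism near $0$, and that each partial derivative of $\hat h_r\circ F$ is again a derivative along a Killing flow (the push-forward of a Killing field by an isometry is Killing), hence exists and is continuous; the classical criterion then gives $\hat h_r\circ F\in C^1$, so $\hat h_r\in C^1$. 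The paper sidesteps this issue entirely, since in its approximation step the approximants are honestly $C^1$ and uniform convergence of their differentials gives differentiability of the limit directly.
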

\begin{proof}
First we assume that $\oh$ is $C^1$. 
Let $\gamma_t$ the isometric flow generated by $\tilde V$.
We have to compute
\[
\lim_{t\rightarrow 0} \frac{\hat h_r(\gamma_t(x))-\hat h_r(x)}{t}=
\lim_{t\rightarrow 0}\frac{1}{t} \const(r)\int_{B(x,r)} (\oh(\gamma_t y)-\oh(y)) \d\mathbb H^d(y)~.
\]
This limit clearly exists and we have
\[
  D\hat h_r(x)[v]=\const(r)\int_{B(x,r)} \langle\grad^{\H} \oh(y), \tilde V(y)\rangle \d\mathbb H^d(y)~. 
\]
Since $\tilde V$ is a Killing vector field, it is divergence free, so 
$\langle\grad^{\H} \oh, \tilde V\rangle=\operatorname{div}(\oh V)$. Applying the divergence theorem
we obtain a proof of the formula. Notice that $D\hat h_r$ continuously
depends on the point, so $\hat h_r$ is $C^1$.

Consider now the case where $\oh$ is only $C^0$.
Take a sequence of $C^1$-functions $\oh_n$ converging to $\oh$ uniformly on compact subsets of $\H^d$. 
We have that $ ((\hat{h}_n)_r)$ converges to $\hat h_r$ as $n\rightarrow+\infty$ uniformly on compact subsets.
So in order to conclude it is sufficient to notice that 
$D(\hat{h}_n)_r(x)$ converges to the $1$-form
\[
 \alpha_r(x)[v]=\int_{S(x,r)}u(y)\langle \nu(y), \tilde V(y)\rangle \d S_r(y)~.
\]
It follows that $\hat h_r$ is differentiable and $D\hat h_r=\alpha$.
\end{proof}

\begin{remark}\label{rk:highorder}
{ \rm
In the proof we have showed in particular that if $\oh$ is $C^1$ then
\[
  D\hat h_r(x)[v]=\const(r)\int_{B(x,r)}\langle\grad^{\H} \oh(y), \tilde V(y)\rangle \d\mathbb H^d(y)~.
\]
Applying  Lemma~\ref{lem:c1app} to the continuous function $\oh'(x)=\langle\grad^{\H} \oh(x), \tilde V(x)\rangle$,
we see that for any Killing vector field $\tilde V$ the function $x\mapsto D\hat h_r(x)(\tilde V(x))$ is $C^1$. 
Thus   $\hat h_r$ is  $C^2$. 
More generally by an induction argument, one shows that if
$\oh$ is of class $C^k$, then $\hat h_r$ is of class $C^{k+1}$.
}\end{remark}

Finally we have to prove the last property. Let $\mathrm{Lip}_K(\oh)$ be the best Lipschitz constant of $\oh$ on $K$,
and let $\|\oh \|_{\operatorname{Lip}(K)}=\|\oh\|_K+\mathrm{Lip}_K(\oh)$.

\begin{proposition}\label{prop: app apro}
 Let $\oh$ be a support function.
For any convex compact subset $K$ of $\mathbb H^d$, there is a constant
$C=C(\|\oh\|_{\operatorname{Lip}(K)})$ such that $\hat h_r-Cr$ is a support function on $K$.
\end{proposition}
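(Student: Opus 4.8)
The plan is to read the conclusion ``$\hat h_r-Cr$ is a support function on $K$'' through Lemma~\ref{lm:c2+}: for a $C^2$ function $u$ this means exactly that the support quadratic form $\nabla^2 u-u\,g_\H$ is positive semidefinite on $K$. Since $Cr$ is constant, what we must establish is the lower bound
\[
\nabla^2\hat h_r-\hat h_r\,g_\H\ \geq\ -Cr\,g_\H \qquad\text{on }K,
\]
with $C$ depending only on $\|\oh\|_{\operatorname{Lip}(K_r)}$ (which may be absorbed into a dependence on $K$ by enlarging $K$ slightly). First I would reduce to $\oh$ smooth. By Lemma~\ref{lem: conv unif sur compt} there are $C^\infty_+$ support functions $\oh_n\to\oh$ uniformly on $K_r$; for convex functions uniform convergence on a neighborhood bounds the Lipschitz constants uniformly (cf.\ \cite[10.6]{Roc97}), so $\sup_n\|\oh_n\|_{\operatorname{Lip}(K_r)}\leq M<\infty$. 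If I prove the statement in the smooth case with a constant $C=C(M)$, then each $\hat{(h_n)}_r-Cr$ is a support function on $K$; since averaging is linear and $\oh_n\to\oh$ uniformly on $K_r$ we get $\hat{(h_n)}_r\to\hat h_r$ uniformly on $K$, and being a support function (convexity of the $1$-homogeneous extension) is closed under uniform limits, so $\hat h_r-Cr$ is a support function on $K$.

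For the smooth case, the key computation is to differentiate the average along the \emph{transvection flow}. Fix $x\in K$ and a unit $v\in T_x\H^d$, let $\sigma$ be the geodesic with $\sigma(0)=x$, $\dot\sigma(0)=v$, and let $\phi_t$ be the one-parameter group of isometries translating along $\sigma$, with Killing generator $V$; thus $V(x)=v$, $|V(x)|=1$, $\nabla_VV(x)=0$, and $\sigma(t)=\phi_t(x)$. Because $\phi_t$ is a volume-preserving isometry, $B(\sigma(t),r)=\phi_t(B(x,r))$, so after the change of variables
\[
\hat h_r(\sigma(t))=A(r)\int_{B(x,r)}\oh(\phi_t(z))\,\d\mathbb H^d(z).
\]
Differentiating twice under the integral (legitimate for smooth $\oh$ on the compact ball) and using $\tfrac{\d^2}{\d t^2}\big|_{0}\oh(\phi_t(z))=\nabla^2\oh(V,V)(z)+\d\oh_z(\nabla_VV)$, and then subtracting $\hat h_r(x)=A(r)\int_{B(x,r)}\oh\,\d\mathbb H^d$ after writing $-\oh=-\oh|V|^2+\oh(|V|^2-1)$, gives
\[
\nabla^2\hat h_r(x)(v,v)-\hat h_r(x)=A(r)\!\int_{B(x,r)}\!\Big[(\nabla^2\oh-\oh g_\H)(V,V)+\oh\,(|V|^2-1)+\d\oh(\nabla_VV)\Big]\d\mathbb H^d.
\]

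Now I would use that $\oh$ is a support function, so $(\nabla^2\oh-\oh g_\H)(V,V)\geq 0$ pointwise and this term may be dropped for a lower bound. The remaining two terms are the genuinely non-flat error, and they are controlled by the geometry of the transvection field: along the axis $\ell$ of $\sigma$ one has $|V(z)|=\ch(\operatorname{dist}(z,\ell))$ and $|\nabla_VV(z)|=\sh(\operatorname{dist}(z,\ell))\ch(\operatorname{dist}(z,\ell))$, uniformly over all geodesics by homogeneity of $\H^d$. Since $\operatorname{dist}(z,\ell)\leq r$ on $B(x,r)$, this yields $0\leq |V(z)|^2-1\leq\sh^2 r$ and $|\nabla_VV(z)|\leq\sh r\,\ch r$, whence $|\oh(z)(|V(z)|^2-1)|\leq\sh^2 r\,\|\oh\|_{\infty,K_r}$ and $|\d\oh_z(\nabla_VV)|\leq\sh r\,\ch r\,\|\oh\|_{\operatorname{Lip}(K_r)}$. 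Using that $A(r)V(r)$ is bounded for small $r$ (indeed $\to1$), averaging these bounds gives
\[
\nabla^2\hat h_r(x)(v,v)-\hat h_r(x)\ \geq\ -\,C\,r\,\|\oh\|_{\operatorname{Lip}(K_r)}
\]
uniformly in $x\in K$ and unit $v$, which is the desired tensor inequality. The main obstacle is precisely the appearance and estimation of these two curvature error terms, which vanish identically in the Euclidean (flat) setting where averaging preserves convexity exactly; here they must be bounded uniformly in $x$ and in the direction $v$ (via the explicit behaviour of the transvection Killing field), and one must carry the reduction from general support functions to smooth ones so that the pointwise inequality $(\nabla^2\oh-\oh g_\H)(V,V)\ge 0$ can be used legitimately.
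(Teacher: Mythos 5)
Your proof is correct and follows essentially the same route as the paper's: differentiate the hyperbolic average along the transvection Killing flow through $x$, use the support inequality $(\nabla^2\oh-\oh g_\H)(V,V)\geq 0$ to drop the main term, bound the two error terms $\oh(|V|^2-1)$ and $\d\oh(\nabla_V V)$ by $O(r)\,\|\oh\|_{\operatorname{Lip}(K_r)}$, and then pass from smooth to general support functions by approximation with uniform Lipschitz control. The only cosmetic difference is that you estimate the Killing-field errors via explicit Fermi-coordinate formulas ($|V|=\ch \rho$, $|\nabla_V V|=\sh \rho \,\ch \rho$), whereas the paper uses the softer fact that $\|\tilde V\|$ is convex in negative curvature; both yield the same $O(r)$ bounds.
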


The proof of this proposition is first done assuming that $\oh$ is $C^2$ and then
using an approximation argument.
The proof in the regular case follows by a computation of the Hessian of $\hat h_r$.

\begin{lemma}\label{lem appen approx}
Assume that $\oh$ is $C^2$. Let $v$ be a tangent vector at $x\in\mathbb H^d$.
Then
\[
   \nabla^2(\hat h_r)_x (v,v)=\const(r)\left(\int_{B(x,r)} (\nabla^2 \oh)_y(\tilde
   V_y, \tilde V_y)\d\mathbb H^d(y) \ +\ \int_{B(x,r)}  \langle
   \grad^{\H} \oh_y, \nabla_{\tilde V}\tilde V\rangle \d\mathbb H^d(y)\right)
\]
where $\tilde V$ is the Killing vector field  extending $v$ and  generating a hyperbolic transformation with
axis through $x$ (i.e.,  $\tilde V(x)=v$ and $\nabla_{\tilde V}\tilde V=0$ at x). 
\end{lemma}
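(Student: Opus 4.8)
The plan is to exploit the isometry-covariance of the averaging operator. Let $\gamma_t$ be the flow of the Killing field $\tilde V$, so that each $\gamma_t$ is an isometry of $\H^d$ with $\gamma_0=\mathrm{id}$ and $\frac{d}{dt}|_{t=0}\gamma_t(x)=\tilde V(x)=v$. Since isometries carry the geodesic ball $B(x,r)$ onto $B(\gamma_t(x),r)$ and preserve the volume form, the change of variables $y=\gamma_t(z)$ gives
\[
  \hat h_r(\gamma_t(x))=A(r)\int_{B(\gamma_t(x),r)}\oh(y)\,\d\H^d(y)=A(r)\int_{B(x,r)}\oh(\gamma_t(z))\,\d\H^d(z).
\]
The key gain is that the domain of integration no longer depends on $t$, so all of the $t$-dependence now sits inside the integrand $\oh\circ\gamma_t$.

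First I would reduce the Hessian of $\hat h_r$ to a second time-derivative. For any function $f$ and any vector field $\tilde V$ one has $\nabla^2 f(\tilde V,\tilde V)=\tilde V(\tilde V(f))-(\nabla_{\tilde V}\tilde V)(f)$, together with $\tilde V(\tilde V(f))(x)=\frac{d^2}{dt^2}|_{t=0} f(\gamma_t(x))$. Because $\tilde V$ is chosen to generate a hyperbolic transformation with axis through $x$, we have $\nabla_{\tilde V}\tilde V=0$ \emph{at} $x$, whence
\[
  \nabla^2(\hat h_r)_x(v,v)=\frac{d^2}{dt^2}\Big|_{t=0}\hat h_r(\gamma_t(x)).
\]
Applying this to the integral formula above and differentiating twice under the integral sign (legitimate since $\oh$ is $C^2$ and $\overline{B(x,r)}$ is compact) yields
\[
  \nabla^2(\hat h_r)_x(v,v)=A(r)\int_{B(x,r)}\frac{d^2}{dt^2}\Big|_{t=0}\oh(\gamma_t(z))\,\d\H^d(z).
\]

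Then for each fixed $z$ I would convert the inner second derivative back into a Hessian, running the same identity in reverse but now at the point $z$:
\[
  \frac{d^2}{dt^2}\Big|_{t=0}\oh(\gamma_t(z))=\tilde V(\tilde V(\oh))(z)=\nabla^2\oh(\tilde V_z,\tilde V_z)+\langle\grad^{\H}\oh(z),\nabla_{\tilde V}\tilde V(z)\rangle.
\]
Substituting this into the previous display produces exactly the claimed formula.

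The crucial point — and the one to handle with care — is that although $\nabla_{\tilde V}\tilde V$ vanishes at the center $x$, it does \emph{not} vanish at a general point $z\in B(x,r)$; it is precisely this nonvanishing that accounts for the second integral in the statement, and conflating the two points would lose that term. The remaining verifications are routine: the justification of differentiation under the integral, and the existence of a Killing field $\tilde V$ with $\tilde V(x)=v$ and $\nabla_{\tilde V}\tilde V(x)=0$, which is supplied by the one-parameter group of translations along the geodesic through $x$ in the direction $v$.
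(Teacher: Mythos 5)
Your proof is correct and follows essentially the same route as the paper's: both use the isometry-covariance of the average to move the $t$-dependence into the integrand, identify $\nabla^2(\hat h_r)_x(v,v)$ with $\frac{d^2}{dt^2}\big|_{t=0}\hat h_r(\gamma_t(x))$ via the fact that the flow line through $x$ is a geodesic (equivalently $\nabla_{\tilde V}\tilde V(x)=0$), and then differentiate twice under the integral, with the non-vanishing of $\nabla_{\tilde V}\tilde V$ at points $z\neq x$ producing the second term. Your explicit remark on why that second integral cannot be discarded is exactly the point the paper's computation encodes implicitly.
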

\begin{proof}[Proof of Lemma~\ref{lem appen approx}]
Let $\gamma_t$ be the isometry flow generated by $\tilde V$.
The path $c(t)=\gamma_t(x)$ is a geodesic passing through $x$ such that $\dot c=v$.
In particular, we have
\[
     \nabla^2(\hat h_r)_x(v,v)\rangle=\frac{d^2 \hat h_r(c(t))}{dt^2}|_{t=0}~.
\]
Differentiating the formula
\[
    \hat h_r(c(t))=\const(r)\int_{B(x,r)} \oh(\gamma_t y)\d\mathbb H^d(y)
\]
we get
\[
  \frac{d \hat h_r\circ c}{dt}(t)=\const(r)\int_{B(x,r)}\langle \grad^{\H} \oh(\gamma_t y),
  \tilde V(\gamma_t(y))\rangle \d\mathbb H^d(y)~.
\]
The statement is then proved by differentiating again this formula at $t=0$.
\end{proof}

We will need the following lemma. 

\begin{lemma}\label{lem:limsup lipschitz}
Let $(u_n)_n$ be a sequence of convex functions on an open set $O$ of the unit ball $B\subset \R^d$
converging uniformly on any compact set of $O$ to $u$. 
Let $K\subset O$ be a compact set, and $K'\subset O$ a compact set containing $K$ in its interior.
Then
$$\operatorname{Limsup} \|u_n \|_{\operatorname{Lip}(K)}\leq \|u \|_{\operatorname{Lip}(K')}~. $$
\end{lemma}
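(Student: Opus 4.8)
The plan is to split the Lipschitz norm into its two pieces, $\|u_n\|_{\operatorname{Lip}(K)}=\|u_n\|_K+\mathrm{Lip}_K(u_n)$, and to treat them separately before recombining by subadditivity of $\limsup$. The sup-norm piece is immediate: since $u_n\to u$ uniformly on the compact set $K$ we have $\|u_n\|_K\to\|u\|_K$, and $\|u\|_K\le\|u\|_{K'}$ because $K\subset K'$; hence $\limsup_n\|u_n\|_K=\|u\|_K\le\|u\|_{K'}$. It therefore remains to prove
\[
   \limsup_n \mathrm{Lip}_K(u_n)\le \mathrm{Lip}_{K'}(u)~,
\]
since adding the two inequalities and using $\limsup(a_n+b_n)\le\limsup a_n+\limsup b_n$ yields $\limsup_n\|u_n\|_{\operatorname{Lip}(K)}\le\|u\|_{K'}+\mathrm{Lip}_{K'}(u)=\|u\|_{\operatorname{Lip}(K')}$.

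For the Lipschitz piece I would use the standard dictionary between Lipschitz constants and subgradients for convex functions, together with the closedness of the subdifferential under local uniform convergence. Two elementary facts are needed. First, a sequence of convex functions converging locally uniformly is locally uniformly Lipschitz; so one may insert a compact set $K''$ with $K\subset\operatorname{int}K''\subset K''\subset\operatorname{int}K'$ and a constant $L$ with $\mathrm{Lip}_{K''}(u_n)\le L$ for all large $n$, which bounds every subgradient $p\in\partial u_n(x)$, $x\in K$, by $|p|\le L$. Moreover the subgradient inequality $u_n(y)\ge u_n(x)+\langle p, y-x\rangle$ gives at once $\mathrm{Lip}_K(u_n)\le M_n:=\sup\{|p|:x\in K,\ p\in\partial u_n(x)\}$, and this supremum is attained since the subdifferential is compact-valued and upper semicontinuous. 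Second, if $x_n\to x_*$, $p_n\in\partial u_n(x_n)$ and $p_n\to p_*$, then $p_*\in\partial u(x_*)$; this follows by passing to the limit in $u_n(y)\ge u_n(x_n)+\langle p_n, y-x_n\rangle$ using the local uniform convergence $u_n\to u$ (see also \cite{RW98}).

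I would then argue by contradiction. Suppose $\limsup_n M_n=M>\mathrm{Lip}_{K'}(u)$; passing to a subsequence, assume $M_n\to M$ and choose $x_n\in K$, $p_n\in\partial u_n(x_n)$ with $|p_n|=M_n$. By the first fact the $p_n$ are bounded, so after a further extraction $x_n\to x_*\in K$ and $p_n\to p_*$ with $|p_*|=M$; we may assume $p_*\neq 0$, since otherwise $M=0\le\mathrm{Lip}_{K'}(u)$ and there is nothing to prove. By the second fact, $p_*\in\partial u(x_*)$. As $x_*\in K\subset\operatorname{int}K'$, for all small $t>0$ the point $y_t=x_*+t\,p_*/|p_*|$ lies in $K'$, and the subgradient inequality gives
\[
   u(y_t)-u(x_*)\ \ge\ \langle p_*,\,y_t-x_*\rangle\ =\ tM~.
\]
Hence $\big(u(y_t)-u(x_*)\big)/|y_t-x_*|\ge M$ with $x_*,y_t\in K'$, so $\mathrm{Lip}_{K'}(u)\ge M$, contradicting $M>\mathrm{Lip}_{K'}(u)$. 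Since $\mathrm{Lip}_K(u_n)\le M_n$, this proves the desired bound and hence the lemma.

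The step I expect to be the main obstacle is the book-keeping around the domain in the first fact: the inequality $\mathrm{Lip}_K(u_n)\le M_n$ and the passage $p_*\in\partial u(x_*)$ rest on the subgradient inequality being valid along the segments involved, which is automatic when $O$ is convex but otherwise requires a short localization. As only short segments near $K$ actually intervene, this is arranged by covering $K$ with finitely many convex balls contained in $\operatorname{int}K'$ and working on each, or simply by reducing at the outset to the convex case. Everything else is the standard convex-analysis fact on the convergence of subdifferentials.
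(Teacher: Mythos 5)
Your proof is correct, but it runs on genuinely different machinery than the paper's. The paper's argument is more elementary: it handles the sup-norm part the same way you do, and then, arguing by contradiction, takes pairs $x_n,y_n\in K$ whose difference quotient exceeds $\mathrm{Lip}_{K'}(u)+\epsilon$, extends the ray from $x_n$ through $y_n$ until it meets $\partial K'$ at a point $z_n$, and uses the monotonicity of secant slopes of a convex function along a line to transfer the large quotient from the pair $(x_n,y_n)$ to the pair $(x_n,z_n)$. Since $x_n\in K$ and $z_n\in\partial K'$, any limit points satisfy $\|z-x\|>0$, so uniform convergence on $K'$ lets one pass to the limit in the quotient and contradict the Lipschitz bound for $u$. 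You replace this ray-stretching trick by the subdifferential dictionary: the Lipschitz constant as a supremum of subgradient norms, graph-closedness of $\partial u_n$ under local uniform convergence, and a displacement of definite length in the direction of the limiting maximal subgradient. Both are contradiction arguments, and both hinge on the same mechanism, namely converting arbitrarily close pairs of points into data with separation bounded below (the paper via $z_n\in\partial K'$, you via the segment $x_*+t\,p_*/|p_*|$ with $t$ of fixed size inside $K'$). What the paper's route buys is self-containedness, needing only the one-dimensional slope inequality; what yours buys is that every step is a standard, citable fact of convex analysis (indeed in the spirit of \cite{RW98}, which the paper invokes elsewhere).

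One caveat about the localization you flag at the end, since your proposed fix is not quite complete as sketched. The inequality $\mathrm{Lip}_K(u_n)\le M_n$ genuinely fails when $O$ is not convex: if $O$ has two components and $u_n$ equals a different constant on each, then $M_n=0$ while $\mathrm{Lip}_K(u_n)>0$ for $K$ meeting both components. Covering $K$ by convex balls inside $\operatorname{int}K'$ only controls difference quotients of \emph{nearby} pairs; it is not true that "only short segments intervene" in bounding $\mathrm{Lip}_K(u_n)$. The missing (easy) half is that pairs $x,y\in K$ with $\|x-y\|\ge\delta$ are handled directly by uniform convergence, without any convexity: $|u_n(x)-u_n(y)|/\|x-y\|\le \mathrm{Lip}_K(u)+2\|u_n-u\|_K/\delta$, whose limsup is at most $\mathrm{Lip}_{K'}(u)$. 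With this splitting into close and separated pairs, your argument closes in full generality. In fairness, the paper's own proof is equally imprecise on this point (the segment $[x_n,z_n]$ must stay in the region where $u_n$ is convex, and the hitting point of $\partial K'$ needs $K'$ to be met nicely by the ray), and in the paper's actual application the functions are restrictions of support functions that are convex on all of $B$, where both proofs go through verbatim.
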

\begin{proof}
By uniform convergence, $\|u_n \|_{K}$ converge to $\|u \|_K$, which is less than $\|u \|_{K'}$.
So we have to prove that the limit superior of $\mathrm{Lip}_K(u_n)$ is less than $\mathrm{Lip}_{K'}(u)$.

Let $x,y$ be two points   in $K$, and $z$ be the intersection point of the ray from $x$ towards $y$ and $\partial K'$.
So $y=(1-\lambda)x+\lambda z$ with $\lambda=\|y-x\|/\|z-x\|$, 
 then by convexity we have
 \begin{equation}\label{eq: base convex lipschitz}
   \frac{u_n(y)-u_n(x)}{\| y-x\|} < \frac{u_n(z)- u_n(x)}{\|z-x\|}~.
 \end{equation}

Let $\epsilon>0$ and suppose by contradiction that 
$u_n$ are not $(\mathrm{Lip}_{K'}(u)+\epsilon)$-Lipschitz on $K$ for $n$ big. This implies that there is two sequences
 $x_n$ and $y_n$ in $K$ such that 

$$        u_n(y_n)-u_n(x_n)> (\mathrm{Lip}_{K'}(u)+\epsilon)\|y_n-x_n\|~.$$

Let $z_n$ be the intersection point of the ray from $x_n$ towards $y_n$ and $\partial K'$.
By \eqref{eq: base convex lipschitz} we have that
       \begin{equation}\label{eq: base convex lipschitz2} \frac{u_n(z_n)-u_n(x_n)}{\|z_n-x_n\|}>
       \mathrm{Lip}_{K'}(u)+\epsilon~. \end{equation}

Now, up to take a subsequence, $z_n\rightarrow z$ in $\partial K'$ and $x_n\rightarrow x$ in $K$.
Notice in particular that $\|z-x\|>0$.
Since $u_n\rightarrow u$ uniformly on $K'$ we have that
   $$u_n(z_n)-u_n(x_n)\rightarrow u(z)-u(x)$$
 
and by \eqref{eq: base convex lipschitz2}

  $$ \frac{u(z)-u(x)}{\|z-x\|}\geq \mathrm{Lip}_{K'}(u)+\epsilon $$

that contradicts that $u$ is $(\mathrm{Lip}_{K'}(u))$-Lipschitz on $K'$.
\end{proof}

Given a hyperbolic support function $\oh$ let us denote by $h$ the corresponding support function on the ball.
Recall that $h(x)=\lambda(x)\oh(\rad (x))$ where $\lambda(x)=\sqrt{1-\|x\|^2}$ and $\rad:B\to\H^d$ is the radial map.

Using that for each compact subset $K$ of $\H^d$ the map $\rad: \rad^{-1}(K)\to K$ is bi-Lipschitz 
we simply check that there are two constants $\alpha<\beta$ depending on $K$ such that
for any support function $\oh$
\[
    \alpha \|h\|_{\mathrm{Lip(\rad^{-1}}(K))}\leq \|\oh\|_{\mathrm{Lip}(K)}\leq \beta \|h\|_{\mathrm{Lip(\rad^{-1}}(K))}
\]
where the Lipschitz constant of $h$ is computed with respect to the  Euclidean metric of the ball while
the Lipschitz constant of $\oh$ is computed with respect to the hyperbolic metric.

 So Lemma \ref{lem:limsup lipschitz} implies the following corollary.
 
 \begin{corollary}\label{cor:supp}
 Let $(\oh_n)_n$ be a sequence of hyperbolic support functions on an open subset $U\subset\H^d$
converging uniformly on any compact set of $U$ to $\oh$. 
Let $K\subset U$ be a compact set, and $K'\subset U$ a compact set containing $K$ in its interior.
Then there is a constant $C_1=C_1(K')$ such that
$$\operatorname{Limsup} \|\oh_n \|_{\operatorname{Lip}(K)}\leq C_1 \|\oh \|_{\operatorname{Lip}(K')}~. $$
 \end{corollary}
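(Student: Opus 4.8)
The plan is to transport the whole question from the hyperbolic side to the ball side, where Lemma~\ref{lem:limsup lipschitz} is directly available, and then to carry the resulting estimate back by means of the two-sided bi-Lipschitz comparison between $\|\cdot\|_{\operatorname{Lip}}$ of a hyperbolic support function and $\|\cdot\|_{\operatorname{Lip}}$ of the associated ball support function recorded just above the statement. Write $h_n$ and $h$ for the ball support functions associated with $\oh_n$ and with the limit (which I take to be $\oh$, matching the right-hand side of the statement), so that $h_n(x)=\lambda(x)\,\oh_n(\rad(x))$ and similarly for $h$.

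First I would verify that $h_n\to h$ uniformly on every compact subset of $\rad^{-1}(U)$. This is immediate: on a compact $C\subset U$ the map $\rad\colon\rad^{-1}(C)\to C$ is continuous and $\lambda$ is bounded there, while $\oh_n\to\oh$ uniformly on $C$; hence $h_n=\lambda\cdot(\oh_n\circ\rad)$ converges uniformly to $h$ on $\rad^{-1}(C)$. Since $\rad$ is a homeomorphism of $B$ onto $\H^d$, the inclusion $K\subset\operatorname{int}K'$ gives $\rad^{-1}(K)\subset\operatorname{int}\rad^{-1}(K')$, both compacts sitting inside the open subset $\rad^{-1}(U)$ of $B$.

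Next I would apply Lemma~\ref{lem:limsup lipschitz} to the convex functions $h_n$ on $O=\rad^{-1}(U)$, with compact set $\rad^{-1}(K)$ and larger compact set $\rad^{-1}(K')$, obtaining
\[
  \operatorname{Limsup}\|h_n\|_{\operatorname{Lip}(\rad^{-1}(K))}\leq \|h\|_{\operatorname{Lip}(\rad^{-1}(K'))}~.
\]
Then I invoke the bi-Lipschitz comparison twice, with the constants $\alpha<\beta$ attached to $K'$: on the left, $\|\oh_n\|_{\operatorname{Lip}(K)}\leq\beta\,\|h_n\|_{\operatorname{Lip}(\rad^{-1}(K))}$, and on the right, $\|h\|_{\operatorname{Lip}(\rad^{-1}(K'))}\leq\alpha^{-1}\|\oh\|_{\operatorname{Lip}(K')}$. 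Chaining the three inequalities and setting $C_1=\beta/\alpha$ produces the claim.

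The only genuinely delicate point --- and the step I expect to need a word of care --- is that the left-hand bi-Lipschitz bound a priori uses the constant attached to $K$, whereas the statement requires $C_1$ to depend on $K'$ alone. I would settle this by monotonicity of the comparison constants in the domain: since $\rad^{-1}(K)\subset\rad^{-1}(K')$, the bi-Lipschitz constants of $\rad$ (together with the bounds on $\lambda$) over the smaller set are dominated by those over the larger one, so the pair $\alpha,\beta$ attached to $K'$ is admissible for $K$ as well. Using these uniform constants, the choice $C_1=\beta/\alpha$ depends only on $K'$, as required. Everything else is a routine assembly of facts already established in the excerpt.
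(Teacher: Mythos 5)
Your proof is correct and follows essentially the same route as the paper, which deduces the corollary directly from the bi-Lipschitz comparison $\alpha \|h\|_{\mathrm{Lip}(\rad^{-1}(K))}\leq \|\oh\|_{\mathrm{Lip}(K)}\leq \beta \|h\|_{\mathrm{Lip}(\rad^{-1}(K))}$ together with Lemma~\ref{lem:limsup lipschitz} applied to the ball support functions. Your added care about making the constants depend only on $K'$ (via monotonicity of the geometric quantities defining $\alpha,\beta$) is a legitimate filling-in of a detail the paper leaves implicit.
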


\begin{proof}[Proof of Proposition~\ref{prop: app apro}]
There is a constant $C= C(d)$ such that if $\tilde V$ is any Killing vector field and $x$ a point
in $\mathbb H^d$ such that $\nabla_{\tilde V} \tilde V (x)=0$ then for any $y\in B(x,1)$

$$  \left|\|\tilde V_y\|^2- \|\tilde V_x\|^2\right|<C r,~\qquad\|\nabla_{\tilde{V}} \tilde V(y)\|\leq C\|\tilde V_x\| r$$
where $r=d_{\mathbb H}(x,y)$. This can be seen because as the hyperbolic space has negative curvature, 
the map $f(y)=\|\tilde V_y\|$ is convex, hence locally
Lipschitz, and $f\operatorname{grad}^{\mathbb H}f=-\nabla_{\tilde V} \tilde V$ \cite{BO69}. By applying an isometry, one sees
that the constant  does not depend on $x$.

Let us assume that $\oh$ is $C^2$.
By Cauchy--Schwarz and one of the inequalities above, we get that for every $y\in B(x,r)$
$$\langle \grad^{\H} \oh,\nabla_{\tilde V}\tilde V(y)\rangle \geq - \|\grad^{\H} \oh\|_{B(x,r)} \|\tilde V_x\| C r~.$$

Moreover, $\nabla^2 \oh -\oh g_{\H} \geq 0$, that implies 
$$\nabla^2(\oh)(\tilde V_y,\tilde V_y)-\oh(y) \|\tilde V_x\|^2 \geq -|\oh(y)|\left| \|\tilde V_y\|^2 - \|\tilde V_x\|^2\right|\geq -\|\oh\|_{B(x,r)} C r~. $$

Lemma~\ref{lem appen approx} implies that
\[
  \nabla^2(\hat h_r)-\hat h_r g_{\H}( v, v) \geq
  -\const(r)V(r) C  (\|\oh\|_{B(x,r)}+\|\grad^{\H} \oh\|_{B(x,r)} \|v\|)r~.
 \]
So in particular for any  $x\in K$, if $\|v\|=1$,  then
\[
(\nabla^2(\hat h_r)-\hat h_r g_{\H}) (v, v)\geq
- \const(r)V(r)C(\|\oh\|_{K_r }+\|\grad^{\H} \oh\|_{K_r})r
\]
and as $K_r$ is convex, $\|\grad \oh\|_{K_r}=\operatorname{Lip}_{K_r}(\oh)$, and 
as $ V(r)\rightarrow 1$ when $r\rightarrow 0$, there exists $C'$ such that 
\[
(\nabla^2(\hat h_r)-\hat h_r g_{\H}) (v, v) \geq
-C' \|\oh\|_{\operatorname{Lip}(K_r)}r\,,\quad ||v||=1\,,
\]
so $ \hat h_r -C' \|\oh\|_{\operatorname{Lip}(K_r)}r$ is a support function.

Now consider the case where $\oh$ is only continuous.
Let $(H_n)_n$ be a sequence of $C^2_+$ support functions 
converging to the extension $H$ of $\oh$
uniformly on compact subset of $I^+(0)$ (Lemma~\ref{lem: conv unif sur compt}).
From Corollary~\ref{cor:supp} 
 for any $\epsilon>0$ the functions
\[
   f_n= (\hat{h}_n)_r-C'C_1(\|\oh\|_{\operatorname{Lip}(K_{2r})}+\epsilon)r
\]
are support functions for $n$ big and $r<1$, 
where $C_1=C_1(K_{1})$ is the constant of Corollary~\ref{cor:supp}.

Taking the $1$-homogeneous extension $F_n$ of $f_n$, we have that $F_n$ is convex over the cone on $K$  for $n$ big.
Letting $n$ go to $+\infty$, $F_n$ converges to the $1$-homogeneous extension of 
\[
  f=\hat h_r-C'C_1(\|\oh\|_{\operatorname{Lip}(K_r)}+\epsilon)r~.
\]

So $f$ is a support function on $K$. Since $\epsilon$ can be chose arbitrarily we deduce that
$\hat h_r-C'C_1\|\oh\|_{\operatorname{Lip}(K_r)}r$ is a support function over $K$.
\end{proof}



\begin{spacing}{0.9}
\begin{footnotesize}
\bibliography{minkowski}
\bibliographystyle{alpha}
\end{footnotesize}
\end{spacing}

\end{document}